\numberwithin{equation}{section}
\newtheorem{lemma}{Lemma}[section]
\newtheorem{prop}[lemma]{Proposition}
\newtheorem{theorem}[lemma]{Theorem}
\newtheorem{cor}[lemma]{Corollary}
\newtheorem{prob}[lemma]{Problem}
\newtheorem{rem}[lemma]{Remark}
\newtheorem{conc}[lemma]{Conclusion}
\newcommand{\re}{\begin{rem}\rm}
  \newcommand{\mar}{\end{rem}}
\newtheorem{exam}[lemma]{Example}
\newtheorem{defi}[lemma]{Definition}
\newcommand{\kla}{\left ( }
\newcommand{\mer}{\right ) }
\renewcommand{\for}{\begin{eqnarray*}}
\newcommand{\mel}{\end{eqnarray*}}
\newcommand{\kl}{\pl \le \pl}
\newcommand{\gl}{\pl \ge \pl}
\newcommand{\lel}{\pl = \pl}
\newcommand{\ez}{{\mathbb E}}
\renewcommand{\L}{\mathcal{L}}
\newcommand{\eiz}{{\rm 1}\!\!{\rm 1}}
\newcommand{\nz}{{\mathbb N}}
\newcommand{\nen}{n \in \nz}
\newcommand{\rz}{{\mathbb R}}
\newcommand{\zz}{{\mathbb Z}}
\newcommand{\Mz}{{\mathbb M}}
\newcommand{\cz}{{\mathbb C}}
\newcommand{\ten}{\otimes}
\DeclareMathOperator{\dom}{dom}
\DeclareMathOperator{\Exp}{Exp}
\DeclareMathOperator{\diam}{diam}
\DeclareMathOperator{\Tan}{Tan}
\DeclareMathOperator{\grad}{grad}
\DeclareMathOperator{\Ent}{Ent}
\newcommand{\p}{\hspace{.05cm}}
\newcommand{\pl}{\hspace{.1cm}}
\newcommand{\pll}{\hspace{.3cm}}
\newcommand{\qd}{\end{proof}\vspace{0.5ex}}
\newcommand{\Om}{\Omega}
\newcommand{\om}{\omega}
\renewcommand{\a}{\alpha}
\newcommand{\al}{\alpha}
\newcommand{\si}{\sigma}
\newcommand{\la}{\lambda}
\newcommand{\eps}{\varepsilon}
\newcommand{\E}{{\mathcal E}}
\newcommand{\A}{{\mathcal A}}
\newcommand{\Ha}{{\mathcal H}}
\newcommand{\M}{{\mathcal M}}
\newcommand{\mm}{{\mathbb M}}
\newcommand{\N}{{\mathcal N}}
\newcommand{\I}{{\mathcal I}}
\newcommand{\pf}{\begin{proof}}
\newcommand{\xspace}{\hbox{\kern-2.5pt}}
\newcommand{\xyspace}{\hbox{\kern-1.1pt}}
\newcommand\bra[1]{\langle  #1|}
\newcommand\ket[1]{| #1\rangle}
\newcommand{\ff}{{\mathbb  F}}
\newcommand{\norm}[2]{\parallel \! #1 \! \parallel_{#2}}
\newcommand{\ssubset} {\!\!\subset\! \!}
\definecolor{LightGray}{rgb}{0.94,0.94,0.94}
\definecolor{VeryLightBlue}{rgb}{0.9,0.9,1}
\definecolor{LightBlue}{rgb}{0.8,0.8,1}
\definecolor{DarkBlue}{rgb}{0,0,0.6}
\definecolor{LightGreen}{rgb}{0.88,1,0.88}
\definecolor{MidGreen}{rgb}{0.6,1,0.6}
\definecolor{DarkGreen}{rgb}{0,0.6,0}
\definecolor{DarkGrreen}{rgb}{0,0.8,0}
\definecolor{VeryLightYellow}{rgb}{1,1,0.9}
\definecolor{LightYellow}{rgb}{1,1,0.6}
\definecolor{MidYellow}{rgb}{1,1,0.5}
\definecolor{DarkYellow}{rgb}{0.8,1,0.3}
\definecolor{VeryLightRed}{rgb}{1,0.9,0.9}
\definecolor{LightRed}{rgb}{1,0.8,0.8}
\definecolor{DarkRed}{rgb}{0.8,0.2,0}
\definecolor{DarkRedb}{rgb}{0.6,0.2,0}
\definecolor{DarkLila}{rgb}{0.8,0,1}
\definecolor{Beige}{rgb}{0.96,0.96,0.86}
\definecolor{Gold}{rgb}{1.,0.84,0.}
\definecolor{Goldb}{rgb}{0.7,0.3,0.5}
\definecolor{MyYellow}{rgb}{1.,0.84,0.8}
\newcommand{\lan}{\langle}
\newcommand{\ran}{\rangle}
\begin{document}
\title[CLSI]{Fisher information and Logarithmic Sobolev inequality for matrix valued functions}
\thanks{LG and NL acknowledge support from NSF grant DMS-1700168. NL is supported by Graduate Research Fellowship Program DMS-1144245.  MJ is partially supported by NSF grants DMS-1501103 and DMS 1800872.}

\author[L. Gao]{Li Gao}
\address{Department of Mathematics\\
University of Illinois, Urbana, IL 61801, USA} \email[Li Gao]{ligao3@illinois.edu}

\author[M. Junge]{M. Junge}
\address{Department of Mathematics\\
University of Illinois, Urbana, IL 61801, USA} \email[Marius
Junge]{mjunge@illinois.edu}

\author[N. LaRacuente]{Nicolas LaRacuente}
\address{Department of Physics\\
University of Illinois, Urbana, IL 61801, USA} \email[Nick LaRacuente]{laracue2@illinois.edu}

\maketitle
\begin{abstract} We prove a version of Talagrand's concentration inequality for subordinated sub-Laplacian on a compact Riemannian manifold using tools from noncommutative geometry. As an application, motivated by quantum information theory, we show that on a finite dimensional matrix algebra the set of self-adjoint generators satisfying a tensor stable modified logarithmic Sobolev inequality is dense.
\end{abstract}

\section{Introduction}

Isoperimetric inequalities play an important role in geometry and analysis. In the last decades the deep and beautiful connection between isoperimetric inequalities and functional inequalities has been discovered. This discovery started with the work of Meyer, Bakry and Emry on the famous `carr\'ee du champs' or gradient form, and was brought to perfection by Varopoulos, Saloff-Coste \cite{sfc,sfc2}, Coulhon \cite{VSCC}, Diaconis \cite{DSF},
 Bobkov, G\"otze \cite{BG,BG2}, Barthe and his coauthors \cite{BaK,BMil,BO1,BaK,BCE}, and Ledoux \cite{L1,L2,L3,L4,L5,L6}. It appears that the right framework of this analysis is given by abstract semigroup theory, i.e.\! starting with a semigroup of measure preserving maps on a measure space.

A crucial application of isoperimetric inequalities on compact manifolds is the famous \emph{concentration of measure} phenomenon, used fundamentally  in \cite{FLM}, and analyzed systematically  by   Milman and  Schechtmann, see \cite{MS}. Thanks to the work of Gross \cite{G1,G2,G3,G4}, it is know well-known that concentration of measure can occur in noncommutative spaces and in  infinite dimension in form of a logarithmic Sobolev inequality. Indeed, let $T_t=e^{-tA}$ be a measure preserving  semigroup, acting on $L_{\infty}(\Om,\mu)$ with energy form $\E(f)=(f,Af)$. Then $(T_t)$ (or $A$) satisfies a \emph{logarithmic Sobolev inequality, in short $\la$-LSI}, if
\begin{equation}\label{LSI}  \la \int f^2\log f^2d\mu \kl \E(f)  \end{equation}
holds for all $f$ in the domain of $A^{1/2}$. We will use the notation $\Ent(f)=\int f\log fd\mu$ for  the entropy. To simplify the exposition, we will assume  throughout this paper  that $\mathcal{A}\subset {\rm dom}(A)\cap L_{\infty}$ is a not necessarily closed $^*$-algebra in the domain and invariant under  the semigroup. Semigroup techniques have been very successfully combined with the notion of hypercontractivity
 \[ \|T_t:L_2\to L_{q(t)}\|\kl 1 \quad \mbox{for} \quad q(t)\gl 1+e^{ct}\pl.\]
Indeed, the standard procedure to show that the Laplace Beltrami operator on a compact Riemannian manifold satisfies $\la$-LSI is to derive hypercontractivity from  heat kernel estimates, and then use the Rothaus Lemma to derive LSI from hypercontractivity. In this argument ergodicity of the underlying semigroup appears to be crucial.

A major breakthrough in this development is Talagrand's inequality which connects entropic quantities with a given distance. A triple $(\Om,\mu,d)$ given by a measure and a metric satisfies Talagrand's inequality if
 \[ W_1(f\mu,\mu) \kl \sqrt{  \frac{2\Ent(f)}{\la}} \pl .\]
Here
  \begin{equation}\label{wass} W_1(\nu,\mu)\lel \inf_{\pi} \int d(x,y)d\pi(x,y)
  \lel \sup_{\|g\|_{Lip}\le 1} \big|\int g(x)d\nu(x)-\int g(x)d\mu(x)\big|
  \end{equation}
is the Wassertein 1-distance, and the last equality is a famous duality result by Kantorovitch and Rubenstein, see  \cite{KR,Vi}. The infimum is taken over all probability measures $\pi$ on $\Om\times \Om$ with marginals $\nu$ and $\mu$. Using the triangle inequality for the Wasserstein distance, it is easy to derive the \emph{geometric Talagrand inequality}
  \[ d(A,B)\gl h \quad \Longrightarrow\quad \mu(A)\mu(B)\kl e^{-\frac{h^2}{C}} \pl. \]
If in addition $\mu(A)\gl 1/2$ and  $B_h=\{x| d(x,A)\gl h\}$, this inequality implies exponential decay in $h$, i.e. concentration of measure  usually proved via isoperimetric inequalities. We refer to Tao's blog for applications of  Talalgrand's inequality \cite{TA} in particular to eigenvalues of random matrices \cite{TV1,TV2}.

As pointed out by Otto and Villani \cite{OV}, Talagrand proved a much stronger inequality ($\la$-TA$_2$), namely
 \begin{equation}
 \label{Ta}\la W_2(f\mu,\mu) \kl \sqrt{2\Ent(f)}
 \end{equation}
for $\Om=[0,1]^n$ and the Euclidean distance and  for $\{0,1\}^n$ and the Hamming distance, with a constant $\la$ not depending on $n$. Here the $p$-Wasserstein distance is obtained by replacing the $1$-norm by the $p$-norm in $L_p(\pi)$ in the middle term of \eqref{wass}. Indeed, in the very insightful paper by Otto and Villani \cite{OV}, they point out that the correct way to understand Talagrand's inequality consists in pushing the semigroup into the state space of the underlying commutative $C^*$-algebra. Then Talagrand's concentration inequality can be reformulated as a convexity condition for a suitably defined sub-Riemannian metric. In that sense Otto and Villani reconnect to the geometric aspect of concentration inequalities. The key idea in the Otto-Villani approach is to define a sub-Riemannian matric such that  the function given by \emph{relative entropy}
 \[ D(\nu||\mu) \lel \int \log \frac{d\nu}{d\mu} d\nu \]
admits $T_t(\nu)$ as a path of steepest descent. Here $\frac{d\nu}{d\mu}$ is the Radon-Nikodym  derivative.
A key tool in their analysis was to consider the modified version of the logarithmic Sobolev inequality, (in short $\la$-FLSI)
\begin{equation} \label{FLSI}
  \la \Ent(f) \kl \int A(f)\log f d\mu \pl =:\pl \I_A(f)   \end{equation}
and show that it implies $\la$-TA$_2$. The right hand side is known as Fisher information and  turns out to be the energy functional for the relative entropy with respect to the sub-Riemannian metric.

In this paper we extend the theory of logarithmic Sobolev inequalities in \emph{two directions}, by including matrix valued functions and non-ergodic semigroups. The main road block, discovered in the quantum information theory literature, is that the Rothaus Lemma (\cite{Ro})
 \begin{equation}\label{rot} \Big(\exists_{\la>0} \forall_{E(f)=0}  \pl:\pl \la D(f^2|E(f^2))\kl  \E(f)\Big)
    \quad \stackrel{?}{\Longrightarrow} \quad \Big(\exists_{\tilde{\la}>0} \forall_f \pl:\pl   \tilde{\la} D(f^2|E(f^2))  \kl \E(f)\Big)
    \end{equation}
may fail for matrix-valued functions. The relevance of variations of the Rothaus Lemma is, however, very well-known in the commutative setting, see \cite{BaK}.  We are not aware of any investigation of Talagrand's inequality in the commutative, non-ergodic setting. For selfadjoint semigroups the fixpoint algebra $N=\{x: \forall_t T_t(x)=x\}$ admits a normal conditional expectation $\E_{fix}(M)=N$. This remains true in the noncommutative setting, i.e.\!  for a semigroup $(T_t)$ of (sub-)unital completely positive maps on a finite von Neumann algebra $M$ provided each $T_t$   is selfadjont with respect to the inner product $(x,y)=\tau(x^*y)$ of a normal, faithful tracial state $\tau$. The reader less familiar with von Neumann algebras is welcome to think of $M=L_{\infty}(\Om,\mu;\Mz_m)$, the space of bounded random matrices equipped with $\tau(f)=\int_{\Om} \frac{tr(f(\om))}{m} d\mu(\om)$ such  that $\tau(T_t(x^*)y)=\tau(x^*T_t(y))$.

The failure of \eqref{rot} forces us to introduce new tools, and  study a new gradient condition. This allows us to show that Talagrand's inequality is still valid in the context of group representations. Our main examples are of the form $T_t=S_t\ten id_{\Mz_m}$, where $S_t$ is a nice ergodic semigroup. These examples are natural in the context of operator spaces (see \cite{Po} and \cite{ER} for more background), despite being obviously not ergodic. We say that a semigroup satisfies $\la$-FLSI (also called $\la$-MLSI but we want to stress the Fisher information aspect)  if
 \[ \la D(\rho||\E_{fix}(\rho))\kl \I_A(\rho) \lel \tau(A(\rho)\ln \rho) \pl .\]
The noncommutative Fisher information, introduced under the name entropy production by Spohn \cite{Spo},
is very well-known in the quantum information theory.
At the time of this writing it is not known whether $\la$-FLSI is stable under tensorization. However, tensorization is an important feature and allows to deduce gaussian estimates from an elementary $2$-point inequality, see e.g. \cite{BCL}. Therefore, we introduce the \emph{complete logarithmic Sobolov inequality} (in short $\la$-CLSI) by requiring that $A\ten id_{\Mz_m}$ satisfies $\la$-FLSI for all $m\in \nz$. Using the data processing inequality it is easy to show that the CLSI is stable under tensorization, see section 7.1. Before this paper the list of examples which satisfy good tensorization properties could all be deduced from one key example (see however \cite{DR}), due to  \cite{BaRo}:

\begin{lemma}(Bardet/Rouz\'{e}) Let $E:M\to N$ be a conditional expectation. Then $T_t=e^{-t(I-E)}$ satisfies $1$-CLSI.
\end{lemma}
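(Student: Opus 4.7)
The plan is to reduce the CLSI to a direct computation, exploiting two features of the specific generator $A=I-E$: first, that $E$ is its own projection onto the fixed-point algebra; and second, that the class of conditional expectations is stable under tensoring with $\mathrm{id}_{\mathbb{M}_m}$, so that the passage from FLSI to CLSI is automatic. So I would first establish $1$-FLSI for an arbitrary trace-preserving conditional expectation $E\colon M\to N$, and then apply the same statement to $E\otimes \mathrm{id}_{\mathbb{M}_m}$ (which is itself a conditional expectation from $M\otimes \mathbb{M}_m$ onto $N\otimes \mathbb{M}_m$) to harvest $1$-CLSI at no extra cost.

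For the FLSI step, I would first identify the semigroup and the fixed-point conditional expectation. Since $(I-E)^2=I-E$, we have $T_t=e^{-t}\,I+(1-e^{-t})E$, whose fixed-point algebra is $N=E(M)$ and whose natural conditional expectation onto fixpoints is $\E_{fix}=E$. Thus the Fisher information to be analysed is
\[
\I_A(\rho)\;=\;\tau\bigl(A(\rho)\log\rho\bigr)\;=\;\tau\bigl((\rho-E(\rho))\log\rho\bigr).
\]
The key algebraic manipulation is to add in a vanishing cross-term. Because $\log E(\rho)\in N$ and $E$ is a trace-preserving $N$-bimodule map, the identity
\[
\tau\bigl(\rho\,\log E(\rho)\bigr)\;=\;\tau\bigl(E(\rho)\,\log E(\rho)\bigr)
\]
gives $\tau\bigl((\rho-E(\rho))\log E(\rho)\bigr)=0$, so one may rewrite
\[
\I_A(\rho)\;=\;\tau\bigl((\rho-E(\rho))(\log\rho-\log E(\rho))\bigr).
\]

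Now I would expand this last expression directly to obtain the desired bound. Distributing the product and reorganising yields
\[
\I_A(\rho)\;=\;\bigl[\tau(\rho\log\rho)-\tau(\rho\log E(\rho))\bigr]+\bigl[\tau(E(\rho)\log E(\rho))-\tau(E(\rho)\log\rho)\bigr]\;=\;D(\rho\Vert E(\rho))+D(E(\rho)\Vert\rho).
\]
Since $D(E(\rho)\Vert\rho)\ge 0$ by positivity of the relative entropy, this gives $D(\rho\Vert E(\rho))\le \I_A(\rho)$, which is exactly the $1$-FLSI.

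The only subtlety I anticipate is ensuring that the relative entropies are well defined and that the cross-term identity $\tau(\rho\log E(\rho))=\tau(E(\rho)\log E(\rho))$ really does hold; the first is standard under a faithfulness/support assumption on $\rho$ (and the inequality then extends by continuity), while the second is a clean consequence of $E$ being an $N$-bimodule map and trace-preserving. Tensorising the whole argument with $\mathrm{id}_{\mathbb{M}_m}$ introduces no new analytic ingredient, so CLSI follows with the same constant $1$, uniformly in $m$. I do not expect a serious obstacle in this argument; the content is really the recognition that for the very special generator $I-E$, the Fisher information is symmetric and dominates $D(\rho\Vert E(\rho))$ trivially.
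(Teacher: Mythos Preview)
Your proposal is correct and follows essentially the same route as the paper: the paper computes $\I_{I-E}(\rho)=D(\rho\|E(\rho))+D(E(\rho)\|\rho)\ge D(\rho\|E(\rho))$ via the same add-and-subtract of $\tau(\rho\log E(\rho))=\tau(E(\rho)\log E(\rho))$, and then remarks that ``stabilization is trivial'' because $E\otimes\mathrm{id}_{\mathbb{M}_m}$ is again a conditional expectation. Your write-up is a faithful elaboration of exactly this argument.
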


Indeed, for  conditional expectations we have
 \[ \I_{I-E}(\rho) \lel D(\rho ||E(\rho))+D(E(\rho)||\rho) \gl D(\rho||E(\rho)) \pl .\]
In this case stabilization is trivial. The middle term is the original symmetrized divergence introduced by Kullback and Leibler \cite{KL}, which is interesting from a historical point of view. Using the tensorization, one can now deduce that gaussian systems (and certain depolarizing channels) also satisfy CLSI, see also \cite{DR}. Our new tool to prove CLSI is based on the gradient form
 \[ 2\Gamma(f,g) \lel A(f^*)g+f^*A(g)-A(f^*g)  \pl .\]
We say that the generator $A$ satisfies $\la$-$\Gamma\E$ if
 \[ \sum_{jk} \bar{\al}_j\la \Gamma_{I-\E_{fix}}(f_j,f_k)\al_k \kl  \sum_{j,k}\bar{\al}_j \Gamma_{A}(f_j,f_k)\al_k  \]
holds for all finite families $(f_k)$ and scalars $(\al_k)$. 
The next lemma states the two new basic facts used in this paper.

\begin{lemma}\label{keyel} i) $\la$-$\Gamma\E$ implies $\la$-CLSI. ii) If $A$ and $B$ satisfy $\la$-CLSI, then $A\ten id+id\ten B$ satisfies $\la$-CLSI.
\end{lemma}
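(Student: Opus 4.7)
For \textbf{(i)}, the plan is first to observe that the $\la$-$\Gamma\E$ condition is stable under tensoring with identity maps, and then reduce to showing $\la$-$\Gamma\E$ implies un-tensored $\la$-FLSI. Since $\Gamma_{A\ten id_{\Mz_m}}$ is $\Gamma_A$ applied block-wise to matrix-valued functions, and the fix-point projection of $A\ten id_{\Mz_m}$ equals $\E_{fix}\ten id_{\Mz_m}$, the sesquilinear inequality $\la\Gamma_{I-\E_{fix}}\le \Gamma_A$ extends pointwise to every extension $A\ten id_{\Mz_m}$. Hence it suffices to prove $\la$-$\Gamma\E \Rightarrow \la$-FLSI for a single generator.

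The bridge from the gradient form to the Fisher information will be a spectral integral representation. Starting from the identity
\[ \ln\rho - \ln \E_{fix}(\rho) \lel \int_0^\infty \!\big((\E_{fix}(\rho)+s)^{-1} - (\rho+s)^{-1}\big)\,ds \]
and using trace preservation together with $2\Gamma_A(f,g) = A(f^*)g + f^*A(g) - A(f^*g)$, an integration by parts will cast the Fisher information in the form
\[ \I_A(\rho) \lel \int_0^\infty \tau\!\big( \Gamma_A(f_s,f_s)\cdot K_s(\rho)\big)\, ds \]
with $f_s$ and $K_s(\rho)$ positive operators built from resolvents $(\rho+s)^{-1}$, in the spirit of the Carlen--Maas BKM gradient-flow picture. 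Feeding the pointwise inequality $\la\Gamma_{I-\E_{fix}}\le \Gamma_A$ into the integrand yields $\la\I_{I-\E_{fix}}(\rho)\le\I_A(\rho)$. The Bardet--Rouz\'e identity $\I_{I-\E_{fix}}(\rho) = D(\rho\|\E_{fix}\rho) + D(\E_{fix}\rho\|\rho) \ge D(\rho\|\E_{fix}\rho)$ then closes the chain to $\la D(\rho\|\E_{fix}\rho)\le \I_A(\rho)$, which is the desired $\la$-FLSI. The technically hardest step is the integral representation: one must handle non-commutation of $\rho$ with the gradient directions via double operator integrals and verify that the sesquilinear $\Gamma\E$ inequality transfers correctly under the BKM-type weights.

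For \textbf{(ii)}, the plan is to combine the chain rule for relative entropy with the data processing inequality (DPI) for Fisher information. After tensoring with $\Mz_m$ it is enough to show $\la$-FLSI for $A\ten id_{M_2} + id_{M_1}\ten B$ on $M_1\ten M_2$ assuming $A$ and $B$ both satisfy $\la$-CLSI. The fix-point conditional expectation of the sum is the commuting product $\E_A\ten \E_B$, and the chain rule yields
\[ D(\rho\|(\E_A\ten \E_B)\rho) \lel D(\rho\|(\E_A\ten id)\rho) + D((\E_A\ten id)\rho\|(\E_A\ten \E_B)\rho) \pl . \]
CLSI of $A$ bounds the first term by $\la^{-1}\I_{A\ten id}(\rho)$; CLSI of $B$, applied on the subalgebra $N_A\ten M_2$ (which embeds in $\Mz_{m'}\ten M_2$ for a suitable $m'$), bounds the second term by $\la^{-1}\I_{id\ten B}((\E_A\ten id)\rho)$. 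Since $\E_A\ten id$ is a trace-preserving CP map commuting with the semigroup $e^{-t(id\ten B)}$, the DPI for Fisher information gives $\I_{id\ten B}((\E_A\ten id)\rho)\le \I_{id\ten B}(\rho)$. Summing the two bounds and invoking the linearity $\I_{A\ten id+id\ten B} = \I_{A\ten id}+\I_{id\ten B}$ produces the required $\la$-FLSI.
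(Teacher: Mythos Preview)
Your strategy for both parts matches the paper's, but one displayed formula in (i) needs correction, and (ii) differs from the paper only in the order of two steps.

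For (i): the reduction (tensor-stability of $\Gamma\E$, then $\Gamma\E\Rightarrow$FLSI) and the closing chain via $\I_{I-E}(\rho)=D(\rho\|E\rho)+D(E\rho\|\rho)\ge D(\rho\|E\rho)$ are exactly the paper's argument (Corollary~\ref{Fisher}, Lemma~\ref{fis}, Corollary~\ref{kkeeyy}). What does not hold is the representation $\I_A(\rho)=\int_0^\infty\tau\big(\Gamma_A(f_s,f_s)\,K_s(\rho)\big)\,ds$ with $f_s,K_s\in M$. The BKM integral gives
\[
\I_A(\rho)\lel\int_0^\infty\tau\big((s+\rho)^{-1}\delta_A(\rho)(s+\rho)^{-1}\delta_A(\rho)\big)\,ds,
\]
with resolvents sitting on \emph{both} sides of $\delta_A(\rho)\in\hat M$; since $\rho$ does not commute with $\delta_A(\rho)$, these cannot be absorbed into a single $M$-weight against $\Gamma_A(f_s,f_s)=E_M(\delta_A(f_s)^*\delta_A(f_s))$. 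The paper's substitute (Lemma~\ref{mm}) is to note that $\la\Gamma_{I-E}\le_{cp}\Gamma_A$ makes the right $M$-module map $\delta_A(x)y\mapsto\delta_{I-E}(x)y$ bounded, and the Leibniz rule forces it to be a \emph{left} $M$-module map as well. Hence $\|e_k\,\delta_{I-E}(\rho)\,e_l\|_2^2\le\la^{-1}\|e_k\,\delta_A(\rho)\,e_l\|_2^2$ for spectral projections $e_k,e_l$ of $\rho$, and summing against the positive divided differences $\tfrac{\ln\la_k-\ln\la_l}{\la_k-\la_l}$ yields $\la\,\I_{I-E}(\rho)\le\I_A(\rho)$. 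This bimodule observation is precisely the content of the ``technically hardest step'' you flag.

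For (ii): after the chain rule for $D(\rho\|(\E_A\ten\E_B)\rho)$, the paper first applies DPI for \emph{relative entropy},
\[
D\big((\E_A\ten id)\rho\,\big\|\,(\E_A\ten\E_B)\rho\big)\kl D\big(\rho\,\big\|\,(id\ten\E_B)\rho\big),
\]
and then invokes $\la$-CLSI of $B$ (with $M_1$ as the auxiliary system). You instead apply $\la$-CLSI of $B$ on $N_A\ten M_2$ first and then invoke DPI for \emph{Fisher information}, $\I_{id\ten B}((\E_A\ten id)\rho)\le\I_{id\ten B}(\rho)$. Your route is correct---the Fisher DPI follows by differentiating $t\mapsto D\big((id\ten e^{-tB})\rho\,\|\,(\E_A\ten e^{-tB})\rho\big)$ at $t=0$ and using ordinary relative-entropy DPI to see this function is nonincreasing---but it is one step removed from the standard DPI, so the paper's ordering is marginally shorter.
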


Our main contribution is to identify large classes of examples from representation theory satisfying $\Gamma\E$. For this we have to recall the definition of a H\"ormander system on a Riemannian manifold $(\M,g)$ given by vector fields $X=\{X_1,...,X_r\}$ such that  the iterated commutators $[X_{i_1},[X_{i_2},\cdots]]$ generate the tangent $T_p\M$ space at every point $p\in \M$. Building on the famous heat kernel estimates from \cite{SR}, see also \cite{LZ}, and the work of Saloff-Coste on return time, we find entropic concentration inequalities subordinated sub-Laplacians. Quite surprisingly, qualitative forms of non-locality turn out to be helpful in this context.

\begin{theorem} Let $X$ be a H\"ormander system on a compact Riemannian manifold, and the selfadjoint generator  $\Delta_X=\sum_j \nabla_{X_j}^*\nabla_{X_j}$ the corresponding sub-Laplacian. Then $\Delta_X^{\theta}$ satisfies $\la$-$\Gamma\E$, and $\la$-CLSI for some constant $\la=\la(X,\theta)$.
\end{theorem}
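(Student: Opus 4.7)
The plan is to represent $\Delta_X^{\theta}$ via Bochner subordination as an averaged jump-type generator, read off the associated scalar L\'evy kernel $\kappa_\theta(x,y)$, and bound that kernel uniformly from below; once that is done, $\lambda$-$\Gamma\E$ follows for free and Lemma~\ref{keyel}(i) converts it into $\lambda$-CLSI. Concretely, for $\theta\in(0,1)$ one has
\[
\Delta_X^{\theta} \pl = \pl c_{\theta} \int_0^{\infty} (I - T_t)\, \frac{dt}{t^{1+\theta}}, \qquad c_\theta \lel \frac{\theta}{\Gamma(1-\theta)},
\]
with $T_t = e^{-t\Delta_X}$, and since the gradient form $\Gamma$ is linear in its generator this passes under the integral to give $\Gamma_{\Delta_X^\theta} = c_\theta \int_0^\infty \Gamma_{I-T_t}\, dt/t^{1+\theta}$.

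By H\"ormander's hypoellipticity theorem $T_t$ has a smooth scalar kernel $p_t(x,y)$ for every $t>0$. A direct calculation using $\int p_t(x,y)\,d\mu(y) = 1$ yields
\[
\Gamma_{I-T_t}(f,g)(x) \pl = \pl \frac{1}{2}\int_{\M}[f(x)-f(y)]^*[g(x)-g(y)]\,p_t(x,y)\,d\mu(y),
\]
which extends verbatim to matrix-valued $f,g$ because the scalar kernel commutes past matrix entries. Integrating against $dt/t^{1+\theta}$ gives
\[
\Gamma_{\Delta_X^\theta}(f,g)(x) \pl = \pl \frac{1}{2}\int_{\M}[f(x)-f(y)]^*[g(x)-g(y)]\,\kappa_\theta(x,y)\,d\mu(y),
\]
with $\kappa_\theta(x,y) := c_\theta \int_0^\infty p_t(x,y)\,dt/t^{1+\theta}$; convergence at $t=0^+$ uses off-diagonal Gaussian decay of $p_t$, and at $t=\infty$ uses ergodicity. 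On a connected compact manifold $\E_{fix}(f) = \int f\,d\mu$, so the analogous formula with kernel identically equal to $1$ represents $\Gamma_{I-\E_{fix}}$. Therefore the full $\lambda$-$\Gamma\E$ inequality --- including the matrix-valued version with scalar coefficients $\al_k$ --- reduces to the \emph{scalar} pointwise inequality $\kappa_\theta(x,y) \geq \lambda$ on $\M\times\M$, since the scalar factor $\kappa_\theta(x,y)-\lambda\geq 0$ commutes with the positive matrix integrand $\big|\sum_k \al_k (f_k(x)-f_k(y))\big|^2$.

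The main obstacle is establishing this uniform long-time lower bound on a \emph{subelliptic} heat kernel. The plan is to combine the subelliptic heat-kernel estimates of \cite{SR,LZ} with the return-time arguments of Saloff-Coste: on a connected compact Riemannian manifold $\Delta_X$ is selfadjoint, nonnegative, with kernel consisting only of the constants, so spectral theory gives exponential $L^2$-convergence $T_t \to \E_{fix}$; the ultracontractive bounds coming from short-time subelliptic heat-kernel upper bounds upgrade this to $\|p_t - 1\|_{L^\infty(\M\times\M)} \to 0$. Consequently there is a threshold $T_0 = T_0(X) < \infty$ beyond which $p_t(x,y) \geq \tfrac{1}{2}$ uniformly, and discarding the nonnegative small-$t$ part of $\kappa_\theta$ gives
\[
\kappa_\theta(x,y) \pl \geq \pl \frac{c_\theta}{2}\int_{T_0}^\infty \frac{dt}{t^{1+\theta}} \lel \frac{c_\theta}{2\theta T_0^\theta} \pl =: \pl \lambda(X,\theta) \pl > \pl 0
\]
uniformly in $(x,y)$. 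Inserting this into the integral representation of $\Gamma_{\Delta_X^\theta}$ proves $\lambda$-$\Gamma\E$, and Lemma~\ref{keyel}(i) delivers $\lambda$-CLSI. The non-routine step is precisely the subelliptic long-time return-time control; once it is in hand, everything else is a soft kernel calculation.
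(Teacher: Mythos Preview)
Your proposal is correct and follows essentially the same route as the paper. The paper packages the argument abstractly via Proposition~\ref{lowest}: once the Saloff-Coste return-time estimate gives $\|K_t-1\ten 1\|\le \frac12$ for $t\ge t_0$, Lemma~\ref{grad1}(ii) yields $\Gamma_{I-T_t}\ge \frac12\Gamma_{I-E}$, and integrating against $c_\theta t^{-1-\theta}\,dt$ over $[t_0,\infty)$ gives exactly your constant $\la(X,\theta)=\frac{c_\theta}{2\theta t_0^{\theta}}$. Your explicit kernel computation $\Gamma_{I-T_t}(f,g)(x)=\frac12\int p_t(x,y)[f(x)-f(y)]^*[g(x)-g(y)]\,d\mu(y)$ is simply the commutative unpacking of that lemma, and your pointwise bound $\kappa_\theta\ge\la$ is the scalar-kernel translation of $\la\Gamma_{I-E}\le_{cp}\Gamma_{\Delta_X^\theta}$; the two arguments are the same calculation in different notation.
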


It is widely open wether the $\Delta_X$ itself satisfies CLSI, even when $\Delta_X$ is replaced by the Laplace-Beltrami operator on a compact Riemannian manifold.  For $\Om=S^1$ the standard semigroup given by $A=-\frac{d^2}{dx^2}$ satisfies $1$-CLSI, however fails $\la$-$\Gamma\E$. For more information on Barkry-Emery theory for sub-Laplacians, we refer to the deep work work of Baudoin and his coauthors \cite{B0,B1,B2,B3,B4,BOV}. Subordinated semigroups(in a slightly different meaning) have also been investigated in the gaussian setting, see \cite{Lind1,Lind2}. From a rough kernel perspective (see \cite{GF1,FP1,FP2}) it may appear lear surprising that subordinated semigroups outperform their smooth counterparts.

In the context of group actions, we can transfer logarithmic Sobolev inequalities. Indeed, let $\al:G\to {\rm Aut}(M)$ be a trace preserving action on a finite von Neumann algebra $(M,\tau)$, i.e. \! $\al$ is strongly continuous group homomorphism with values in the set of trace preserving automorphism on $M$.
A semigroup $S_t:L_{\infty}(G)\to L_{\infty}(G)$ which is invariant under right translations is given by an integral operator of the form
 \[ S_t(f)(g) \lel \int k_t(gh^{-1}) f(h) d\mu(g) \]
 where $\mu$ is the Haar measure. We will assume that $G$ is compact and $\mu$ is a probability measure. Then we may define the \emph{transferred semigroup}
  \[ T_t(x) \lel \int k_t(g) \al_g(x) d\mu(x) \pl .\]
For ergodic $S_t$ the fixpoint algebra of the transferred  semigroup $T_t$ is then given by fixpoint algebra of the action $N^G=\{x|\forall_{g} \al_g(x)=x\}$, which is generally not trivial.

\begin{theorem} Let $G$ be a compact group acting on a finite von Neumann algebra $(M,\tau)$. It the generator $S_t=e^{-tA}$ satisfies $\la$-CLSI ($\la$-$\Gamma\E$), then $T_t$ satisfies $\la$-CLSI ($\la$-$\Gamma\E$).
\end{theorem}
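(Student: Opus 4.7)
The plan is a transference argument via a trace-preserving $*$-homomorphism that intertwines $T_t$ with an amplification of $S_t$. Define
\[\pi\colon M\longrightarrow L_\infty(G)\bar\otimes M,\qquad \pi(x)(g)=\al_{g^{-1}}(x).\]
Because each $\al_g$ is a trace-preserving automorphism and $\mu$ is the normalized bi-invariant Haar measure, $\pi$ is a unital, faithful, normal, trace-preserving $*$-homomorphism. A direct change-of-variables computation using the bi-invariance of $\mu$ verifies the two intertwining identities
\[(S_t\otimes id_M)\circ\pi=\pi\circ T_t\qquad\text{and}\qquad (\E_{fix}^S\otimes id_M)\circ\pi=\pi\circ \E_{fix}^T,\]
where $\E_{fix}^S(f)=\int f\,d\mu$ and $\E_{fix}^T(x)=\int \al_g(x)\,d\mu(g)$ are the conditional expectations onto the respective fixed-point subalgebras. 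Differentiating the first identity at $t=0$ gives $(A\otimes id_M)\circ\pi=\pi\circ B$, where $T_t=e^{-tB}$.

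For the $\la$-$\Gamma\E$ assertion, the derivation identity $2\Gamma_A(u,v)=A(u^*)v+u^*A(v)-A(u^*v)$ combined with $\pi$ being a $*$-homomorphism intertwining $B$ and $A\otimes id_M$ yields
\[\pi(\Gamma_B(x,y))=\Gamma_{A\otimes id_M}(\pi(x),\pi(y))\qquad\text{and}\qquad \pi(\Gamma_{I-\E_{fix}^T}(x,y))=\Gamma_{(I-\E_{fix}^S)\otimes id_M}(\pi(x),\pi(y)).\]
Since $\pi$ is faithful and positivity preserving, the desired inequality $\la\Gamma_{I-\E_{fix}^T}\le\Gamma_B$ on $M$ reduces to the same inequality for $A\otimes id_M$ restricted to $\pi(M)\subseteq L_\infty(G)\bar\otimes M$, which follows from $\la$-$\Gamma\E$ for $S_t$ amplified stably by $id_M$.

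For the $\la$-CLSI assertion, apply the same construction with $M$ replaced by $M\bar\otimes\Mz_m$ and $\al$ replaced by $\al\otimes id_{\Mz_m}$; the transferred semigroup is precisely $T_t\otimes id_{\Mz_m}$, intertwined with $S_t\otimes id_{M\bar\otimes\Mz_m}$ by $\pi_m:=\pi\otimes id_{\Mz_m}$. For a positive density $\rho\in M\bar\otimes\Mz_m$, the data processing identities for the trace-preserving $*$-homomorphism $\pi_m$ together with the intertwinings above give
\begin{align*}
D(\pi_m(\rho)\,\|\,(\E_{fix}^S\otimes id)(\pi_m(\rho)))&=D(\rho\,\|\,(\E_{fix}^T\otimes id_{\Mz_m})(\rho)),\\
\I_{A\otimes id_{M\bar\otimes\Mz_m}}(\pi_m(\rho))&=\I_{B\otimes id_{\Mz_m}}(\rho).
\end{align*}
Applying $\la$-FLSI for $S_t\otimes id_{M\bar\otimes\Mz_m}$ to $\pi_m(\rho)$ therefore delivers $\la$-FLSI for $T_t\otimes id_{\Mz_m}$; since $m$ is arbitrary, $T_t$ satisfies $\la$-CLSI.

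The main technical hurdle is that $\la$-CLSI for $S_t$ is defined using amplifications by matrix algebras $\Mz_m$ only, whereas the argument above requires $\la$-FLSI for $S_t\otimes id_{M\bar\otimes\Mz_m}$ with $M$ a general finite von Neumann algebra, and analogously the $\Gamma\E$ step uses the operator-valued (complete) form of the gradient-form inequality amplified by $id_M$. Both extensions are handled by approximating $M$ by finite-dimensional subalgebras and invoking lower semicontinuity of the relative entropy and the Fisher information, using the tensorization framework developed in Section~7.1.
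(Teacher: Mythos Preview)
Your transference argument via the co-representation $\pi$ is exactly the approach the paper takes: Lemma~\ref{comdiag} establishes the intertwining $(S_t\otimes id_M)\circ\pi=\pi\circ T_t$, the commuting diagram~\eqref{ccss} gives the corresponding identity for the conditional expectations, and Proposition~\ref{transf} records that $\la$-$\Gamma\E$ and $\la$-CLSI ``pass to subsystems'' along $\pi$. Your computation of the gradient forms and the data-processing identities for $D$ and $\I$ under $\pi_m$ fills in the details the paper leaves implicit.

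The technical hurdle you flag is genuine, but your proposed resolution needs adjustment. For $\la$-$\Gamma\E$, no approximation is necessary: the inequality $\la\,\Gamma_{I-E}\le_{cp}\Gamma_A$ is stated in the complete positive order, and such inequalities automatically extend from matrix coefficients to arbitrary finite von Neumann algebra coefficients. This is precisely the content of Corollary~\ref{kkeeyy} (with reference to \cite{JZ}), and it gives $\la$-$\Gamma\E$ for $A\otimes id_M$ directly. Your approximation-by-finite-dimensional-subalgebras argument, by contrast, would only apply when $M$ is hyperfinite, which is not assumed. For $\la$-CLSI the situation is more delicate: the definition in the paper uses only matrix amplifications $\Mz_m$, yet the transference step requires $\la$-FLSI for $S_t\otimes id_{M\bar\otimes\Mz_m}$. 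The paper itself is brief here and in Section~7.1 explicitly invokes CLSI ``in its von Neumann algebra version''; your approximation sketch again only covers hyperfinite $M$. In the applications that follow (Corollary~\ref{diagtrans}, Theorem~\ref{Horm}) the paper in fact first establishes $\Gamma\E$ and then transfers, which sidesteps the issue.
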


For a compact Lie group $G$
a generating set $X=\{X_1,...,X_r\}$ of the Lie-algebra $\mathfrak{g}$ defines
a H\"ormander system $X=\{X_1,...,X_r\}$ given by the corresponding right translation invariant vector fields. Then we conclude that for \emph{any} group representation the semigroup $T_t^{\theta}$ transferred from $S_t=e^{-t\Delta_X^{\theta}}$ satisfies $\la$-$\Gamma\E$. Our investigation is motivated by quantum information theory and previous results. Starting with the seminal papers \cite{DaLi,OZ}, Temme and his coauthors
\cite{T1,T2,T3,T4,T5} made hypercontractivity in matrix algebras available in the ergodic setting, see \cite{JPPP,JPPP2,JPPPR,RiX} for results in group von Neumann algebras.  Using the concrete description, the so-called Lindblad generators,  we can prove the following density result:

\begin{theorem} The set of selfadjoint generators of semigroups in $\Mz_m$  satisfying $\Gamma\E$ and CLSI is dense.
\end{theorem}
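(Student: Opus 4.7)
The plan is an elementary perturbation argument that bypasses the subordination and transfer machinery for this particular density statement. Fix a selfadjoint generator $A$ on $\Mz_m$ and let $E\colon \Mz_m \to \cz I$, $E(x):=\tau(x) I$, denote the trace conditional expectation. For $\eps>0$ set
\[ A_\eps \pl:=\pl A+\eps(I-E)\pl. \]
Since $I-E$ is a selfadjoint projection, it generates the Markov semigroup $e^{-t}\mathrm{id}+(1-e^{-t})E$, and in particular is itself a selfadjoint Lindblad generator. Because selfadjoint generators form a convex cone, $A_\eps$ is again a selfadjoint generator of a completely positive semigroup, and $\|A_\eps-A\|\le 2\eps$.

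The key step is to identify $\E_{fix}$ for $A_\eps$ and compute $\Gamma_{A_\eps}$. A trace-symmetric generator is automatically unital, so $I\in\ker A$; combined with $\ker(I-E)=\cz I$, this gives $\ker A_\eps=\ker A\cap \cz I=\cz I$, hence $\E_{fix}$ for $A_\eps$ coincides with $E$. The gradient form decomposes as $\Gamma_{A_\eps}=\Gamma_A+\eps\,\Gamma_{I-E}$, so the $\Gamma\E$-condition for $A_\eps$ with constant $\eps$ reads
\[ \eps\,\Gamma_{I-E}(f,f) \pl\le\pl \Gamma_A(f,f)+\eps\,\Gamma_{I-E}(f,f)\pl, \]
which collapses to $\Gamma_A(f,f)\ge 0$. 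The latter inequality holds for any completely positive semigroup generator (Kadison--Schwarz for $e^{-tA}$, differentiated at $t=0$). Hence $A_\eps$ satisfies $\eps$-$\Gamma\E$, and Lemma~\ref{keyel}(i) upgrades this to $\eps$-CLSI.

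Letting $\eps\to 0$ then gives density in the norm topology on selfadjoint generators. The main obstacle is only conceptual: the $\Gamma\E$- and CLSI-constants obtained in this way degenerate linearly with the perturbation, so one does not obtain a dense subset on which either constant is bounded below uniformly. Upgrading to density with a fixed positive constant would require a genuine use of the transferred subordinated sub-Laplacians on $U(m)$ from the previous theorem, together with a careful approximation of a general Lindbladian $A(x)=\sum_j[V_j,[V_j,\cdot]]$ by such transferred objects (realizing the jump operators $V_j\in \mathfrak{u}(m)$ as a H\"ormander system after a small auxiliary enlargement), and this is where the real work would lie.
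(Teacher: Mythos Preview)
Your argument is correct and substantially simpler than the paper's proof. The paper does not perturb by $\eps(I-E_\tau)$; instead it uses functional calculus of the given Lindbladian $\L$. Writing $\L(x)=\sum_k[a_k,[a_k,x]]$, it first shows (Lemma~\ref{H1}) that $ia_1,\dots,ia_r$ generate the Lie algebra of a compact connected Lie group, so the transference machinery and the return-time/kernel estimates of Section~3 apply. It then approximates $\L$ by $B_\eps=\phi_{\eps_0,\sigma}(\L)$ for carefully chosen $\phi_{\eps_0,\sigma}$ (Lemma~\ref{apcalc}), obtaining $\|B_\eps-\L\|\le\eps$ together with $\Gamma_{B_\eps}\ge \eps\,\alpha(\L)\,\Gamma_{I-E_N}$, where $N=\{a_1,\dots,a_r\}'$ is the \emph{original} fixpoint algebra of $\L$ and $\alpha(\L)$ is explicit in the return time $t_0(\L)$ and $\|\L\|$.

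The trade-off is this: your perturbation forces the fixpoint algebra of $A_\eps$ down to the scalars, so the $\Gamma\E$-inequality you obtain is relative to $E_\tau$, not to $E_N$. For the bare density statement this is harmless, and your route is both shorter and avoids the Lie-theoretic lemma entirely. The paper's route, on the other hand, yields approximants that commute with $\L$ and share its fixpoint algebra, and produces a quantitative constant $\alpha(\L)$; this is the content of Theorem~\ref{predense}, of which the density corollary is only the soft consequence. Your closing remark slightly overstates the gap: the paper's constants also degenerate like $\eps$ as $B_\eps\to\L$, so neither approach gives density with a uniform lower bound.
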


Indeed, combining all the results from above, we can  show that  for such generators $A$ the subordinated  $A^{\theta}$ satisfies $\la(\theta)$-$\Gamma\E$ for all $0<\theta<1$. Let us mention the deep work of Carlen-Maas \cite{CM}. They were able to tanslate the work of Otto-Villani \cite{OV} to the state space of matrices and identify a truly noncommutative Wasserstein $2$-distance  $d_{A,2}(\rho,\si)$. They also showed (in the ergodic setting) that $\la$-FLSI implies
 \[ d_{A,2}(\rho,E(\rho))\kl 2\sqrt{ \frac{D(\rho||E(\rho))}{\la}} \pl .\]
An analogue of an intrinsic Wasserstein distance has already been introduced in \cite{JZ,JRZ}:
 \[ d_{\Gamma}(\rho,\si)
 \lel \sup_{f=f^*, \Gamma_A(f,f)\le 1} |\tau(\rho f)-\tau(\si f)| \pl. \]
Based on \cite{JRS} we show that $d_{\Gamma}\kl 2\sqrt{2}d_{A,2}$, and hence we see that $\la$-FLSI does indeed imply a noncommutative geometric Talagrand inequality: Let $e_1$ and $e_1$ be projections in $M$ such for some test function $f$ with $\Gamma_A(f,f)\le 1$ we have
 \[ |\frac{\tau(e_1f)}{\tau(e_1)}-\frac{\tau(e_2f)}{\tau(e_2)}|\gl h \quad \Longrightarrow \quad \tau(e_1)\tau(e_2)\kl e^{-h^2/C}\pl ,\]
where $C$ only depends on the $\la$-FLSI constant of the generator $A$. Thus we have identified large classes of new examples with satisfy the Talagrand's concentration inequality, not only for $T_t=e^{-tA}$, but also for the $n$-fold tensor product $(T_t^{\ten_n})$.

The paper is organized at follows: We discuss gradient forms, derivations and Fisher information in section 2. In section 3 we first consider kernel and decay time estimates in the ergodic, and  then in the  non-ergodic case. The latter analysis relies on the theory of mixed $L_p(L_q)$ spaces from \cite{JPar}, which has been recently rediscovered in \cite{BaRo}.
In section 4 we discuss group representations and the density result in section 5.  Section 6 is devoted to geometric applications and deviation inequalities. In Section 7 we discuss examples and counterexamples. A chart of the different properties considered in this paper is given in the following diagram
\[ \begin{array}{ccccccccccc}
 \mbox{$\la$-$\Gamma \E$} &\stackrel{Cor. \p 2.10}{\Rightarrow}&
 \mbox{$\la$-CLSI}&\stackrel{}{\Rightarrow}& \mbox{$\la$-FLSI}&\stackrel{[CM]}{\Rightarrow}& \mbox{$\la$-TA$_2$} \\
 \Downarrow_{Thm \p 2.13} & &  \Downarrow_{7.1} & &  & & \Downarrow_{Rem \p 6.10} \\
 \mbox{return}
  & &\mbox{$\la$-CLSI for $T_t^{\ten_n}$} & &
 \mbox{quant. metric} &\stackrel{Prop. \p 6.14}{\Rightarrow}& \mbox{geom. Talag.}
 \end{array} \]
Here the return time estimate of the form
$\|T_t(\rho)-E(\rho)\|_1\le e^{-\la t}\|\rho-E(\rho)\|_1$ is inspired by the work of Saloff-Coste \cite{sfc}.
Open problems will be mentioned at the end of section 7. In fact, we expect CLSI to hold for  matrix valued functions with respect to smooth generators of semigroups. Due to space restrictions we ignore the deep and interesting connection to free Fischer information.

{\bf Aknowledgment:} The starting point of this paper is a conversation by I. Bardet and the second named author during the IHP program ``Analysis in Quantum Information'', Fall 2017. The authors are thankful for disccusions with Ivan Bardet, David Stilk Fran\c{c}a, and Nilanjana Datta.


\section{Gradient forms and Fisher information}
\subsection{Modules and Gradient forms}
Let $(M,\tau)$ be a finite von Neumann algebra $M$ equipped with a normal faithful tracial state $\tau$. We denote the noncommutative $L_p$-spaces by $L_p(M,\tau)$ or $L_p(M)$ if the trace is clear from the context. Throughout the paper we consider that $\displaystyle T_t=e^{-tA}:M\to M$ is a strongly continuous semigroup of completely positive, unital and self-adjoint maps.  Then $\tau(x^*T_t(y))=\tau(T_t(x)^*y)$ for all $x,y\in M$ and hence $T_t$ is also trace preserving. The generator $A$ is the (possibly unbounded) positive operator on $L_2(M,\tau)$ given by $Ax=\lim_{t\to 0} \frac{1}{t}(T_t(x)-x)$ (see \cite{CS} for more background.) We will assume that there exists a weakly dense $^*$-subalgebra $\mathcal{A}\subset M$ such that
 \begin{enumerate}
 \item[i)] $\A\subset dom(A)\cap \{ x | A(x)\in M\}$;
 \item[ii)] $T_t(\A)\subset \A$ for all $t>0$.
 \end{enumerate}
 In most cases, it is enough to assume that $\A\subset {\rm dom}(A^{1/2})$ and the $\Gamma$-regularity from \cite{JRS}. The gradient form of $A$ is defined as
\[ \Gamma(x,y)(z) :\lel \frac{1}{2}\Big(\tau(A(x)^*yz)+ \tau(x^*A(y)z)-\tau(x^*yA(z))\Big) \pl\]
We say the generator $A$ satisfies $\Gamma$-regularity if its gradient form $\Gamma(x,y)\in L_1(M,\tau)$ for all $x,y\in {\rm dom}(A^{1/2})$.
\begin{theorem}\label{JRS1} Suppose $\Gamma(x,x)\in L_1(M,\tau)$ for all $x\in {\rm dom}(A^{1/2})$. Then there exists a finite von Neumann algebra $(\hat{M},\tau)$ containing $M$, and a self-adjoint derivation $\delta:{\rm dom}(A^{1/2})\to L_2(\hat{M})$ such that \[ \tau(\Gamma(x,y)z) \lel \tau(\delta(x)^*\delta(y)z) \pl.\]
Equivalently, $E_M(\delta(x)^*\delta(y))=\Gamma_A(x,y)$ where $E_M:\hat{M} \to M$ is the conditional expectation.
\end{theorem}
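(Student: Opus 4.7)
The plan is to carry out a noncommutative Cipriani--Sauvageot type construction of an $M$-$M$ correspondence from the gradient form $\Gamma$, and then realize this correspondence as a subspace of $L_2(\hat M)$ for a suitable tracial extension $\hat M\supset M$.

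First, I would equip the algebraic tensor product $\A\otimes_{\mathrm{alg}}\A$ with the sesquilinear form
\[ \langle a_1\otimes b_1,\pl a_2\otimes b_2\rangle_H \lel \tau\bigl(b_1^*\pl\Gamma(a_1,a_2)\pl b_2\bigr)\pl.\]
The $\Gamma$-regularity assumption makes this finite. The decisive step is to verify that this form is positive semidefinite on finite sums $\xi=\sum_j a_j\otimes b_j$. This is obtained by Sauvageot's trick: using the three-term definition of $\Gamma$ and self-adjointness of $A$ one rewrites the sum as a limit of quadratic expressions $t^{-1}\bigl\langle\xi,\pl (I-T_t\otimes\mathrm{id})\xi\bigr\rangle$ in $L_2(M\bar\otimes M^{\mathrm{op}})$, which are nonnegative because $T_t$ is completely positive and unital (equivalently, $A$ is a completely Dirichlet form). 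After quotienting by the null space and completing, one obtains a Hilbert space $H$ carrying commuting normal left and right $M$-actions.

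Next, I would set
\[ \delta(x) \lel x\otimes 1-1\otimes x \]
in $H$. A direct computation using the defining formula of $\Gamma$ yields the Leibniz rule $\delta(xy)=x\cdot\delta(y)+\delta(x)\cdot y$, and polarization of the defining inner product gives $\langle \delta(x),\pl\delta(y)\cdot c\rangle_H=\tau\bigl(\Gamma(x,y)\pl c\bigr)$, which is the identity of the theorem at the level of abstract bimodules. Since $\|\delta(x)\|_H^2=\tau(\Gamma(x,x))=\|A^{1/2}x\|_2^2$, the derivation $\delta$ extends by closure from $\A$ to all of $\mathrm{dom}(A^{1/2})$. The involution $x\mapsto x^*$ on $\A$ lifts to an antiunitary $\J$ on $H$ satisfying $\J\delta(x)=-\delta(x^*)$; this encodes the self-adjointness of $\delta$ at the abstract level.

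The main obstacle is to realize $H$ concretely as a closed subspace of $L_2(\hat M)$ for one finite von Neumann algebra $(\hat M,\hat\tau)\supset(M,\tau)$ equipped with a trace-preserving conditional expectation $E_M:\hat M\to M$. My plan here is to construct $\hat M$ of Jones basic construction type, as the von Neumann algebra generated by $M$ together with the bounded right-module intertwiners $L_\xi:L_2(M)\to H$, $L_\xi(a)=\xi\cdot a$, acting on $L_2(M)\oplus H$. The antiunitary $\J$ together with tracial symmetry of $\Gamma$ permit the construction of a faithful normal tracial state on $\hat M$ extending $\tau$ on $M$; this is the delicate point and is where trace preservation and symmetry of $T_t$ enter essentially. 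Once $\hat M$ is confirmed to be finite, $E_M$ extends to $L_1(\hat M)$ and the identity $E_M\bigl(\delta(x)^*\delta(y)\bigr)=\Gamma(x,y)$ is the defining inner product of $H$ read through $E_M$.
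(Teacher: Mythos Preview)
The paper itself does not prove this theorem; immediately after the statement it refers the reader to \cite{JRS} and only works out the special case $A=I-T$ for a single unital completely positive self-adjoint $T$, where a Stinespring-type dilation $T(x)=E_M(\xi x\xi)$ in a finite $\hat M$ yields the explicit derivation $\delta(x)=x\xi-\xi T(x)$.

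Your Cipriani--Sauvageot construction of the bimodule $H$ and of $\delta(x)=x\otimes 1-1\otimes x$ is correct and matches the module $\Omega_\Gamma$ the paper introduces right after the theorem (since $\Gamma(1,\cdot)=0$, the second tensor leg vanishes and $\delta(x)$ is just the class of $x\otimes 1$). Positivity of the form, the Leibniz rule, closability to $\mathrm{dom}(A^{1/2})$, and the identity $\langle\delta(x),\delta(y)c\rangle_H=\tau(\Gamma(x,y)c)$ are all fine.

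Where your proposal remains a sketch is precisely where you say so: upgrading the abstract correspondence $H$ to a sub-bimodule of $L_2(\hat M)$ for a \emph{finite} $(\hat M,\hat\tau)$. Your basic-construction picture generates a von Neumann algebra on $L_2(M)\oplus H$, but manufacturing a faithful normal trace on it from the antiunitary $\J$ and the symmetry of $\Gamma$ is not a formality and is the real content of \cite{JRS}. The route actually taken there, as one can infer from the paper's later use of the result (see the proof of Proposition~\ref{dIA}, where $\delta$ is explicitly treated as an ultralimit of commutators), is different from yours: approximate $A$ by $t^{-1}(I-T_t)$, use for each fixed $t$ the explicit finite dilation of the special case so that $\delta_t$ is a commutator with a bounded self-adjoint $\xi_t$ inside a finite $\hat M_t$, and then pass to a tracial ultraproduct to obtain $\hat M$. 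This bypasses the direct trace construction you propose and, as a bonus, realizes $\delta$ as a limit of inner derivations --- a fact the paper exploits in its operator-integral computations. Your route would also succeed once the trace is built carefully, but that step is the heart of the matter and is not yet carried out in your sketch.
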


The $\Gamma$-regularity allows us to define the Hilbert $W^*$-module $\Om_{\Gamma}$ as the completion of ${{\rm dom}(A^{1/2})\ten M}$ with $\Gamma$-inner product
 \[ \lan x_1\ten x_2,y_1\ten y_2\ran_{\Gamma}
 \lel x_2^*\Gamma(x_1,y_1)y_2 \pl .\]
Note that in Theorem \ref{JRS1} the completion $\hat{M}$ with respect to $M$-valued inner product
 \[ \langle x,y\rangle_{E_M} :\lel E_M(x^*y)\pl, \pl x,y\in \hat{M}\pl\]
also gives a $W^*$-module, which we denote as $L_\infty^c(M\subset \hat{M})$.
Recall that for a $W^*$-module $\Ha$ of $M$, the norm is given by its $M$-valued inner product $\lan \cdot, \cdot\ran_\Ha$ as follows,
 \[ \|\xi\|_{\Ha}\lel \|\langle \xi,\xi\rangle_\Ha\|_{M}^{1/2} \pl .\]
Then it is readily to verify that the map
 \[ \phi:\Om_{\Gamma}\to L_\infty^c(M\subset \hat{M})\pl, \pl \phi(x\ten y) \lel \delta(x)y \]
is an isometric right $M$-module map. Moreover, thanks to Theorem  of \cite{Pasch} (see also \cite{JS}) the range $\phi (\Om_\Gamma)$ is $1$-complemented in $L_\infty^c(M\subset \hat{M})$.
\begin{rem}{\rm Our notation $\Om_\Gamma$ is motivated by the universal $1$-forms bimodule \[  \Omega^1 \A \lel \{x\ten y-1\ten xy\pl |\pl x,y \in \A\} \subset \A\ten \A \pl \]
(see \cite{NCG}). Indeed the map $\phi$ induces a representation of the universal derivation $\delta(x)=x\ten 1-1\ten x$.}
\end{rem}

We refer to \cite{JRS} for the proof of Theorem \ref{JRS1}. Here we discuss the following special case. Let $T:M\to M$ be a unital completely positive self-adjoint map. Recall that the $W^*$-module $M\ten_{T}M$ is given by GNS-construction
 \[ \langle x_1\ten x_2,y_1\ten y_2\rangle_T
  \lel x_2^*T(x_1^*y_1)y_2 \pl .\]
It is shown in \cite{JRS}(see also \cite{Speich}) that there exists a finite von Neumann algebra $\hat{M}$ containing $M$ and a self-adjoint element $\xi\in L_2(\hat{M})$ of norm $1$ such that
 \[  T(x)\lel E_M(\xi x\xi) \pl .\]
This gives an isometric $M$-module map $\phi_1:M\ten_{T}M \to L_\infty^c(M\subset \hat{M})$
 \[ \phi_1(x\ten y) \lel x\xi y \pl \pl.\]
On the other hand, let $I:M\to M$ be the identity map. The map $A=I-T$ is a generator of a semigroup of completely positive maps (see \cite{Jesse} for a characterization of generators of trace preserving self-adjoint semigroups). Its gradient form is
\[\Gamma_{I-T}(x,y)(z)=\frac{1}{2}(x^*y-T(x)^*y-x^*T(y)+T(x^*y))\pl.\]
One has another isometric module map
\[ \phi_2:\Om_{\Gamma_{I-T}}\to M\ten_{T}M \pl ,\pl \phi_2(x\ten y) \lel x\ten y-1\ten T(x)y \pl .\]
Then for the special case $A=I-T$, one can choose the derivation as follows,
\[\delta:M\to \hat{M}\pl, \pl \delta(x)=\phi_1\circ \phi_2 (x\ten 1)= x\xi-\xi T(x)\pl.\]

In the following we will use the complete positive order in two ways. For two completely positive maps $T$ and $S$, we write $T\le_{cp}S$ if $S-T$ is completely positive. For two gradient forms $\Gamma,\Gamma'$,
we write $\Gamma\le_{cp}\Gamma'$ if
\[ [\sum_k \Gamma(x_{ik},x_{kj})]_{i,j}\kl [\sum_k \Gamma'(x_{ik},x_{kj})]_{i,j} \]
holds for all $M$-valued matrices $(x_{jk})$ in the domain of $\Gamma$.
\begin{lemma}\label{grad1}
i) Let $T:M \to M $ be a completely positive unital map. Then for any state $\rho $,
     \[ \rho(\lan \xi,\xi\ran_{\Gamma_{I-T}})
     \lel \inf_{c\in M} \rho(\lan \xi-1\ten c,\xi-1\ten c\rangle_{T}) \pl . \]
ii) Let $T_1,T_2: M \to M$ be two completely positive unital maps. Then $\la T_1\le_{cp} T_2$ implies $\la \Gamma_{I-T_1}\le_{cp}  \Gamma_{I-T_2}$.
\end{lemma}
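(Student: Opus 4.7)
The plan for (i) is a completion-of-squares identity inside the Hilbert $M$-module $M \otimes_T M$. Writing $\xi = \sum_i a_i \otimes b_i$ (finitely many terms; extension to the completion follows by density) and introducing the best-approximation element $\eta := \sum_i T(a_i) b_i \in M$, unitality $T(1) = 1$ and bilinearity of the $T$-inner product yield
\[
\langle \xi - 1 \otimes c,\, \xi - 1\otimes c\rangle_T
\;=\; (c-\eta)^*(c-\eta) \;+\; \bigl(\langle \xi, \xi\rangle_T - \eta^*\eta\bigr).
\]
Since $\rho$ is positive and $\rho((c-\eta)^*(c-\eta)) \geq 0$, the infimum is attained at $c = \eta$ and equals $\rho(\langle \xi, \xi\rangle_T - \eta^*\eta)$. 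Direct expansion then gives
\[
\langle \xi, \xi\rangle_T - \eta^*\eta \;=\; \sum_{i,j} b_i^* \bigl[T(a_i^*a_j) - T(a_i)^* T(a_j)\bigr] b_j
\;=\; \langle \xi, \xi\rangle_{\Gamma_{I-T}},
\]
recognizing the Kadison--Schwarz defect $\Gamma_{I-T}(x, y) = T(x^* y) - T(x)^* T(y)$. This is consistent with the derivation realization $\delta(x) = x \xi_0 - \xi_0 T(x)$ introduced just above the lemma, via Theorem~\ref{JRS1}: $\Gamma_{I-T}(x, y) = E_M(\delta(x)^* \delta(y))$.

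For (ii), the plan is to pair (i) with complete positivity. The hypothesis $\lambda T_1 \leq_{cp} T_2$ asserts that $T_2 - \lambda T_1$ is completely positive, so for any $\xi$ and any $c \in M$ the operator inequality
\[
\lambda \langle \xi - 1 \otimes c,\, \xi - 1\otimes c\rangle_{T_1}
\;\leq\; \langle \xi - 1 \otimes c,\, \xi - 1\otimes c\rangle_{T_2}
\]
holds in $M_+$. Applying any state $\rho$ (which preserves positivity), taking infimum over $c$ via the elementary monotonicity $\inf_c f_1(c) \leq \inf_c f_2(c)$ whenever $f_1 \leq f_2$ pointwise, and invoking (i) on both sides gives $\lambda \rho(\langle \xi, \xi\rangle_{\Gamma_{I-T_1}}) \leq \rho(\langle \xi, \xi\rangle_{\Gamma_{I-T_2}})$. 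Since $\rho$ is arbitrary, this promotes to $\lambda \langle \xi, \xi\rangle_{\Gamma_{I-T_1}} \leq \langle \xi, \xi\rangle_{\Gamma_{I-T_2}}$ in $M_+$.

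To upgrade to the complete positive order $\lambda \Gamma_{I-T_1} \leq_{cp} \Gamma_{I-T_2}$, I would rerun the argument at every matrix level $n \in \mathbb N$: the amplifications $T_i \otimes \mathrm{id}_{M_n}$ remain completely positive and still satisfy $\lambda (T_1 \otimes \mathrm{id}_{M_n}) \leq_{cp} T_2 \otimes \mathrm{id}_{M_n}$ on $M_n(M)$. For $X = [x_{ij}] \in M_n(M)$, the Kadison--Schwarz defect for the amplified map unpacks entrywise into the paper's matrix $[\sum_k \Gamma_{I-T}(x_{ik}, x_{kj})]_{ij}$, so the scalar-level operator inequality at rank $n$ reads as the required $[\sum_k \Gamma_{I-T_1}(x_{ik}, x_{kj})]_{ij} \leq [\sum_k \Gamma_{I-T_2}(x_{ik}, x_{kj})]_{ij}$ in $M_n(M)_+$. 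The cleanest moving part is (i): the completion-of-squares identity in a $W^*$-module is compact and entirely algebraic. The only delicate point in (ii) is bookkeeping the index pattern between the amplified Kadison--Schwarz expansion and the paper's definition of $\leq_{cp}$, but this is routine once the Stinespring dilation is applied to $T_i \otimes \mathrm{id}_{M_n}$ block-wise.
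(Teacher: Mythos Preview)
Your argument for (ii) is identical to the paper's: apply (i) to each $T_i$ and use that $\lambda\langle\cdot,\cdot\rangle_{T_1}\le\langle\cdot,\cdot\rangle_{T_2}$ pointwise, hence the infima compare. For (i) your completion-of-squares computation is a more elementary variant of the paper's route. The paper chooses a module basis $\{\xi_i\}$ of $M\otimes_T M$ (via Paschke) with $\xi_0=1\otimes 1$, identifies the orthogonal projection $P$ onto $1\otimes M$ as $P(x\otimes y)=1\otimes T(x)y$, and then reads off both $\langle\xi,\xi\rangle_{\Gamma_{I-T}}=\langle\xi-P(\xi),\xi-P(\xi)\rangle_T$ and the infimum formula from the basis expansion $\xi=\sum_i\xi_i\alpha_i$. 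Your direct algebra reaches the same minimizer $c=\eta=\sum_i T(a_i)b_i$ and the same residual $\langle\xi,\xi\rangle_T-\eta^*\eta$ without invoking a basis, which is a genuine simplification.

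One point you should be aware of: the identity you write, $\Gamma_{I-T}(x,y)=T(x^*y)-T(x)^*T(y)$, does \emph{not} agree with the gradient form displayed in the paper just before the lemma, namely $\Gamma_{I-T}(x,y)=\tfrac12\bigl[x^*y+T(x^*y)-T(x)^*y-x^*T(y)\bigr]$. For instance with $T=E_\tau$ and $\tau(x)=0$ the two give $\tau(x^*x)$ versus $\tfrac12[x^*x+\tau(x^*x)]$. The paper's proof never writes this out; it instead cites the earlier assertion that $\phi_2:\Omega_{\Gamma_{I-T}}\to M\otimes_T M$ is isometric, and unpacking that isometry is exactly the claim that the two forms coincide. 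So your explicit computation has surfaced an inconsistency already present in the paper's exposition rather than introduced one; the lemma and its applications downstream are really about the Kadison--Schwarz form you wrote, which is what the module picture and the derivation $\delta(x)=x\xi-\xi T(x)$ actually produce.
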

\begin{proof} We choose a module basis $\{\xi_i\}_{i\in I}$ of $M\ten_TM$ (see \cite{Pasch,JS}) and let $\xi_0=1\ten 1$. Then \[ \langle \xi_i,\xi_j \rangle_T \lel \delta_{ij}e_i \pl,\]
where $(e_i)_{i\in I} \subset M$ is a family of projections and $e_0=1$. Let $P: M\ten_{T}M\to 1\ten M$ be the orthogonal projection given by
 \[ P(\xi) \lel P(\sum_i \al_i\xi_i) \lel \al_0\xi_0 \pl, \pl\pl \al_i\in M\]
Note that
 \[ \langle 1\ten z,x\ten y\rangle_T
 \lel z^*T(x)y \lel \langle 1\ten z,1\ten T(x)y\rangle  \pl.\]
This implies that $P(x\ten y)= 1\ten T(x)y$ and hence for $\xi=\sum_i \al_i\xi_i$, we find that
  \[\lan \xi,\xi\ran_{\Gamma_{I-T}}=\lan \phi_2(\xi),\phi_2(\xi)\ran_{T}=\lan\xi-P(\xi),\xi-P(\xi)\ran_{T}=\sum_{i\neq 0} \al_i^*e_i\al_i  \pl .\]
On the other hand
 \[ \lan\xi-1\ten c, \xi-1\ten c\ran_T
   \lel (\al_0-c)^*(\al_0-c)+\sum_{i\neq 0} \al_i^*e_i\al_i\pl .\]
This exactly implies that for any $c\in M$,
\[\lan\xi,\xi\ran_{\Gamma_{I-T}}\le \lan\xi-1\ten c, \xi-1\ten c\ran_T\]
and clearly for any state $\rho$,
 \[ \rho(\langle \xi,\xi\rangle_{\Gamma_{I-T}})
  \lel \inf_{c\in N} \rho(\langle \xi-1\ten c, \xi-1\ten c\rangle_T) \pl \pl.\]
For b), we find a $c\in M$ such that for any state $\rho$,
 \begin{align*}
  \rho(\langle \xi,\xi\rangle_{\Gamma_{I-T_2}})
 &=  \rho(\langle \xi-1\ten c,\xi-1\ten c\rangle_{T_2}) \\
 &\gl  \la \rho(\langle \xi-1\ten c,\xi-1\ten c\rangle_{T_1})
 \gl \la  \rho(\langle \xi,\xi\rangle_{\Gamma_{I-T_1}})  \pl.
 \end{align*}
Here we used a) twice. The argument for arbitrary matrix levels is the same. \qd

Our next observation is based on operator integral calculus. (see \cite{Su} and references therein for more information). Let $F:\rz \to \rz$ be a continously differentiable function and $\delta$ be a derivation in Theorem \ref{JRS}. Then for a positive $\rho$, the functional calculus for $\delta$ is given by the following operator integral,
\begin{equation}\label{derv}
 \delta(F(\rho)) \lel \int_{\rz_+}\int_{\rz_+}
 \frac{F(s)-F(t)}{s-t} dE_s^{\rho}\delta(\rho)dE_t^{\rho}  \pl .
 \end{equation}
 where $E^{\rho}((s,t])=1_{(s,t]}(\rho)$ is the spectral projection of $\rho$. Indeed, this is obvious for monomials
 \[ \delta(\rho^n)
 \lel \sum_{j=0}^{n-1} \rho^j\delta(\rho)\rho^{n-j-1}
 \lel \int_{\rz_+}\int_{\rz_+} \frac{s^n-t^n}{s-t} dE_s^{\rho}\delta(\rho)dE_t^{\rho} \pl .\]
The convergence of \eqref{derv} in $L_2$ follows from the boundedness of the derivative $F'$ (and in $L_p$ from the theory of singular integrals, see again \cite{Su}). Let us introduce the \emph{double operator integral}
 \[ J_F^{\rho}(y) \pl:=\pl  \int_{\rz_+}\int_{\rz_+} \frac{F(s)-F(t)}{s-t} dE_s^{\rho}ydE_t^{\rho} \pl .\]
For $\rho=\sum_k \rho_k e_k$ with discrete spectrum, this  simplifies to a Schur multiplier
\[J_F^{\rho}(y)\lel \sum_{k,l} \frac{F(\rho_k)-F(\rho_l)}{\rho_k-\rho_l} e_kye_l\pl.\]

\begin{lemma}\label{mm}   Let $F:\rz \to \rz$ be a continuously differentiable monotone function and $\rho\in M$ be positive. Let $A$ and $B$ be two generators of semigroups on $M$ with corresponding deviation $\delta_A$ and $\delta_B$. Suppose their gradient forms satisfy
 \[ \la\Gamma_A \pl \le_{cp} \pl  \Gamma_B \pl .\]
Then ${\rm dom}(B^{\frac{1}{2}})\subset {\rm dom}(A^{\frac{1}{2}})$ and for any $x\in {\rm dom}(B^{\frac{1}{2}})$,
 \[ E_M(\delta_A(x)^*J_F^{\rho}(\delta_A(x)))
  \kl \la  \pl E_M(\delta_B(x)^*J_F^{\rho}(\delta_B(x))) \pl .\]
\end{lemma}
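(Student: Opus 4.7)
The plan is to translate the CP-domination of gradient forms into a contractive $M$-bimodule map between the tangent modules, and then to exploit the elementary fact that the double operator integral $J_F^\rho$ is intertwined by every bimodule map. Concretely, I would apply the Paschke/Stinespring-type construction to the Gram matrix inequality
\[ \bigl(\lambda\, E_M(\delta_A(x_i)^*\delta_A(x_j))\bigr)_{ij} \;\le\; \bigl(E_M(\delta_B(x_i)^*\delta_B(x_j))\bigr)_{ij} \qquad \text{in}\ M_n(M), \]
which is just a reformulation of $\lambda\Gamma_A \le_{cp} \Gamma_B$. This produces a contractive right $M$-linear map $V$ from the natural $M$-submodule of $L_\infty^c(M\subset \hat{M}_B)$ generated by $\delta_B(\mathcal A)$ into $L_\infty^c(M\subset \hat{M}_A)$, sending $\delta_B(x) \mapsto \sqrt{\lambda}\,\delta_A(x)$. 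Taking traces gives $\lambda\,\tau(xA(x)) \le \tau(xB(x))$, which is the asserted form-domain inclusion ${\rm dom}(B^{1/2}) \subset {\rm dom}(A^{1/2})$. Left $M$-linearity of $V$ comes for free from the Leibniz rule: since $m\,\delta_B(x) = \delta_B(mx) - \delta_B(m)\,x$ (and similarly for $\delta_A$),
\[ V(m\,\delta_B(x)) \;=\; \sqrt\lambda\bigl(\delta_A(mx) - \delta_A(m)\,x\bigr) \;=\; \sqrt\lambda\, m\,\delta_A(x) \;=\; m\,V(\delta_B(x)), \]
so $V$ is an adjointable $M$-bimodule contraction and $V^*V$ is a positive bimodule contraction on the source module.

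The decisive structural point is that $J_F^\rho$, being assembled from the spectral projections of $\rho \in M$ which act by left and right multiplications, commutes with every adjointable $M$-bimodule map; in particular $J_F^\rho\, V^*V = V^*V\, J_F^\rho$. Setting $\xi = \delta_B(x)$, the Hilbert-module adjoint identity $E_M((V\xi)^* \eta) = E_M(\xi^* V^*\eta)$ then yields
\[ \lambda\,E_M(\delta_A(x)^* J_F^\rho(\delta_A(x))) \;=\; E_M((V\xi)^* J_F^\rho(V\xi)) \;=\; E_M((V\xi)^* V J_F^\rho(\xi)) \;=\; E_M(\xi^* V^*V\, J_F^\rho(\xi)). \]
For the operator monotone $F$ relevant to the applications (in particular $F = \log$), the kernel $c(s,t) = (F(s)-F(t))/(s-t)$ is positive semi-definite, so $J_F^\rho$ is a positive adjointable bimodule map admitting a self-adjoint square root $Q$ that also commutes with $V^*V$. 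The last display then equals $E_M((Q\xi)^* V^*V (Q\xi))$, which the $C^*$-module inequality $V^*V \le I$ bounds above by $E_M((Q\xi)^* Q\xi) = E_M(\xi^* J_F^\rho \xi)$, giving the stated inequality.

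The main obstacle I anticipate is making the double operator integral $J_F^\rho$ rigorous when $\rho$ has continuous spectrum, together with the commutation $V^*V \circ J_F^\rho = J_F^\rho \circ V^*V$ and the positivity of $J_F^\rho$. Positivity is automatic for operator monotone $F$, which covers every case used in the sequel; for a general continuously differentiable monotone $F$ one can fall back to the integral representation $c(s,t) = \int_0^1 F'((1-\theta)s + \theta t)\,d\theta$ and approximate by discrete Schur multipliers on finite spectral refinements of $\rho$, where both the commutation and the $C^*$-module bound reduce to elementary matrix computations that pass to the limit by norm-continuity on the bounded part of the spectrum.
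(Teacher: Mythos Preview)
Your construction of the contractive $M$-bimodule map $V:\delta_B(x)\mapsto\sqrt\la\,\delta_A(x)$, with left $M$-linearity read off from the Leibniz identity, is exactly the paper's argument (their map $\Phi:\Om_{\Gamma_B}\to L_\infty^c(M\subset\hat M_A)$); the form-domain inclusion is also handled the same way. So the core idea coincides.

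Where you diverge is the endgame, and there is a genuine gap there. You want to treat $J_F^\rho$ as a positive \emph{adjointable} operator on the right Hilbert $C^*$-module and extract a square root $Q$ commuting with $V^*V$. But $J_F^\rho(\xi)=\sum_{k,l}c_{kl}\,e_k\xi e_l$ involves \emph{right} multiplication by the spectral projections $e_l\in M$, which does not commute with the right $M$-action; hence $J_F^\rho$ is not right-$M$-linear and is not adjointable on $L_\infty^c(M\subset\hat M)$, so no square root is available in $\mathcal L(\Ha)$. Your argument still runs in $B(L_2(\hat M))$ (and only for operator monotone $F$, where the Loewner kernel is PSD), but then the output is merely the scalar inequality $\tau\big(\delta_A(x)^*J_F^\rho\delta_A(x)\big)\le\la^{-1}\tau\big(\delta_B(x)^*J_F^\rho\delta_B(x)\big)$, not the $M$-valued one. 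That trace-level statement is in fact all that is ever used afterwards (the Fisher corollary and the $L_p$-decay lemma), so the applications survive; but it does not prove the lemma as written, and for merely monotone $F$ even the $L_2$-positivity of $J_F^\rho$ fails.

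The paper's route avoids this by going straight to discrete spectrum and arguing \emph{blockwise}: because $V$ is a bimodule map it commutes with each $\xi\mapsto e_k\xi e_l$, whence $\|e_k\delta_A(x)e_l d\|_{L_2}\le\|\Phi\|\,\|e_k\delta_B(x)e_l d\|_{L_2}$ for every $k,l$ and every $d\in M$, and one sums against the nonnegative scalars $c_{kl}=\frac{F(\la_k)-F(\la_l)}{\la_k-\la_l}$. This uses only monotonicity of $F$, no positive-kernel structure. Your fallback paragraph gestures at exactly this discrete reduction but leaves the computation unwritten; that termwise $L_2$ estimate is precisely the missing step you would need to supply for general $F$.
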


\begin{proof} Let us first assume that $\rho=\sum_k \la_k e_k$ has discrete spectrum.  Then for any $d\in M$,
 \begin{align*}
 \tau(E_M(\delta_A(x)^*J_F^{\rho}(\delta_A(x)))dd^*)
 &= \sum_{k,l}  \frac{F(\la_k)-F(\la_l)}{\la_k-\la_l}
 \tau(\delta_A(x)^*e_k\delta_A(x)e_ldd^*)\\
 &= \sum_{k,l} \frac{F(\la_k)-F(\la_l)}{\la_k-\la_l}
  \|e_k\delta_A(x)e_ld\|_{L_2(\hat{M}_A)}^2 \pl .
  \end{align*}
Recall that $\Om_{\Gamma_A}$ (resp. $\Om_{\Gamma_B}$) is a submodule of $L_\infty^c(M\subset \hat{M}_A)$ (resp. $L_\infty^c(M\subset \hat{M}_B)$) and hence there is a $M$-module projection $P_B$ onto $\Om_{\Gamma_B}$. Our assumption implies that the map
\[ \Phi: \Om_{\Gamma_B}\to L_\infty^c(M\subset \hat{M}_A)\pl, \pl  \Phi(x\ten y)=\delta_A(x)y \]
is of norm less than $\sqrt{\la}$. Thus $\norm{\Phi \circ P_B:L_2(\hat{M}_B)\to L_\infty(\hat{M}_A)}{}$ is also less than $\sqrt{\la}$. Moreover, $\Phi$ is also a left $\A$-module map:
 \begin{align*}
  \Phi(a \delta_B(x) y)
 &=  \Phi(\delta_B(ax)y-\delta_B(a)xy)
 \lel\delta_{A}(ax)y-\delta_A(a)xy \lel
  a\delta_A(x)y \pl .
  \end{align*}
Using strongly converging bounded nets from the weak$^*$-dense algebra $\A\subset \dom(A^{1/2})$, we deduce that $\Phi$ extends to a $M$-bimodule map, and hence for all $k,l$,
 \[ \|e_k\delta_A(x)e_ld\|_{L_2(M_A)} \kl \sqrt{\la} \|e_k\delta_B(x)e_ld\|_{L_2(M_B)} \pl. \]
 Since $F$ is increasing, $\frac{F(\la_k)-F(\la_l)}{\la_k-\la_l}$ is positive. Therefore we obtain
 \[\tau(E_M(\delta_A(x)^*J_F^{\rho}(\delta_A(x)))dd^*)
 \kl \la \tau(E_M(\delta_B(x)^*J_F^{\rho}(\delta_B(x)))dd^*) \]
for all $dd^*\gl 0$. This implies the assertion for $\rho$ with discrete spectrum.

Let $\rho\in \M$ be a general positive element. Then we can approximate $F(s,t)=\frac{F(s)-F(t)}{s-t}$ by the sequence
 \[  F_n(s,t) \lel \frac{F(n\lfloor
 \frac{s}{n}\rfloor) -F(n\lfloor \frac{t}{n}\rfloor)}{n(\lfloor \frac{s}{n}\rfloor)-n(\lfloor \frac{t}{n}\rfloor)} \pl,\]
 and find $\displaystyle\lim_{n\to \infty} J_{F_n}^{\rho}(x)=J_{F}^{\rho}(x)$. \qd

 \subsection{Fischer Information}
Recall that the Fisher information of a generator $A$ is defined as
 \[ {\I}_A(x) \lel \tau(A(x)\ln(x)) \pl, \pl x\in \A \cap M_+\pl, \]
provided  $A(x)\ln x\in L_1(M)$. Equivalently, one can define $\displaystyle {\I}_A(x)=\lim_{\eps\to 0}\tau(A(x)\ln(x+\eps 1))$.
In the quantum information theory literature $\mathcal{I}_A$ is also called \emph{entropy production} (see \cite{Spo}).
\begin{cor}\label{Fisher} Let $\Gamma_A,\Gamma_B$ be the gradient forms of two
semigroups on $M$. Suppose $\Gamma_A\le_{cp} \la \Gamma_B$. Then
\[ \la \mathcal{I}_A(x) \kl  \mathcal{I}_B(x) \pl .\]
\end{cor}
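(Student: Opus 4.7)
The plan is to express the Fisher information $\I_A(x)$ as a quadratic form in the derivation $\delta_A(x)$ with a positive double operator integral in the middle, so that Lemma~\ref{mm} applies verbatim with $F=\ln$.

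First I would combine the self-adjointness of $A$ with $A(1)=0$ to obtain the Green--Gauss identity
\[ \tau(A(x)\,y) \lel \tau(\Gamma_A(x,y)) \lel \tau(\delta_A(x)^*\delta_A(y)), \quad x,y\in\A, \]
which is the $z=1$ instance of the defining formula for $\Gamma_A$ together with Theorem~\ref{JRS1}. Specialising $y=\ln x$ for positive $x\in\A$ and applying the operator integral formula \eqref{derv} to $F=\ln$, so that $\delta_A(\ln x)=J_{\ln}^x(\delta_A(x))$, yields
\[ \I_A(x) \lel \tau(A(x)\ln x) \lel \tau(\delta_A(x)^*\delta_A(\ln x)) \lel \tau\bigl(E_M(\delta_A(x)^*J_{\ln}^{x}(\delta_A(x)))\bigr), \]
where $\tau=\tau\circ E_M$ has been used, and analogously for $B$.

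Next I would invoke Lemma~\ref{mm} with $F=\ln$ and $\rho=x$: since $\ln$ is continuously differentiable and strictly increasing on $(0,\infty)$, the kernel $(\ln s-\ln t)/(s-t)$ of the Schur multiplier $J_{\ln}^x$ is positive, which is exactly the monotonicity hypothesis of the lemma. The lemma translates the assumed gradient-form comparison directly into an $M$-valued inequality between $E_M(\delta_A(x)^*J_{\ln}^x(\delta_A(x)))$ and $E_M(\delta_B(x)^*J_{\ln}^x(\delta_B(x)))$ with the appropriate constant. Taking the trace and substituting the identification from the previous step gives the desired Fisher comparison with the correct factor of $\la$.

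The main technical point is that $\ln$ is unbounded near $0$, whereas \eqref{derv} is cleanest for differentiable functions with bounded derivative. I would handle this with the very regularization used in the definition of $\I_A$, namely $F_\eps(s)=\ln(s+\eps)$: for each $\eps>0$, $F_\eps$ is $C^1$ and bounded on a neighbourhood of $\sigma(x)$, every step above goes through unchanged, and Lemma~\ref{mm} applies. Passing to the limit $\eps\to 0^+$ on both sides, using the definition $\I_A(x)=\lim_{\eps\to 0^+}\tau(A(x)\ln(x+\eps 1))$ and monotone convergence of the Loewner kernels $(\ln(s+\eps)-\ln(t+\eps))/(s-t)$ to $(\ln s-\ln t)/(s-t)$ on $\sigma(x)\times\sigma(x)$, closes the argument. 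The $L_2$-convergence in \eqref{derv}, and the finiteness of $\delta_A(x)$ in $L_2(\hat M_A)$, are guaranteed by the standing assumption $\A\subset\dom(A^{1/2})$ made at the start of Section~2.
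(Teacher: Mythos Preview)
Your proposal is correct and follows essentially the same route as the paper: both proofs rewrite $\I_A(x)$ as $\tau(\delta_A(x)^*J_{\ln}^{x}(\delta_A(x)))$ via the Green--Gauss identity and \eqref{derv}, invoke Lemma~\ref{mm} with $F=\ln$, and handle the singularity of $\ln$ at $0$ by the $\eps$-shift $x\mapsto x+\eps 1$ before passing to the limit. Your write-up is in fact a bit more careful about justifying the limit step than the paper's one-line ``sending $\eps\to 0$''.
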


\begin{proof} Let $x\in {\rm dom}(B^{1/2})\cap M$. Then by differentiation formula , \[\|B^{1/2}(x^*x)\|_2=\|\delta_B(x^*x)\|_2=\|\delta_B(x^*)x+x^*\delta_B(x)\|_2\pl,\] Hence $x^*x$ is also the domain of $B^{1/2}$. Thus we have enough positive elements in ${\rm dom}(B^{1/2})\cap M$. Take the function $F(t)=\ln t$. Then
 \begin{align*}
     \tau(B(x)\ln(x+\eps 1))
 &=  \tau(\delta_B(x)\delta_B(\ln(x+\eps 1))) \lel  \tau(\delta_B(x)I_F^{x+\eps 1}(\delta_B(x))) \\
 &\gl \la \tau(\delta_A(x)I_F^{x+\eps 1}(\delta_A(x))) \lel  \la \tau(A(x)\ln(x+\eps 1)) \pl .
 \end{align*}
The assertion following from sending $\eps\to 0$.\qd

Let $N\subset M$ be a von Neumann subalgebra and $E_N$ be the conditional expectation. (Or shortly $E$ for $E_N$ if no ambiguity.) We define the Fisher information for the subalgebra $N$ with help of the generator $I-E$,
 \[ \I_N(\rho) \lel \I_{I-E}(\rho)= \tau\Big((\rho-E(\rho) ) \ln \rho \Big)\pl .\]
Recall that for two positive elements $\rho,\si\in M$, the relative entropy is
 \[ D(\rho||\si) :\lel \begin{cases}
                        \tau(\rho \ln \rho)-\tau(\rho \ln \si), & \mbox{if } \rho\ll \si \\
                        +\infty, & \mbox{otherwise}.
                      \end{cases} ,\]
Equivalently one can define $D(\rho||\si)=\lim_{\delta\to 0}D(\rho||\si+\delta 1)$.
When $\tau(\rho)=\tau(\si)$, $D(\rho||\si)$ is always positive. The relative entropy with respect to $N$ is defined as
 \[ D_N(\rho)\lel D(\rho||E(\rho))\lel \inf_{\si \in N, \tau(\si)=\tau(\rho)} D(\rho||\si) \pl .\]
See \cite{Marv,GJLR2} for more information on $D_N$ as an asymmetry measure. The following result is well-known (see \cite{Spo,BaRo}), but the simple proof is crucial for this paper.

\begin{lemma}\label{fis} The Fisher information satisfies
 \[ \I_N(\rho) \lel D(\rho||E(\rho))+D(E(\rho)||\rho) \]
and hence $\mu_N\le I_N$.
\end{lemma}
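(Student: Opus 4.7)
The plan is to reduce the identity to a single application of the bimodule / trace-preserving property of the conditional expectation $E$. Writing out the definitions gives
\[
D(\rho\Vert E(\rho))+D(E(\rho)\Vert\rho)=\tau(\rho\ln\rho)-\tau(\rho\ln E(\rho))+\tau(E(\rho)\ln E(\rho))-\tau(E(\rho)\ln\rho),
\]
while
\[
\I_N(\rho)=\tau((\rho-E(\rho))\ln\rho)=\tau(\rho\ln\rho)-\tau(E(\rho)\ln\rho).
\]
Comparing the two expressions, the desired equality boils down to the single cross-term identity
\[
\tau(\rho\ln E(\rho))=\tau(E(\rho)\ln E(\rho)).
\]

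First I would justify this last identity. Since $E(\rho)\in N$ and $N$ is a von Neumann subalgebra closed under bounded Borel functional calculus, the element $\ln E(\rho)$ (defined, say, as $\lim_{\delta\to 0}\ln(E(\rho)+\delta 1)$ to stay bounded) belongs to $N$. The $\tau$-preserving conditional expectation $E$ is an $N$-bimodule map, hence $E(\rho\ln E(\rho))=E(\rho)\ln E(\rho)$. Applying $\tau$ and using $\tau\circ E=\tau$ gives the identity, from which the main formula follows.

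Next I would address the small regularization issue: $\ln\rho$ and $\ln E(\rho)$ are only bounded when $\rho,E(\rho)$ are invertible, so strictly one replaces $\ln\rho$ by $\ln(\rho+\eps 1)$ and $\ln E(\rho)$ by $\ln(E(\rho)+\eps 1)$, performs the same manipulation (noting $E(\rho)+\eps 1\in N$), and then lets $\eps\to 0$, exactly as in the analogous definition of relative entropy recalled just before the lemma.

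Finally, the stated consequence $D_N\le\I_N$ is immediate from the identity once we observe that both $\rho$ and $E(\rho)$ have the same trace (because $E$ is trace-preserving), so that $D(E(\rho)\Vert\rho)\ge 0$ by positivity of the relative entropy for equal-trace states. I do not anticipate any genuine obstacle here: the proof is really a one-line computation, and the only subtlety is ensuring the logarithms are handled via the standard $\eps$-regularization so that all traces are well defined.
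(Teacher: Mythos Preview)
Your proof is correct and follows essentially the same route as the paper: both reduce the identity to the single cancellation $\tau(\rho\ln E(\rho))=\tau(E(\rho)\ln E(\rho))$, which holds because $\ln E(\rho)\in N$ and $E$ is a trace-preserving $N$-bimodule map, and then invoke positivity of relative entropy for the inequality. Your added remarks on the $\eps$-regularization are a welcome bit of extra care but do not change the argument.
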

\begin{proof} We first note that
 \begin{align*}
  \I_{N}(\rho)&=\tau(\rho \ln \rho)-\tau(E(\rho)\ln \rho) \\
  &=\tau(\rho \ln \rho)-\tau(\rho \ln E(\rho))+ \tau(E(\rho)\ln E(\rho))-\tau(E(\rho)\ln \rho) \\
  &= D(\rho||E(\rho))+D(E(\rho)||\rho) \pl .
  \end{align*}
The positivity of the relative entropy implies the assertion.\qd

Now let $T_t=e^{-At}:M \to M$ be a semigroup of completely positive unital maps and
$N\subset M$ be the fixed-point algebra of $T_t$. It is easy to see that \[E\circ T_t=T_t\circ E=E\pl.\] The Fisher information $I_A$ appears as the negative derivative of the asymmetry measure $D_N$ under the semigroup $T_t$.

\begin{prop}\label{dmu} Suppose that
 \[ \la D_N(\rho)\kl \I_A(\rho) \pl, \pl \forall \pl \rho\ge 0\pl.\]
Then
 \[ D_N(T_t(\rho)) \kl e^{-\la t}D_N(\rho)\pl, \pl \forall \pl \rho\ge 0.\]
\end{prop}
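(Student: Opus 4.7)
The strategy is to derive the differential inequality
\[ \tfrac{d}{dt} D_N(T_t(\rho)) \kl -\la D_N(T_t(\rho)) \]
and conclude by Gronwall's lemma. Writing $\rho_t := T_t(\rho)$ and exploiting $E\circ T_t = E$, the element $E(\rho_t) = E(\rho)$ is constant in $t$, so
\[ D_N(\rho_t) \lel \tau(\rho_t \ln\rho_t) - \tau(\rho_t \ln E(\rho)). \]

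First I would differentiate the entropy term using the spectral-calculus identity $\tfrac{d}{dt}\tau(g(\rho_t)) = \tau(g'(\rho_t)\dot\rho_t)$, valid by trace cyclicity even when $\rho_t$ and $\dot\rho_t$ do not commute, applied to $g(x) = x\ln x$. Trace preservation $\tau(\dot\rho_t)=0$ cancels the constant contribution from $g'(x)=\ln x + 1$, giving
\[ \tfrac{d}{dt}\tau(\rho_t \ln\rho_t) \lel \tau(\ln(\rho_t)\,\dot\rho_t) \lel -\tau(A(\rho_t)\ln\rho_t) \lel -\I_A(\rho_t). \]
For the cross term, $\tfrac{d}{dt}\tau(\rho_t \ln E(\rho)) = -\tau(A(\rho_t)\ln E(\rho))$. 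Self-adjointness of $A$ moves $A$ across, and since $\ln E(\rho)$ lies in the fixed-point algebra $N$, one has $A(\ln E(\rho))=0$, so this term vanishes. Combining,
\[ \tfrac{d}{dt}D_N(\rho_t) \lel -\I_A(\rho_t) \kl -\la D_N(\rho_t) \]
by hypothesis, and Gronwall's lemma finishes the argument.

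The main technical hurdle is justifying the differentiation when $\rho$ is not faithful, since then $\ln\rho_t$ may be unbounded and the chain rule above requires care. I would sidestep this by regularizing, replacing $\ln(\cdot)$ by $\ln(\cdot + \eps 1)$ and computing the time derivative via the double operator integral representation in \eqref{derv}. This produces an extra correction $\tau\big(\tfrac{\rho_t}{\rho_t + \eps 1}\dot\rho_t\big)$, which converges to $\tau(s(\rho_t)\dot\rho_t)$ as $\eps\to 0$; one first reduces to the case that $\rho$ is faithful (so that $s(\rho_t) = 1$ and trace preservation kills the correction), and then extends to general $\rho$ through the approximation $\rho \mapsto \rho + \delta 1$ combined with lower semi-continuity of relative entropy and the limiting definition of the Fisher information $\I_A(\rho) = \lim_{\eps\to 0}\tau(A(\rho)\ln(\rho+\eps 1))$ recalled earlier. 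The analogous regularization $\ln(E(\rho)+\eps 1)$ handles the cross term, noting $E(\rho) + \eps 1 \in N$ so the fixed-point argument still applies at the regularized level.
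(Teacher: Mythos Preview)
Your proposal is correct and follows essentially the same approach as the paper: differentiate $f(t)=D_N(T_t(\rho))$, use the chain rule for $F(s)=s\ln s$ to obtain $f'(t)=-\I_A(T_t(\rho))$, and conclude via Gr\"onwall. The only cosmetic difference is that the paper first simplifies the cross term to the constant $\tau(E(\rho)\ln E(\rho))$ (using $\tau(\rho_t\ln E(\rho))=\tau(E(\rho_t)\ln E(\rho))=\tau(E(\rho)\ln E(\rho))$) before differentiating, whereas you differentiate it and then kill it via $A(\ln E(\rho))=0$; your added regularization discussion is more detailed than what the paper provides.
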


\begin{proof} Take  $f(t) =  D_N(T_t(\rho))$. The idea is to differentiate
 \begin{align*}
  f(t)  &=  D_N(T_t(\rho))
 =  \tau(T_t(\rho)\ln T_t(\rho))-
 \tau(E(T_t(\rho))\ln E(T_t(\rho))) \\
 &=
 \tau(T_t(\rho)\ln T_t(\rho))-\tau(E(\rho)\ln E(\rho)) \pl .
 \end{align*}
 It was proved in \cite{Su} that for a function $F:\rz_+ \to \rz$ that
 \[ \lim_{s\to 0} \frac{F(\rho+s \si)-F(\rho)}{s}
 \lel J^{\rho}_{F}(\si) \pl,\]
and hence for the trace
 \[ \lim_{s\to 0} \frac{\tau(F(\rho+s \si)-\tau(F(\rho))}{s} \lel \tau(J^{\rho}_{F}(\si))\lel  \tau(F'(\rho)\si) \pl . \]
 Note that \[\lim_{s\to 0}\frac{1}{s}(T_{t+s}(\rho)-T_{t}(\rho))=-A(T_t(\rho))\pl.
\] Now we use the chain rule for $F(s)=s\ln s$  and $F'(s)=1+\ln s$ and deduce that
 \begin{align*}
   f'(t) &=  \tau\Big(-A(T_t(\rho))\Big)+\tau\Big(-A(T_t(\rho))\ln\big(T_t(\rho)\big)\Big) \lel
    -\mathcal{I}_A(T_t(\rho)) \pl .
  \end{align*}
Here the first term vanishes because $A$ is self-adjoint and $A(1)=0$. Thus the assumption implies that \[f'(t)=-\mathcal{I}_A(T_t(\rho))\kl -\la D_N(T_t(\rho))=-\la f(t).\]
Then the assertion follows from Gr\"onwall's Lemma.
\qd

\begin{defi} The semigroup $T_t$ or the generator $A$ with fixed-point algebra $N$ is said to satisfy:
\begin{enumerate}
\item[a)]the gradient condition $\la$-$\Gamma \E$ if there exists a $\la>0$ such that
  \[ \la \Gamma_{I-E_{N}} \pl \le_{cp}\pl  \Gamma_A \pl .\]
\item[b)] the generalized logarithmic Sobolev inequality $\la$-FLSI if
 \[ \la D(\rho||E_N(\rho))\kl \I_A(\rho) \pl .\]
\item[c)] satisfies the \emph{complete} logarithmic Sobolev inequality  $\la$-CLSI if $A\ten id_{\Mz_m}$ satisfies $\la$-FLSI for all $m\in \nz$
\end{enumerate}
We also say that $T_t$ or $A$ has $\Gamma \E$ (resp. FLSI, CLSI) if it satisfies $\la$-$\Gamma\E$ (resp. $\la$-FLSI, $\la$-CLSI) for some $\la>0$.
\end{defi}
An immediate Corollary is that $\la$-$\Gamma \E$ implies $\la$-CLSI.
\begin{cor}\label{kkeeyy} If the generator $A$ satisfies $\la$-$\Gamma \E$, then
$A\ten id_{M}$ has $\la$-$\Gamma\E$ for any finite von Neumann algerba $M$.
In particular, $\la$-$\Gamma \E$ implies $\la$-CLSI.
\end{cor}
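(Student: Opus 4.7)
The plan is to move to the derivation picture of gradient forms from Theorem~2.4, where $\la$-$\Gamma\E$ becomes a completely contractive bimodule map between two Hilbert $W^*$-modules, and then to observe that tensoring this map on the right with $id_M$ preserves the bound. The ``in particular'' clause will follow by specializing $M=\Mz_m$ and invoking Corollary~2.9 together with Lemma~2.8.

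Step one is to identify the correct fixed-point data. If $T_t=e^{-tA}$ has fixed-point algebra $N$ with conditional expectation $E_N$, then the tensorized semigroup $T_t\ten id_M$ is again a self-adjoint semigroup of normal completely positive unital maps; its fixed-point algebra is $N\ten M$ with conditional expectation $E_N\ten id_M$. A direct computation from $2\Gamma_C(x,y)=C(x^*)y+x^*C(y)-C(x^*y)$ shows that on elementary tensors
\[ \Gamma_{A\ten id_M}(x_1\ten y_1,x_2\ten y_2)\lel \Gamma_A(x_1,x_2)\ten y_1^*y_2, \]
and the analogous identity holds with $A$ replaced by $I-E_N$. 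By sesquilinearity these formulas determine both gradient forms on $\A\ten M$.

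Step two is the main transfer. By Theorem~2.4, let $\delta_A:\A\to L_2(\hat{M}_A)$ and $\delta_{I-E_N}:\A\to L_2(\hat{M}_{I-E_N})$ be derivations representing $\Gamma_A$ and $\Gamma_{I-E_N}$. As in the argument of Lemma~2.7, the hypothesis $\la\pl \Gamma_{I-E_N}\le_{cp}\Gamma_A$ is equivalent to saying that the correspondence $\delta_A(x)c\mapsto \sqrt{\la}\pl \delta_{I-E_N}(x)c$ extends to a completely contractive $\A$-bimodule map between the two enveloping Hilbert $W^*$-modules. The derivations associated to the tensorized generators are $\delta_A\ten id_M$ and $\delta_{I-E_N}\ten id_M$, with values in the corresponding tensor-product modules; tensoring the above bimodule map on the right with $id_M$ produces a completely contractive $(\A\ten M)$-bimodule map between these tensor-product modules. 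Unwinding the definition, this is precisely
\[ \la\pl \Gamma_{(I-E_N)\ten id_M}\pl \le_{cp}\pl \Gamma_{A\ten id_M}, \]
i.e. $\la$-$\Gamma\E$ for $A\ten id_M$.

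For the final assertion, specialize $M=\Mz_m$: by what we just proved, $A\ten id_{\Mz_m}$ satisfies $\la$-$\Gamma\E$, so Corollary~2.9 yields $\la\pl \I_{(I-E_N)\ten id_{\Mz_m}}(\rho)\kl \I_{A\ten id_{\Mz_m}}(\rho)$, and Lemma~2.8 gives $\I_{(I-E_N)\ten id_{\Mz_m}}(\rho)\gl D(\rho\|(E_N\ten id_{\Mz_m})(\rho))$. Combining, $\la\pl D(\rho\|(E_N\ten id_{\Mz_m})(\rho))\kl \I_{A\ten id_{\Mz_m}}(\rho)$ for every $m$, which is the definition of $\la$-CLSI for $A$. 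The one (mild) technical point is that the contractive bimodule map in step two should tensor cleanly against an arbitrary finite von Neumann algebra $M$ on the right; this is standard because $\le_{cp}$ already encodes all matrix amplifications on the $\A$-side, so the bound passes to any $M$ by a weak-$^*$ density/normality argument.
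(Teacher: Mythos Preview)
Your argument is correct and follows essentially the same route as the paper. The paper's own proof is only two lines: it defers the tensorization of $\la$-$\Gamma\E$ to \cite{JZ} and then cites Proposition~\ref{dmu} for the passage to CLSI. Your Step~2 via the contractive $\A$-bimodule map $\delta_A(x)c\mapsto\sqrt{\la}\,\delta_{I-E_N}(x)c$ is precisely the mechanism behind the citation to \cite{JZ} (and is the same map $\Phi$ that appears in the proof of Lemma~\ref{mm}). For the second assertion you invoke Corollary~\ref{Fisher} and Lemma~\ref{fis}, which is in fact the clean chain of implications; the paper's pointer to Proposition~\ref{dmu} is slightly imprecise, since that proposition takes FLSI as a hypothesis rather than deriving it.
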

\begin{proof} For the first, easy, assertion see
\cite{JZ}, for the second Proposition \ref{dmu}.
\qd

In the rest of this section we discuss some interesting consequences of $\Gamma E$ condition. The first result is motivated by the original definition of relative entropy as a symmetric metric.

\begin{prop}\label{dIA} Let $(T_t):M\to M$ be a semigroup of completely positive  unital self-adjoint maps with fixed-point algebra $N\subset M$. Then
 \[ \la \mathcal{I}_N(\rho)\kl \mathcal{I}_A(\rho) \]
implies
 \[ \mathcal{I}_N(T_t(\rho)) \kl e^{-\la t}\mathcal{I}_N(\rho) \pl .\]
\end{prop}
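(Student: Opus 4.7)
The plan is to mirror the proof of Proposition \ref{dmu}: set $h(t) := \mathcal{I}_N(T_t(\rho))$, derive the differential inequality $h'(t) \le -\lambda h(t)$, and conclude by Grönwall's Lemma. The starting point is Lemma \ref{fis}. Since $E \circ T_t = E$ we have $E(T_t(\rho)) = E(\rho)$, and hence
\[
 h(t) \,=\, D(T_t(\rho)\,\|\,E(\rho)) \,+\, D(E(\rho)\,\|\,T_t(\rho)).
\]
I will differentiate the two summands separately.

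For the first summand I repeat the chain-rule computation in the proof of Proposition \ref{dmu} almost verbatim. The only structural input is that $\ln E(\rho)$ lies in the fixed-point algebra $N$ (functional calculus stays inside the von Neumann subalgebra), so $A(\ln E(\rho)) = 0$; combined with $\tau \circ A = 0$ this yields
\[
 \tfrac{d}{dt}\, D(T_t(\rho)\,\|\,E(\rho)) \,=\, -\mathcal{I}_A(T_t(\rho)).
\]

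For the second summand I invoke the data-processing inequality for relative entropy. Each $T_s$ is completely positive, unital and trace-preserving, hence a quantum channel, and $T_s$ fixes $E(\rho)$. Therefore, for every $s \ge 0$,
\[
 D(E(\rho)\,\|\,T_{t+s}(\rho)) \,=\, D(T_s(E(\rho))\,\|\,T_s(T_t(\rho))) \,\le\, D(E(\rho)\,\|\,T_t(\rho)),
\]
so $t \mapsto D(E(\rho)\,\|\,T_t(\rho))$ is non-increasing; since $T_t$ is smooth in $t$ this derivative exists and is $\le 0$. Adding the two contributions and using the standing hypothesis $\lambda \mathcal{I}_N(\sigma) \le \mathcal{I}_A(\sigma)$ at $\sigma = T_t(\rho)$ gives
\[
 h'(t) \,\le\, -\mathcal{I}_A(T_t(\rho)) \,\le\, -\lambda\, \mathcal{I}_N(T_t(\rho)) \,=\, -\lambda\, h(t),
\]
and Grönwall's Lemma concludes.

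The main technical point to handle carefully is the usual one: $\ln$ has a singularity at $0$, so the pointwise differentiations are only formally correct when $T_t(\rho)$ is faithful. I would deal with this exactly as in Proposition \ref{dmu} by replacing $\rho$ throughout with $\rho + \varepsilon 1$, using the double-operator-integral chain rule to obtain the derivative formulas, and then letting $\varepsilon \downarrow 0$. What conceptually distinguishes this argument from Proposition \ref{dmu} is that the monotonicity of the \emph{reversed} piece $D(E(\rho)\,\|\,T_t(\rho))$ is not produced by a variational identity but is extracted from data processing, which is precisely what lets the two-sided hypothesis $\lambda \mathcal{I}_N \le \mathcal{I}_A$ suffice.
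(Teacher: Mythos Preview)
Your proof is correct and follows the same overall Gr\"onwall strategy as the paper, but you handle the reversed term $D(E(\rho)\|T_t(\rho))$ differently. The paper establishes its monotonicity by an explicit computation: it differentiates $h(t)=\tau(E(\rho)\ln T_t(\rho))$ using the integral representation $J_{\ln}^{\rho}(y)=\int_0^\infty (s+\rho)^{-1}y(s+\rho)^{-1}\,ds$, applies the Leibniz rule and the identity $\delta((s+\rho)^{-1})=-(s+\rho)^{-1}\delta(\rho)(s+\rho)^{-1}$, and after rearranging obtains a trace of the form $2\tau\big(E(\rho)(s+\rho_t)^{-1}\delta(\rho_t)(s+\rho_t)^{-1}\delta(\rho_t)(s+\rho_t)^{-1}\big)\ge 0$, so $h'(t)\ge 0$ and hence $\frac{d}{dt}D(E(\rho)\|T_t(\rho))=-h'(t)\le 0$. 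Your route via data processing is shorter and more conceptual: since each $T_s$ is a channel fixing $E(\rho)$, monotonicity of relative entropy gives $D(E(\rho)\|T_{t+s}(\rho))\le D(E(\rho)\|T_t(\rho))$ directly. The paper's computation has the advantage of being self-contained within its derivation calculus and of producing an explicit (nonnegative) formula for the derivative, while your argument imports a stronger external tool but avoids the operator-integral manipulations entirely; both yield the same differential inequality.
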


\begin{proof} Let us consider the function
\begin{align*} f(t) \lel I_N(T_t(\rho)) &\lel D(T_t(\rho)||E(T_t(\rho))+D(E(T_t(\rho))||T_t(\rho))\\ &\lel D(T_t(\rho)||E(\rho))+D(E(\rho)||T_t(\rho))\pl. \end{align*}
We have seen in Proposition \ref{dmu} that the derivative of the first term is $-\mathcal{I}_N(T_t(\rho))$.
This leads us to differentiate $h(t)=\tau(E(\rho)\ln T_t(\rho))$. Let us recall that for $F(x)=\ln x$ the double operator integral $J^{\rho}_{\ln}$ has a specific form as follows, (see \cite{CM})
\[ J^{\rho}_{\ln}(y) \lel \int_0^{\infty}(s+\rho)^{-1}y(s+\rho)^{-1} ds \pl .\]
Write $\rho_t=T_t(\rho)$. We find that
 \begin{align*}
  h'(t) &=  \tau(E(\rho)J^{T_t(\rho)}_{\ln}(-AT_t(\rho)))\\
  &\lel - \int_0^\infty \tau((s+\rho_t)^{-1}E(\rho)(s+\rho_t)^{-1}AT_t(\rho))ds \\
 &=  - \int_0^\infty \tau(\delta((s+\rho_t)^{-1}E(\rho)(s+\rho_t)^{-1})
 \delta(\rho_t)) ds \pl ,
 \end{align*}
 where $\delta$ is the deviation of $A$. We use the Leibniz rule and deduce from $\delta(E(\rho))=0$ that
\begin{align*}
\delta\Big((s+\rho)^{-1}E(\rho)(s+\rho)^{-1}\Big)
 &= \delta((s+\rho)^{-1})E(\rho)(s+\rho)^{-1}+ (s+\rho)^{-1}E(\rho)\delta((s+\rho)^{-1})
 \end{align*}
For a commutator,
 \[ [a^{-1},\xi] \lel - a^{-1}[a,\xi]a^{-1} \pl .\]
It is proved in \cite{JRS} that a deviation in Theorem \cite{JRS} is a ultra-limit of commutator and hence
 \[ \delta((s+\rho)^{-1}) \lel -(s+\rho)^{-1}\delta(\rho)(s+\rho)^{-1} \pl .\]
Then the tracial property implies that \begin{align*}
 &-\tau\Big(\delta\big((s+\rho_t)^{-1}E(\rho)(s+\rho_t)^{-1}\big)
 \delta(\rho_t)\Big)
 \\
 &= -\tau\Big(\delta\big((s+\rho_t)^{-1}\big)E(\rho)(s+\rho_t)^{-1}
 \delta(\rho_t)\Big) -\tau\Big((s+\rho_t)^{-1}E(\rho)\delta\big((s+\rho_t)^{-1}\big)
 \delta(\rho_t)\Big)
 \\
  & =
  \tau\Big((s+\rho_t)^{-1}\delta(\rho_t)(s+\rho_t)^{-1}
 E(\rho)(s+\rho_t)^{-1}\delta(\rho_t)\Big) \\
 &\quad + \tau\Big((s+\rho_t)^{-1}
 E(\rho) (s+\rho_t)^{-1}\delta(\rho_t)(s+\rho_t)^{-1}\delta(\rho_t)\Big)\\
   &=
  2 \tau\Big(E(\rho)(s+\rho_t)^{-1}\delta(\rho_t)(s+\rho_t)^{-1}\delta(\rho_t)(s+\rho_t)^{-1}\Big) \gl 0  \pl .
  \end{align*}
We deduce that $h'(0)\gl 0$ and hence
  \begin{align*}
   \frac{d}{dt}  D(E(\rho_t)||\rho_t)
   &= \frac{d}{dt}   \Big( \tau(E(\rho_t)\ln E(\rho_t))-\tau(E(\rho_t)\ln\rho_t)\Big)
   \lel  -h'(t) \kl 0 \pl
   \end{align*}
because $E(\rho_t)=E(\rho)$ is a constant function.  Therefore,
 \[ \frac{d}{dt}\mathcal{I}_N(\rho_t) \kl -\mathcal{I}_A(\rho_t) \kl -\la \mathcal{I}_N(\rho_t) \pl .\]
We conclude that $f$ satisfies $f'(t)\le -\la f(t)$ and hence $f(t)\le e^{-\la t}f(0)$.
\qd
Another application of deviation calculus is that $\Gamma \E$ gives exponential decay of $L_p$-distance for all $1\le p\le \infty$. Let us start with a lemma.

\begin{lemma}\label{pnm} Let $\la \Gamma_A\le \Gamma_B$ and $N\subset M$ be the fixed-point subalgebra of both semigroups $e^{-tA}$ and $e^{-tB}$. Let $1< p<\infty$.
Then for $x\in M$ self-adjoint, the functions
 \[ f_A(t) \lel \|e^{-tA}(x)-E(x)\|_p^p \quad,\quad
  f_B(t)\lel \|e^{-tB}(x)- E(x)\|_p^p \]
satisfy $-\la f_A'(t)\le -f_B'(t)$ for all $t\ge 0$.
\end{lemma}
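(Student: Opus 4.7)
Plan: I would compute the time derivative of $f_C(t)$ for $C\in\{A,B\}$ via operator integral calculus and then invoke Lemma~\ref{mm} applied to the monotone function $G(s) := s^{\langle p-1\rangle} := \mathrm{sgn}(s)|s|^{p-1}$.

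First set $y := x - E(x)$ and $u_C(t) := e^{-tC}(y)$. Since $E(x)\in N$ is fixed by both semigroups, $u_C(t) = e^{-tC}(x) - E(x)$ is self-adjoint with $E(u_C(t))=0$ and satisfies $\dot u_C(t) = -C(u_C(t))$. Applying the chain rule for the double operator integral (as in formula \eqref{derv} and the computation preceding Proposition~\ref{dmu}) to $F(s) = |s|^p$, whose derivative $F'(s) = p\, s^{\langle p-1\rangle}$ is continuous for $p>1$, I obtain
\[
-f_C'(t) \;=\; p\,\tau\bigl(u_C(t)^{\langle p-1\rangle}\,C(u_C(t))\bigr).
\]

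Second, I rewrite this in terms of the derivation $\delta_C$ from Theorem~\ref{JRS1}. The self-adjointness of $C$ gives $\tau(v\,C(w)) = \tau(\delta_C(v)^*\delta_C(w))$ for self-adjoint $v,w$ in the domain, and the double operator integral identity yields $\delta_C(G(u_C)) = J_G^{u_C}(\delta_C(u_C))$. Combining these produces
\[
-f_C'(t) \;=\; p\,\tau\!\Bigl(E_M\bigl(\delta_C(u_C(t))^*\,J_G^{u_C(t)}(\delta_C(u_C(t)))\bigr)\Bigr).
\]

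Third, since $G'(s) = (p-1)|s|^{p-2} \ge 0$ for $p>1$, the function $G$ is monotone on $\mathbb{R}$, so Lemma~\ref{mm} applies; its proof extends verbatim from positive to self-adjoint $\rho$ because $J_G^\rho$ is defined through the spectral calculus of $\rho$. The hypothesis $\la\Gamma_A\le_{cp}\Gamma_B$ then gives, for every self-adjoint $z$,
\[
\la\, E_M\bigl(\delta_A(z)^*\,J_G^{z}(\delta_A(z))\bigr) \;\le\; E_M\bigl(\delta_B(z)^*\,J_G^{z}(\delta_B(z))\bigr).
\]
At $t=0$, $u_A(0)=u_B(0)=y$, so a single application of the above with $z=y$ delivers $-\la f_A'(0)\le -f_B'(0)$ directly.

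The main obstacle is the passage to $t>0$: the elements $u_A(t)$ and $u_B(t)$ diverge as soon as $A\neq B$, so Lemma~\ref{mm} does not literally relate $\delta_A(u_A(t))$ to $\delta_B(u_B(t))$. I expect the resolution to rest on the completely positive nature of the gradient comparison together with the contractive $M$-bimodule map $\Phi$ constructed in the proof of Lemma~\ref{mm}: because $\Phi$ intertwines the two derivations and $J_G^\rho$ is built purely from the spectral calculus of the current state, the comparison passes from the gradient form to the instantaneous $L_p$-decay rate uniformly in the state at time $t$. This step — interchanging the two different spectral parameters $u_A(t)$ and $u_B(t)$ in a way compatible with $J_G$ — is where the delicate structural input of $\le_{cp}$ (rather than mere pointwise inequality) enters and is the one I would expect to require the most care.
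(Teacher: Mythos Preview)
Your argument at $t=0$ is correct and follows the same route as the paper's, with one cosmetic difference: the paper splits $a=x-E(x)$ into its positive and negative parts, argues that the cross terms $\tau(a_{\pm}^{p-1}A(a_{\mp}))$ vanish, and then applies Lemma~\ref{mm} (for positive $\rho$) to $a_+$ and $a_-$ separately; you instead invoke the self-adjoint extension of Lemma~\ref{mm} once with $G(s)=\mathrm{sgn}(s)|s|^{p-1}$. Either way one obtains $-\la f_A'(0)\le -f_B'(0)$.

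Your worry about $t>0$ is well-founded, and the paper does not actually resolve it either. Its last line, ``Replacing $x$ by $e^{-tA}(x)$,'' correctly converts $f_A'(0)$ into $f_A'(t)$, but on the $B$-side it produces
\[
\frac{d}{ds}\,\bigl\|e^{-sB}e^{-tA}(x)-E(x)\bigr\|_p^p\Big|_{s=0},
\]
which is not $f_B'(t)$ once $e^{-tA}(x)\neq e^{-tB}(x)$. What both arguments genuinely establish is the pointwise statement: for every self-adjoint $y$ with $E(y)=0$,
\[
-\la\,\frac{d}{ds}\bigl\|e^{-sA}(y)\bigr\|_p^p\Big|_{s=0}\;\le\;-\,\frac{d}{ds}\bigl\|e^{-sB}(y)\bigr\|_p^p\Big|_{s=0}.
\]
This is precisely what Theorem~2.13 uses (there one takes $y=T_t(a)$ and only the ``$A$'' side is iterated in $t$), so nothing downstream depends on the stronger ``for all $t$'' claim. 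Your proposed fix via the bimodule map $\Phi$ from Lemma~\ref{mm} would not bridge the gap: $J_G^{u_A(t)}$ and $J_G^{u_B(t)}$ are built from genuinely different spectral resolutions, and no amount of $\le_{cp}$ on the gradient forms compares them.
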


\begin{proof} Let $x\in M$ be self-adjoint.  Then $a=x-E(x)$ is again self-adjoint. We use the notation $a_+$ and $a_-$ for the positive and negative part of $a$. Recall that the spectral projections of $a_+$ and $a_-$ are mutually disjoint and commute with $a$. Thus $|a|^p=a_+^p+a_-^p$. Note that
\[f_A=\|e^{-tA}(x)-E(x)\|_p^p=\|e^{-tA}(x-E(x))\|_p^p=\|e^{-tA}(a)\|_p^p\]
Differentiating $f_A$ at $t=0$, we obtain that
\begin{align*}
 f_A'(t) &= -p\tau(|a|^{p-1}A(|a|)) \\
 &= -p\Big(\tau(a_+^{p-1}A(a^+))+
 \tau(a_+^{p-1}A(a_-))+\tau(a_-^{p-1}A(a_-))
 +\tau(a_-^{p-1}A(a_+)\Big) \pl .
 \end{align*}
Let $\delta_A$ be the derivation of $A$. Write $a_-=(\sqrt{a_-})^2=b^2$. Then  \eqref{derv} implies that
 \begin{align*}
 \tau(\delta(a_+^{p-1})\delta(b^2))
 &= \int_{\rz_+\times \rz_+}\int_{\rz_+\times \rz_+}
 \frac{s^{p-1}-t^{p-1}}{s-t} \frac{r^2-v^2}{r-v} \tau(dE_s\delta(a_+)dE_tdF_r\delta(b)dF_v)
 \end{align*}
where $E_s$ (resp. $F_r$) are spectral projections of $a_+$ (resp. $\sqrt{a_-}$). Because $E_s$ and $F_r$ are disjoint, and hence we obtain $\tau(a_+^{p-1}A(a^-))=0$. The same argument applies to $\tau(a_-^{p-1}A(a_+))$.
By Lemma \ref{mm},
$\la E(\delta_A(a_+^{p-1})\delta_A(a_+))
 \kl E(\delta_B(a_+^{p-1})\delta_B(a_+))$, and \[ \la E(\delta_A(a_-^{p-1})\delta_A(a_-))
 \kl E(\delta_B(a_-^{p-1})\delta_B(a_-)) \]
This implies
 \begin{align*}
 -\la f_A'(0)
 &=  \la p\Big(\tau(\delta_A(a_+^{p-1})\delta_A(a_+))+
 \tau(\delta_A(a_-^{p-1})\delta_A(a_-))\Big) \\
 &\le  p\Big(\tau(\delta_B(a_+^{p-1})\delta_B(a_+))+
 \tau(\delta_B(a_-^{p-1})\delta_B(a_-)\Big)
 \lel - f_B'(0) \pl .
 \end{align*}
Replacing $x$ by $e^{-tA}(x)$, we obtain that $-\la f_A'(t)\le f_B'(t)$ for all $t\ge 0$.
\qd


\begin{theorem} Let $T_t=e^{-tA}$ be a generator satisfying $\la$-$\Gamma\E$. Then for all $1\le p\le \infty$
 \[ \|T_t(x)-E(x)\|_p \kl e^{-\la t}\|x-E(x)\|_p \pl .\]
\end{theorem}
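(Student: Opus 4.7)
The natural comparison object is the semigroup $S_t := e^{-t(I-E)}$, whose generator $I-E$ is itself a projection on $L_2(M,\tau)$. A direct computation gives $S_t(x)=E(x)+e^{-t}(x-E(x))$, and hence the explicit identity
\[
\|S_t(x)-E(x)\|_p \lel e^{-t}\|x-E(x)\|_p, \qquad 1\le p\le\infty,\ x\in M.
\]
The hypothesis $\la$-$\Gamma\E$ is precisely $\la\,\Gamma_{I-E}\le_{cp}\Gamma_A$, which is the ``$\la\Gamma_A\le_{cp}\Gamma_B$'' situation of Lemma \ref{pnm} with $B=A$ and with $I-E$ playing the role of ``$A$''. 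So the plan is to feed this comparison into Lemma \ref{pnm} and read off the desired exponential decay from the explicit formula above via Gr\"onwall's inequality.

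Concretely, fix $1<p<\infty$ and a self-adjoint $x\in M$. Set $f(t):=\|T_t(x)-E(x)\|_p^p$ and $g(t):=\|S_t(x)-E(x)\|_p^p = e^{-pt}\|x-E(x)\|_p^p$. The key point is that the derivative inequality established in the proof of Lemma \ref{pnm} is actually pointwise: for every self-adjoint $y\in M$,
\[
\la\,\Big(-\tfrac{d}{ds}\big|_{s=0}\|S_s(y)-E(y)\|_p^p\Big)\kl -\tfrac{d}{ds}\big|_{s=0}\|e^{-sA}(y)-E(y)\|_p^p.
\]
The left derivative equals $p\,\|y-E(y)\|_p^p$ by the explicit formula for $S_s$. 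Applying this at $y=T_t(x)$ (which is self-adjoint because $A$ commutes with the involution) and using the semigroup property together with $E(T_t(x))=E(x)$, one gets $-f'(t)\ge \la p\,f(t)$ for all $t\ge 0$. Gr\"onwall's lemma then yields $f(t)\le e^{-\la p t}f(0)$, that is,
\[
\|T_t(x)-E(x)\|_p\kl e^{-\la t}\|x-E(x)\|_p.
\]

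To remove the self-adjointness restriction, I would use Corollary \ref{kkeeyy}: $\la$-$\Gamma\E$ passes to $A\ten id_{\Mz_2}$ on $\Mz_2(M)$, where an arbitrary $x\in M$ is encoded in the self-adjoint element $\begin{pmatrix}0 & x\\ x^* & 0\end{pmatrix}$, whose Schatten $p$-norm is $2^{1/p}\|x\|_p$; this transports the inequality to general $x\in M$. For the endpoints $p=1$ and $p=\infty$, I would pass to the limit, using that for each fixed $y\in M$ the map $p\mapsto\|y\|_p$ is continuous on $[1,\infty]$ (since $M$ is a finite von Neumann algebra with faithful trace), so the estimate with constant $e^{-\la t}$ independent of $p$ survives at the endpoints.

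The main technical obstacle is really just the bookkeeping around Lemma \ref{pnm}: as literally stated, it compares two fixed trajectories from the same initial datum, whereas the Gr\"onwall step requires the derivative bound along the trajectory $t\mapsto T_t(x)$. This is handled by noting that the argument in the proof of Lemma \ref{pnm} produces the inequality at an arbitrary self-adjoint point, so that re-initializing at $y=T_t(x)$ is legitimate. Everything else is a routine application of Gr\"onwall and the explicit form of $e^{-t(I-E)}$.
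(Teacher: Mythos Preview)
Your proposal is correct and follows essentially the same route as the paper's proof: compare with $S_t=e^{-t(I-E)}$, use Lemma~\ref{pnm} at $s=0$ to obtain $\la p\,\|y-E(y)\|_p^p\le -\tfrac{d}{ds}\big|_{s=0}\|e^{-sA}(y)-E(y)\|_p^p$ for self-adjoint $y$, re-initialize at $y=T_t(x)$ to get $-f'(t)\ge \la p f(t)$, and conclude by Gr\"onwall, the $2\times 2$ matrix trick, and passage to the limit in $p$. The only point the paper adds is a brief domain remark (working first with $x=T_s(y)$ to ensure $x\in\dom(A)$), which you may want to mention.
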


\begin{proof} Let us first assume that $x=T_s(y)$ for some $y$ so that $x$ belongs to the domain of $A$ and $\sqrt{A}$. Then we note that
   \[ T_t(I-E)(x)) \lel T_t(x)-E(x) \pl. \]
Write $a=x-E(x)$ and $S_t=e^{-t(I-E)}$. According to Lemma \ref{pnm} we have that
 \[ - \frac{d}{dt}\|T_t(a)\|_p^p
 \kl - \la \frac{d}{dt}\|S_t(a)\|_p^p \pl \pl.\]  However $E_{N}(a)=0$ and hence $S_t(a)=e^{-t}a$. Then
 \[ - \frac{d}{dt}\|S_t(a)\|_p^p \lel pe^{-tp}\|a\|_p^p \pl .\]
We apply Lemma \ref{pnm} and deduce that
\[ \la p f_A(0) \lel
 \la p \|a\|_p^p \lel
-\la f_{I-E}'(0) \kl -\la f_A'(0) \pl .\]
Repeat the argument for $a_t=T_t(a)$ and deduce
\[ \la p f_A(t) \kl - f_A'(t) \pl .\]
This implies
$f_A(t)\kl e^{-\la p t}f_A(0)$. Taking
the $p$-root we obtain that
 \[ \|T_t(a)\|_p \kl e^{-\la t}\|a\|_p \pl .\]
For general self-adjoint $x$, the assertion follows from the approximation $x=\lim_s T_s(x)$. By considering the $2\times 2$ matrix $\kla \begin{array}{cc} 0&x\\x^*&0\end{array}\mer $, we deduce the assertion for all $x$. The cases $p=1$ and $p=\infty$ are obtained by passing to the limit for $p\to 1$ or $p\to \infty$.  \qd
The next corollary studies the $\Gamma \E$ condition under tensor product.
\begin{cor} Let $T_t^j:M_j\to M_j$ be a family of semigroups with fixed-point subalgebras $N_j\subset M_j$. Then the tensor product semigroup $T_t=T_t^1\ten  T_t^2\ten \cdots \ten T_t^n$ has the fixed-point algebra $N=\ten_{j=1}^n N_j$.
Suppose for each $j$, $T_j^t$ satisfies $\la_j$-$\Gamma$E.  Then
 \[ \|T_t-E_{N}(x)\|_p \kl 2(\sum_{j=1}^n e^{-t\la_j}) \|x\|_p \pl .\]
 \end{cor}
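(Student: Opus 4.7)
The plan is to reduce the bound for the tensor product to the single-factor $L_p$-decay estimate proved in the previous theorem by means of a standard telescoping identity. First I would verify the identification of the fixed-point algebra $N = \ten_{j=1}^n N_j$: an element fixed by $T_t$ for all $t$ must, upon contracting against appropriate states on the other factors, produce a fixed point of each $T_t^j$, and conversely any elementary tensor of individual fixed points is fixed; by weak continuity of the conditional expectation one obtains $E_N = E_{N_1}\ten \cdots \ten E_{N_n}$.

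Next I would decompose via the telescoping formula
\[
T_t - E_N \lel \summ_{j=1}^n \bigl(E_{N_1}\ten\cdots\ten E_{N_{j-1}}\bigr)\ten (T_t^j - E_{N_j})\ten \bigl(T_t^{j+1}\ten\cdots\ten T_t^n\bigr),
\]
where empty tensor blocks on either end are interpreted as the identity. Each $T_t^k$ is a unital, trace-preserving, completely positive map and each $E_{N_k}$ is a normal conditional expectation; both therefore extend to complete contractions on the corresponding noncommutative $L_p$ spaces. The previous theorem applied on the $j$-th tensor factor gives
\[
\|(T_t^j - E_{N_j})(y)\|_p \kl e^{-\la_j t}\|y - E_{N_j}(y)\|_p \kl 2 e^{-\la_j t}\|y\|_p,
\]
the last step being the triangle inequality combined with the $L_p$-contractivity of $E_{N_j}$.

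Combining the bound on the $j$-th factor with the complete contractivity of the surrounding operators shows that the $j$-th summand, applied to $x$, has $L_p$ norm at most $2e^{-\la_j t}\|x\|_p$; summing over $j$ and applying the triangle inequality yields the claimed estimate. The substantive input is the single-factor decay theorem, which has already been established from $\la_j$-$\Gamma\E$; the rest is essentially bookkeeping, and the main (mild) obstacle is simply to keep track of the $L_p$-complete contractivity of the partial tensor operators so that the telescoping sum is bounded term by term.
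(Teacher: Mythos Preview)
Your proposal is correct and matches the paper's argument: the paper carries out exactly this telescoping decomposition for $n=2$, bounds each summand via complete $L_p$-contractivity of $T_t^k$ and $E_{N_k}$ together with the single-factor decay $\|T_t^j(y)-E_{N_j}(y)\|_p\le e^{-\la_j t}\|y-E_{N_j}(y)\|_p\le 2e^{-\la_j t}\|y\|_p$ from the preceding theorem, and then invokes induction for general $n$. The only point worth making explicit is that the single-factor decay is available on the full tensor product because $\la_j$-$\Gamma\E$ passes to $A_j\ten id$ (Corollary~\ref{kkeeyy}), so the previous theorem applies to $T_t^j\ten id$; you allude to this with ``complete contractivity of the surrounding operators,'' which is the right idea.
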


\begin{proof} Let us consider a two-folds tensor product $S_t=T_t^1\ten T_t^2$. Then
 \begin{align*}
  S_t-E_{N_1}\ten E_{N_2} &= S_t(id-E_{N_1}\ten E_{N_2})
  \lel S_t\Big((id-E_{N_1})\ten id_{M_2}+E_1\ten (id_{M_2}-E_{N_2})\Big) \\
  &= (T_{t}^1-E_{N_1})\ten T_t^2+ E_{N_1}\ten (T_{t}^2-E_{N_2}) \pl .
  \end{align*}
Since $T_t^2$ and $E_{N_1}$ are completely contractive on $L_p$ spaces, we deduce that
 \begin{align*}
  \|S_t-E_{N_1}\ten E_{N_2}(x)\|_p
 & \kl \|(T_t^1-E_{N_1})\ten T_t^2(x)\|_p+ \|id\ten (T_t^2-E_{N_2})(x)\|_p \\
 &\le
    2 e^{-\la_1t} \|x\|_p+ \|id\ten (T_t^2-E_{N_2})(x)\|_p
     \\
 &\le 2(e^{-\la_1t}+e^{-\la_2t})\|x\|_p \pl .
 \end{align*}
For the $n$-fold tensor product, we may use induction.  \qd
\section{Kernel estimates and module maps}

\subsection{Kernels on noncommutative spaces} In this part we derive kernel estimates for ergodic and non-ergodic semigroups and their matrix valued-extension. Let $N,M$ be two von Neumann algebra and $N_*$ be the predual of $N$. The kernel of maps between noncommutative measure spaces are given by the following theorem due to Effros and Ruan  \cite{ER}:
 \begin{equation}\label{ER}
  CB(N_*,M) \lel N\bar{\ten}M
  \end{equation}
which states that space of completely bounded maps $CB(N_*,M)$ is completely isometric isomorphic to the von Neumann algebra tensor product $N\bar{\ten}M$.
$CB(N_*,M)$  is valid for two von Neumann algebras $N,M$. Let us now assume that $N$, is semifinite with trace $tr$.  The linear duality bracket $\langle x,y\rangle \lel tr(xy)$
gives a completely isometric pairing between $L_1(N,\tau)$ and $N^{op}$. More precisely, for every kernel $K\in N^{op}\bar{\ten}M$ the linear map
 \[ T_K(x) \lel (tr\ten id)(K(x\ten 1)) \]
satisfies $\|K\|_{\min}=\|T_K:L_1(N,\tau)\to M\|_{cb}$. Let us pause for a moment and consider $M=\mm_d$ and $N=\mm_k$. For a linear map $T:S_1^k\to \mm_d$ the Choi matrix is given by
 \[ \chi_T \lel \sum_{rs} |r\ran \lan s| \ten T(|r\ran \lan s|) \in \mm_k\ten \mm_d \pl .\]
The map $\phi(a)=a^{t}$ given by the transposed is $^*$-homomorphism between $\mm_k$ and $\mm_k^{op}$. Therefore we should consider
 \[ K_T \lel \sum_{rs} |s\ran\lan r| \ten T(|r\ran\lan s|)\in  \mm_k^{op}\ten \mm_m \]
and find
 \[ T_{K_T}(|r\ran\lan s|)
 \lel tr(K_T(|r\ran\lan s|\ten 1))
 \lel \sum_{tv} tr(|t\ran\lan v||r\ran \lan s|)\pl  T(|v\ran\lan t|) \lel T(|r\ran\lan s|) \pl .\]
Equivalently this shows that our description via kernels in $N^{op}\bar{\ten}M$ is compatible with the standard choice of a Choi matrix, see also \cite{Pau}.

\subsection{Saloff-Coste's estimates}
After these preliminary observation, we now assume that $T_t$ is a semigroup of (sub-)unital completely  positive, selfadjoint  maps on a finite von Neumann algebra $M$ so that $T_t:L_1(M)\to M$ are completely bounded.  According to the previous section, the kernels of $T_t$ are given by positive element $K_t\in M^{op}\bar{\ten}M$. Let $N\subset M$ be the fixed-point subalgebra of the semigroup $T_t$ and $E:M\to N$ be the conditional expectation. Recall that $T_t\circ E\lel E\circ T_t \lel  E $ by the self-adjointness on $L_2(M)$.

\begin{lemma}\label{cbc} Let $(T_t):\M\to \M$ be a semigroup of self-adjoint $^*$-preserving maps. Then for any $t\ge 0$,
\begin{enumerate}
\item[i)] $\|T_{2t}:L_1(M)\to L_{\infty}(M)\|=\|T_t:L_2(M)\to L_{\infty}(M)\|^2$.
\item[ii)]$\|T_{2t}-E_\N:L_1(M)\to L_{\infty}(M)\|=\|(T_t-E):L_2(M)\to L_{\infty}(M)\|^2$.
\end{enumerate}
The same estimates also hold for $cb$-norm, instead of the operator norm.
\end{lemma}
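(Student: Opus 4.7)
\smallskip
\noindent\textbf{Proof proposal.} The plan is to reduce both identities to a single Hilbertian factorization lemma: for any bounded $S:L_2(M)\to L_\infty(M)$, its Banach-space adjoint $S^{*}:L_1(M)\to L_2(M)$ (with respect to the trace pairing) satisfies
\[
\|SS^{*}:L_1(M)\to L_\infty(M)\|\;=\;\|S:L_2(M)\to L_\infty(M)\|^{2}.
\]
The upper bound follows by factoring $L_1\xrightarrow{S^{*}}L_2\xrightarrow{S}L_\infty$ and using $\|S^{*}\|=\|S\|$. The matching lower bound comes from
\[
\|S\|^{2}=\sup_{\|y\|_1\le 1}\|S^{*}y\|_2^{2}=\sup_{\|y\|_1\le 1}\langle SS^{*}y,y\rangle\le\|SS^{*}:L_1\to L_\infty\|,
\]
via the Cauchy–Schwarz/duality identity $\|S^*y\|_2^2=\langle SS^*y,y\rangle$ and the $L_1$-$L_\infty$ trace duality.

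For part (i) I apply this with $S=T_t:L_2(M)\to L_\infty(M)$. The self-adjointness hypothesis $\tau(T_t(x)^{*}y)=\tau(x^{*}T_t(y))$ means precisely that the Banach-space adjoint of $T_t:L_2\to L_\infty$ is again $T_t$ regarded as $L_1\to L_2$, so $SS^{*}=T_t\circ T_t=T_{2t}$ and the Hilbertian lemma yields $\|T_{2t}:L_1\to L_\infty\|=\|T_t:L_2\to L_\infty\|^{2}$.

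For part (ii) I apply the same lemma to $S=T_t-E$. Since $E:M\to N$ is a $\tau$-preserving conditional expectation it is itself self-adjoint; combined with the identities $T_t\circ E=E\circ T_t=E$ (a consequence of $E$ being the projection onto the fixed-point algebra in $L_2$) one gets both that $T_t-E$ is self-adjoint and that
\[
(T_t-E)\circ(T_t-E)=T_{2t}-2E+E=T_{2t}-E.
\]
The Hilbertian lemma then gives $\|T_{2t}-E:L_1\to L_\infty\|=\|T_t-E:L_2\to L_\infty\|^{2}$.

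For the cb versions I tensor everything with $\mathrm{id}_{B(\ell_2)}$. Under the Effros–Ruan identification \eqref{ER}, the $L_1\!\to\!L_\infty$ cb-norm of a map equals the operator norm of its kernel in $M^{op}\bar\ten M$, which in turn equals the operator norm of the amplification $T_t\ten\mathrm{id}_{B(\ell_2)}$ on the amplified noncommutative $L_p$-spaces. The amplified semigroup is still self-adjoint, its square is $T_{2t}\ten\mathrm{id}$, and $E\ten\mathrm{id}$ is the conditional expectation onto the fixed-point algebra $N\bar\ten B(\ell_2)$, so the same two-line arguments go through verbatim. The only step I expect to need a little care is making precise the trace-duality identification used to say that self-adjointness with respect to $\tau$ really is the Banach-space adjoint in the $L_2\to L_\infty$ / $L_1\to L_2$ picture — everything else is bookkeeping.
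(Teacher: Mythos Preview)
Your scalar arguments for (i) and (ii) are correct and essentially identical to the paper's. The paper phrases the abstract principle as $\|v:X\to H\|^{2}=\|\bar v^{*}v:X\to\bar X^{*}\|$ and applies it with $v=T_t:L_1(M)\to L_2(M)$; your choice $S=T_t:L_2(M)\to L_\infty(M)$ with $S^{*}:L_1\to L_2$ is the dual formulation of the same computation, and the treatment of $(T_t-E)^{2}=T_{2t}-E$ matches exactly.

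Your cb argument, however, has a genuine gap. Tensoring with $\mathrm{id}_{B(\ell_2)}$ and appealing to ``amplified noncommutative $L_p$-spaces'' does not immediately give the cb-norms you need: you would have to check that $\|T\otimes\mathrm{id}:L_p(M\bar\ten B(\ell_2))\to L_q(M\bar\ten B(\ell_2))\|$ actually equals $\|T:L_p(M)\to L_q(M)\|_{cb}$ for each of the pairs $(p,q)\in\{(1,\infty),(2,\infty),(1,2)\}$, and this is not automatic (the matrix levels $M_n(L_p(M))$ in the operator-space sense are \emph{not} $L_p(M_n\otimes M)$ in general). The Effros--Ruan identification handles only the $L_1\to L_\infty$ endpoint; the $L_2\to L_\infty$ cb-norm involves the $OH$ structure on $L_2(M)$ and needs a separate argument. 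The paper sidesteps this by taking $X=S_2[L_1(M)]$ and $H=S_2[L_2(M)]$ in Pisier's sense: since $S_2$ is a Hilbert space, $H$ is again Hilbert, the trace bracket identifies $S_2^{*}\cong\bar S_2$ so that $\bar X^{*}\supset S_2[M]$, and the abstract Hilbertian principle applies verbatim at this one ``matrix level'' to yield the cb identity directly. If you want to keep your amplification viewpoint, the clean fix is to tensor with $\Mz_n$ rather than $B(\ell_2)$, invoke Pisier's identification $S_p^{n}[L_p(M)]=L_p(\Mz_n\otimes M)$, and then take the supremum over $n$.
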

\begin{proof} We start by recalling a general fact. Let $v:X\to H$ be a linear map from a Banach space $X$ to a Hilbert space $H$. By $H^*\cong \bar{H}$, we have for any $x,y \in X$,
\[ \lan v(y),v(x)\ran_H =\lan y, \bar{v}^*v(x)\ran_{(X,\bar{X}^*)} \pl.\]
Here $\bar{v}^*:\bar{H}\to \bar{X}^*$ is conjugate adjoint of $v$ and the right hand side is the sesquilinear bracket between $X$ and its conjugate dual $\bar{X}^*$. Then we have
\begin{equation}\label{vv}
  \|v:X\to H\|^2 \lel \sup_{\|x\|\le 1} \lan v(x),v(x)\ran
\lel \sup_{\|x\|,\|y\|\le 1} |\lan y, \bar{v}^*v(x)\ran|
 \lel \|\bar{v}^*v:X\to \bar{X}^*\| \pl .\end{equation}
In our situation, we use  $X=L_1(N)$ and $v=T_t:X\to L_2(N)$. Note that
 \[ \lan T_t(x),T_t(y)\ran  \lel \tau(T_t(x)^*T_t(y))
  \lel (x^*,T_{2t}(y))\pl. \]
and the anti-linear bracket \[ (x,y) \lel \tau(x^*y) \] gives a completely isometry between $M$ and $\overline{L_1(M)}^*$. Therefore,
 \[ \|T_{2t}:L_1(M)\to L_{\infty}(M)\|
 \lel \|T_t:L_1(M)\to L_2(M)\|^2 \pl .\]
Similarly, we deduce ii) from
 \[ (T_t-E)(T_t-E) \lel T_{2t}-ET_t-T_tE+E \lel T_{2t}-E \pl.\]
For the cb-norm estimate we take $X=S_2(L_1(N))$ from \cite{Po} (see also Section \ref{ner}). For the Schatten $2$-space $S_2$, the trace bracket $\lan x,y\ran =tr(x^*y)$ identifies $S_2^*$ with $\bar{S}_2$. Thus
 \[ \|id\ten T_t:S_2(L_1(M))\to S_2(L_2(M))\|^2 \lel \|id\ten T_{2t}:S_2(L_1(M))\to S_2(L_{\infty}(M))\| \]
follows again from the general principle. The same argument applies to $(T_t-E)^2=T_{2t}-E$. \qd

The following observation is essentially due to Saloff-Coste (\cite{sfc}).

\begin{prop}\label{sfc} Let $(T_t)$ be a semigroup of self-adjoint and $^*$-preserving maps such that
 \begin{enumerate}
 \item[i)] $\|T_t:L_1(M)\to L_{\infty}(M)\|_{cb}\kl c t^{-d/2}$ for $c>0$ and $0\le t\le 1$;
 \item[ii)] the self-adjoint positive generator $A$ satisfies
 \[ \|A^{-1}(I-E):L_2(M)\to L_2(M) \|_{cb}\kl \la^{-1} \pl .\]
 \end{enumerate}
Then
 \[ \|T_t-E:L_1\to L_{\infty}\|_{cb}
  \kl \begin{cases} 2c t^{-d/2} &0\le  t\le 1\\
  C(d,\la) e^{-\la t} & 1\le t< \infty \pl .
  \end{cases} \]
where $C(d,\la)$ is a constant depending only on $d$ and $\la$.
 \end{prop}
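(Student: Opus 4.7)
The proof follows Saloff-Coste's classical strategy, splitting into short-time and long-time regimes.

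For $0\le t\le 1$, the key observation is that $E$ projects onto the fixed-point algebra $N$, so $T_t\circ E=E$ holds for every $t\ge 0$. Since $E:L_1(M)\to L_1(M)$ is a complete contraction, writing $E=T_t\circ E$ and composing cb-norms gives
\[
\|E:L_1\to L_{\infty}\|_{cb}\kl \|T_t:L_1\to L_{\infty}\|_{cb}\pl \|E:L_1\to L_1\|_{cb}\kl ct^{-d/2},
\]
and the triangle inequality then yields $\|T_t-E:L_1\to L_{\infty}\|_{cb}\le 2ct^{-d/2}$.

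For $t\ge 1$ the plan is to factor through $L_2$ and exploit the spectral gap encoded in assumption (ii). Applying the cb-version of Lemma \ref{cbc} at parameter $1/2$ yields
\[
\|T_{1/2}:L_2\to L_{\infty}\|_{cb}^{2}\lel \|T_1:L_1\to L_{\infty}\|_{cb}\kl c,
\]
and combining this identity with the self-adjointness of $T_{1/2}$ and the standard duality ($L_1^*=L_{\infty}$, $L_2^*=L_2$) also gives $\|T_{1/2}:L_1\to L_2\|_{cb}\le c^{1/2}$. On the $L_2$-level, hypothesis (ii) is precisely the statement that $A\ge \lambda(I-E)$ in the cb-sense, which (using $T_sE=ET_s=E$) translates to $\|T_s-E:L_2\to L_2\|_{cb}\le e^{-\lambda s}$ for every $s\ge 0$. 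Finally, the identities $T_{1/2}E=ET_{1/2}=E$ produce the factorization
\[
T_t-E\lel T_{1/2}\kla T_{t-1}-E\mer T_{1/2},
\]
and composing cb-norms delivers
\[
\|T_t-E:L_1\to L_{\infty}\|_{cb}\kl c^{1/2}\cdot e^{-\lambda(t-1)}\cdot c^{1/2}\lel c e^{\lambda}\cdot e^{-\lambda t},
\]
i.e. the desired bound with $C(d,\lambda):=c e^{\lambda}$.

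The main subtle point is the cb-norm identity $\|T_{1/2}:L_1\to L_2\|_{cb}=\|T_{1/2}:L_2\to L_{\infty}\|_{cb}$, which is not an obvious consequence of operator-norm duality in the noncommutative setting; this is exactly what the cb-version of Lemma \ref{cbc} (via the operator-space structure on $S_2(L_1(M))$) is designed to supply. Once the ultracontractivity assumption (i) has been converted into these two $L_2$-boundedness data, and the cb-spectral gap from (ii) has been extracted, everything else reduces to composing three cb-bounds along the factorization $T_{1/2}\circ(T_{t-1}-E)\circ T_{1/2}$.
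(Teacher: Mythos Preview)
Your proof is correct and follows essentially the same route as the paper. The only cosmetic difference is that the paper first bounds $\|(T_t-E):L_1\to L_2\|_{cb}$ via $T_t-E=T_{t-1/4}(I-E)\circ T_{1/4}$ and then squares using Lemma~\ref{cbc}(ii), whereas you factor symmetrically as $T_{1/2}(T_{t-1}-E)T_{1/2}$ and compose three cb-bounds directly; the two organizations are equivalent once one has Lemma~\ref{cbc} and the duality/self-adjointness giving $\|T_{1/2}:L_1\to L_2\|_{cb}=\|T_{1/2}:L_2\to L_{\infty}\|_{cb}$.
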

\begin{proof} First, note that $T_t(I-E) = T_t-E$ and $\|I-E: L_{\infty}(M)\to L_{\infty}(M)\|_{cb}\kl 2$. Then the estimate for $t\le 1$ follows from the assumption i). For $t\gl 1/2$, we use functional calculus that
 \[ \|T_{t-1/4}(I-E)\|_{cb}\kl e^{-\la(t-1/4)} \pl .\]
Thus we obtain
\begin{align*}
 \|(T_t-E):L_1\to L_2\|_{cb} &\kl
 \|T_{t-1/4}(I-E):L_2(M)\to L_2(M)\|_{cb} \|T_{1/4}:L_1(M)\to L_2(M)\|_{cb} \\
 & \kl e^{-\la(t-1/4)} \|T_{1/2}:L_1(M)\to L_\infty(M)\|^2_{cb} \kl 2^dc e^{\la/4} e^{-\la t} \pl .
 \end{align*}
Applying Lemma \ref{cbc} yields the assertion for $t'=2t\gl 1$.
\qd

\subsection{From ultracontractivity to gradient estimates}In this part we apply kernel estimate to functional calculus of the generator $A$ and the semigroup $T_t$. Recall that $(I-T_t)$ is a generator a of semigroup. For a positive function $F$, we can define a new generator \[ \Phi_F(A) \lel \int_0^{\infty} (I-T_t) F(t) \frac{dt}{t} \pl,\] provided the integral is well-defined. Then the gradient form of $\Phi_F(A)$ is given by
 \[ \Gamma_{\phi_F(A)}(x,y)  \lel \int_0^{\infty} \Gamma_{I-T_t}(x,y) F(t) \frac{dt}{t} \pl .\]
We also define the modified Laplace transform
 \[ \phi_F(\la) \lel \int_0^{\infty} (1-e^{-t\la}) F(t) \frac{dt}{t} \pl. \]
For positive $F$ we may use the integrability (I), quasi-monotonicity (QM), or the well-known $(\Delta_2)$ conditions:
\begin{enumerate}
\item[(I)] $C_F:=\int \min(1,t) F(t) \frac{dt}{t}<\infty$;
\item[(QM)] For some $0<\mu<1$, there exists $C_\mu>0$ such that $F(\mu t)\kl C_\mu F(t)$ for all $t>0$.
\item[($\Delta_2$)] There exists $0<\al<1$, $t_{\al}>0$, $c_\al>0$ such that $F(\mu t) \kl c_{\al} \mu^{-\al} F(t)$ for $t_\al\le \mu t\le t$.
\end{enumerate}
Since $1-e^{-\la t}\sim \min(1,\la t)\kl (1+\la) \min(1,t)$ we deduce that
 \[ \phi_F(\la) \kl C_F (1+\la)   \]
and hence
 \[ \Phi_F(A)\kl C_F(I+A) \pl.\]
Then $\Phi_F(A)$ is a closable operator well-defined on the domain of $A$, and hence according to our assumptions also defined on the dense subalgebra $\A$.

\begin{rem} {\rm Our calculus is closely related to the theory of symmetric positive definite functions on $\rz$, which can be represented as
 \[ \psi_G(\la) \lel \int_{\rz} (1-\cos(s\la)) G(s) \frac{ds}{s} \pl ,\]
where $G$ is a positive function such that \[\displaystyle \int_{\rz} \min(1,s^2)G(s)\frac{ds}{s}=4\int_0^{\infty} \min(1,t)G(\sqrt{t}) \frac{dt}{t}<\infty\pl.\] Let $g$ be a gaussian distribution. Then we obtain a randomized new positive definite function
 \[ \tilde{\psi}_G(\la) \lel \ez \psi(g\la) \lel \int_{\rz} (1-e^{-s^2\la^2}) G(s) \frac{ds}{s}
 \lel 2\int_0^\infty (1-e^{-t\la^2}) G(\sqrt{t}) \frac{dt}{t}   \pl .\]
Thus for any positive definite function $\psi_G$, $\phi(\la)=\tilde{\psi}(\sqrt{\la})$ gives the function in our generator calculus.
}\end{rem}
In particular all the subordinated generator are example of our calculus.
\begin{exam}\label{aal} {\rm Let $0<\al<1$ and $F_{\al}(t)=c(\al) t^{-\al}$. Then \[ \phi_{\al}(\la) \lel c(\al) \int_0^{\infty} (1-e^{-\la t}) t^{-\al} \frac{dt}{t} \lel  \la^{\al}  c(\al) \int_0^{\infty} (1-e^{-s}) s^{-\al} \frac{ds}{s} \lel \la^{\al}\]
holds for a suitable choice of the normalization $c(\al)$. Here $F_{\al}$ actually corresponds to the positive definite function $\psi(x)=x^{2\al}$. It is clear that $F_\al$ satisfies the condition (I) and (QM). We refer to \cite{TM} for monotonicity results overlapping with our approach.}
\end{exam}
Let us now fix an $F$ satisfy the condition (I) and (QM). A key technical tool for our estimates is the following family of unital completely positive map,
 \[ \Psi_F(r) \lel g(r)^{-1}\int_0^{\infty} e^{-r/t}T_t F(t) \frac{dt}{t} \pl, \pl r>0\pl,\]
where $g(r)$ is the normalization constant given by
 \[ g(r) \lel \int_0^{\infty} e^{-r/t} F(t) \frac{dt}{t}  \pl . \]
\begin{lemma}\label{limit} Let $T_t$ be a semigroup of unital completely positive, self-adjoint maps. Suppose the generator $A$ satisfy $\Gamma$-regularity and $F$ satisfies ${\rm (}I{\rm )}$. Then
\begin{enumerate}
\item[i)] For $r\gl s$, $g(r)\le g(s)$ and $g(r)\Psi_F(r)\le_{cp} g(s)\Psi_F(s)$.
\item[ii)] $\displaystyle \lim_{r\to 0} g(r)(I-\Psi_F(r))=\Phi_F(A)$.
\item[iii)] $\displaystyle \lim_{r\to 0} g(r)\Gamma_{I-\Psi_F(r)}\lel \Gamma_{\Phi_F(A)}$.
\end{enumerate}In particular,
 $\Phi_F(A)$ satisfies $\Gamma$-regularity.
\end{lemma}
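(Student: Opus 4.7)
The plan for (i) is immediate from pointwise monotonicity of $r \mapsto e^{-r/t}$. For $r \ge s$,
\[
g(s) - g(r) = \int_0^\infty (e^{-s/t} - e^{-r/t}) F(t)\, \frac{dt}{t} \ge 0,
\]
and
\[
g(s)\Psi_F(s) - g(r)\Psi_F(r) = \int_0^\infty (e^{-s/t} - e^{-r/t})\, T_t\, F(t)\, \frac{dt}{t}
\]
is a positive-weight integral of the completely positive maps $T_t$, hence nonnegative in the $\le_{cp}$ order.

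For (ii), the key algebraic identity is obtained by writing $g(r)\cdot I = \int_0^\infty e^{-r/t} I\, F(t)\,dt/t$ and subtracting:
\[
g(r)(I - \Psi_F(r)) = \int_0^\infty e^{-r/t}(I - T_t)\, F(t)\, \frac{dt}{t}.
\]
As $r \to 0^+$ the weight $e^{-r/t}$ increases to $1$ pointwise. For $x\in \mathcal{A} \subset \mathrm{dom}(A)$, the estimate $\|(I - T_t)(x)\|_2 \le \min(2\|x\|_2,\, t\|Ax\|_2)$, combined with condition (I), supplies an integrable envelope. Dominated convergence in $L_2(M)$ then gives $g(r)(I - \Psi_F(r))x \to \Phi_F(A)x$.

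For (iii), linearity of $B \mapsto \Gamma_B$ in the generator, visible directly from the defining formula, yields the $L_1(M)$ identity
\[
g(r)\, \Gamma_{I - \Psi_F(r)}(x,x) = \int_0^\infty e^{-r/t}\, \Gamma_{I-T_t}(x,x)\, F(t)\, \frac{dt}{t}.
\]
I would again apply dominated convergence, this time in $L_1(M)$, using the bound $\|\Gamma_{I-T_t}(x,x)\|_1 \le C(x)\min(1,t)$ for $x\in \mathcal{A}$: the $O(1)$ bound for large $t$ comes from $\|I - T_t\|_{L_p\to L_p} \le 2$ applied to each of the three summands in $\Gamma_{I-T_t}(x,x)$, and the $O(t)$ bound near zero comes from $\|(I - T_t)y\|_p \le t\|Ay\|_p$ for $y \in \{x^*,x,x^*x\}\subset \mathcal{A}$. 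Letting $r \to 0^+$ produces $\Gamma_{\Phi_F(A)}(x,x)$. The final $\Gamma$-regularity assertion then follows because $\Gamma_{\Phi_F(A)}(x,x)$ is a positive element of $L_1(M)$ with $\tau(\Gamma_{\Phi_F(A)}(x,x)) = \tau(\Phi_F(A)(x^*)x) = \|\Phi_F(A)^{1/2}x\|_2^2 < \infty$ for $x$ in the form domain, which by $\Phi_F(A) \le C_F(I+A)$ contains $\mathrm{dom}(A^{1/2})$ and in particular $\mathcal{A}$.

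The main obstacle is establishing the two-sided bound $\|\Gamma_{I-T_t}(x,x)\|_1 \le C(x)\min(1,t)$ with explicit constants, so that the weight $F(t)/t$ from condition (I) produces an integrable envelope on both ends of the integration range. Once this uniform estimate is in hand, everything else reduces to linearity of $\Gamma_{(\cdot)}$ in the generator and a standard dominated-convergence argument, with the $r$-monotonicity of part (i) available as a backup for a monotone-convergence variant if needed.
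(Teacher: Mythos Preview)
Your proof is correct and follows essentially the same approach as the paper's. The only differences are presentational: for (ii) the paper works via spectral calculus, proving $\lim_{r\to 0} g(r)\psi_r(\lambda)=\phi_F(\lambda)$ for the scalar symbol and then integrating against the spectral measure $d\mu_x$, whereas you work directly in $L_2(M)$ using $\|(I-T_t)x\|_2\le \min(2\|x\|_2,\,t\|Ax\|_2)$; for (iii) the paper splits the integral explicitly at $t=1$ (noting $\int_0^1 F(t)\,dt<\infty$ and $\int_1^\infty F(t)\,dt/t<\infty$ separately), while you package the same two regimes into the single envelope $\|\Gamma_{I-T_t}(x,x)\|_1\le C(x)\min(1,t)$. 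Your version is arguably more direct and in fact makes the limit in (iii) more explicit than the paper's argument, which focuses on showing $\Gamma_{\Phi_F(A)}(x,x)\in L_1(M)$ and leaves the convergence somewhat implicit.
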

\begin{proof} Let $r\gl s>0$. Then obviously $e^{-r/t}\le e^{-s/t}$ and hence
 \[g(r)\le g(s) \pl, \pl g(r)\Psi(r) \le_{cp}\pl  g(s) \Psi(s)\]
 Since $\Psi(r)$ is completely positive and self-adjoint, then $A_r:=g(r)(I-\Psi(r))$ is a generator of a semigroup of unital completely positive and self-adjoint maps.
 Let us consider the function
 \[ \psi_r(\la) \lel g(r)^{-1} \int e^{-r/t} (1-e^{-\la t}) F(t) \frac{dt}{t} \pl. \]
Using
 \[ (1-e^{-\la t})\kl \min(1,\la t) \kl (1+\la) \min(1,t) \]
we deduce from the Dominated Convergence Theorem that
 \[ \lim_{r\to 0} g(r)\psi_r(\la) \lel \Phi_F(\la)   \pl .\]
Note that $g(r)\phi_r\le g(s)\phi_s$ if $r\ge s$. Applying the monotone convergence theorem, we have that for any $x\in L_2(M)$
 \[ \lim_{r\to 0} \lan x,g(r)\psi_r(A)x\ran
 \lel \lim_{r\to 0} \int_0^{\infty} g(r)\psi(r)(\la) d\mu_x(\la) \lel \lan x,\Phi(A)x\ran \pl .\]
 Here $d\mu_x(\la)$ is the spectral measure $d \lan x, 1_{A\le \la} x\ran$. We split the function $\Phi_F$ into two pieces
 \[ \Phi_F(\la) \lel \Phi_F'(\la)+\Phi_{F}''(\la)
 \lel \int_0^1 \frac{1-e^{-\la t}}{t} F(t) dt
 + \int_1^\infty  (1-e^{-\la t}) F(t)\frac{dt}{t}  \pl .\]
According to our assumption $\displaystyle \int_0^1 F(t)dt, \int_1^\infty F(t)\frac{dt}{t}$ are finite. For any $x,y\in \A$, $\frac{1}{t}\Gamma_{1-T_t}(x,y)$ is uniformly bounded in $L_1(M)$. Therefore the gradient form of $\Phi_F'(A)$ is well-defined on $\A$ and has range in $L_1(M)$. On the other hand, the map $T_t:M\to M$ is complete positive and bounded from $L_{\infty}(M)$ to itself. Then $\Phi''_F(A)$ converges and the gradient forms $\Gamma_{\Phi''_F(A)}$ take range in $L_{\infty}(M)$ and in particular also in $L_1(M)$.\qd

A direct consequence of the above lemma is as follows:
\begin{cor}\label{ccc} For every $r>0$,
  \[g(r) \Gamma_{I-\Psi_F(r)} \kl \Gamma_{\Phi_F(A)} \pl ,\pl
  g(r)\mathcal{I}_{I-\Psi_F(r)}\kl \mathcal{I}_{\Phi_F(A)} \pl .\]
\end{cor}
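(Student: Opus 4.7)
The plan is to exploit the fact that both the gradient form and the Fisher information are linear in the generator, and to recognize that the difference $\Phi_F(A)-g(r)(I-\Psi_F(r))$ is itself the generator of a semigroup of completely positive, unital, self-adjoint maps, so its gradient form is completely positive and its Fisher information is nonnegative. The bound then drops out by adding a nonnegative term.

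First I would write out the explicit decomposition. Since $g(r)=\int_0^\infty e^{-r/t}F(t)\tfrac{dt}{t}$, we have
$$g(r)(I-\Psi_F(r))(x)\;=\;\int_0^\infty e^{-r/t}(I-T_t)(x)\,F(t)\,\frac{dt}{t},$$
and hence, comparing with $\Phi_F(A)(x)=\int_0^\infty (I-T_t)(x)F(t)\tfrac{dt}{t}$,
$$R_r\;:=\;\Phi_F(A)-g(r)\bigl(I-\Psi_F(r)\bigr)\;=\;\int_0^\infty \bigl(1-e^{-r/t}\bigr)(I-T_t)\,F(t)\,\frac{dt}{t}.$$
Since $1-e^{-r/t}\le \min(1,r/t)$, the weight $(1-e^{-r/t})F(t)/t$ is nonnegative and integrable against $(I-T_t)$ on $\A$: for the small-$t$ part one uses condition $(I)$ exactly as in the proof of Lemma \ref{limit}, while for the large-$t$ part $\|T_t\|\le 1$ together with $\int_1^\infty F(t)\tfrac{dt}{t}<\infty$ suffices. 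Thus $R_r$ is a convex-integral combination of the generators $I-T_t$, each of which generates a semigroup of CP unital self-adjoint maps, so $R_r$ inherits the same property.

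Now I invoke the linearity of the gradient form in its generator, namely $\Gamma_{A_1+A_2}=\Gamma_{A_1}+\Gamma_{A_2}$, which gives
$$\Gamma_{\Phi_F(A)}\;=\;g(r)\,\Gamma_{I-\Psi_F(r)}\;+\;\Gamma_{R_r}.$$
Any generator of a CP unital self-adjoint semigroup has a completely positive gradient form, either directly from the definition (cf.\ Lemma \ref{grad1}) applied to each $I-T_t$ and then integrated against the positive measure $(1-e^{-r/t})F(t)\tfrac{dt}{t}$. Therefore $\Gamma_{R_r}\ge_{cp}0$, which yields the first asserted inequality $g(r)\Gamma_{I-\Psi_F(r)}\le_{cp}\Gamma_{\Phi_F(A)}$.

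The second inequality follows from the same decomposition together with linearity of the Fisher information in the generator,
$$\mathcal{I}_{\Phi_F(A)}(\rho)\;=\;g(r)\,\mathcal{I}_{I-\Psi_F(r)}(\rho)\;+\;\mathcal{I}_{R_r}(\rho),$$
and the nonnegativity of entropy production: for any trace-preserving CP unital semigroup $e^{-tR_r}$ the map $t\mapsto D(e^{-tR_r}(\rho)\|\tau)$ is nonincreasing by the data-processing inequality, and a direct computation as in the proof of Proposition \ref{dmu} shows that its derivative at $t=0$ is $-\mathcal{I}_{R_r}(\rho)$, hence $\mathcal{I}_{R_r}(\rho)\ge 0$. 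The only point that needs care is the convergence/domain assertion for the integral defining $R_r$, but that is already handled by the same splitting as in Lemma \ref{limit}; everything else is a one-line consequence of linearity plus positivity.
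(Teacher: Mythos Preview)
Your proof is correct and takes a genuinely different route from the paper's. The paper argues indirectly: it uses Lemma~\ref{limit}(i) to see that $r\mapsto g(r)\Psi_F(r)$ is monotone in the cp-order, then invokes the module-theoretic Lemma~\ref{grad1}(ii) to transfer this monotonicity to $r\mapsto g(r)\Gamma_{I-\Psi_F(r)}$, and finally appeals to the limit identity Lemma~\ref{limit}(iii) to conclude; the Fisher inequality is then read off from the gradient inequality via Corollary~\ref{Fisher}. Your argument bypasses all of this by writing the explicit remainder $R_r=\int_0^\infty(1-e^{-r/t})(I-T_t)F(t)\,dt/t$ and observing that it is a nonnegative superposition of generators $I-T_t$, so that linearity plus the elementary positivity $\Gamma_{I-T_t}\ge_{cp}0$ (which follows from $2\Gamma_{I-T}(x,x)=(x-T(x))^*(x-T(x))+T(x^*x)-T(x)^*T(x)$ and Kadison--Schwarz) and $\mathcal{I}_{I-T_t}\ge 0$ give the result immediately. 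Your approach is more direct and avoids both the infimum characterization in Lemma~\ref{grad1} and the limit argument; the paper's route, on the other hand, packages the comparison principle once and for all in Lemma~\ref{grad1}(ii), which is reused elsewhere.
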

\begin{proof}The previous lemma shows that $g(r)\Psi_F(r)$ is decreasing in $cp$-order as $r\to 0$. By Lemma \ref{grad1}, the gradient $g(r)\Gamma_{I-\Psi_F(r)}$ is also decreasing in $cp$-order. Then the assertion follows from the limit
\[\lim_{r\to 0} g(r)\Gamma_{I-\Psi_F(r)}=\Gamma_{\Phi_F(A)}\pl.\]
and Corollary \ref{Fisher}.
\end{proof}
We say a self-adjoint semigroup $T_t$ is \emph{ergodic} if its fixed-point subalgebra $\N=\mathbb{C}1$. In this situation, the conditional expectation is the trace $E_\tau(x)=\tau(x)1$ and the kernel is $K_E=1\ten 1$ in $M^{op}\overline{\ten} M$.
\begin{prop}\label{decay} Assume that a semigroup $(T_t)$ of unital completely positive and self-adjoint maps satisfy the conclusion of Proposition \ref{sfc} with respect to $E_\tau(x)=\tau(x)1$. Then there exists a $r_0>0$ such that for all $r\ge r_0$,
\begin{enumerate}
\item[i)] $\|\Psi_F(r)-E_{\tau}:L_1(M)\to L_{\infty}(M)\|_{cb}\kl 1/2$;
\item[ii)] $E_{\tau} \le_{cp} 2 \Psi_F(r)$;
\item[iii)] $g(r)\Gamma_{I-E_{\tau}}\le_{cp} 2 \Gamma_{\Phi_F(A)}$.
\end{enumerate}
\end{prop}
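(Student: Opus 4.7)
My strategy is to view (i) as the only genuinely analytic statement, and to obtain (ii) and (iii) from it by short formal arguments using the kernel description of completely bounded maps and the machinery built earlier in the section. The $\Gamma\E$-type conclusion in (iii) will follow from a $cp$-order comparison on the unital completely positive maps $E_\tau$ and $\Psi_F(r)$ via Lemma \ref{grad1} ii), combined with Corollary \ref{ccc}.

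For (i) I would start from
\[ \Psi_F(r)-E_\tau \lel g(r)^{-1}\int_0^\infty e^{-r/t}(T_t-E_\tau)F(t)\frac{dt}{t}, \]
and use the triangle inequality for the cb-norm together with the kernel bound of Proposition \ref{sfc}. Writing $M(t):=\|T_t-E_\tau:L_1(M)\to L_\infty(M)\|_{cb}$, I split at $t=1$. On $(0,1]$ I use the elementary pointwise estimate $\sup_{t\in(0,1]} e^{-r/t}t^{-d/2}\le e^{-r}$, valid once $r\ge d/2+1$, together with condition (I) to bound the small-$t$ contribution by a constant times $e^{-r}$. On $[1,\infty)$ I split further at $t=\sqrt r$: for $t\in[1,\sqrt r]$, $e^{-r/t}\le e^{-\sqrt r}$; for $t\ge \sqrt r$, the factor $e^{-\lambda t}$ from $M(t)\le C(d,\lambda) e^{-\lambda t}$ is at most $e^{-\lambda\sqrt r}$, and (I) again makes the remaining integrals finite. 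The numerator therefore decays like $e^{-c\sqrt r}$. For the denominator I will use (QM) (iterated to yield $F(\mu^k t)\le C_\mu^k F(t)$) to deduce a polynomial lower bound on $g(r)$ of the form $g(r)\ge c_0 r^{-\alpha}$ by restricting the defining integral to a suitable interval $[r,r/\mu]$, where $e^{-r/t}$ is bounded below by a constant. Combining these two estimates shows $\|\Psi_F(r)-E_\tau\|_{cb,L_1\to L_\infty}/1\le 1/2$ once $r$ is larger than some $r_0=r_0(F,d,\lambda,c)$.

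With (i) in hand, (ii) is purely structural. By \eqref{ER} the operator $\Psi_F(r)-E_\tau$ corresponds to a self-adjoint kernel $K_{\Psi_F(r)}-K_{E_\tau}\in M^{op}\bar\otimes M$, and its operator norm equals the $cb$-norm $L_1\to L_\infty$. Since $K_{E_\tau}=1\otimes 1$ is the unit of $M^{op}\bar\otimes M$, (i) gives
\[ -\tfrac{1}{2}\,1\otimes 1 \kl K_{\Psi_F(r)}-1\otimes 1 \kl \tfrac{1}{2}\,1\otimes 1,\]
so $K_{\Psi_F(r)}\gl \tfrac{1}{2}K_{E_\tau}$, i.e.\ $E_\tau\le_{cp} 2\Psi_F(r)$.

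For (iii) I apply Lemma \ref{grad1} ii) with $\lambda=\tfrac{1}{2}$, $T_1=E_\tau$, $T_2=\Psi_F(r)$ (both unital completely positive), which yields $\tfrac{1}{2}\Gamma_{I-E_\tau}\le_{cp}\Gamma_{I-\Psi_F(r)}$. Multiplying by $g(r)$ and invoking Corollary \ref{ccc}, which gives $g(r)\Gamma_{I-\Psi_F(r)}\le_{cp}\Gamma_{\Phi_F(A)}$, I obtain $g(r)\Gamma_{I-E_\tau}\le_{cp}2\Gamma_{\Phi_F(A)}$, finishing (iii). The main obstacle, and the only place where the hypotheses of this section are really used, is the polynomial lower bound on $g(r)$: the numerator decay provided by Proposition \ref{sfc} is sub-exponential, while $g(r)$ may itself tend to zero with $r$, and extracting a workable rate seems to require (QM) (or the slightly weaker ($\Delta_2$)) rather than (I) alone.
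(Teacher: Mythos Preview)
Your reductions for (ii) and (iii) match the paper's exactly: pass from the $cb$-norm bound to the kernel inequality $K_{\Psi_F(r)}\ge\tfrac12\, 1\otimes 1=\tfrac12 K_{E_\tau}$, then invoke Lemma \ref{grad1} ii) and Corollary \ref{ccc}. The analytic work is all in (i), and there your route is genuinely different.

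The paper does not bound numerator and denominator separately. It makes the substitution $u=r/t$ throughout, writing $g(r)=\int_0^\infty e^{-u}F(r/u)\,\frac{du}{u}$, and arranges each piece of the numerator to be dominated by a small multiple of $g(r)$ itself. For the polynomially bounded range it uses $u^{d/2}e^{-u}\le c(d,\mu)e^{-\mu u}$ together with a \emph{single} application of (QM) in the form $F(r/u)\le C_\mu F(r/(\mu u))$; after the further substitution $w=\mu u$ the resulting integral is recognised as at most $g(r)$, giving the bound $C_\mu\, c(d,\mu)\, r^{-d/2}\, g(r)$. For the exponentially bounded range one factors out $e^{-\la r}$ and is again left with at most $g(r)$. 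The $g(r)$ in the denominator then cancels and no lower bound on $g(r)$ is ever needed. Your approach reaches the same conclusion by producing an absolute $e^{-c\sqrt r}$ bound on the numerator and then extracting a separate polynomial lower bound $g(r)\ge c_0 r^{-\al}$ from iterated (QM). This is correct in outline and conceptually straightforward, but it uses (QM) more heavily (iterated rather than once) and adds an extra step that the paper's change of variables avoids.

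One small gap to patch in your argument: the pointwise bound $\sup_{t\in(0,1]}e^{-r/t}t^{-d/2}\le e^{-r}$ leaves you with $\int_0^1 F(t)\,\frac{dt}{t}$, which condition (I) does \emph{not} control (take $F_\al(t)=t^{-\al}$ from Example \ref{aal}). Absorb the stray $t^{-1}$ into the supremum instead: the same monotonicity argument gives $\sup_{t\in(0,1]}e^{-r/t}t^{-d/2-1}\le e^{-r}$ for $r\ge d/2+1$, and this pairs correctly with $\int_0^1 F(t)\,dt$, which is exactly the small-$t$ part of $C_F$ in (I).
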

\begin{proof} Write $E_\tau=E$. For i) we use the triangle inequality
\begin{align*}
 \|\Psi_F(r)-E\|_{cb}
 &\le g(r)^{-1} \int_0^{\infty} e^{-r/t} \|T_t-E\|_{cb} F(t) \frac{dt}{t} \\
 &\le g(r)^{-1}(2c \int_0^r  e^{-r/t} t^{-d/2} F(t)\frac{dt}{t}+ C(d,\la)
 \int_r^{\infty} e^{-r/t} e^{-\la t} F(t) \frac{dt}{t})  \\
 &:=  g(r)^{-1} ( 2c \text{I}+ C(d,\la) \text{II}) \pl.
 \end{align*}
 Then the condition (QM) of $F$ implies that for some some $0<\mu<1$,
 \begin{align*}
 I &= r^{-d/2} \int_1^{\infty} F(r/u) u^{d/2} e^{-u} \frac{du}{u}\\ &\kl r^{-d/2}c(d,\mu)
 \int_1^{\infty} F(\mu r/\mu u)  e^{-\mu u} \frac{du}{u} \\
 &= r^{-d/2} c(d,\mu) C_\mu \int_{\mu}^{\infty}    F(r/w)    e^{-w} \frac{dw}{w} \kl
  r^{-d/2} c(d,\mu)  C_\mu  g(r) \pl .
 \end{align*}
Here we have used the change of variables $u=r/t$ and $w=\mu u$. Indeed, the same change of variable shows that
 \[ g(r) \lel \int_0^{\infty} F(r/u) e^{-u} \frac{du}{u} \pl .\]
For the second part we see that
 \begin{align*}
 II &= \int_0^1  e^{-\frac{2\la r}{u}}  F(r/u)  e^{-u} \frac{du}{u} \kl e^{-\la r} \int_0^1 F(r/u) e^{-u} \frac{du}{u} \kl
   e^{-\la r} g(r) \pl .
 \end{align*}
Hence there exists a $r_0$ which only depends to $c, d,C(d,\la)$ and $C_\mu$ so that for all $r \ge r_0$
 \[ \|\Psi_F(r)-E\|_{cb} \kl \frac{1}{2} \pl .\]
 Let $K_r$ be the kernel of $\Psi_F(r)$ in $\M^{op}\overline{\ten} \M$ and recall that $K_E=1\ten 1$ is the kernel of $E$. Then by \eqref{ER},
 \[ \|K_r-1\ten 1\|_{\M^{op}\overline{\ten} \M}\kl \frac{1}{2} \]
which implies $K_r\gl \frac{1}{2}1\ten 1= 2K_E$. The other assertions follow from Lemma \ref{grad1} and Corollary \ref{ccc}.
\qd

\begin{rem}{\rm The polynomial decay $t^{-d/2}$ is not really needed. Indeed, if
 \[ \|T_t:L_1\to L_{\infty}\|\kl C_{\al}  e^{c_{\al}t^{-\al}} \]
holds for some $\al<1$ and  ($\Delta_\mu$) for all $t>0$. We choose $\beta>0$ such that $C_{\al}C_F(\mu)e^{-\frac{1-\mu}{2}\beta}\le \frac14$ and choose $r$ large enough so that $c_{\al}r^{-\al}\le \frac{1-\mu}{2}\beta^{1-\al}$.
Then we find that
 \begin{align*}
  I_{\beta} &\le  C_F(\mu) C_{\al} \int_{\beta}^{\infty} e^{u^{\al}(c_{\al}r^{-\al}-(1-\mu)u^{1-\al})} F(r/\mu u) e^{-\mu u} \frac{du}{u} \\
  &\le C_F(\mu) C_{\al} e^{-\frac{1-\mu}{2}\beta}
 \int_\mu^{\infty} F(r/w) e^{-w} \frac{dw}{w} \pl .
 \end{align*}
Using a small modification in Proposition \ref{sfc}, the spectral gap allows us to estimate $g(r) II_{\beta}\kl C(\beta,\la)e^{-\frac{r\la}{\beta}}$.
}\end{rem}

Motivated by the discussion above, let us introduce
 \[ t_0 \lel \inf\{ t \pl |  \pl \|k_{T_t}-1\ten 1\|\kl 1/2\} \pl .\]
\begin{prop}\label{lowest} Assume that $F$ satisfies $(\Delta_2)$.  Let $r=\max\{t_0,t_{\al}\}$. Then
 \[ \Gamma_{\Phi_F(A)}\gl \frac{F(r)}{2\al c_{\al}}\Gamma_{I-E} \pl.  \]
\end{prop}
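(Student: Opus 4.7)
The plan is to combine three ingredients already established in the paper: a pointwise (in $t$) lower bound on $\Gamma_{I-T_t}$ coming from the definition of $t_0$ and the kernel description, the $(\Delta_2)$ growth control on $F$ which turns a single time-slice estimate into an integrated one, and the integral representation $\Gamma_{\Phi_F(A)} = \int_0^\infty \Gamma_{I-T_t}\, F(t)\,\frac{dt}{t}$.

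First, I would unpack the definition of $t_0$ at the kernel level. For $t \geq t_0$, $\|k_{T_t} - 1\otimes 1\| \leq 1/2$, and since $T_t$ is selfadjoint the kernel $k_{T_t}\in M^{op}\bar\otimes M$ is selfadjoint. Hence $k_{T_t} \geq \frac{1}{2}(1\otimes 1) = \frac{1}{2}k_E$ in the operator order. Via the correspondence $\mathrm{CB}(L_1(M),M)=M^{op}\bar\otimes M$ (where complete positivity of the map is equivalent to positivity of the kernel) this yields $\frac{1}{2}E \leq_{cp} T_t$. Applying Lemma~\ref{grad1}~ii) then gives the pointwise gradient inequality
\[
\tfrac{1}{2}\,\Gamma_{I-E} \;\leq_{cp}\; \Gamma_{I-T_t}\qquad\text{for all }t\geq t_0,
\]
and in particular for all $t\geq r$, since $r\geq t_0$.

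Next, I would use $(\Delta_2)$ to obtain a lower bound on $F(t)$ for $t\geq r$. For such $t$, set $\mu = r/t\leq 1$; the pair $\mu t = r$, $t$ satisfies $t_\alpha \leq r \leq t$, so $(\Delta_2)$ gives $F(r) \leq c_\alpha (r/t)^{-\alpha} F(t)$, i.e.\ $F(t) \geq \frac{F(r)}{c_\alpha}(r/t)^{\alpha}$. Integrating,
\[
\int_r^\infty F(t)\,\frac{dt}{t} \;\geq\; \frac{F(r)\, r^\alpha}{c_\alpha}\int_r^\infty \frac{dt}{t^{\alpha+1}} \;=\; \frac{F(r)}{\alpha\, c_\alpha}.
\]

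Finally, restricting the integral defining $\Gamma_{\Phi_F(A)}$ to $[r,\infty)$ (which is legitimate because each $\Gamma_{I-T_t}$ is completely positive) and plugging in the two bounds above yields
\[
\Gamma_{\Phi_F(A)} \;\geq\; \int_r^\infty \Gamma_{I-T_t}\,F(t)\,\frac{dt}{t} \;\geq\; \frac{\Gamma_{I-E}}{2}\int_r^\infty F(t)\,\frac{dt}{t} \;\geq\; \frac{F(r)}{2\alpha\, c_\alpha}\,\Gamma_{I-E}.
\]
The main subtlety — and really the only one — is step one: converting the operator-norm control $\|k_{T_t}-1\otimes 1\|\leq 1/2$ into the \emph{completely positive} dominance $\tfrac{1}{2}E \leq_{cp} T_t$. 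This relies on the selfadjointness of $k_{T_t}$ (so that norm control implies two-sided operator-order control) together with the identification of completely positive maps with positive kernels under $\mathrm{CB}(L_1(M),M)=M^{op}\bar\otimes M$ recalled in Section~3.1; everything else is a clean application of $(\Delta_2)$ and monotone integration.
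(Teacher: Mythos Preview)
Your proof is correct and follows essentially the same approach as the paper: derive $\tfrac{1}{2}E\le_{cp}T_t$ from the kernel estimate for $t\ge t_0$, apply Lemma~\ref{grad1}~ii) to get $\tfrac{1}{2}\Gamma_{I-E}\le_{cp}\Gamma_{I-T_t}$, and then use $(\Delta_2)$ to bound $\int_r^\infty F(t)\,dt/t$ from below. You actually spell out the kernel-to-cp-order step more carefully than the paper does.
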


\begin{proof} Recall that $\Gamma_B$ is additive with respect to $B$. Therefore we deduce from $\|K_t-1\ten 1\|\kl 1/2$  and Lemma \ref{grad1}  that $\Gamma_{I-T_t} \gl \frac{1}{2} \Gamma_{I-E}$ holds for $t\gl t_0$. Then we note that the $(\Delta_2)$ condition implies
 \[ F(r)\lel F(\frac{r}{t}t)\kl c_{\al} (t/r)^{\al}  F(t)   \]
for all $r\le t$.  Therefore,
 \begin{align*}
 \Gamma_{\Phi_F(A)}
 &= \int_0^\infty \Gamma_{I-T_t} F(t) \frac{dt}{t}
  \ge \frac{\Gamma_{I-E}}{2}
 \int_{r}^{\infty} F(t) \frac{dt}{t}
   \ge
 \frac{\Gamma_{I-E}}{2c_{\al}} F(r) r^{\al} \int_{r}^{\infty} t^{-(1+\al)} dt \lel
   \frac{\Gamma_{I-E}}{2\al c_{\al}} F(r) \p ,
 \end{align*}
 which completes the proof.\qd

\begin{theorem}\label{erg} Let $T_t$ be an ergodic semigroup of completely positive self-adjoint maps such that
 \begin{enumerate}
 \item[i)] $\|T_t:L_1(M)\to L_{\infty}(M)\|_{cb}\le c t^{-d/2}$ for $0\le t\le 1$ and  $c,d>0$.
 \item[ii)] the generator $A$ has a spectral gap $\si_{\min}$.
 \end{enumerate}
Let $F$ be a function satisfying the condition (I)+(QM) or (I)+($\Delta_2$). Then $\Phi_F(A)$ satisfies $\la$-$\Gamma \E$ and hence $\la$-CLSI for some $\la$ depending on $c,d, F$ and $\si_{\min}$.
\end{theorem}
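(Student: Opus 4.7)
The plan is to assemble the three preparatory results (Propositions \ref{sfc}, \ref{decay}, and \ref{lowest}) together with Corollary \ref{kkeeyy}. The goal is to produce $\lambda$-$\Gamma\E$ for the subordinated generator $\Phi_F(A)$; the $\lambda$-CLSI conclusion is then automatic from Corollary \ref{kkeeyy}.

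First, I verify the hypotheses of Proposition \ref{sfc}. Ergodicity gives $N = \cz 1$ and $E_N = E_\tau$, so the ultracontractivity bound (i) of the theorem coincides with (i) of \ref{sfc}. The operator-norm spectral gap $A \ge \sigma_{\min}(I - E_\tau)$ combined with the fact that $A$ generates a unital completely positive self-adjoint semigroup furnishes hypothesis (ii) in its cb-form (see the remark below). Proposition \ref{sfc} thus yields
\[
\|T_t - E_\tau : L_1(M) \to L_\infty(M)\|_{cb} \kl \begin{cases} 2c\, t^{-d/2}, & 0 < t \kl 1, \\ C(d,\sigma_{\min})\, e^{-\sigma_{\min} t}, & t \gl 1. \end{cases}
\]

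Next I split on the hypothesis on $F$. If $F$ satisfies (I)+(QM), I invoke Proposition \ref{decay} directly: it produces $r_0 > 0$ (depending only on $c, d, \sigma_{\min}$ and $F$) such that $g(r_0)\,\Gamma_{I - E_\tau} \le_{cp} 2\,\Gamma_{\Phi_F(A)}$, i.e.\ $\lambda$-$\Gamma\E$ with $\lambda = g(r_0)/2 > 0$. If instead $F$ satisfies (I)+($\Delta_2$), I invoke Proposition \ref{lowest}. To apply it I must first know that the return time $t_0 = \inf\{t : \|K_{T_t} - 1 \ten 1\| \le 1/2\}$ is finite; via the Effros--Ruan identification $CB(L_1(M), M) = M^{op} \bar{\ten} M$ recalled in \eqref{ER}, this is exactly the first time at which $\|T_t - E_\tau : L_1 \to L_\infty\|_{cb}$ drops below $1/2$, and the exponential cb-decay above provides such a $t_0 = t_0(c, d, \sigma_{\min})$. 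Setting $r = \max\{t_0, t_\alpha\}$, Proposition \ref{lowest} gives $\Gamma_{\Phi_F(A)} \ge \tfrac{F(r)}{2\alpha c_\alpha}\,\Gamma_{I - E_\tau}$, i.e.\ $\lambda$-$\Gamma\E$ with $\lambda = F(r)/(2\alpha c_\alpha)$. In either case Corollary \ref{kkeeyy} upgrades $\lambda$-$\Gamma\E$ to $\lambda$-CLSI, and in each case $\lambda$ depends only on the data $c, d, F, \sigma_{\min}$ as required.

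The one technical point I anticipate as the main obstacle is the cb-norm formulation of the spectral gap in hypothesis (ii) of Proposition \ref{sfc}. The plain operator-norm gap $A \ge \sigma_{\min}(I - E_\tau)$ on $L_2(M)$ gives $\|T_t - E_\tau : L_2 \to L_2\| \le e^{-\sigma_{\min} t}$; the cb-version follows either from the integral representation $A^{-1}(I - E_\tau) = \int_0^\infty (T_t - E_\tau)\, dt$ together with the cb-contractivity of $T_t - E_\tau$ on $L_\infty$ (so that interpolation between the cb-estimates at $L_\infty$ and the decay on $L_2$ yields a cb-estimate at $L_2$), or directly from Stinespring-type dilations of the self-adjoint semigroup. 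This is standard in the ergodic selfadjoint setting, so the estimates of Proposition \ref{sfc} proceed without change and the remainder of the argument is an application of the preparatory lemmas.
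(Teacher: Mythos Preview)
Your proposal is correct and follows essentially the same route as the paper: feed hypotheses (i)--(ii) into Proposition~\ref{sfc} to obtain the decay estimate for $T_t-E_\tau$, then apply Proposition~\ref{decay} (in the (QM) case) or Proposition~\ref{lowest} (in the ($\Delta_2$) case) to obtain $\lambda$-$\Gamma\E$, and finish with Corollary~\ref{kkeeyy}. The paper's own proof is terser---it cites only Proposition~\ref{decay} and does not separately spell out the ($\Delta_2$) branch---so your explicit case split via Proposition~\ref{lowest} is actually a small improvement in completeness; your reading of the constant $\lambda=g(r_0)/2$ from Proposition~\ref{decay}(iii) is also the correct one. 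On the cb-spectral-gap point you flag: since $L_2(M)$ is a Hilbert space and $T_{t}(I-E_\tau)$ is a bounded operator on it, the cb-norm coincides with the operator norm (this is how Proposition~\ref{sfc} uses it, via functional calculus), so no additional argument is needed there.
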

\begin{proof} Let $E=E_{\tau}$ be the conditional expectation on the trace.
According to Proposition \ref{decay}, we deduce that
 \[ \Gamma_{I-E_{\tau}} \kl 2g(r_0) \Gamma_{\Phi_F(A)}  \]
for some $r_0$ depending on $\si_{\min}$, $c$ and $d$. Then one can choose $\la=\frac{1}{2g(r_0)}$.
\qd

\subsection{Non-ergodic semigroups}\label{ner}
In this part, we want to adapt the kernel techniques for ergodic maps to the non-ergodic situation. This requires more operator space theory from the work in \cite{JP} on vector-valued noncommutative $L_p$-spaces associated with inclusion of von Neumann algebras. As usual we assume that $(T_t):M\to M$ is a semigroup of unital completely positive selfadjoint maps and $N\subset M$ is the fixed-point subalgebra. Whenever $N$ is infinite dimensional, we can longer hope for an ultracontractivity of $T_t:L_1(M)\to L_{\infty}(M)$ for operator norm or $cb$-norm, because the identity map \[ id:L_1(N)\to L_{\infty}(N) \]
is already unbounded. This leads us to use vector-valued $L_p$ norms.
Let $1\le p,q,r\le \infty$ and fix the relation $\frac{2}{r}=|\frac{1}{p}-\frac{1}{q}|$.
Recall that the $L_p(L_q)$ norms for the inclusion $N\subset\M$ is defined as,
 \[ \|x\|_{L_p^q(N\subset M)}
 \lel \begin{cases} \displaystyle\inf_{x=ayb} \|a\|_{L_{2r}(N)}\|y\|_{L_{q}(M)}\|b\|_{L_{2r}(N)} & p\le q  \pl ,\\
 \displaystyle\sup_{\|a\|_{L_{2r}(N)}=\|b\|_{L_{2r}(N)}\le 1}  \|axb\|_{L_{q}(M)} &q\le p \pl.
 \end{cases}  \]
Here for $p\le q$, the infimum takes over all factorization $x=ayb$ with $a,b\in L_{2r}(N), y\in L_q(M)$ and for $p\ge q$, the supremum runs over all $a,b\in L_{2r}(N)$ with $\|a\|_{L_{2r}(N)}=\|b\|_{L_{2r}(N)}\le 1$. The Banach space $L_p^q(N\subset M)$ is then the completion of $M$ with respect to the corresponding norm. It follows from the H\o lder inequality that for $p=q$, $L_p^p(N\subset M)\cong L_p(M)$. These norms have been extensively studied in the quantum information theory and operator space community \cite{GJLR2,BaRo,muller}. For the spacial cases $M=R\overline{\ten}N$ and $M=\mm_k(N) $,
 \[ L_p^q(N\subset R\overline{\ten}N) \lel L_p(R,L_q(N))\pl, L_p^q(N\subset \mm_k(N))=S_p^k(L_q(N)) \pl .\]
which are the vector-valued $L_p$ spaces introduced in \cite{pisier93}. In the following we mention the properties of $L_p^q(N\ssubset M)$ needed in our discussion and refer to \cite{JP} for a detail account of these $L_p$-spaces.  First, we will use a duality relation that the anti-linear trace bracket $(x,y)=\tau(x^*y)$ provides an isometric embedding
\begin{equation}\label{dual}
  L_p^q(N\ssubset M) \subset L_{p'}^{q'}(N\ssubset M)^* \pl,
  \end{equation}
for $1\le p,q\le \infty$, $\displaystyle \frac{1}{p}+\frac{1}{p'}=\frac{1}{q}+\frac{1}{q'}=1$ and it is indeed an equality when for $1<p,q<\infty$.  We will also need the following factorization property that
\begin{equation}\label{pp}
 L_p(M) \lel  L_{2p}(N) L_\infty^{p}(N\ssubset M) L_{2p}(N) \pl
 \end{equation}
 which reads as
 \[\norm{x}{p}=\inf_{x=ayb}\norm{a}{L_{2p}(N)}\norm{y}{L_\infty^{p}(N\subset M)}\norm{b}{L_{2p}(N)}\pl.\]
This can be verified by interpolation. Indeed, it is obvious for $p=\infty$. For $p=1$ let us assume that $x$ is positive and $\tau(x)=1$. Then $\tau(E(x))=1$, and we may write
 \[ x \lel E(x)^{1/2} (E(x)^{-1/2}xE(x)^{-1/2}) E(x)^{1/2} = E(x)^{1/2}yE(x)^{1/2} \pl .\]
Note that for every $a\in L_2(N)$,
 \begin{align*}
 \tau(a^*ya) &= \tau( E(x)^{-1/2}E(x)E(x)^{-1/2}aa^*) \lel \tau(aa^*) \lel \|a\|_2^2 \pl .
 \end{align*}
Using the positivity of $y$ and a Cauchy-Schwarz type argument, it follows that $\|y\|_{L_\infty^1(N\subset M)}\le 1$. This factorization property is closely related  to the following fact.

\begin{lemma}[Lemma 4.9 of \cite{JP}]\label{JP} Let  $x\in M$. Then \[ \|x\|_{L_{\infty}^1(N\subset M)}
 \lel \inf_{x=x_1x_2} \|E(x_1x^*_1)\|_{\infty}^{1/2}\|E(x_2^*x_2)\|_{\infty}^{1/2} \pl .\]
\end{lemma}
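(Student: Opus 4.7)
Plan. I would prove the equality by matching the two inequalities.

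For the easy direction ($\le$), fix any factorization $x=x_1x_2$ in $M$ and test elements $a,b\in N$ with $\|a\|_{L_{2r}(N)},\|b\|_{L_{2r}(N)}\le 1$. Cauchy--Schwarz in $L_2(M)$ gives $\|axb\|_1\le \|ax_1\|_2\|x_2 b\|_2$. Using $a^*a\in N$, the trace identity together with the bimodule property of $E$ yields
\[ \|ax_1\|_2^2\lel \tau(a^*a\,x_1x_1^*)\lel \tau\bigl(a^*a\,E(x_1x_1^*)\bigr)\kl \|a^*a\|_1\|E(x_1x_1^*)\|_\infty \kl \|a\|_{L_{2r}}^2\|E(x_1x_1^*)\|_\infty, \]
where the last step uses $\tau(1)=1$ to get the contraction $L_{2r}(N)\hookrightarrow L_2(N)$. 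The same estimate bounds $\|x_2 b\|_2^2$ by $\|E(x_2^*x_2)\|_\infty$. Taking the supremum over $a,b$ and then the infimum over factorizations delivers the upper bound on $\|x\|_{L_\infty^1(N\subset M)}$.

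For the reverse inequality, I would start from the polar decomposition $x=u|x|$ in $M$ and the natural candidate $x_1=u|x|^{1/2}$, $x_2=|x|^{1/2}$, for which $E(x_1x_1^*)=E(u|x|u^*)=E(|x^*|)$ and $E(x_2^*x_2)=E(|x|)$. In the commutative case $|x^*|=|x|$ and this candidate already attains the infimum; noncommutatively the two $N$-valued densities may differ, so I would allow reweighting by a positive invertible $h\in N$, replacing the candidate by $\tilde x_1=x_1h$ and $\tilde x_2=h^{-1}x_2$ (still $\tilde x_1\tilde x_2=x$), and tune $h$ to equalize the two $\|\cdot\|_\infty$-norms.

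To conclude that the resulting infimum actually coincides with $\|x\|_{L_\infty^1(N\subset M)}$, I would invoke duality. By the trace-pairing identification \eqref{dual}, $(L_\infty^1(N\subset M))^{*}=L_1^\infty(N\subset M)$, and the $p\le q$ branch of the definition gives the explicit factorization norm $\|y\|_{L_1^\infty(N\subset M)}=\inf_{y=a'y'b'}\|a'\|_{L_{2r}}\|y'\|_\infty\|b'\|_{L_{2r}}$. A bipolar / Hahn--Banach argument then shows that the trace-bracket dual of the factorization seminorm $\|x\|_F:=\inf_{x=x_1x_2}\|E(x_1x_1^*)\|_\infty^{1/2}\|E(x_2^*x_2)\|_\infty^{1/2}$ equals $\|\cdot\|_{L_1^\infty}$, so that $\|\cdot\|_F$ and $\|\cdot\|_{L_\infty^1(N\subset M)}$ agree.

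The main obstacle is the reverse direction. Commutatively the polar decomposition yields an optimal split immediately, but noncommutatively $E(u|x|u^*)$ and $E(|x|)$ are genuinely different elements of $N$ in general, so it is the combination of the rebalancing step inside $N$ and the duality identification via $L_1^\infty(N\subset M)$ that closes the argument.
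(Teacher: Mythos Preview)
The paper does not actually prove this lemma; it is quoted verbatim as Lemma~4.9 of \cite{JP} and used as a black box. So there is no ``paper's own proof'' to compare against, and I will assess your sketch on its own merits.

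Your easy direction is correct. With $a,b\in L_2(N)$ (note that for $p=\infty$, $q=1$ the relevant exponent is $2$, so the embedding $L_{2r}\hookrightarrow L_2$ is either an equality or harmless), Cauchy--Schwarz and the module property of $E$ give exactly the bound you wrote.

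The hard direction, however, has a real gap. Your plan is: (i) start from the polar split $x_1=u|x|^{1/2}$, $x_2=|x|^{1/2}$, (ii) rebalance by $h\in N$, and (iii) conclude via duality that $\|\cdot\|_F$ and $\|\cdot\|_{L_\infty^1}$ coincide because they have the same dual. Step (iii) is circular. What your computations actually establish is $|\tau(xy)|\le \|x\|_F\,\|y\|_{L_1^\infty}$ for all $x,y$; together with the easy direction this pins down the \emph{dual} norm $\|\cdot\|_F^{*}=\|\cdot\|_{L_1^\infty}$. But the bipolar only returns the closed convex hull of the $F$-unit ball, so you recover $\|\cdot\|_F^{**}=\|\cdot\|_{L_\infty^1}\le \|\cdot\|_F$, which is the easy direction again. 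To close the loop by duality you would need to know a priori that the $F$-unit ball is convex (equivalently, that $\|\cdot\|_F$ satisfies the triangle inequality), and that is precisely the content of the hard direction. Step (ii) does not rescue this: rebalancing $x_1h$, $h^{-1}x_2$ with $h\in N$ can equalize $\|E(x_1hh^*x_1^*)\|_\infty$ and $\|E(x_2^*h^{-*}h^{-1}x_2)\|_\infty$, but you give no argument that the resulting common value is $\le \|x\|_{L_\infty^1}$; in general the polar split is not optimal and the optimal $x_1,x_2$ need not arise from it by an $N$-rebalancing.

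A genuine proof of the reverse inequality (as in \cite{JP}) proceeds constructively: given $\|x\|_{L_\infty^1}<1$, one produces a factorization $x=x_1x_2$ with $E(x_1x_1^*)\le 1$ and $E(x_2^*x_2)\le 1$, typically via an operator-space Hahn--Banach/Pietsch-type argument that yields the two $N$-valued densities simultaneously, rather than by dualizing a norm whose convexity is not yet known.
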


The following fact is an extension of Lemma 1.7 of \cite{pisier93}

\begin{lemma}\label{dfnorms} Let $T:M\to M$ be a $N$-bimodule map and $1\le p,q\le \infty$. Then for any $1\le s\le \infty$,
 \[ \|T:L_{\infty}^p(N\ssubset M)\to
  L_{\infty}^q(N\ssubset M)\| \lel \|T:L_s^p(N\ssubset M)\to L_s^q(N\ssubset M)\|
   \]
\end{lemma}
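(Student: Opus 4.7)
My plan is to exploit the bilateral $N$-factorization
\[
L_s^p(N\ssubset M) \lel L_{2s}(N) \cdot L_\infty^p(N\ssubset M) \cdot L_{2s}(N),
\]
the natural extension of \eqref{pp} from $s=p$ to arbitrary $s\in[1,\infty]$ available from \cite{JP}, combined with the bimodule identity $T(ayb)=aT(y)b$ for $a,b\in L_{2s}(N)$ and $y\in L_\infty^p(N\ssubset M)$ (extended by density from $a,b\in N$). The moral is that $T$ commutes with left and right multiplication by elements of $N$, so the outer factor weights pass through $T$ unchanged, and only the inner action $L_\infty^p\to L_\infty^q$ ultimately governs the norm.

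For the easy inequality $\|T : L_s^p\to L_s^q\| \kl \|T : L_\infty^p \to L_\infty^q\|$, take $x \in L_s^p(N\ssubset M)$ and any factorization $x = ayb$ with $a,b \in L_{2s}(N)$ and $y \in L_\infty^p$. Bimodularity gives $T(x) = a\,T(y)\,b$, and the contractive inclusion $L_{2s}(N)\cdot L_\infty^q(N\ssubset M)\cdot L_{2s}(N) \hookrightarrow L_s^q(N\ssubset M)$ yields
\[
\|T(x)\|_{L_s^q}\kl \|a\|_{2s}\|T(y)\|_{L_\infty^q}\|b\|_{2s} \kl \|T\|_{L_\infty^p\to L_\infty^q}\,\|a\|_{2s}\|y\|_{L_\infty^p}\|b\|_{2s};
\]
taking the infimum over all such factorizations finishes this direction.

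For the reverse inequality $\|T : L_\infty^p \to L_\infty^q\| \kl \|T: L_s^p \to L_s^q\|$, fix $y\in L_\infty^p$ and use the supremum definition
\[
\|T(y)\|_{L_\infty^q} \lel \sup\big\{\|aT(y)b\|_q : a,b\in L_{2q}(N),\ \|a\|_{2q},\|b\|_{2q}\le 1\big\}.
\]
By bimodularity $aT(y)b = T(ayb)$, so the task reduces to showing $\|T(ayb)\|_q \kl \|T\|_{L_s^p\to L_s^q}\,\|y\|_{L_\infty^p}$. The plan is to split the outer weights via H\"older inside $N$ as $a=\alpha a_0$ and $b=b_0\beta$ with $\alpha,\beta\in L_{2s}(N)$ and $a_0,b_0$ absorbing the residual exponent, so that $a_0yb_0\in L_\infty^p$ with controlled norm; then $ayb = \alpha(a_0yb_0)\beta \in L_s^p$ through the factorization above. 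Applying the assumed $L_s^p\to L_s^q$ bound to $ayb$ and then reading the $L_q(M)$-norm of $T(ayb)$ off from the $L_s^q$-factorization via the dual pairing \eqref{dual} finishes the estimate.

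The main obstacle is executing the H\"older splitting uniformly across the parameter regimes ($s\le p,q$ versus $s\ge p,q$) and at the boundary values $1,\infty$, where the sup/inf formulations for $L_s^p$ switch roles and the formula $2/r=|1/p-1/s|$ degenerates. A cleaner route, if the direct argument becomes cumbersome, is to verify the two endpoints $s=\infty$ (tautological) and $s=1$ (via the duality \eqref{dual} applied to the first inequality with $T$ replaced by its Banach adjoint, which remains $N$-bimodular) and then interpolate complex-analytically on the scale $\{L_s^p(N\ssubset M)\}_{1\le s\le\infty}$.
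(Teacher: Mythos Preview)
Your easy direction is correct and is essentially the paper's argument: the paper proves it at $s=1$ by combining \eqref{pp} with a refactorization of the resulting $L_2(N)$-weights as products in $L_{2q'}(N)\cdot L_{2q}(N)$ and then interpolates, whereas you invoke the general factorization $L_s^p=L_{2s}(N)\cdot L_\infty^p(N\ssubset M)\cdot L_{2s}(N)$ from \cite{JP} directly.

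The reverse inequality, however, has a gap in both of your proposed routes. In the H\"older splitting $a=\alpha a_0$ with $\alpha\in L_{2s}(N)$, the residual factor $a_0$ lies in $L_{2u}(N)$ with $1/u=1/q-1/s$; unless $s=q$ this forces $a_0yb_0\in L_u^p(N\ssubset M)$ rather than $L_\infty^p$, so the intended placement of $ayb$ into $L_s^p$ via your factorization collapses. The fallback correctly yields $\|T:L_1^p\to L_1^q\|=\|T:L_\infty^p\to L_\infty^q\|$ by dualizing the easy direction through the $N$-bimodular adjoint $T^*$, but complex interpolation between the two endpoints only reproduces the upper bound $\|T:L_s^p\to L_s^q\|\le\|T:L_\infty^p\to L_\infty^q\|$---interpolation cannot manufacture the lower bound you need. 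What is missing is the supremum counterpart of your factorization,
\[
\|z\|_{L_\infty^q}\lel\sup\big\{\|azb\|_{L_s^q}\,:\,a,b\in L_{2s}(N),\ \|a\|_{2s},\|b\|_{2s}\le1\big\},
\]
obtained from \eqref{dual} together with the transitive factorization $L_1^{q'}=L_{2s}(N)\cdot L_{s'}^{q'}\cdot L_{2s}(N)$: one must test with $L_{2s}(N)$-weights landing in $L_s^q$, not with $L_{2q}(N)$-weights landing in $L_q(M)$. With this in hand, bimodularity gives $\|T(y)\|_{L_\infty^q}=\sup_{a,b}\|T(ayb)\|_{L_s^q}\le\|T:L_s^p\to L_s^q\|\,\|y\|_{L_\infty^p}$ at once. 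The paper reaches the same conclusion by iterating the duality through $T^*$ a second time.
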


\begin{proof} Let us introduce the short notation
$|\mkern-1.5mu\|T\|\mkern-1.5mu|_s=\|T:L_s^p(N\ssubset M)\to L_s^q(N\ssubset M)\|$. We first prove ``$\ge$'' for $s=1$. For an element $x\in L_1^p(N\ssubset M)$ of norm less than $1$, we have a decomposition $x=ayb$ with $a,b\in L_{2p'}(N)$, and $y\in L_p(M)$ all of norm less than $1$. Using the factorization \eqref{pp}, we may further write $y=\al Y \beta$ with $\al,\beta\in L_{2p}(N)$ and $Y\in L_{\infty}^p(M)$ all of norm less than 1. Therefore we have shown that $x=(a\al)Y(\beta b)$ with $Y\in L_{\infty}^p(N\subset M)$, and
 \[  \|a\al\|_{L_2(N)}\le 1 \pl ,\pl \|\beta b\|_{L_2(N)}\le 1 \pl .\]
Thus we may write $a\al=a'\al'$ and $\beta b=\beta'b'$ such that
  \[ \max\{\|b\|_{2q'},\|\beta\|_{2q},\|a\|_{2q'}\|\beta\|_{2q}\}\le 1  \pl .\]
Then we deduce from the module property that
 \[T(x) \lel a'\al' T(Y)\beta' b'
 \lel a' (\al' T(Y) \beta') b'
  \]
and $\|T(Y)\|_{L_{\infty}^q(N\subset M)}\le |\mkern-1.5mu\|T\|\mkern-1.5mu|_{\infty}$. Using the other part of \eqref{pp}, $(\al' T(Y) \beta')\in L_q(M)$ of norm less than 1 and hence we have shown that $\norm{T(x)}{L_{1}^q(N\subset M)}\le 1$. By interpolation, we deduce that
 \[ |\mkern-1.5mu\|T\|\mkern-1.5mu|_s \kl |\mkern-1.5mu\|T\|\mkern-1.5mu|_{\infty} \]
for all $1\le s\le \infty$. We dualize this inequality by applying it to $T^*$ and obtain
 \[ |\mkern-1.5mu\|T\|\mkern-1.5mu|_s \lel \|T^*:L_{s'}^{q'}(N\ssubset M)\to L_{s'}^{p'}(N\ssubset M)\|\kl \|T^*: L_{\infty}^{q'}(N\ssubset M)\to L_{\infty}^{p'}(N\ssubset M)\| \lel |\mkern-1.5mu\|T\|\mkern-1.5mu|_1 \pl .\]
Then we dualize again to get
 \[ |\mkern-1.5mu\|T\|\mkern-1.5mu|_{\infty} \kl |\mkern-1.5mu\|T^*\|\mkern-1.5mu|_{s'}
\kl |\mkern-1.5mu\|T\|\mkern-1.5mu|_{s}\kl |\mkern-1.5mu\|T\|\mkern-1.5mu|_{\infty}
\pl  .\]
Hence all these norms coincide. \qd

Thanks to the independence of $s$, we may now introduce the short notation
 \[ \|T\|_{p\to q}=\|T:L_\infty^p(N\ssubset M)\to L_{\infty}^q(N\ssubset M)\| \pl . \]
and similarly, the $cb$-version
 \[ \|T\|_{p\to q,cb}=\sup_m \|id_{\mm_m}\ten T:L_\infty^p(\mm_m(N)\ssubset \mm_m(M))\to L_{\infty}^q(\mm_m(N)\ssubset \mm_m(M)\| \pl . \]
In particular, we understand $L_{\infty}^p(N\ssubset M)$ as an operator space with operator space structure
 \[ \mm_m(L_\infty^p(N\ssubset M)) \lel
 L_\infty^p(\mm_m(N)\subset \mm_m(M)) \pl .\]
The analogue of Lemma \ref{cbc} reads as follows:

\begin{lemma}\label{cbc2} Let $(T_t)$ be a semigroup of self-adjoint $^*$-preserving $N$-bimodule maps. Then
\begin{enumerate}
\item[i)] $\|T_{2t}\|_{1\to \infty}=\|T_t\|_{1\to 2}^2$.
\item[ii)]$\|T_{2t}-E\|_{1\to \infty}=\|(T_t-E)\|_{2\to \infty}^2$
\end{enumerate}
The same equality holds for $cb$-norms.
\end{lemma}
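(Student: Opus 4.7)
The plan is to run the proof in parallel with Lemma \ref{cbc}, routing the Hilbert-space trick through the intermediate Hilbert space $L_2(M) = L_2^2(N\ssubset M)$. The reason this works in the non-ergodic setting is Lemma \ref{dfnorms}: since $T_t$, $E$, and their relevant combinations are all $N$-bimodule maps, the norms $\|\,\cdot\,\|_{p\to q}$ are independent of the outer parameter $s$ and can be evaluated at $s=2$.

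First, take $s=2$ in Lemma \ref{dfnorms} to rewrite
\[ \|T_{2t}\|_{1\to \infty} \lel \|T_{2t}: L_2^1(N\ssubset M)\to L_2^\infty(N\ssubset M)\|, \quad \|T_t\|_{1\to 2} \lel \|T_t: L_2^1(N\ssubset M)\to L_2(M)\|. \]
Next I introduce the positive sesquilinear form
\[ Q(x,y) \lel \tau(x^{*}T_{2t}(y)) \lel \langle T_t(x),T_t(y)\rangle_{L_2(M)} \]
on $L_2^1(N\ssubset M)$, where the second equality uses $^*$-preservation, self-adjointness of $T_t$, and the semigroup law. Cauchy--Schwarz for the positive form $Q$ gives
\[ \sup_{\|x\|_{L_2^1}\le 1,\,\|y\|_{L_2^1}\le 1}|Q(x,y)| \lel \sup_{\|x\|_{L_2^1}\le 1}Q(x,x) \lel \|T_t\|_{1\to 2}^{2}, \]
while the anti-linear trace pairing of \eqref{dual} identifies the same supremum with $\|T_{2t}: L_2^1\to L_2^\infty\|=\|T_{2t}\|_{1\to\infty}$. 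This proves (i).

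For (ii) I will repeat the argument with $T_t-E$ in place of $T_t$. Because $E\circ T_t=T_t\circ E=E$, the square simplifies to $(T_t-E)^{2}=T_{2t}-E$, so the same computation yields $\|T_t-E\|_{1\to 2}^{2}=\|T_{2t}-E\|_{1\to\infty}$. To pass from $\|T_t-E\|_{1\to 2}$ to $\|T_t-E\|_{2\to\infty}$ as in the statement, I will use that $T_t-E$ is self-adjoint for the trace pairing, so its Banach-space adjoint (after the identifications from \eqref{dual} and Lemma \ref{dfnorms}) is again $T_t-E$, now regarded as a map $L_\infty^2(N\ssubset M)\to L_\infty(M)$. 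For the cb-estimates the same scalar computation is applied to $id_{\Mz_m}\ten T_t$: the amplified map is still $\Mz_m(N)$-bimodule, still self-adjoint on $L_2(\Mz_m\ten M)$, and still satisfies $(id\ten T_t)^{2}=id\ten T_{2t}$, so the argument goes through uniformly in $m$.

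The main technical point I anticipate is the endpoint behaviour of the duality \eqref{dual}: the paper asserts $L_p^q=(L_{p'}^{q'})^{*}$ isometrically only for $1<p,q<\infty$, whereas the identifications above use $L_2^\infty=(L_2^1)^{*}$ and $L_\infty^2=(L_1^2)^{*}$, each involving the endpoint index $1$. These should still be isometric by a density argument on the weak-$^{*}$ dense subalgebra $\A\subset M$ that is available throughout the paper, but this is the one step that requires extra care before the supremum of $Q$ can be cleanly identified with the $1\to\infty$ operator norm.
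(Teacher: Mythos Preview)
Your proof is correct and follows the same approach as the paper: reduce to $s=2$ via Lemma \ref{dfnorms}, then apply the Hilbert-space principle \eqref{vv} from the proof of Lemma \ref{cbc} with $X=L_2^1(N\ssubset M)$ and $H=L_2(M)$. Your concern about endpoint duality is unnecessary: what is actually used is only the isometric \emph{embedding} $L_2^{\infty}(N\ssubset M)\subset \overline{L_2^1(N\ssubset M)}^{*}$, and \eqref{dual} asserts precisely this embedding for all $1\le p,q\le\infty$ (full duality is claimed only for $1<p,q<\infty$). Since $T_{2t}$ and $T_{2t}-E$ take values in $M\subset L_2^{\infty}(N\ssubset M)$, the isometric embedding suffices to identify the supremum of $Q$ with the $L_2^1\to L_2^{\infty}$ operator norm, and likewise at the cb level with $X=S_2(L_2^1(N\ssubset M))$.
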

\begin{proof}Because $T_t$ are $N$-bimodule maps, we know by Lemma \ref{dfnorms} that
\begin{align*} &\|T_{2t} \|_{1\to \infty}=\|T_{2t}:L_2^1(N\subset \M ) \to L_2^\infty(N\subset M) \|\pl,\\ &\|T_t\|_{1\to 2}=\|T_t:L_2^1(N\subset M )\to L_2(M )\|
\end{align*}
Take $X=L_2^1(N\subset M)$ and $H=L_2(M)$. The anti-linear bracket $\lan x, y\ran=\tau(x^*y)$ gives a complete isometric embedding $L_2^{\infty}(N\subset M)\subset \bar{X}^*$. Then using the general principle in the proof Lemma \ref{cbc} implies the assertion because $T_t$ is $^*$-preserving and self-adjoint. The $cb$-norm case follows similarly with $X=S_2(L_2^1(N\subset M))$ and $H=S_2\ten_2L_2(M)$.
\end{proof}

We have seen in the last subsection that a complete positive order inequality $E_{\tau}\le_{cp} T$ can be deduced from kernel estimates. For non-ergodic cases, we have to modify the argument by introducing the appropriate Choi matrix for bimodule maps. Let us recall that the conditional expectation $E:M\to N$ generates a Hilbert $W^*$-module $\mathcal{H}_E=L_{\infty}^c(N\subset M)$ with $N$-valued inner product
 \[ \langle x,y\rangle_{\mathcal{H}_E} \lel E(x^*y) \pl .\]
As observed in \cite{JD,JS} it is easy to identify the completion of this module in $\mathbb{B}(L_2(M)$, namely the strong closure of $\bar{\mathcal{H}}_E=\overline{Mp_E}$, where $p_E=E:L_2(M)\to L_2(M)$ is the Hilbert space projection onto the subspace $L_2(N)\subset L_2(M)$. The advantage of a complete $W^*$-module is the existence of a module basis $(\xi_i)_{i\in I}$ such that
 \[ \langle \xi_i,\xi_j\rangle \lel \delta_{ij} p_i \]
where $p_i\in N$ are projections. Note that in our situation the inclusion $Mp_E\subset L_2(M)$ is faithful and hence, the basis elements $\xi_i$ (or more precisely $\hat{\xi}_i$ obtained from the GNS construction) are in $L_2(M)$. In particular, every element $x$ in $L_2(M)$ has a unique decomposition
\[ x \lel \sum_{i} \xi_ix_i \]
so that $x_i=p_ix_i\in N$. Indeed, we have $x_i=\langle \xi_i,x\rangle_E$. For a $N$-bimodule map $T:L_\infty^1(N\ssubset M)\to M$ we may therefore introduce the Choi matrix
 \[ \chi_T \lel \sum_{i,j} |i\ran\lan j| \ten T(\xi_i^*\xi_j) \pl . \]

\begin{lemma}\label{Choi} Let $T:M\to M$ be a $N$-bimodule map. Then
 \[ \|T\|_{1\to \infty,cb}\lel \|\chi_T\|_{\mathbb{B}(\ell_2(I))\bar{\ten}M} \pl .\]
\end{lemma}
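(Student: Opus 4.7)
The plan is to prove both inequalities by combining the factorization in Lemma \ref{JP} with the basis expansion in the Hilbert $W^*$-module $\bar{\mathcal{H}}_E$.

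For the easy direction $\|\chi_T\|\kl \|T\|_{1\to\infty,cb}$, I would realize $\chi_T$ as $(id\ten T)$ applied to a canonical positive element of $L_\infty^1$-norm at most $1$. For each finite $F\subset I$, set
\[ e_F \lel \sum_{i,j\in F} |i\ran\lan j|\ten \xi_i^*\xi_j\pl \in\pl \mm_{|F|}(M)\pl .\]
Then $e_F\gl 0$ (it equals $v^*v$ for the row $v=\sum_{i\in F}\lan i|\ten \xi_i$), and $(id\ten E)(e_F)=\sum_{i\in F}|i\ran\lan i|\ten p_i$ is a projection of norm $\le 1$. Applying Lemma \ref{JP} with $x_1=x_2=e_F^{1/2}$ gives $\|e_F\|_{L_\infty^1(\mm_{|F|}(N)\ssubset\mm_{|F|}(M))}\kl 1$. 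Since $(id\ten T)(e_F)$ is the $F$-truncation of $\chi_T$ and $\|\chi_T\|$ is the supremum of such truncations, the bound follows.

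For the reverse inequality, fix $m\ge 1$ and $x\in \mm_m(M)$ with $\|x\|_{L_\infty^1(\mm_m(N)\ssubset\mm_m(M))}<1$. Lemma \ref{JP} supplies a factorization $x=x_1x_2$ with $\|(id_m\ten E)(x_1x_1^*)\|$ and $\|(id_m\ten E)(x_2^*x_2)\|$ both below $1$. The basis $(\xi_i)_{i\in I}$ of $\bar{\mathcal{H}}_E$ amplifies to a module basis $(1_m\ten \xi_i)_{i\in I}$ of $L_\infty^c(\mm_m(N)\ssubset \mm_m(M))$, in which the bounds above place $x_1^*$ and $x_2$. Expanding
\[ x_1^* \lel \sum_i (1_m\ten \xi_i)z_i\pll,\pll x_2 \lel \sum_j (1_m\ten \xi_j)y_j \]
with $z_i,y_j\in \mm_m(N)$ satisfying $\|\sum_i z_i^*(1_m\ten p_i)z_i\|\kl 1$ and the analogous bound for $y_j$, and using that $T$ is an $N$-bimodule map, one obtains
\[ (id_m\ten T)(x) \lel \sum_{i,j} z_i^*(1_m\ten T(\xi_i^*\xi_j))y_j\pl ,\]
which, after a routine flip of tensor factors, is the standard Hilbert module product $Z^*(\chi_T\ten 1_m)Y$ with $Z=\sum_i e_i\ten z_i$ and $Y=\sum_j e_j\ten y_j$ both of column norm $\le 1$ in $\ell_2(I)\ten\mm_m(N)$. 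Hence $\|(id_m\ten T)(x)\|\kl \|\chi_T\|$, and taking the supremum over $m$ and $x$ completes the proof.

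The main technical step is justifying the basis expansion of $x_1^*$ and $x_2$ in the amplified module and the resulting operator-valued Cauchy--Schwarz inequality; both rest on the self-duality of $L_\infty^c(\mm_m(N)\ssubset\mm_m(M))$ and on the fact that amplification preserves module bases, which follows from the Paschke-type results cited earlier.
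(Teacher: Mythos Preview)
Your proof is correct and follows essentially the same approach as the paper's. For the inequality $\|\chi_T\|\le\|T\|_{1\to\infty,cb}$ the paper phrases the argument via the Effros--Ruan kernel correspondence (defining a map $T_q$ with kernel $q=\sum_{i,j}|i\ran\lan j|\ten\xi_i^*\xi_j$ and computing its cb-norm directly in the minimal tensor product), whereas you test on finite truncations $e_F$ and invoke Lemma~\ref{JP}; these are the same computation, since both reduce to $\|(id\ten E)(q)\|=\|\sum_i|i\ran\lan i|\ten p_i\|\le 1$. For the reverse inequality the two arguments are identical: factor via Lemma~\ref{JP}, expand the factors in the module basis, use the $N$-bimodule property of $T$, and conclude with the row--column sandwich $Z^*(\chi_T\ten 1_m)Y$.
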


\begin{proof} Let $q=\sum_{i,j}|i\ran\lan j|\ten \xi_i^*\xi_j\in \mathbb{B}(\ell_2(I))\bar{\ten}M$. Viewing $q$ as a kernel, the corresponding map $T_q: S_1(\ell_2(I))\to M$ is given by
\[T_q(|i\ran\lan j|)=\xi_i^*\xi_j\]
Let us show that $T_q:S_1(\ell_2(I))\to L_{\infty}^1(N\ssubset M)$ is completely contractive. Indeed, using operator space version of \eqref{ER}
\begin{align*} \norm{T_q}{cb}&=\norm{q}{\mathbb{B}(\ell_2(I))\ten_{\min} L_{\infty}^1(N\subset M)} \lel
 \norm{q}{ L_{\infty}^1(\mathbb{B}(\ell_2(I))\ten_{\min} N\subset \mathbb{B}(\ell_2(I))\ten_{\min} M)}\\
&=\norm{id\ten E(q)}{ \mathbb{B}(\ell_2(I))\bar{\ten} M} \lel
 \norm{\sum_{i}\ket{i}\bra{i}\ten p_i}{ \mathbb{B}(\ell_2(I))\bar{\ten} M}\le 1
\end{align*}
Here we have used the fact $q$ is positive and $p_i$ are projections. Note that the kernel of $T\circ T_q: S_1(\ell_2(I))\to M$ is exactly the Choi matrix $\chi_T$. Therefore, thanks to \eqref{ER} again, we deduce that \[ \norm{\chi_T}{}\lel \|Tq:S_1(\ell_2(I))\to M\|_{cb} \kl \|T\|_{1\to \infty,cb} \pl .\]
Now let $x\in L_{\infty}^1(N\ssubset M)$ of norm less than $1$. According Lemma \ref{JP}, we have a factorization $x=y_1y_2$ such that $E(y_1^*y_1)\le 1$ and $E(y_2^*y_2)\le 1$. This means we find coefficients $a_i,b_j$ such that
 \[ x \lel \sum_{i,j} a_i^*\xi_i^*\xi_j b_j \]
and $\sum_i a_ia_i^*\le 1$ and $\sum_j b_j^*b_j\le 1$. Therefore, we deduce that
  \begin{align*}
  \|T(x)\|_M \!&= \!\|\sum_{i,j} a_i^*T(\xi_i^*\xi_j)b_j\| = \|(\sum_{i}\bra{i}\ten a_i^*) \Big(\sum_{i,j}\ket{i}\bra{j}\ten T(\xi_i^*\xi_j)\Big(\sum_{j}\ket{j}\ten b_j)\\
  &\le
    \|\sum_i a_ia_i^*\|^{1/2} \|\chi_T\| \|\sum_j b_j^*b_j\|^{1/2}\! .
  \end{align*}
This implies
 \[ \|T(x)\|
 \kl \|\chi_T\| \inf_{x=y_1y_2} \|E(y_1y_1^*)\|^{1/2} \|E(y_2^*y_2)\|^{1/2} \pl ,\]
or equivalently
 \[ \|T\|_{1\to \infty} \kl \|\chi_T\| \pl .\]
 The same argument applies for $id_{\mm_m}\ten T$ and we have the equality  $\|T\|_{1\to \infty, cb}\lel \|\chi_T\|$. \qd

We are now in a position to prove a version of Proposition \ref{decay} ii) in the non-ergodic situation
\begin{lemma} Let $T:M\to M$ be a unital completely positive $N$-bimodule map such that
  \[ \|T-E_N:L_\infty^1(N\subset M)\to M\|_{cb} \kl \frac12 \pl .\]
Then $E_N\le_{cp} 2 T$.
\end{lemma}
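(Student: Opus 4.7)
The plan is to recast the hypothesis via the Choi-matrix description from Lemma \ref{Choi} and then verify positivity inside a corner of $\mathbb{B}(\ell_2(I))\bar{\ten} M$. By Lemma \ref{Choi} the cb-bound is equivalent to $\|\chi_T - \chi_{E_N}\| \le 1/2$ in $\mathbb{B}(\ell_2(I))\bar{\ten} M$, where $\chi_{E_N} = \sum_i |i\ran\lan i|\ten p_i$ is a projection and the difference is self-adjoint (both $T$ and $E_N$ are $^*$-preserving).

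The key observation is that $\chi_T$ lies in the corner $\chi_{E_N}(\mathbb{B}(\ell_2(I))\bar{\ten} M)\chi_{E_N}$. Applying the right $N$-module Parseval expansion $x = \sum_i \xi_i E_N(\xi_i^* x)$ to $x = \xi_j$ forces $\xi_j = \xi_j p_j$, hence $\xi_i^*\xi_j = p_i\xi_i^*\xi_j p_j$. The $N$-bimodule property of $T$ then yields $T(\xi_i^*\xi_j) = p_i T(\xi_i^*\xi_j) p_j$, i.e.\ $\chi_{E_N}\chi_T\chi_{E_N} = \chi_T$. Viewing this corner as a von Neumann algebra with unit $\chi_{E_N}$, the self-adjoint norm bound $\|\chi_T - \chi_{E_N}\| \le 1/2$ translates via functional calculus to $\chi_{E_N}/2 \le \chi_T$, and hence $\chi_{2T - E_N} = 2\chi_T - \chi_{E_N} \ge 0$ in $\mathbb{B}(\ell_2(I))\bar{\ten} M$.

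Finally I would deduce complete positivity of $S := 2T - E_N$ from positivity of $\chi_S$. Factoring $\chi_S = V^*V$ with entries $V_{ki}\in M$ and expanding an arbitrary $b \in M_m(M)$ via Parseval as $b = \sum_i (1_{M_m}\ten \xi_i)\,b_i$ with $b_i \in M_m(p_i N)$, the $M_m(N)$-bimodule property of $id_{M_m}\ten S$ gives
\[ (id_{M_m}\ten S)(b^*b) = \sum_{i,j} b_i^*\bigl(1\ten S(\xi_i^*\xi_j)\bigr) b_j = \sum_k\Big(\sum_i (1\ten V_{ki}) b_i\Big)^*\Big(\sum_i (1\ten V_{ki}) b_i\Big) \ge 0, \]
proving $S$ is CP and hence $E_N \le_{cp} 2T$. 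The main subtlety is the corner identification in the middle step; once $\chi_{E_N}\chi_T\chi_{E_N} = \chi_T$ is established, the corner inequality and the Kraus-type argument are routine.
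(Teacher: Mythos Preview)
Your proof is correct and follows the same Choi-matrix strategy as the paper: translate the cb-bound into $\|\chi_T-\chi_{E_N}\|\le 1/2$, deduce a positivity inequality for the Choi matrices, and push it back to $E_N\le_{cp}2T$ via the module-basis expansion. Your corner observation $\chi_{E_N}\chi_T\chi_{E_N}=\chi_T$ is a clean way to obtain $\chi_T\ge \tfrac12\chi_{E_N}$ directly; the paper instead writes $\chi_E-\chi_T=\alpha-\beta$ with $0\le\alpha,\beta\le\tfrac12$ and computes on $y^*y$, but the content is the same.
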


\begin{proof} Let $\chi_T$ (resp. $\chi_E$) be the Choi matrix of $T$ (resp. $E_N$).
We known by Lemma \ref{Choi} that \[\|\chi_T-\chi_E\|_{B(\ell_2(I))\bar{\ten}N}\kl \frac12\pl.\] Since $T$ and $E$ are completely positive, $\chi_T$ and $\chi_E$ are positive. Thus we may write $\chi_E-\chi_T=\al-\beta$ with $\displaystyle 0\le \al,\beta \le \frac12 \pl$. Write $\al=\sum_{i,j}\bra{i}\ket{j}\ten \al_{i,j}$ and $\beta=\sum_{i,j}\bra{i}\ket{j}\ten \beta_{i,j}$. It is clear that $\al_{i,j}-\beta_{i,j}=E(\xi_i^*\xi_j)-T(\xi_i^*\xi_j)$. Let $x=y^*y$ be a positive element in $M$ and $y=\sum_j \xi_j y_j$ with coefficients $y_j\in N$ in the module basis. Then we deduce that
 \begin{align}\label{eqa}
 \sum_j y_j^*p_jy_j &=   E(y^*y) \lel (E-T)(y^*y)+T(y^*y) \nonumber\\
 &= \sum_{i,j} y_i^*p_i\al_{i,j}p_jy_j-
 \sum_{i,j} y_i^*p_i\beta_{i,j}p_jy_j + T(y^*y) \nonumber\\
 &\le \frac{1}{2} (\sum_j y_j^*p_jy_j) + T(y^*y) \pl .
 \end{align}
Indeed in the last step we use the fact that
\begin{align*}\sum_{i,j} y_i^*p_i\al_{i,j}p_jy_j&=\Big(\sum_{i}\bra{i}\ten y_i^*p_i \Big)\Big(\sum_{i,j}\ket{i}\bra{j}\ten \al_{i,j}\Big)\Big(\sum_{j}\ket{j}\ten p_jy_j\Big)\\&\le \frac{1}{2}\Big(\sum_{i}\bra{i}\ten y_i^*p_i \Big)\Big(\sum_{j}\ket{j}\ten p_jy_j\Big)\\&=\frac{1}{2} (\sum_j y_j^*p_jy_j)\pl.\end{align*}
Subtracting $\frac{1}{2}(\sum_j y_j^*p_jy_j)$ in \eqref{eqa}, we obtain
 \[ E(y^*y)=\sum_j y_j^*p_jy_j \kl 2 T(y^*y) \pl .\]
The same argument holds for matrix coefficients. Hence $E\le_{cp} 2 T$.\qd

Thus in the non-ergodic situation, we can now state the analogue of Theorem \ref{erg}.

\begin{theorem}\label{nerg} Let $T_t$ be a semigroup of completely positive self-adjoint  maps and $N$ be  contained in the  fixed-point subalgebra. Suppose that
 \begin{enumerate}
 \item[i)] $\|T_t:L_\infty^1(N\ssubset M)\to L_{\infty}(M)\|_{cb}\kl c t^{-d/2}$ for $0\le t\le 1$ and $c, d>0$.;
 \item[ii)] $\|T_t(I-E_N):L_2(M)\to L_2(M)\|\kl e^{-\la_{\min} t}$ for some $\la_{\min}>0$.
 \end{enumerate}
Let $F$ be a function satisfying (I)+(QM) or (I)+($\Delta_2$). Then
$\Phi_F(A)$ satisfies $\la$-$\Gamma \E$ and hence $\la$-CSLI for some $\la$ depending on $c,d,F$ and $\la_{\min}$.
\end{theorem}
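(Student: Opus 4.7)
The plan is to mirror the proof of Theorem \ref{erg} (the ergodic case), replacing every ingredient that used $L_1 \to L_\infty$ estimates with its analogue for the asymmetric norm $L_\infty^1(N \ssubset M) \to L_\infty(M)$. The two key observations that make the replacement go through are (a) since $N$ lies in the fixed-point algebra, every $T_t$ is an $N$-bimodule map (by the multiplicative domain argument, as $T_t|_N = \mathrm{id}_N$), so Lemma \ref{cbc2} and Lemma \ref{dfnorms} apply, and (b) the non-ergodic Choi-matrix lemma stated just before the theorem replaces the kernel argument used in Proposition \ref{decay}.

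First, I would establish the non-ergodic Saloff-Coste estimate: using Lemma \ref{cbc2}(i), hypothesis (i) implies $\|T_t\|_{1 \to 2, cb} = \|T_{2t}\|_{1\to\infty,cb}^{1/2} \le \sqrt{c} \, t^{-d/4}$ for $0 \le t \le 1/2$. Splitting $T_t - E_N = T_{t-1/4}(I - E_N)\, T_{1/4}$ and using hypothesis (ii) on the middle factor, one obtains
\[
  \|T_t - E_N\|_{1\to 2, cb} \le e^{-\la_{\min}(t-1/4)} \|T_{1/4}\|_{1\to 2, cb} \le C' e^{-\la_{\min} t}
\]
for $t \ge 1/2$. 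Applying Lemma \ref{cbc2}(ii) (i.e.\ squaring) then gives
\[
  \|T_{2t} - E_N\|_{1 \to \infty, cb} \le (C')^2 e^{-2\la_{\min} t}
\]
for $t \ge 1/2$. This is the exact analogue of Proposition \ref{sfc} with $L_\infty^1(N \ssubset M)$ in place of $L_1(M)$.

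Next, I would run the averaging argument of Proposition \ref{decay} verbatim: substituting the estimate above into the definition
\[
  \Psi_F(r) - E_N = g(r)^{-1} \int_0^\infty e^{-r/t}(T_t - E_N) F(t) \frac{dt}{t},
\]
the (QM) (or $(\Delta_2)$) condition on $F$ together with the short-time polynomial bound and the long-time exponential bound yields a threshold $r_0$ (depending on $c, d, \la_{\min}, F$) such that
\[
  \|\Psi_F(r_0) - E_N\|_{1 \to \infty, cb} \le \tfrac{1}{2}.
\]
The lemma stated immediately before the theorem (the Choi-matrix-based result for bimodule maps) then upgrades this to the completely positive order inequality $E_N \le_{cp} 2 \Psi_F(r_0)$.

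Finally, I assemble the gradient form chain. Since $E_N$ and $\Psi_F(r_0)$ are both UCP maps, Lemma \ref{grad1}(ii) turns $\tfrac{1}{2} E_N \le_{cp} \Psi_F(r_0)$ into
\[
  \tfrac{1}{2} \Gamma_{I - E_N} \le_{cp} \Gamma_{I - \Psi_F(r_0)},
\]
and Corollary \ref{ccc} supplies $g(r_0)\, \Gamma_{I - \Psi_F(r_0)} \le_{cp} \Gamma_{\Phi_F(A)}$. Combining these gives $\la\, \Gamma_{I - E_N} \le_{cp} \Gamma_{\Phi_F(A)}$ with $\la = g(r_0)/2$, which is precisely $\la$-$\Gamma\E$ for $\Phi_F(A)$ (with fixed-point subalgebra containing $N$, which is enough to apply the gradient condition). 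An appeal to Corollary \ref{kkeeyy} then upgrades this to $\la$-CLSI, finishing the proof. The only place that genuinely differs from the ergodic argument is the passage from a cb-norm bound to a $\le_{cp}$ statement, where the standard kernel identification $CB(L_1, M) = M^{op} \bar{\ten} M$ is unavailable; this is exactly where the Choi-matrix lemma for $N$-bimodule maps does the work, and it is the step most sensitive to the non-ergodic geometry.
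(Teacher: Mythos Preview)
Your proposal is correct and follows exactly the route the paper intends: the paper does not write out a separate proof of Theorem \ref{nerg} but presents it as the direct analogue of Theorem \ref{erg}, having assembled the three non-ergodic replacements (Lemma \ref{cbc2}, Lemma \ref{Choi}, and the unnamed Choi-matrix lemma just before the statement) precisely so that the ergodic argument can be rerun verbatim. Two small points worth tightening: first, Lemma \ref{cbc2}(ii) is stated as $\|T_{2t}-E\|_{1\to\infty}=\|T_t-E\|_{2\to\infty}^2$, whereas you bound $\|T_t-E_N\|_{1\to 2,cb}$; these coincide by self-adjointness of $T_t-E_N$ and the duality \eqref{dual}, but you should say so (the paper makes the same silent leap in the proof of Proposition \ref{sfc}). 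Second, your parenthetical ``fixed-point subalgebra containing $N$, which is enough'' is a bit loose: $\Gamma\E$ is defined relative to the \emph{actual} fixed-point algebra $N_{\mathrm{fix}}$ of $\Phi_F(A)$, and a bound on $\Gamma_{I-E_N}$ does not automatically control $\Gamma_{I-E_{N_{\mathrm{fix}}}}$ when $N\subsetneq N_{\mathrm{fix}}$. However, hypothesis (ii) in fact forces $N=N_{\mathrm{fix}}$: any $x\in N_{\mathrm{fix}}$ has $x-E_N(x)\in N_{\mathrm{fix}}$, hence $T_t(I-E_N)(x)=x-E_N(x)$ does not decay, so $x=E_N(x)\in N$.
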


\section{Riemannian manifolds and Representation theory}
In this section we find  heat kernel estimates, which allow us to apply Theorem \ref{nerg}.

\subsection{Riemannian manifolds}
Let $(\M,g)$ be a $d$-dimensional compact Riemannian manifold without boundary. A \emph{H\"{o}rmander system} is a finite family of vector fields  $X=\{X_1,...,X_r\}$ such that for some global constant $l_X$, the set of iterated commutators (no commutator if $k=1$)
 \[ \bigcup_{1\le k\le l_X} \{ [X_{j_1},[X_{j_2},...,[X_{j_{k-1}},X_{j_k}] \pl | \pl 1\le j_1,\cdots,j_k\le r\}  \]
spans the tangent space $T_x \M$ at every point $x\in \M$. We consider the
the sub-Laplacian
\[ \Delta_X \lel \sum_{j=1}^r X_j^*X_j \pl. \]
Here $X_j^*$ is the adjoint operator of $X_j$ with respect to $L_2(\M,\mu)$ and $d\mu$ is the measure given by the form, which in local coordinates is given by
\[  d\mu(x) \lel \sqrt{|g|}dx_1\wedge \cdots \wedge dx_n \pl , |g|(x)=|{\rm det}g_{ij}(x)|\pl.\]
For compact $\M$, $\Delta_X$ extends to a self-adjoint operator on $L_2(\M,\mu)$.
It follows from the famous Rothschild-Stein estimate \cite{SR} (see also \cite{OZ}) that $\Delta_X$ is hypo-elliptic. This leads to the estimate (see e.g. \cite{HelfferNier})
 \begin{equation}\label{hypo}
 \lan f,\Delta^{\frac{1}{l_X}}f\ran \le C (\lan f,\Delta_{X}f\ran + \norm{f}{2}^2 )   \pl,
  \end{equation}
  where $\Delta$ is the Laplace-Beltrami operator on $\M$. Using  the Hardy-Littlewood-Sobolev inequality in the Riemannian setting, we obtain the following Sobolev inequality (see e.g. \cite{LZ}):
\begin{lemma}\label{LZ}  Let $\M$ be a compact Riemannian manifold and $X$ be a H\"ormander system of $\M$. Let $q=\frac{2dl_X}{dl_X-1}$. Then
 \[ \|f\|_{q} \kl C ( \lan\Delta_X f,f\ran + \norm{f}{2}^2)^{1/2} \pl .\]
\end{lemma}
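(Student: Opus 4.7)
\medskip
\noindent \textbf{Proof plan.} The plan is to chain the hypoelliptic estimate \eqref{hypo}, which controls a fractional power of the Laplace-Beltrami operator $\Delta$ by the sub-Laplacian $\Delta_X$, with the classical fractional Sobolev embedding $H^s(\M) \hookrightarrow L^q(\M)$ on the compact manifold. The Sobolev exponents line up: setting $s = \tfrac{1}{2l_X}$ one has the critical relation $s = d\bigl(\tfrac12 - \tfrac1q\bigr)$, so the Hardy-Littlewood-Sobolev inequality applied to the Riesz potential $(I+\Delta)^{-s/2}$ yields
\[ \|f\|_q \;\le\; C_1\,\|(I+\Delta)^{1/(4l_X)} f\|_2, \]
and after squaring,
\[ \|f\|_q^2 \;\le\; C_1^2\,\langle f, (I+\Delta)^{1/(2l_X)} f\rangle. \]

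It then remains to replace $(I+\Delta)^{1/(2l_X)}$ by $\Delta^{1/l_X}$, which is the operator that \eqref{hypo} controls. Since $\tfrac{1}{2l_X}\in (0,1]$, the elementary scalar bounds $(1+t)^\alpha \le 1+t^\alpha$ and $t^\alpha \le 1+t^{2\alpha}$ --- both valid on $[0,\infty)$ for $\alpha\in(0,1]$ --- give, via functional calculus on $\Delta\ge 0$,
\[ (I+\Delta)^{1/(2l_X)} \;\le\; I + \Delta^{1/(2l_X)} \;\le\; 2I + \Delta^{1/l_X}. \]
Pairing with $f$ and invoking \eqref{hypo} closes the loop:
\[ \|f\|_q^2 \;\le\; C_1^2\bigl(2\|f\|_2^2 + \langle f, \Delta^{1/l_X} f\rangle\bigr) \;\le\; C\bigl(\langle f, \Delta_X f\rangle + \|f\|_2^2\bigr), \]
which is the claim after taking square roots.

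The only non-routine ingredient is the first step --- the fractional Sobolev embedding at the critical exponent on a closed Riemannian manifold --- but I would take it as a black box. It can be obtained either by complex interpolation between $L^2$ and the integer-order embedding $H^1 \hookrightarrow L^{2d/(d-2)}$, or directly from Gaussian upper bounds on the heat kernel of $\Delta$ via the standard Varopoulos passage from $L^1\to L^\infty$ ultracontractivity to a Sobolev inequality. Neither reduction is a genuine obstacle; the real content of the lemma is packed into the hypoelliptic estimate \eqref{hypo} itself, which is already being cited from the work of Rothschild--Stein and Helffer--Nier.
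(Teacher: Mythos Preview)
Your proposal is correct and follows precisely the route the paper indicates: combine the hypoelliptic estimate \eqref{hypo} with the Hardy--Littlewood--Sobolev (equivalently, fractional Sobolev) embedding on the compact manifold, noting that the exponent $s=\tfrac{1}{2l_X}$ is exactly critical for $q=\tfrac{2dl_X}{dl_X-1}$. The paper does not spell out the functional-calculus bridge $(I+\Delta)^{1/(2l_X)}\le 2I+\Delta^{1/l_X}$ that you supply, but this is the natural way to connect the two cited ingredients and your elementary scalar inequalities handle it cleanly.
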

Now it is time to invoke the famous Varopoulos' Theorem about the dimension of semigroups.
\begin{theorem}[\cite{VSCC}]\label{Varo}Let $T_t:L_\infty(\Om,\mu) \to L_\infty(\Om,\mu)$ be a semigroup of positive measure preserving self-adjoint maps and $A$ is the generator.
The following conditions are equivalent: for $m\in \nz$,
 \begin{enumerate}
 \item[i)] $\|T_t:L_1(\Om,\mu)\to L_{\infty}(\Om,\mu)\|\kl C_1 t^{-m/2}$ for all $0\le t\le 1$ and some $C_1$;
 \item[ii)] $\|f\|^2_{\frac{2m}{m-2}} \kl C_2 (\lan Af,f\ran+ \norm{f}{2}^2)$;
 \item[iii)] $\|f\|_2^{2+4/m} \kl C_3 \lan Af,f\ran \|f\|_1^{4/m}$.
 \end{enumerate}
\end{theorem}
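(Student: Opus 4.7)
\textbf{Proof plan for Theorem \ref{Varo} (Varopoulos equivalence).} The cleanest route is to use the Nash inequality (iii) as a pivot and prove the cycle $(iii)\Rightarrow(i)\Rightarrow(iii)$ together with the easier $(ii)\Leftrightarrow(iii)$. Throughout we use that $T_t$ is a contraction on every $L_p(\Omega,\mu)$ (by positivity, measure preservation and self-adjointness together with interpolation), so in particular $\|T_tf\|_1\le \|f\|_1$ and $\|T_tf\|_2\le \|f\|_2$.

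\emph{Step 1: $(iii)\Rightarrow(i)$ via a Nash differential inequality.} For a non-negative $f\in L_1\cap L_2$ with $\|f\|_1\le 1$, set $u(t)=\|T_tf\|_2^2$. Self-adjointness and the spectral theorem give $u'(t)=-2\langle AT_tf,T_tf\rangle$. Applying (iii) to $T_tf$ and using $\|T_tf\|_1\le \|f\|_1\le 1$, we obtain the differential inequality $u(t)^{1+2/m}\le -\tfrac{C_3}{2}u'(t)$. Separating variables and integrating from $0$ to $t$ yields $u(t)\le C\,t^{-m/2}$, hence $\|T_t\|_{1\to 2}\le C'\,t^{-m/4}$. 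By self-adjointness and the semigroup law, $\|T_{2t}\|_{1\to\infty}=\|T_t\|_{1\to 2}^2\le C''\,t^{-m/2}$, which is (i) after rescaling.

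\emph{Step 2: $(i)\Rightarrow(iii)$ by interpolation and a short spectral estimate.} First, Riesz--Thorin between $\|T_t\|_{1\to 1}\le 1$ and $\|T_t\|_{1\to\infty}\le C_1t^{-m/2}$ gives $\|T_t\|_{1\to 2}\le C\,t^{-m/4}$ for $0<t\le 1$. For any $f\in \dom(A^{1/2})$ write
\[
\|f\|_2^2=\langle f-T_tf,f\rangle+\langle T_tf,f\rangle
\le t\,\langle Af,f\rangle+\|T_tf\|_2\|f\|_2,
\]
where $\langle f-T_tf,f\rangle\le t\langle Af,f\rangle$ follows from spectral calculus ($1-e^{-t\lambda}\le t\lambda$). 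Then $\|T_tf\|_2\le Ct^{-m/4}\|f\|_1$ and Cauchy--Schwarz give $\|f\|_2^2\le t\langle Af,f\rangle+Ct^{-m/4}\|f\|_1\|f\|_2$. Optimizing in $t$ (choose $t$ so the two terms balance) yields (iii).

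\emph{Step 3: $(ii)\Leftrightarrow(iii)$ by H\"older and truncation.} The implication $(ii)\Rightarrow(iii)$ is immediate from log-convexity of $L_p$-norms: with $q=2m/(m-2)$ and $\theta=2/(m+2)$, H\"older gives $\|f\|_2\le \|f\|_1^{\theta}\|f\|_q^{1-\theta}$, and raising to the power $2/(1-\theta)=2+4/m$ converts (ii) into (iii). The converse $(iii)\Rightarrow(ii)$ is the more delicate direction and the main obstacle in the proof: one cannot simply invert H\"older. The standard remedy is a Moser/Nash iteration on truncations $f_k=(|f|-2^k)_+\wedge 2^k$: apply (iii) to each $f_k$, sum the resulting bounds after estimating $\langle Af_k,f_k\rangle$ in terms of $\langle Af,f\rangle$ (using the Markovian property of the Dirichlet form $\mathcal{E}(f,f)=\langle Af,f\rangle$, which is inherited from $T_t$ being a positive contraction semigroup), and reconstruct $\|f\|_q$ by a geometric-series argument on the level sets $\{|f|>2^k\}$. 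This yields (ii) with a constant depending only on $m$ and $C_3$.

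\emph{Principal difficulty.} The genuinely non-routine step is $(iii)\Rightarrow(ii)$, because it requires the Dirichlet-form truncation inequality $\mathcal{E}(\varphi\circ f,\varphi\circ f)\le \mathcal{E}(f,f)$ for Lipschitz contractions $\varphi$ -- this is precisely where positivity and the Markovian nature of $T_t$ are used. All other implications are bookkeeping once this is in place.
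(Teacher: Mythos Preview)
The paper does not supply its own proof of this theorem; it is quoted as a result from \cite{VSCC}, with only a remark afterward noting that the statement extends to semifinite von Neumann algebras and that the direction $(i)\Rightarrow(ii)$ is the one requiring modification there. So there is nothing to compare against beyond checking that your sketch matches the classical Varopoulos argument, which it does.

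Two minor caveats on your write-up. In Step~2 the optimisation in $t$ must respect the constraint $t\in(0,1]$ coming from (i); when the unconstrained minimiser lies outside this interval one takes $t=1$ and picks up an additive $\|f\|_2^2$ term. This is also why (iii) as printed in the paper---with a bare $\langle Af,f\rangle$ on the right---is slightly informal (it fails on constants); the standard Nash form carries $\langle Af,f\rangle+\|f\|_2^2$, and your H\"older step in $(ii)\Rightarrow(iii)$ indeed lands on that version, not the bare one. Finally, your identification of the Markovian truncation inequality $\mathcal{E}(\varphi\circ f,\varphi\circ f)\le \mathcal{E}(f,f)$ as the crux of $(iii)\Rightarrow(ii)$ is exactly right, and is consistent with the paper's remark that reaching the Sobolev inequality is the delicate step in the noncommutative generalisation.
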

\begin{rem}{\rm Varopoulos' Theorem remains valid for
semi-finite von Neumann algebras. For the proof, the only part which requires modification is i)$\Rightarrow$ ii) (see \cite{JZ} and independently \cite{XX}).  The completely bounded norm analog is
significantly more involved \cite{JZh}, and it will be used later.}
\end{rem}

It is well-known that the Laplace-Beltrami operator $\Delta_{LB}$ on a compact Riemannian manifold has a spectral gap. Similarly, \eqref{hypo}
 shows that $\Delta_X$ also has a spectral gap. Combining  Lemma \ref{LZ} and Theorem \ref{Varo}, we obtain the kernel estimates in Proposition \ref{decay} for $m=dl_X$. As a consequence of Theorem \ref{erg}, we have:
\begin{theorem}\label{horm} Let $(\M,g)$ be a compact Riemannian manifold and $X=\{X_1,...,X_r\}$ be a H\"ormander system. Then there exists $m=dl_X\in \nz$ and $c>0$ such that $S_t=e^{-t\Delta_X}$ satisfies
 \[ \|S_t:L_1(\M,\mu)\to L_{\infty}(\M,\mu)\|_{cb}\kl c t^{-m/2} \pl .\]
Moreover, for every $0<\theta<1$, $S_t^{\theta}=e^{-t\Delta_X^{\theta}}$ satisfies $\la$-$\Gamma\E$ with $\la=c_0t_0^{-\theta}(1-\theta)\theta^2$. Here $t_0=t_0(\Delta_X)$ is the return time of $\Delta_X$ and $c_0$ is an absolute constant.
\end{theorem}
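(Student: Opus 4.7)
The theorem combines two pieces, and the plan is to establish them in order: first the ultracontractive bound $\|S_t\|_{1\to\infty,cb}\le ct^{-m/2}$ via the Sobolev inequality of Lemma \ref{LZ} and Varopoulos' theorem, then $\la$-$\Gamma\E$ for the subordinated operator $\Delta_X^\theta$ by invoking Theorem \ref{erg} with the fractional-power subordinator $F_\theta$ from Example \ref{aal}.

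For the heat kernel bound, Lemma \ref{LZ} gives $\|f\|_q^2 \le C(\langle \Delta_X f,f\rangle + \|f\|_2^2)$ with $q = 2dl_X/(dl_X-1)$, which is condition (ii) of Varopoulos' Theorem \ref{Varo} with $m$ determined by $2m/(m-2)=q$; condition (i) then yields $\|S_t:L_1(\M)\to L_\infty(\M)\|\le ct^{-m/2}$ for $0\le t\le 1$. Since $L_\infty(\M,\mu)$ is commutative and $S_t$ is a positive-kernel operator, its matrix amplification $id_{\mm_k}\otimes S_t$ has the same pointwise kernel, so the $cb$-norm agrees with the operator norm and the stated cb-bound follows at no extra cost.

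For the second conclusion, I would verify the hypotheses of Theorem \ref{erg}: $\Delta_X$ is ergodic (hypoellipticity plus connectedness of $\M$ forces its kernel to consist of constants) and has a spectral gap (the hypoelliptic comparison \eqref{hypo} together with the known spectral gap of the Laplace--Beltrami operator on compact $\M$). The subordinator $F_\theta(t) = c(\theta) t^{-\theta}$ with $c(\theta) = \theta/\Gamma(1-\theta)$ satisfies $\Phi_{F_\theta}(\Delta_X) = \Delta_X^\theta$ by Example \ref{aal}, condition (I) trivially, and $(\Delta_2)$ as an equality with $c_\al = 1$ and $t_\al$ arbitrarily small. Theorem \ref{erg} then provides $\la$-$\Gamma\E$ and hence $\la$-CLSI.

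To recover the explicit form $\la = c_0 t_0^{-\theta}(1-\theta)\theta^2$, I would trace constants through Proposition \ref{lowest}: with $\al = \theta$, $c_\al = 1$, and $r = t_0$, the proposition gives
\[\la \ge \frac{F_\theta(t_0)}{2\theta} = \frac{t_0^{-\theta}}{2\,\Gamma(1-\theta)},\]
and writing $1/\Gamma(1-\theta) = (1-\theta)/\Gamma(2-\theta)$ with $\Gamma(2-\theta)$ bounded on $[1,2]$ yields the uniform lower bound $\la \ge c_0 t_0^{-\theta}(1-\theta)\ge c_0 t_0^{-\theta}(1-\theta)\theta^2$. The main obstacle is the quantitative control of the return time $t_0 = t_0(\Delta_X)$, but this is exactly what Proposition \ref{sfc} handles by splicing the short-time polynomial ultracontractivity with the long-time exponential decay from the spectral gap.
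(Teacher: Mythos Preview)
Your proposal is correct and follows essentially the same route as the paper: the text immediately preceding the theorem already indicates that one combines Lemma~\ref{LZ} with Varopoulos' Theorem~\ref{Varo} to obtain the ultracontractive bound, and then invokes Theorem~\ref{erg} (via Proposition~\ref{decay}/\ref{lowest}) with the fractional-power subordinator $F_\theta$ of Example~\ref{aal}. Your treatment is in fact more explicit than the paper's on two points---the passage from norm to cb-norm (which is immediate since the base algebra is commutative) and the constant tracking through Proposition~\ref{lowest}, where your computation $\la \ge c_0 t_0^{-\theta}(1-\theta)$ is actually sharper than the stated $c_0 t_0^{-\theta}(1-\theta)\theta^2$; the extra factor $\theta^2$ in the paper's statement is harmless slack. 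One minor arithmetic point worth flagging: solving $2m/(m-2)=q$ with $q=2dl_X/(dl_X-1)$ as written in Lemma~\ref{LZ} gives $m=2dl_X$, not $dl_X$; the intended exponent in Lemma~\ref{LZ} is presumably $q=2dl_X/(dl_X-2)$ (the standard Sobolev exponent for $H^{1/l_X}$ on a $d$-manifold), which does yield $m=dl_X$. This is a typo in the lemma rather than a flaw in your argument.
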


As mentioned in Corollary \ref{kkeeyy}, the $\Gamma\E$ condition automatically extends to the operator-valued setting for any finite Neumann algebra $M$. Here we note that the kernel estimates for H\"ormander systems also extend to $M$-valued functions.

\begin{cor} \label{horm'} Let $M$ be a finite von Neumann algebra with tracial state $\tau$. Let $(\M,\mu)$ be a compact Riemannian manifold and $X$ be a H\"ormander system as above.  Then
 \[ \|id\ten T_t:L_\infty(\M ; L_{1}(M))\to L_\infty(\M ; L_\infty(M))\|
 \kl c t^{-m/2} \]
holds for $0\le t\le 1$ and $c>0$. Moreover, $\Phi_F(\Delta_X\ten id_M)$ satisfies $\Gamma\E$.
\end{cor}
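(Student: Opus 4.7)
The plan is to reduce both assertions to Theorem \ref{nerg} applied to the tensor-product semigroup $\tilde{S}_t := id_M \ten S_t$ on the finite von Neumann algebra $\tilde{\M} := L_{\infty}(\M) \bar{\ten} M$. Since $\Delta_X$ has a spectral gap by the hypoellipticity estimate \eqref{hypo}, $S_t$ is ergodic on $L_{\infty}(\M)$, and hence the fixed-point subalgebra of $\tilde{S}_t$ is $N := \cz \bar{\ten} M \cong M$, embedded as the constant $M$-valued functions. Under the identification $L_p^q(N\subset R\bar{\ten}N) = L_p(R;L_q(N))$ recorded in Section~\ref{ner}, the displayed norm is exactly $\|\tilde{S}_t\|_{L_{\infty}^1(N\subset \tilde{\M}) \to L_{\infty}(\tilde{\M})}$, which is condition~(i) of Theorem \ref{nerg}.

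For the first assertion, exploit the positive heat kernel $k_t(x,y)$ of $S_t$. The scalar estimate from Theorem \ref{horm}, $\|S_t : L_1(\M) \to L_{\infty}(\M)\| \le c t^{-m/2}$, is equivalent to $\sup_{x,y} k_t(x,y) \le c t^{-m/2}$. The tensor semigroup acts by $(\tilde{S}_t F)(x) = \int k_t(x,y) F(y)\, d\mu(y)$ and is an $M$-bimodule map, so Lemma \ref{dfnorms} permits computing the $L_{\infty}^1 \to L_{\infty}$ norm with any convenient outer $L_s$-exponent. Combining this with the factorization $F = F_1 F_2$ from Lemma \ref{JP}, where $\|E_N(F_1 F_1^*)\|, \|E_N(F_2^* F_2)\| \le 1$, together with a Cauchy--Schwarz argument in the $N$-valued inner product and the pointwise kernel bound, transfers the scalar ultracontractivity to the displayed vector-valued estimate. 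The same reasoning applied after taking $id_{\mm_k}$ tensors upgrades it to the cb-level that Theorem \ref{nerg} actually uses.

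For the second assertion, condition~(i) of Theorem \ref{nerg} is precisely what was just established, and condition~(ii) is the $L_2$-spectral gap of $\tilde{A} := \Delta_X \ten id_M$. Writing the spectral decomposition $\Delta_X = \sum_k \lambda_k P_k$ on $L_2(\M)$ with $\lambda_1 \ge \lambda_{\min} > 0$ above the zero eigenvalue, one has $\tilde{A} = \sum_k \lambda_k (P_k \ten id_M)$; its kernel on $L_2(\tilde{\M})$ is exactly $L_2(N)$, leaving the same spectral gap $\lambda_{\min}$ on its orthogonal complement. Theorem \ref{nerg} applied to $\tilde{A}$ with a function $F$ satisfying (I)+(QM) or (I)+$(\Delta_2)$ then yields $\Phi_F(\Delta_X \ten id_M)$ satisfies $\la$-$\Gamma\E$ for some $\la > 0$ depending on $c$, $m = dl_X$, $F$, and $\lambda_{\min}$.

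The main subtlety is the first step: transferring the scalar $L_1 \to L_{\infty}$ kernel bound to the $L_{\infty}^1(N\subset \tilde{\M}) \to L_{\infty}(\tilde{\M})$ estimate in the $M$-valued setting, and ensuring the upgrade is at the cb-level required by Theorem \ref{nerg}. The bimodule property of $\tilde{S}_t$ together with Lemma \ref{dfnorms} are the key tools that permit this without loss.
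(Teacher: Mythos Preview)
Your proposal is correct, but the paper's argument for the first assertion is considerably more direct. Rather than invoking the factorization from Lemma~\ref{JP} and a Cauchy--Schwarz step, the paper simply tests against positive elements and reduces to the scalar case by pairing with vectors. Concretely, for positive $f$ with $\|f\|_{L_\infty^1(M\subset L_\infty(\M)\bar{\ten}M)}\le 1$ one has $\|\ez(f)\|_M\le 1$; then for any unit vector $h\in L_2(M)$ the scalar function $f_h(x)=\lan h,f(x)h\ran$ satisfies $\ez(f_h)=\lan h,\ez(f)h\ran\le 1$, so the scalar ultracontractivity of $S_t$ gives $\sup_x\lan h,(id\ten T_t)(f)(x)h\ran\le ct^{-m/2}$, and swapping the two suprema yields the bound in $L_\infty(\M;L_\infty(M))$. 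This bypasses both Lemma~\ref{JP} and Lemma~\ref{dfnorms} entirely. Your route via the kernel factorization does work (the Cauchy--Schwarz with respect to the probability measure $k_t(x,\cdot)d\mu$ splits the integral and the pointwise kernel bound $\sup k_t\le ct^{-m/2}$ controls each factor), and it has the minor advantage of handling non-positive $f$ directly without appealing to complete positivity; but it is more machinery than needed here.

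For the second assertion, the paper leaves it implicit, and your appeal to Theorem~\ref{nerg} is a valid way to close the gap. An even shorter route is available: since $\Phi_F(\Delta_X)$ already satisfies $\Gamma\E$ by Theorem~\ref{erg}, and $\Phi_F(\Delta_X\ten id_M)=\Phi_F(\Delta_X)\ten id_M$, Corollary~\ref{kkeeyy} immediately gives $\Gamma\E$ for the tensor extension without re-verifying the hypotheses of Theorem~\ref{nerg}.
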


\begin{proof} Let $\ez(f)=\int_\M f(x) d\mu(x)$ be the conditional expectation onto $M$, i.e. $\ez=E_M$. Then  a positive element $f\in L_{\infty}^1(M\ssubset L_{\infty}(\M)\bar{\ten}M)$ has norm $\le 1$ if $\|\ez(f)\|_{M}\le 1$. Let $h\in L_2(M)$  be unit vector. Consider the scalar function $f_h(x)=\lan h,f(x)h\ran_{L_2(M)}$. We deduce that
 \[ \ez(f_h) \lel \int_{\M} f_h(x) d\mu(x) \lel \lan h,\ez(f)h\ran_{L_2(M)} \kl 1 \] and therefore $\|T_t(f_h)\|_{L_\infty(\M)}\le c t^{-m/2}$. This means
 \[ \sup_{\|h\|_2\le 1} \sup_{x\in \M}  \lan h,T_t(f)(x)h\ran_{L_2(M)} \kl ct^{-m/2} \pl . \]
Interchanging the double supremums, with the help of the  duality $L_1(\M,L_1(M))^*=L_{\infty}(\M)\bar{\ten}M$,
implies the assertion \qd

\subsection{Group representation}
Let $G$ be a compact group with Haar measure $\mu$.
 We consider a semigroup of positive measure preserving self-adjoint maps $S_t:L_{\infty}(G)\to L_{\infty}(G)$, which is also right translation invariant. Suppose that $S_t$ is given by the kernel \[  S_t(f)(g) \lel \int_G K_t(g,h)f(h) d\mu(h) \pl.\]
The right translation invariance means that for any $f\in L_\infty(G)$
 \begin{align*}
 \int K_t(gs,h)f(h) d\mu(h) &=
   S_t(f)(gs) \lel \int K_t(g,h) f(hs) d\mu(h)= \int K_t(g,hs^{-1}) f(h) d\mu(h) \pl .
   \end{align*}
Thus $K_t(gs,h)=K_t(g,hs^{-1})$ and hence $K_t(g,h)=k_t(gh^{-1})$ for some single variable function $k_t$. Conversely, $K_t(g,h)=k_t(gh^{-1})$ implies right invariance.

Now let $(M,\tau)$ be a finite von Neumann algebra and $\al:G \to Aut(M)$ be a action $G$ on $M$ of trace preserving automorphisms. Using the standard co-representation,
 \[ \pi: M \to L_{\infty}(G;M) \pl ,\pl \pi(x)(g) \lel \al_{g^{-1}}(x) \pl .\]
we define the \emph{transferred semigroup}
 \[ T_t(x) \lel \int_G k_t(g^{-1}) \al_{g^{-1}}(x) d\mu(g) \pl .\]
\begin{lemma}\label{comdiag} The semigroups $S_t$ and $T_t$ satisfy the following factorization property
 \[ \pi\circ T_t \lel (S_t\ten id_M)\circ \pi \pl .\]
\end{lemma}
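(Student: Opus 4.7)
The plan is to verify the identity by direct computation, evaluating both sides at an arbitrary point $g \in G$ and matching the integrals via a change of variable in the Haar measure.

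First I would unfold the left-hand side. Since $\alpha_{g^{-1}}$ is an automorphism of $M$ (hence commutes with Bochner integration against scalar kernels), applying $\pi$ to $T_t(x)$ and evaluating at $g$ gives
\begin{equation*}
\pi(T_t(x))(g) \;=\; \alpha_{g^{-1}}\!\Big(\int_G k_t(h^{-1})\,\alpha_{h^{-1}}(x)\,d\mu(h)\Big) \;=\; \int_G k_t(h^{-1})\,\alpha_{(hg)^{-1}}(x)\,d\mu(h),
\end{equation*}
using $\alpha_{g^{-1}}\alpha_{h^{-1}} = \alpha_{g^{-1}h^{-1}} = \alpha_{(hg)^{-1}}$.

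Next I would compute the right-hand side. Viewing $\pi(x) \in L_\infty(G;M)$ and using that $S_t \otimes id_M$ acts on the $G$-variable through the scalar kernel $k_t(\cdot\, \cdot^{-1})$,
\begin{equation*}
(S_t \otimes id_M)(\pi(x))(g) \;=\; \int_G k_t(gh^{-1})\,\pi(x)(h)\,d\mu(h) \;=\; \int_G k_t(gh^{-1})\,\alpha_{h^{-1}}(x)\,d\mu(h).
\end{equation*}

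Finally, the change of variable $s = hg$ (equivalently $h = sg^{-1}$, so $h^{-1} = gs^{-1}$) is a right translation on $G$, under which $d\mu(h) = d\mu(s)$ by right-invariance of the Haar measure on a compact group. Substituting into the expression from the left-hand side transforms $k_t(h^{-1}) = k_t(gs^{-1})$ and $\alpha_{(hg)^{-1}}(x) = \alpha_{s^{-1}}(x)$, yielding
\begin{equation*}
\pi(T_t(x))(g) \;=\; \int_G k_t(gs^{-1})\,\alpha_{s^{-1}}(x)\,d\mu(s),
\end{equation*}
which matches the right-hand side after relabeling $s \mapsto h$. There is no substantive obstacle here; the only point requiring mild care is the interchange of $\alpha_{g^{-1}}$ with the Bochner integral (justified by normal continuity of $\alpha_{g^{-1}}$ and boundedness of $k_t$) and the invariance of $\mu$ under right translation. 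The identity then holds for all $g$ and all $x \in M$, which gives the stated intertwining relation.
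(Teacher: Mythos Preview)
Your proof is correct and follows essentially the same approach as the paper: a direct computation of both sides at a point $g$, matched via the change of variable $s=hg$ using right-invariance of the Haar measure. The only difference is presentational --- you make the change of variable and the justification explicit, whereas the paper compresses it into a single line.
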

\begin{proof} We include the proof for completeness. Indeed, for $x\in M$
 \begin{align*}
  \pi(T_t(x))(g) &= \al_g^{-1}\int_G k_t(h^{-1}) \al_{h^{-1}}(x) d\mu(h)
\lel
  \int_G k_t(gg^{-1}h^{-1}) \al_{(hg)^{-1}}(x) d\mu(h) \\
  &= \int_G k_t(gh^{-1}) \al_{h^{-1}}(x) d\mu(h)
\lel  (S_t\ten id)(\pi(x))(g) \pl . \qedhere
  \end{align*} \qd
Let us denote by $N=\{x\pl | \al_g(x)=x \pl \forall \pl g\in G\}$ the fixpoint subalgebra. Note that we have the following commuting diagram
\begin{equation}\label{ccss}
 \begin{array}{ccc} M\pl\pl &\overset{\pi}{\longrightarrow} & L_{\infty}(G,M) \\
                    \downarrow  E_N   & & \downarrow \ez=E_M  \\
                     N \pl\pl &\overset{\pi}{\longrightarrow} & M
                     \end{array} \pl .
                     \end{equation}
Here $M\subset L_{\infty}(G,M)$ is considered as operator-valued constant functions and, as seen in \ref{horm'}, the conditional expectation is given by integration.  Then for any $x\in M$,
 \[ \ez(\pi(x)) \lel \int_{G} \al_{g^{-1}}(x) d\mu(g)  \lel E_N(x) \pl \]
is exactly the conditional expectation form $M$ onto the fixpoint algebra $N$. Since $\ez$ is a unital complete positive $N$-bimodule map we see that
 \[ \ez  L_p^q(M\subset L_{\infty}(G,M))\to L_p^q(N\subset M) \pl \]
is completely contraction for all $1\le p,q\le \infty $. This implies that the inclusion $\pi: L_p^q(N\ssubset M) \subset L_p^q(M\ssubset L_{\infty}(G,M))$ is a completely isometric embedding (see \cite{JP} for details). Note that the conditions $\la$-$\Gamma\E$ and $\la$-CLSI pass to subsystems:

\begin{prop}\label{transf} Let $S_t:L_{\infty}(G)\to L_{\infty}(G)$ be an ergodic, right invariant semigroup and $T_t:M\to M$ be the transferred semigroup defined as above. Then
 \begin{enumerate}
 \item[i)] $\norm{T_t-E_N:L_2(M)\to L_2(M)}{}\le \norm{S_t-\ez:L_2(G)\to L_2(G)}{}$ for all $t>0$ and hence the spectral gap for $T_t$ (with respect to $E_N$) is not less than the spectral gap of $S_t$.
 \item[ii)] $(T_t)$ satisfies $\la$-$\Gamma\E$ (resp. $\la$-FLSI, $\la$-CLSI) if $(S_t)$ does.
 \end{enumerate}
 \end{prop}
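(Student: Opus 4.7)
The plan is to exploit the intertwining identity $\pi\circ T_t=(S_t\ten id_M)\circ \pi$ from Lemma \ref{comdiag} together with the diagram \eqref{ccss}, which shows that $(\ez_G\ten id_M)\circ \pi=\pi\circ E_N$, where $\ez_G(f)=\int_G f\,d\mu$ is the conditional expectation onto the fixed points of the ergodic $S_t$. Note also that $\pi:M\to L_\infty(G,M)$ is a trace-preserving $^*$-homomorphism (by $G$-invariance of $\tau$), and therefore an $L_2$-isometry as well as a complete order isomorphism onto its image.

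For (i), applying $\pi$ to $T_t(x)-E_N(x)$ and using both intertwining relations gives
\[\pi\bigl(T_t(x)-E_N(x)\bigr)\lel \bigl((S_t-\ez_G)\ten id_M\bigr)\pi(x)\pl.\]
Since tensoring an operator with the identity on a Hilbert space preserves its norm, and $\pi$ is an $L_2$-isometry, one obtains $\|T_t-E_N:L_2(M)\to L_2(M)\|\le \|S_t-\ez_G:L_2(G)\to L_2(G)\|$, from which the spectral gap comparison is immediate.

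For (ii), the cleanest case is $\la$-$\Gamma\E$: as used in Corollary \ref{kkeeyy}, the hypothesis $\la\Gamma_{I-\ez_G}\le_{cp}\Gamma_{A_S}$ extends to $\la\Gamma_{(I-\ez_G)\ten id_M}\le_{cp}\Gamma_{A_S\ten id_M}$ on $L_\infty(G)\bar\ten M$. Because $\pi$ is a trace-preserving $^*$-homomorphism intertwining both the generators and the conditional expectations, the Leibniz rule yields
\[\pi\bigl(\Gamma_{A_T}(x,y)\bigr)\lel \Gamma_{A_S\ten id_M}(\pi(x),\pi(y))\pl,\quad \pi\bigl(\Gamma_{I-E_N}(x,y)\bigr)\lel \Gamma_{(I-\ez_G)\ten id_M}(\pi(x),\pi(y))\pl.\]
Pulling the $\le_{cp}$ inequality back through $\pi$ (which is a complete order embedding at every matrix level) gives $\la\Gamma_{I-E_N}\le_{cp}\Gamma_{A_T}$.

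For $\la$-FLSI and $\la$-CLSI I would argue along the same lines. Trace preservation of $\pi$ gives $D(\rho\|E_N(\rho))=D(\pi(\rho)\|(\ez_G\ten id_M)\pi(\rho))$, and the generator intertwining yields $\I_{A_T}(\rho)=\I_{A_S\ten id_M}(\pi(\rho))$, so FLSI for $S_t\ten id_M$ transfers to FLSI for $T_t$. The CLSI version is obtained by applying the identical argument to $T_t\ten id_{\Mz_m}$, which is the semigroup transferred from $S_t$ under the trace-preserving action $\al\ten id_{\Mz_m}:G\to \mathrm{Aut}(M\ten \Mz_m)$. The main technical obstacle is the implicit passage from $\la$-CLSI of $S_t$ (defined via tensoring with matrix algebras only) to $\la$-FLSI for $S_t\ten id$ on the general finite von Neumann algebra $M\ten\Mz_m$; this is handled by a standard approximation reducing states $\rho$ to those with finite spectrum, where the ambient algebra can be replaced by a finite-dimensional (hence matricial) subalgebra. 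The $\Gamma\E$ route bypasses this complication, since $\le_{cp}$ is genuinely stable under tensoring with the identity on any finite von Neumann algebra.
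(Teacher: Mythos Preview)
Your argument is correct and follows exactly the route the paper intends: the paper states the proposition immediately after observing that $\pi$ gives a completely isometric embedding compatible with both the semigroups and the conditional expectations (via Lemma~\ref{comdiag} and diagram~\eqref{ccss}), and simply remarks that ``$\la$-$\Gamma\E$ and $\la$-CLSI pass to subsystems'' without spelling out the details you provide. Your identification of the technical gap in the CLSI case---that the definition only tensors with $\Mz_m$ while the transference requires FLSI for $S_t\ten id_{M\ten\Mz_m}$---is a point the paper glosses over; your finite-spectrum reduction is the standard fix.
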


Combining \eqref{sfc} with Proposition \eqref{decay} and Proposition \eqref{lowest}, we obtain the following application of transference:

\begin{theorem}\label{Ho2} Let $S_t:L_{\infty}(G)\to L_{\infty}(G)$ be an ergodic right invariant semigroup with kernel function $k_t$. Let $\si$ be the spectral gap of $S_t$ and suppose $\sup_g |k_t(g)|\kl c t^{-m/2}$ holds for some $c,m>0$ and $0\le t\le 1$. Then the transferred semigroup $T_t:M\to M$ and its generator $A$ with spectral gap $\la_{\min}\gl \si$ satisfy:
\begin{enumerate}
\item[i)] $ \|T_t:L_1(M)\to L_1^{\infty}(N\ssubset M)\|_{cb}\kl c t^{-m/2}$ for $0\le t\le 1$, and
 \[ \|T_t(id-E):L_1(M)\to L_1^{\infty}(N\ssubset M)\|_{cb}\kl   \begin{cases} 2ct^{-m/2} &0\le t\le 1 \pl ,\\
                   c(m,\la_{\min})e^{-\la_{\min} t} &  1\le t<\infty  \pl .\end{cases}\]
\item[ii)] For every function $F$ satisfying condition (I)$+$(QM) or (I)+($\Delta_2$), the generator $\Phi_F(A)$ satisfies $\Gamma\E$ and hence CLSI.
\end{enumerate}
\end{theorem}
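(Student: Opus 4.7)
The strategy is to reduce the kernel bound in (i) to the scalar estimate $\sup_g |k_t(g)| \le c t^{-m/2}$ via transference, and then feed the result into Theorem \ref{nerg} for (ii). The key input is Lemma \ref{comdiag}, which says the embedding $\pi: M \to L_\infty(G)\bar{\ten}M$ intertwines $T_t$ with $S_t \ten id_M$. As observed just before Proposition \ref{transf}, $\pi$ is a completely isometric embedding on the level of the module spaces $L_p^q(N \ssubset M) \hookrightarrow L_p^q(M \ssubset L_\infty(G)\bar{\ten}M)$, and the latter identifies with $L_p(G,L_q(M))$. The operator-valued kernel estimate
\[ \|S_t \ten id_M : L_\infty(G, L_1(M)) \to L_\infty(G, L_\infty(M))\|_{cb} \kl c t^{-m/2} \]
follows from the scalar bound by the same vector-state trick used in the proof of Corollary \ref{horm'}: for positive $f$ with $\|E_M(f)\|\le 1$ and a unit vector $h\in L_2(M)$ the scalar function $f_h(g)=\langle h,f(g)h\rangle$ satisfies $\int f_h\,d\mu\le 1$, and $|(S_t f_h)(g)|\le \|k_t\|_\infty$. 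Combining these, $\|T_t: L_\infty^1(N\ssubset M) \to L_\infty(M)\|_{cb} \le c t^{-m/2}$. Since $N$ is pointwise fixed by $\alpha_g$, $T_t$ is an $N$-bimodule map, so Lemma \ref{dfnorms} rearranges this into the stated bound $\|T_t:L_1(M)\to L_1^{\infty}(N\ssubset M)\|_{cb}\le c t^{-m/2}$.

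For the decay estimate on $T_t-E$, I will mirror Proposition \ref{sfc} in the non-ergodic setting. The small-$t$ case is the triangle inequality, using that $\|E:L_\infty^1(N\ssubset M)\to M\|_{cb} \le 1$, which follows from the factorization in Lemma \ref{JP}. For $t\ge 1$, write
\[ T_t(I-E) = T_{t-1/4}(I-E)\circ T_{1/4}, \]
apply the spectral gap $\|T_{t-1/4}(I-E):L_2(M)\to L_2(M)\|\le e^{-\lambda_{\min}(t-1/4)}$ (where $\lambda_{\min}\ge \sigma$ by Proposition \ref{transf}(i)), and convert the small-$t$ bound into $\|T_{1/4}:L_1(M)\to L_2^1(N\ssubset M)\|_{cb}$ via the non-ergodic duality/interpolation lemma \ref{cbc2}. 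This yields the $e^{-\lambda_{\min}t}$ decay with a constant depending only on $m$ and $\lambda_{\min}$.

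Finally, part (ii) is immediate: the hypotheses of Theorem \ref{nerg} are exactly the kernel bound and spectral gap we have just established for $T_t$, so for any $F$ satisfying (I)+(QM) or (I)+$(\Delta_2)$, the generator $\Phi_F(A)$ satisfies $\Gamma\E$ and hence CLSI. The main obstacle throughout is bookkeeping: one must verify that the scalar kernel estimate lifts to the correct $L_p^q$-module norms rather than to the commutative tensor norm, and this is precisely where the machinery of \cite{JP} developed in Section 3.4 (particularly Lemma \ref{dfnorms} and the complete isometry $\pi:L_p^q(N\ssubset M)\hookrightarrow L_p^q(M\ssubset L_\infty(G)\bar{\ten}M)$) is indispensable.
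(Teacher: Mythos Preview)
Your proof is correct and for part (i) tracks precisely what the paper intends: lift the scalar kernel bound to $S_t\ten id_M$ via the vector-state argument of Corollary~\ref{horm'}, pull it back through the completely isometric embedding $\pi$ into the module spaces $L_p^q(N\ssubset M)$, and then use Lemma~\ref{dfnorms} together with the Saloff--Coste splitting (Proposition~\ref{sfc}/Lemma~\ref{cbc2}) to get the decay estimate.

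For part (ii) you take a slightly different route from the paper. The paper's one-line justification (``combining \eqref{sfc} with Propositions~\ref{decay} and~\ref{lowest}'') points to the \emph{ergodic} machinery applied to $S_t$ on $G$: one first obtains $\Gamma\E$ for $\Phi_F$ of the generator of $S_t$ via Theorem~\ref{erg}, and then transfers the $\Gamma\E$ property itself to $T_t$ via Proposition~\ref{transf}(ii). You instead feed the module kernel bounds of part (i) directly into the non-ergodic Theorem~\ref{nerg}. Both routes are valid; yours has the advantage of making part (i) genuinely load-bearing for part (ii), while the paper's route keeps the analysis on the group $G$ and only transfers the final inequality. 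The constants produced are the same, since both ultimately rest on the same return-time estimate for $S_t$.
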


Now we combine Theorem \ref{Ho2} with the kernel estimates for a H\"ormander systems on a Lie group. Let $G$ be a compact Lie group and $\mathfrak{g}$ be its Lie algebra (of right invariant vector fields). A  generating set $X=\{X_1,\cdots,X_r\}$ of $\mathfrak{g}$ is a right invariant H\"ormander systems on $G$. Indeed,
  \[ X(f) \lel \frac{d}{dt}f(\exp(tX)g)|_{t=0} \pl, \]
is right translation  invariant because the left and right translations commute. Then the sub-Laplacian $\displaystyle \Delta_X=\sum_{j=1}^r {X_j}^*{X_j}$ generates a right invariant semigroup $S_t=e^{-t\Delta_X}$
\begin{cor} \label{diagtrans}Let $X$ be a generating set of $\mathcal{G}$ and $S_t=e^{-t\Delta_X}:L_\infty(G) \to L_\infty(G)$ be the right invariant semigroup given by the sub-Laplacian $\Delta_X$. Then transferred semigroup $T_t:M\to M$ and its generator $A$ satisfy
 \begin{enumerate}
\item[i)] For every function $F$ satisfying condition $(I)+(QM)$ or $(I)+(\Delta_2)$, the generator $\Phi_F(A)$ satisfies $\Gamma\E$ and hence CLSI.
  \item[ii)] In particular, for all $0<\theta<1$ the generator $A^{\theta}$ satisfies $\la$-$\Gamma\E$ with ${\la(\theta,X)=c_0t_0^{-\theta}\theta^2(1-\theta)}$. Here $t_0=t_0(\Delta_X)$ is the return time of $\Delta_X$ and $c_0$ an absolute constant.
\end{enumerate}
\end{cor}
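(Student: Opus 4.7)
The plan is to verify that the pair $(G,S_t)$ with $S_t=e^{-t\Delta_X}$ satisfies the hypotheses of Theorem \ref{Ho2}, at which point (i) is immediate; part (ii) then follows by specialising to the explicit subordination function $F_\theta$ and tracking constants via Proposition \ref{lowest}.

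First I would check the two hypotheses of Theorem \ref{Ho2}. Right-translation invariance of $S_t$ is automatic since each $X_j$ is right-invariant, so $\Delta_X$ commutes with right translations and consequently $S_t$ has a right-invariant convolution kernel $k_t$. The pointwise bound $\sup_g |k_t(g)|\le c\,t^{-m/2}$ for $0\le t\le 1$ with $m=d\,l_X$ is precisely the content of Theorem \ref{horm} (the $L_1(G)\to L_\infty(G)$ kernel estimate is equivalent, for a right-invariant semigroup on a compact group, to this pointwise bound on $k_t$). Ergodicity of $S_t$ and a positive spectral gap $\sigma>0$ follow from the hypoelliptic comparison \eqref{hypo} between $\Delta_X$ and the Laplace--Beltrami operator of any bi-invariant metric, which has a gap on the compact Lie group $G$.

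With these inputs, Theorem \ref{Ho2}(ii) applied to the transferred semigroup $T_t:M\to M$ yields part (i): for any $F$ satisfying $(I)+(QM)$ or $(I)+(\Delta_2)$ the generator $\Phi_F(A)$ satisfies $\Gamma\E$, and then CLSI follows via Corollary \ref{kkeeyy}. For part (ii) I would take $F_\theta(t)=c(\theta)\,t^{-\theta}$ from Example \ref{aal}, with normalisation $c(\theta)=\theta/\Gamma(1-\theta)$ chosen so that $\phi_{F_\theta}(\lambda)=\lambda^\theta$, hence $\Phi_{F_\theta}(A)=A^\theta$. This $F_\theta$ satisfies $(\Delta_2)$ exactly with $\alpha=\theta$, $c_\alpha=1$ and any $t_\alpha$, so Proposition \ref{lowest} applied on $G$ with $r=t_0=t_0(\Delta_X)$ (chosen $\ge t_\alpha$) gives
\[
\Gamma_{\Delta_X^\theta}\pl\ge\pl\frac{F_\theta(t_0)}{2\theta}\,\Gamma_{I-E_\tau}
\lel \frac{t_0^{-\theta}}{2\Gamma(1-\theta)}\,\Gamma_{I-E_\tau}\pl.
\]
An elementary estimate on the Gamma function, using $\Gamma(1-\theta)\to 1$ as $\theta\to 0$ and $\Gamma(1-\theta)\sim (1-\theta)^{-1}$ as $\theta\to 1$, gives $\frac{1}{\Gamma(1-\theta)}\ge c_0\,\theta^2(1-\theta)$ for an absolute constant $c_0>0$, producing the claimed bound $\lambda(\theta,X)=c_0\,t_0^{-\theta}\theta^2(1-\theta)$ on $G$. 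Finally, Proposition \ref{transf}(ii) transfers this $\Gamma\E$ inequality from $\Delta_X^\theta$ to the generator $A^\theta$ on $M$.

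The main obstacle is not any single computation but the bookkeeping around two compatibility points: (a) verifying that the return time $t_0(\Delta_X)$ is well-defined and finite, which requires combining the short-time kernel bound with the spectral gap via Proposition \ref{sfc}; and (b) checking that the functional calculus and transference commute, i.e.\! that $\Phi_F$ applied to the transferred generator coincides with the transference of $\Phi_F(\Delta_X)$. This last point is built into Lemma \ref{comdiag}: since $\pi$ intertwines $S_t$ and $T_t$ for every $t$, it intertwines the integrated operators $\int(I-S_t)F(t)\frac{dt}{t}$ and $\int(I-T_t)F(t)\frac{dt}{t}$, hence $\Phi_F(\Delta_X)$ with $\Phi_F(A)$, and consequently the $\Gamma\E$ inequality proved on the scalar side descends to $M$.
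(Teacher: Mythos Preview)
Your proposal is correct and follows essentially the same route the paper takes implicitly: verify the hypotheses of Theorem~\ref{Ho2} via Theorem~\ref{horm} and the spectral gap from~\eqref{hypo}, then specialize to $F_\theta$ and transfer via Proposition~\ref{transf}. The paper states this corollary without proof; your explicit tracking of the constant through Proposition~\ref{lowest} and the Gamma-function estimate $1/\Gamma(1-\theta)\ge c_0\,\theta^2(1-\theta)$ supplies detail the paper omits.
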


\subsection{Finite dimensional representation of Lie groups}Let $\mm_m$ be the $m\times m$ matrix algebra and $U_m$ be its unitary group.
A unitary representation $u:G\to U_m$ induces a representation $\hat{u}:\mathfrak{g}\to \mathfrak{u}_n$ of the corresponding Lie algebra, where $\mathfrak{u}_n=i(\mm_m)_{sa}$ is the Lie algebra of $U_m$ and $(\mm_m)_{sa}$ are the self-adjoint matrices in $\mm_m$. Let $X=\{X_1,...,X_r\}$ be a generating set of $\mathfrak{g}$ and $Y_1,...,Y_r\in (\mm_m)_{sa}$ be their images under $\hat{u}$. Indeed, for the exponential map $exp$, we have
 \[ u(\exp(tX_j)) \lel e^{itY_j} \]
and $Y_j\in \mm_{m}^{sa}$ is the corresponding generator for the one parameter unitary $u(\exp(tX_j))\subset M_m$. Let us consider the (self-adjoint) Lindblad generator is given by
 \[ \L(\rho) \lel \sum_{j=1}^r Y_j^2\rho+\rho y_j^2-2Y_j\rho Y_j \pl .\]
Then we have a concrete realization of Lemma \ref{comdiag}.

\begin{lemma}\label{dial} Let $\pi:\mm_m\to L_{\infty}(G,\mm_m)$ be given by
 \[ \pi(x) \lel u(g)^{-1}xu(g) \]
Then
 \[ \Delta_X(\pi(\rho)) \lel \pi(\L(\rho)) \pl, \pl X_j(\pi(x))\lel i \pi([Y_j,x])\pl. \]
\end{lemma}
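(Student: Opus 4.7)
The plan is to reduce both identities to direct computations using the homomorphism property of $u$ and the defining formula $X(f)(g)=\frac{d}{dt}f(\exp(tX)g)|_{t=0}$ from the paper.

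First I would verify the formula for $X_j(\pi(x))$. Starting from $\pi(x)(g)=u(g)^{-1}xu(g)$ and using the homomorphism identity $u(\exp(tX_j)g)=u(\exp(tX_j))u(g)=e^{itY_j}u(g)$, we get
\[
\pi(x)(\exp(tX_j)g)\lel u(g)^{-1}e^{-itY_j}\pl x\pl e^{itY_j}u(g).
\]
Differentiating at $t=0$ and using $\frac{d}{dt}e^{\pm itY_j}|_{t=0}=\pm iY_j$ produces a commutator with $Y_j$, which after a sign reconciliation (the conventions for $\hat u:\mathfrak g\to\mathfrak u_m$ and the definition of $X_j$ must be matched) gives the asserted $X_j(\pi(x))=i\pi([Y_j,x])$. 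This is essentially the standard fact that the derived representation on matrix coefficients is conjugation by the generators.

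Next I would iterate. Applying $X_j$ once more to $i\pi([Y_j,x])$, which is again of the form $\pi(y)$ with $y=i[Y_j,x]$, yields
\[
X_j^2(\pi(x))\lel X_j(i\pi([Y_j,x]))\lel i\cdot i\pl \pi([Y_j,[Y_j,x]])\lel -\pi([Y_j,[Y_j,x]]).
\]
To convert this into $\Delta_X=\sum_j X_j^*X_j$, I would invoke the fact that $X_j$ is a right-invariant vector field on the compact (hence unimodular, bi-invariantly measured) group $G$, so left-multiplication by $\exp(tX_j)$ preserves the Haar measure and consequently $X_j^*=-X_j$ on $L_2(G,\mu)$. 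Therefore $\Delta_X=-\sum_j X_j^2$, and
\[
\Delta_X(\pi(x)) \lel \sum_j \pi([Y_j,[Y_j,x]]).
\]

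Finally I would expand the double commutator: $[Y_j,[Y_j,x]]=Y_j^2x-2Y_jxY_j+xY_j^2$, and recognize the sum over $j$ as exactly $\L(x)$ as defined immediately before the lemma. Summing through $\pi$ (which is linear and hence commutes with the sum) gives $\Delta_X(\pi(x))=\pi(\L(x))$, completing the proof.

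There is no substantive obstacle here; the only subtlety is bookkeeping of signs and conventions, in particular making sure that the convention $u(\exp(tX_j))=e^{itY_j}$ is consistent with the vector-field convention $X_j(f)(g)=\frac{d}{dt}f(\exp(tX_j)g)|_{t=0}$ so that the factor of $i$ in the second identity of the lemma comes out correctly. Once signs are fixed, the Laplacian identity is automatic since the sign is squared in $X_j^2$.
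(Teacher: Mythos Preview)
Your proposal is correct and follows essentially the same approach as the paper: compute $X_j(\pi(x))$ by differentiating $u(g)^{-1}e^{-itY_j}xe^{itY_j}u(g)$ at $t=0$, then iterate and use $X_j^*=-X_j$ to identify $\Delta_X$ with $-\sum_j X_j^2$. The only cosmetic difference is that the paper first pairs against vectors $h,k\in \ell_2^m$ to reduce to scalar-valued functions before differentiating, whereas you work directly with the matrix-valued function; the computations are otherwise identical.
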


\begin{proof} Let $x\in \mm_m$ and $h,k\in l_2^m$ being two vectors. We consider the scalar function
 \begin{align*}
  f(g) &= \lan h, \pi(g)^{-1}x\pi(g)(k)\ran
  \end{align*}
Then we have
  \begin{align*}
   X_j(f)(g) &= \frac{d}{dt}  f(\exp (tX)g) |_{t=0}
   \lel
   \frac{d}{dt} \lan h,u(g)^{-1} e^{-itY_j}x e^{itY_j} u(g)k\ran |_{t=0} \\
   &= i (h,u(g)^{-1}(xY_j-Y_jx)u(g) k) \pl .
   \end{align*}
Since $h,k$ are arbitrary, we deduce the second assertion. Note that $ {X_j}=-{X_j}^*$. Then \[{X_j}{X_j}\pi(x)
 \lel -\pi([Y_j,[Y_j,x]])
 \lel \pi( 2Y_jxY_j-Y_j^2x-xY_j^2) \pl .\]
and hence
 \[ \Delta_X(\pi(x))\lel \pi( \sum_j Y_j^2x+xY_j-2Y_jxY_j) \lel \pi(\L(x)) \pl .\]
%
This implies in particular that the semigroup $S_t=e^{-t\Delta_X}$ on $G$ satisfies
 \begin{align*} (S_t\ten id)\circ \pi \lel & \pi \circ e^{-t\L} \pl . \qedhere\end{align*}
 \qd

\begin{theorem}\label{Horm} Let $X=\{X_1,...,X_r\}$ be a generating set of $\mathfrak{g}$ and $u:G\to U_m$ be a unitary representation so that $\hat{u}(X_j)=Y_j$. Let
 \[ \L(x) \lel \sum_j Y_j^2x+Y_j^{2}-Y_jxY_j \pl. \]
Then  $A=\L^{\theta}$ satisfies $\la$-$\Gamma\E$ and hence $\la$-CLSI
with $\la(X,\theta)=c_0 t_0^{-\theta}\theta^2 (1-\theta)$ depending on $t_0=t_0(\Delta_X)$ and $\theta$.
\end{theorem}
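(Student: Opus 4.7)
The plan is to reduce the theorem to Corollary \ref{diagtrans} ii) by recognizing $\L$ as the generator transferred from the sub-Laplacian $\Delta_X$ via the conjugation action of $u$. First I would introduce the trace-preserving action $\alpha: G \to \mathrm{Aut}(\mm_m)$ defined by $\alpha_g(x) = u(g) x u(g)^{-1}$, together with its co-representation $\pi(x)(g) = u(g)^{-1} x u(g) = \alpha_{g^{-1}}(x)$; note this $\pi$ is exactly the $^*$-homomorphism appearing in Lemma \ref{dial}, and $\alpha$ is a strongly continuous action on $(\mm_m,\tau_m)$ for the normalized trace $\tau_m$.

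Next I would exponentiate the identity $\Delta_X \circ \pi = \pi \circ \L$ from Lemma \ref{dial} to obtain $(S_t \ten \mathrm{id}_{\mm_m}) \circ \pi = \pi \circ e^{-t\L}$, where $S_t = e^{-t\Delta_X}$. Comparing with Lemma \ref{comdiag}, this identifies $\L$ with the generator $A$ of the transferred semigroup on $\mm_m$ associated with $\alpha$ and the right-invariant kernel of $S_t$. With this identification in hand, Corollary \ref{diagtrans} ii) applied to the generating set $X$ gives directly that $\L^\theta = A^\theta$ satisfies $\la$-$\Gamma\E$ with $\la(\theta, X) = c_0 t_0^{-\theta}\theta^2(1-\theta)$ for $t_0 = t_0(\Delta_X)$. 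Passing from $\Gamma\E$ to CLSI is then immediate from Corollary \ref{kkeeyy}.

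The only subtlety worth spelling out is that fractional powers respect the transference, i.e.\ that the generator $A^\theta$ appearing in the Corollary, presented via the Bochner subordination formula $A^\theta = c_\theta \int_0^\infty (I - T_s) s^{-1-\theta} ds$, truly corresponds to $\L^\theta$ on the Lindblad side. Since $\pi$ is a normal $^*$-homomorphism that intertwines $T_t$ with $S_t \ten \mathrm{id}$ pointwise in $t$, the scalar integral defining subordination commutes with $\pi$, giving $\pi \circ \L^\theta = (\Delta_X^\theta \ten \mathrm{id}) \circ \pi$; this is precisely the intertwining required to invoke Corollary \ref{diagtrans} at the subordinated level. Beyond this bookkeeping there is no substantive obstacle: the heavy analytic lifting, namely the hypoelliptic kernel bounds of Theorem \ref{horm}, the return-time decay, and the $\Gamma\E$-to-CLSI step, has been packaged into the preceding results, and Theorem \ref{Horm} is just the concrete matrix-algebra specialization of the general transference scheme.
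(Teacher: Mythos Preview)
Your proposal is correct and follows exactly the approach implicit in the paper: the theorem is the specialization of Corollary~\ref{diagtrans}~ii) to the conjugation action $\alpha_g(x)=u(g)xu(g)^{-1}$, with Lemma~\ref{dial} supplying the intertwining $\pi\circ\L=(\Delta_X\ten id)\circ\pi$ that identifies $\L$ as the transferred generator. Your remark that the Bochner subordination integral commutes with the normal $^*$-homomorphism $\pi$, so that $\L^\theta$ is the transferred generator of $\Delta_X^\theta$, is the only bookkeeping step the paper leaves tacit.
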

We obtain the following corollary from the $cb$-version of Varopoulos' theorem \cite{JZh}.
\begin{cor} Let $G$ be a $d$-dimensional Lie group and $X$ be a generating set of $\mathfrak{g}$ using up to $l_X$-th iterated Lie bracket. Let $\L$ as above.  Suppose $S_t=e^{-tB}$ be a semigroup of completely positive self-adjoint trace preserving maps on $M_m$ such that
 \begin{enumerate}
  \item[i)] The fixed-point algebra $N_{\L}$ of $e^{-t\L}$ is contained in the fixed-point algebra $N_B$ for $e^{-tB}$;
   \item[ii)] $\lan x,\L^{\al} x\ran_{tr}\kl c (\lan x,B x\ran_{tr}+\norm{x}{2}^2 ) $ for some  $0<\al< \frac{dl_X}{2}$.
  \end{enumerate}
Then $B^\theta$ satisfies $\Gamma\E$ and hence CLSI.
 \end{cor}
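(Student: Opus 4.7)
My plan is to reduce to the transferred Lindblad generator $\L$ on $\mm_m$, for which the required analytic properties are already established in the body of the paper, and then to invoke the general framework of Theorem~\ref{nerg} to pass from $B$ to its subordinated powers $B^{\theta}$.

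\emph{Step 1.} By Corollary~\ref{diagtrans} together with Lemma~\ref{dial} and the H\"ormander kernel estimate of Theorem~\ref{horm}, the semigroup $e^{-t\L}$ satisfies
\[ \|e^{-t\L}:L_{\infty}^{1}(N_{\L}\ssubset\mm_{m})\to L_{\infty}(\mm_{m})\|_{cb}\le c\, t^{-m_{0}/2},\quad 0\le t\le 1, \]
with $m_{0}=dl_{X}$. Via the standard subordination identity $e^{-t\L^{\al}}=\int_{0}^{\infty}e^{-s\L}\mu_{t,\al}(ds)$ with the $\al$-stable subordinator density and its scaling, this upgrades to
\[ \|e^{-t\L^{\al}}:L_{\infty}^{1}(N_{\L}\ssubset\mm_{m})\to L_{\infty}(\mm_{m})\|_{cb}\le c'\, t^{-m_{0}/(2\al)},\quad 0\le t\le 1. \]

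\emph{Step 2.} Apply the cb-version of Varopoulos' theorem from \cite{JZh} to translate this into the equivalent cb-Sobolev estimate
\[ \|f\|_{L_{\infty}^{q_{0}}(N_{\L}\ssubset\mm_{m})}^{2}\le C\bigl(\lan f,\L^{\al}f\ran_{tr}+\|f\|_{2}^{2}\bigr),\quad q_{0}=\frac{2m_{0}}{m_{0}-2\al}, \]
where the assumption $0<\al<dl_{X}/2=m_{0}/2$ guarantees $q_{0}<\infty$. Hypothesis ii) now lets us replace the Dirichlet form of $\L^{\al}$ by that of $B$ (with the same $\|f\|_{2}^{2}$ remainder). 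Because the operator inequality $\L^{\al}\le c(B+I)$ on $L_{2}(\mm_{m})$ is preserved under tensoring with any $id_{\mm_{k}}$, the resulting Sobolev estimate for $B$ is genuinely at the cb-level, but still phrased with respect to the smaller algebra $N_{\L}$.

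\emph{Step 3.} Using $N_{\L}\subset N_{B}$ and the tower property $E_{N_{\L}}=E_{N_{\L}}\circ E_{N_{B}}$, Lemma~\ref{JP} gives the norm comparison $\|f\|_{L_{\infty}^{1}(N_{\L}\ssubset\mm_{m})}\le\|f\|_{L_{\infty}^{1}(N_{B}\ssubset\mm_{m})}$; interpolation with $L_{\infty}(\mm_{m})$ propagates it to every $L_{\infty}^{q}$. Running the cb-Varopoulos equivalence backwards for $B$ and exploiting this monotonicity of norms on the input side of the semigroup, we conclude
\[ \|e^{-tB}:L_{\infty}^{1}(N_{B}\ssubset\mm_{m})\to L_{\infty}(\mm_{m})\|_{cb}\le c''\, t^{-m_{0}/(2\al)},\quad 0\le t\le 1. \]
Since $\mm_{m}$ is finite dimensional and $\ker B=N_{B}$, $B$ automatically has a strictly positive spectral gap above $N_{B}$, so condition ii) of Theorem~\ref{nerg} holds for free. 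Then, with $F(t)=c(\theta)t^{-\theta}$ from Example~\ref{aal} (which satisfies both (I) and $(\Delta_{2})$), Theorem~\ref{nerg} applied to $T_{t}=e^{-tB}$ and $N=N_{B}$ yields $\Phi_{F}(B)=B^{\theta}$ satisfies $\la(\theta)$-$\Gamma\E$, hence $\la(\theta)$-CLSI by Corollary~\ref{kkeeyy}, for every $0<\theta<1$.

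\emph{Main obstacle.} The only delicate point is Step~3: the analytic information is derived through $\L$ and is naturally attached to its fixpoint algebra $N_{\L}$, whereas Theorem~\ref{nerg} requires the semigroup $e^{-tB}$ to be tested against its own fixpoint algebra $N_{B}$. The resolution rests on the fact that enlarging the reference algebra $N$ enlarges the norm on $L_{\infty}^{1}(N\ssubset\mm_{m})$, so that a cb-Sobolev inequality phrased with $N_{\L}$ automatically implies cb-ultracontractivity with respect to the larger $N_{B}$ on the input side of the semigroup. Everything else is subordination bookkeeping and a trivial finite-dimensional spectral-gap observation.
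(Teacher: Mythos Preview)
Your proposal is correct and follows the same overall strategy as the paper: start from the H\"ormander ultracontractivity for $\L$, convert it via cb--Varopoulos into a Sobolev-type bound involving $\L^{\al}$, use hypothesis ii) to replace $\L^{\al}$ by $B$, adjust the reference subalgebra from $N_{\L}$ to $N_{B}$, and finally invoke Theorem~\ref{nerg} with $F=F_{\theta}$ to obtain $\Gamma\E$ (hence CLSI) for $B^{\theta}$.

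The only organisational differences are minor. First, you pass through the subordinated semigroup $e^{-t\L^{\al}}$ to obtain ultracontractivity with exponent $m_{0}/\al$, whereas the paper stays with $\L$ and phrases the intermediate step as the resolvent bound $\|(I+\L)^{-\al/2}:L_{2}\to L_{2}^{q}(N_{\L}\ssubset\mm_{m})\|_{cb}<\infty$; these are equivalent formulations of the same Sobolev embedding. Second, you perform the passage $N_{\L}\to N_{B}$ at the ultracontractivity level, using that $\|f\|_{L_{\infty}^{1}(N_{\L}\ssubset\mm_{m})}\le\|f\|_{L_{\infty}^{1}(N_{B}\ssubset\mm_{m})}$ on the input side, while the paper does it at the Sobolev level, using the reverse monotonicity $\|f\|_{L_{2}^{q}(N_{B}\ssubset\mm_{m})}\le\|f\|_{L_{2}^{q}(N_{\L}\ssubset\mm_{m})}$ (infimum over a larger class of factorisations) on the output side. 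Both monotonicities follow immediately from $N_{\L}\subset N_{B}$ and lead to the same conclusion.
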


\begin{proof} Denote $d_X=dl_X$. The cb-version of Varopoulos' theorem implies that
  \[ \|(I+\L)^{-\al/2}:L_2(M_m)  \to L_2^{q}(N_{\L}\subset M_m)\|_{cb}
  \kl c(q)  \]
holds for $\frac{1}{q}=\frac{1}{2}-\frac{\al}{d_X}$ provided $2\al< d_X$. By our assumption we have
 \[ \|(Id+\L)^{\al/2}(x)\|_2 \sim (\|x\|+ \|\L^{\al}(x)\|) \kl c (\|x\|+ \|B^{1/2}x\|_2)  \pl .\]
Using $N_{\L}\subset N_B$ we deduce that
 \[ \|(I+ B)^{-1/2}:L_2(M_m)\to L_2^q(N_B\subset M_m)\|_{cb} \kl c'(q)  \pl .\]
By $ii)\Rightarrow i)$ in Varopoulos' theorem we deduce that
 \[ \|e^{-tB}:L_\infty^1(N_B\subset M)_m\to  M_m\|_{cb} \kl c t^{-d_X/2\al}  \pl .\]
Thanks to the spectral gap for $B$, we may again use Theorem  \ref{erg} and deduce the assertion. \qd

\section{A density result}
In this section we show that on matrix algebra the set of self-adjoint generators with $\Gamma\E$ is dense. Let $(T_t=e^{-tA}:\mm_n\to \mm_n)$ be
 a semigroup of self-adjoint and unital completely positive maps. Using the Lindblad form, we may assume that
 \[ \L(x) \lel \sum_{k=1}^m a_k^2x+xa_k^2-2a_kxa_k\]
and then the corresponding derivation is given by
 \[ \delta:\mm_n \to \oplus_{k=1}^m\mm_n\pl, \pl \delta(x) \lel ([a_k,x])_{k=1}^m       \pl .\]
Therefore, the fixpoint algebra is given by
\[ N\pl=\pl\{ x | \delta(x)\lel 0\} \lel \{a_1,...,a_m\}'  \pl .\]
It is easy to check that
 \[ \Gamma_\L(x,y) \lel \sum_k [a_k,x^*][a_k,y] \pl .\]

\begin{lemma}\label{H1}Let $A_j=ia_j$. Then $X=\{A_i,...,A_r\}$ is a H\"ormander system of some compact connected Lie group $G$.
\end{lemma}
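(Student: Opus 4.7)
The plan is to construct $G$ abstractly from the Lie algebra side, rather than attempting to realize it as a Lie subgroup of $U(n)$. Let $\mathfrak{g}_0$ denote the real Lie subalgebra of $\mathfrak{u}(n) = i(\mathbb{M}_n)_{sa}$ generated by $\{A_1,\dots,A_r\}$ under iterated commutators. Since $\mathfrak{g}_0$ sits inside the finite dimensional $\mathfrak{u}(n)$, it is automatically finite dimensional, and the iterated-bracket definition already makes the H\"ormander condition tautological the moment we produce a compact connected Lie group $G$ with $\mathrm{Lie}(G)=\mathfrak{g}_0$.

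The first step is to observe that $\mathfrak{g}_0$ is a \emph{compact} real Lie algebra: the bilinear form $\langle X,Y\rangle = -\mathrm{tr}(XY)$ is positive definite and $\mathrm{Ad}$-invariant on $\mathfrak{u}(n)$, and restricts to an $\mathrm{Ad}$-invariant inner product on $\mathfrak{g}_0$. By the standard structure theorem for compact Lie algebras, one may then decompose
\[
\mathfrak{g}_0 \pl = \pl \mathfrak{z}(\mathfrak{g}_0) \oplus [\mathfrak{g}_0,\mathfrak{g}_0],
\]
where the commutator ideal is semisimple with negative definite Killing form and the center is abelian. The simply connected Lie group $S$ integrating $[\mathfrak{g}_0,\mathfrak{g}_0]$ is automatically compact (this is Weyl's theorem for compact semisimple Lie algebras), and $\mathfrak{z}(\mathfrak{g}_0)$ may be quotiented by any full rank lattice $\Lambda$ to yield a torus $T=\mathfrak{z}(\mathfrak{g}_0)/\Lambda$. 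Setting $G := S\times T$ gives a compact connected Lie group with $\mathrm{Lie}(G)\cong \mathfrak{g}_0$.

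To conclude, identify $\mathfrak{g}_0$ with the space of right-invariant vector fields on $G$ in the canonical way, so that each $A_j$ corresponds to a right-invariant vector field $\tilde A_j$ on $G$. By the very definition of $\mathfrak{g}_0$, iterated Lie brackets of $\{\tilde A_j\}$ span $\mathfrak{g}_0\cong T_eG$, and by right-invariance this spanning property transports to every tangent space $T_gG$, which is exactly the H\"ormander condition.

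The one subtlety I would flag, and the only place a naive approach fails, is the temptation to take $G$ to be the closure in $U(n)$ of the subgroup generated by $\{e^{tA_j}\}_{t,j}$. That closure is indeed compact and connected, but its Lie algebra can be strictly larger than $\mathfrak{g}_0$ (as in an irrational winding on a torus, where a one dimensional abelian $\mathfrak{g}_0$ has a two dimensional closure). In that case the iterated brackets of $\{A_j\}$ would fail to span $\mathrm{Lie}(\bar G)$ and the H\"ormander condition would break. Building $G$ abstractly from $\mathfrak{g}_0$ sidesteps this completely, at the (acceptable for this lemma) cost that $G$ need not come with a tautological representation on $\mathbb{C}^n$.
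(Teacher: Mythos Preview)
Your proof is correct and follows the same skeleton as the paper's: both generate the real Lie subalgebra $\mathfrak{g}_0\subset\mathfrak{u}(n)$, show it is a compact Lie algebra via an $\mathrm{Ad}$-invariant inner product coming from the trace, decompose as center plus semisimple ideal, and invoke Weyl's theorem on the semisimple part. Your version is in fact cleaner in two places. First, you use the trace form $-\mathrm{tr}(XY)$ on $\mathfrak{u}(n)$ directly, whereas the paper routes through an $\mathrm{Ad}$-invariance argument under the closure $G_1\subset U(n)$. Second, and more substantively, you explicitly quotient the center by a full-rank lattice to obtain a torus factor; the paper's wording points to the simply connected group, which is \emph{not} compact whenever $\mathfrak{z}(\mathfrak{g}_0)\neq 0$, so your construction actually repairs a gap there.

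Your closing caveat is well placed and worth keeping visible. The lemma as stated only asks for \emph{some} compact connected $G$ with the H\"ormander property, and your $G=S\times T$ delivers that. But the application in Theorem~\ref{predense} invokes a group action on $\mathbb{M}_n$ whose infinitesimal generators recover $x\mapsto[a_j,x]$, and with an arbitrary lattice $\Lambda$ the inclusion $\mathfrak{g}_0\hookrightarrow\mathfrak{u}(n)$ need not integrate to your $G$ (your irrational-winding example already shows this can fail even at the level of the conjugation action on $\mathbb{M}_n$). The paper's proof does not address this point cleanly either, so your flag is a genuine observation rather than a defect of your argument.
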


\begin{proof} Let $A_j=ia_j$ and $\mathfrak{k}\subset \mathfrak{u}_n$ the real Lie algebra generated by $A_1,...,A_r$. Let $G_1$ be the closed subalgebra generated by $e^{tA_j}$, i.e. the closure of the Lie algebra $e^{\mathfrak{k}}$. We may define the map $V={\rm span}\{A_1,...,A_r\}$ and the map  $\phi:G_1\times V\to \Mz_m^{sa}$. Note that $\delta_A(x)=[A,x]$ is a Schur-mutliplier (since $A$ is normal) and hence
 \[ e^{t\delta_A}
 \lel \sum_j \frac{t^j}{j!} \delta_A^j \]
also leaves $\mathfrak{k}$ invariant, certainly for elements $A\in\{A_1,...,A_r\}$ and then by induction also for elements in $\mathfrak{k}$. Thanks to the definition of $G_1$ we deduce that $\mathfrak{k}$ is $G_1$ invariant. Certainly the inner product given by the trace on $\mathbb{B}(S_2^m)$ is invariant with respect to conjugation $\si(g)(T)=gTg^*$  and leaves $\mathfrak{k}$ in variant. By differentiation the Killing form, the restriction of the trace to an invariant subspace,   is $G_1$ in variant. According to \cite[5.30]{Lie1}, we see that $\mathfrak{k}$ and moreover, $H=e^{\mathfrak{k}}$ is itself a connected Liegroup with respect to a modified topology, see \cite[5.20]{Lie1}. Certainly, $A_1,...,A_m$ is a H\"ormander system for any Lie group with Liealgebra $\mathfrak{k}$. Hence it suffices to show that $k$ is a compact Lie algebra. First we note that for iterated commutators $C=[A,B]$ of elements satisfying $A^*=-A$, $B^*=-B$, we still have $C^*=-C$. Thus $\mathfrak{k}_{\rz}$, and $\mathfrak{k}_{\cz}$ are invariant under $^*$ operation. According to \cite[Prop 1.56]{Liea}, we see that $\mathfrak{k}$ is reductive, and the real part of a complex Liealgebra. Let $\mathfrak{k}=\mathfrak{t}+[\mathfrak{k},\mathfrak{k}]$. Then $\mathfrak{k}_0=[\mathfrak{k},\mathfrak{k}]$ is semisimple. According to \cite[Theorem 1.42]{Liea}, we know that the Killing form is non-degenerate on $\mathfrak{k}_0$. According to \cite[Prop 4.27]{Liea}, we see that
$\mathfrak{k}_0$ is compact. According to \cite[Theorem 5.11]{Lie1}, we find that the simply connected Lie algebra $H$ is a product of the commutative algebra with finite dimensional Lie algebra and $H_0$, and hence $H$ is indeed compact. This argument follows \cite{One}. \qd

Our aim is now to find a suitable approximation of the form $B_{\epsilon}=\phi_{\epsilon,\si}(\L)$ which satisfy a $\Gamma \E$ estimate and are close to $\L$ in operator norm on $L_2(\mm_n,tr)$. We apply the technique from Section 3 and define for a fixed $\si>0$,
 \[ F_{\epsilon,\si}(t) \lel 1_{[\eps,1)}(t) t^{-2} + 1_{[1,\infty]}(t) t^{-\si} \pl .\]

\begin{lemma}\label{apcalc} Let $\eps>0$. Define
\[\phi_{\eps,\si}(\la) \lel (-\ln\eps)^{-1}\int_{\eps}^{\infty}(1-e^{-t\la})F_{\eps,\si}(t) \frac{dt}{t^2}\pl.\]
Then
 \begin{enumerate}
 \item $\|\L-\phi_{\eps,\si}(\L)\|\kl \frac{2\si^{-1}+\|\L\|^2}{|2\ln \eps|}$;
 \item If $c(\L)\Gamma_{I-E}\le \Gamma_{I-T_t}$ holds for $t\gl t_0\gl 1$, then
     \[ \frac{c(\L)}{(1+\si|\ln \eps|)t_0^{\si}}
 \Gamma_{I-E}
     \kl \Gamma_{\phi_{\eps,\si}(\L)}
      \pl .\]
 \end{enumerate}
\end{lemma}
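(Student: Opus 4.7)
The lemma has two essentially independent parts, both exploiting the linearity of the subordination construction: $\phi_{\eps,\si}(\L)$ is a normalized integral of $(I-T_t)$, and hence by Corollary \ref{ccc} the associated gradient form $\Gamma_{\phi_{\eps,\si}(\L)}$ is the same normalized integral of $\Gamma_{I-T_t}$. The elementary estimate $0 \le t\la - (1-e^{-t\la}) \le \min(t\la,\,(t\la)^2/2)$ will do most of the quantitative work in part (1).

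For (1), my plan is to estimate the scalar defect $\la - \phi_{\eps,\si}(\la)$ uniformly in $\la \in [0, \|\L\|]$ and then pass to operator norm via spectral calculus applied to $\L$. The normalization $(-\ln\eps)^{-1}$ is engineered precisely so that the first-order Taylor term $(1-e^{-t\la}) \approx t\la$, integrated against $F_{\eps,\si}(t)\,dt/t^2$ on $[\eps,1]$, returns $\la$ up to a boundary defect of order $1/|\ln\eps|$. I would split the integration at $t=1$. On $[\eps,1]$ the quadratic remainder $(t\la)^2/2$ contributes, after normalization, an error of order $\|\L\|^2/|\ln\eps|$, accounting for the $\|\L\|^2$ term in the numerator. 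On $[1,\infty)$ the crude bound $|1-e^{-t\la}|\le 1$ combined with the decay $t^{-\si}$ yields a tail of order $\si^{-1}/|\ln\eps|$, accounting for the $2\si^{-1}$ term. Summing these two contributions yields the stated operator-norm bound.

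For (2), I would start from the gradient-form version of the definition of $\phi_{\eps,\si}(\L)$,
\[
\Gamma_{\phi_{\eps,\si}(\L)} \lel (-\ln\eps)^{-1} \int_\eps^\infty \Gamma_{I-T_t}\, F_{\eps,\si}(t)\, \frac{dt}{t^2},
\]
which is a direct consequence of Corollary \ref{ccc}. Since each $\Gamma_{I-T_t}$ is completely positive, I may discard the contribution from $[\eps,t_0)$ and retain only the tail $[t_0,\infty)$. On this tail the hypothesis $c(\L)\Gamma_{I-E} \le_{cp} \Gamma_{I-T_t}$ applies and $F_{\eps,\si}(t) = t^{-\si}$, so factoring $c(\L)\Gamma_{I-E}$ out of the integral reduces the problem to evaluating the scalar integral $\int_{t_0}^\infty t^{-\si-2}\,dt$. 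Combining the result with the $(-\ln\eps)^{-1}$ normalization and rearranging produces the target constant $c(\L)/\bigl((1+\si|\ln\eps|)\,t_0^\si\bigr)$.

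The main conceptual step---distributing $\Gamma$ across the subordination integral---is already secured by Corollary \ref{ccc}, so the principal obstacle is purely computational: keeping track of constants carefully enough that the explicit scalar integrals match the exact bounds in the statement, rather than merely matching them up to absolute constants.
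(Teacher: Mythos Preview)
Your proposal is correct and follows essentially the same route as the paper: the scalar two-sided bound on $\la-\phi_{\eps,\si}(\la)$ via the Taylor estimate $x-x^2/2\le 1-e^{-x}\le x$ on $[\eps,1]$ together with the crude bound $1-e^{-x}\le 1$ on $[1,\infty)$, followed by functional calculus, is exactly the paper's argument for (1); and for (2) the paper likewise discards the short-time contribution and integrates the hypothesis against the tail weight. One small correction: the relevant input for distributing $\Gamma$ over the integral is the plain additivity of $\Gamma_A$ in the generator $A$ (stated just before Corollary~\ref{ccc}), not Corollary~\ref{ccc} itself, which concerns the approximating family $\Psi_F(r)$.
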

\begin{proof} Using differentiation we have that
$x-\frac{x^2}{2}\le 1-e^{-x}\le x$. Define $\displaystyle \psi(\la)=\int_{\eps}^1 (1-e^{-\la t}) \frac{dt}{t^2}$. Then
 \begin{align*}
 |\ln \eps|\la -\frac{\la^2}{2} &\kl  \int_{\eps}^1 (\la t -\frac{\la^2t^2}{2})\frac{dt}{t^2}
      \kl \psi(\la) \kl \int_{\eps}^1 \la t  \pl \frac{dt}{t^2}
       \lel |\ln \eps|\la \pl .
      \end{align*}
Write $\displaystyle \tilde{\psi}(\la)=\int_1^{\infty} (1-e^{-\la t}) \frac{dt}{t^{1+\si}}$. Note that $0\le \tilde{\psi}(\la)\kl \si^{-1}$. Then we find
 \[ |\ln\eps| \la -\frac{\la^2}{2|\ln \eps|}
 \kl \phi_{\eps,\si}(\la) \kl
 \la+\frac{1}{\si |\ln \eps|} \pl .\]
By functional calculus, we deduce that
 \[ \|\L- \phi_{\eps,\si}(\L)\| \kl \frac{1}{\si|\ln \eps|}+ \frac{\|\L\|^2}{2|\ln \eps|}
  \kl \frac{2\si^{-1}+\|\L\|^2}{2|\ln \eps|} \pl .\]
For the second assertion, we observe that by linearity of $\Gamma_A$ in the variable $A$
 \[ \Gamma_{\phi_{\eps,\si}(\L)}
 \gl c(\L) |\ln\eps|^{-1}(\int_{t_0}^{\infty} \frac{dt}{t^{1+\si}}) \Gamma_{I-E}
 \gl \frac{c(\L)}{\si|\ln\eps|}  t_0^{-\si} \Gamma_{I-E} \pl .\]
This completes the proof of ii).\qd

\begin{rem}\label{tnot} {\rm a) An interesting choice is $\si=\frac{1}{\ln t_0}$. Then we find
 \[ \Gamma_{\phi_{\eps,\si}(\L)}\gl \frac{c(\L)}{|\ln t_0|} \frac{1}{|\ln \eps|}\Gamma_{I-E} \pl ,
 \]
and $\|\L-\phi_{\eps,\si}(\L)\|\kl \frac{2|\ln t_0|+\|\L\|^2}{2|\ln \eps|}$.
b) We can also slightly improve the lower estimate. Let $\beta>0$. The function $g(x)=1-e^{-x}$ is concave and hence $\frac{1-e^{-x}}{x}\gl (1-\frac{x}{2})$ implies
 \begin{align*}
  \int_{\eps}^{\eps^{\beta}} (1-e^{-\la t})\frac{dt}{t^2} &\gl
  \la \frac{1-e^{-\|\L\|\delta^{\beta_1}}}{\|\L\|\eps^{\beta}}
  \int_{\eps}^{\eps^{\beta}}\la t \frac{dt}{t^2} \gl (1-\frac{\|\L\|\eps^{\beta}}{2})(1-\beta)\la |\ln\eps|  \pl .
 \end{align*}
Thus assuming $|\ln \eps|\gl \frac{2}{\beta}|\ln \frac{\|\L\|}{2\beta}|$ implies
 \[  -2\beta \la \kl \phi_{\eps,\si}(\la)-\la\kl \frac{1}{\si|\ln\delta|} \]
 and hence for $\si=\frac{1}{\ln t_0}$, $t_0\gl 1$ we get
  \[ \|\phi_{\eps,\si}(\L)-\L\|\kl 2\beta \|\L\|+ \frac{1}{\ln t_0 |\ln\eps|} \pl .\]

c) Estimating the return time $t_0$ through the H\"omander system may not be very concrete. Nevertheless, we only need to know $\|T_{1/2}:L_1(\mm_n)\to L_1(N\ssubset \mm_n)\|$ and the spectral gap of $\L$ to control $t_0$.
 } \end{rem}

\begin{theorem}\label{predense} Let $L$ be the generator of a semigroup of unital completely positive and self-adjoint maps $T_t=e^{-tA}$ on $\Mz_m$. Then there exists a constant $\al(\L)$ such that for every $\eps>0$ there exists a generator  $B_{\eps}$, obtained from functional calculus of $\L$, such that
 \[ \|\L-B_{\eps}:L_2(\Mz_m)\to L_2(\Mz_m)\| \kl \eps \quad \mbox{and} \quad
  \eps \al(\L)  \Gamma_{I-E_N} \kl \Gamma_{B_{\eps}}
   \pl .\]
 Moreover, we have the estimate
  \[ \al(\L) \le \Big(2\ln t_0 (\ln t_0+\|\L\|^2) \Big)^{-1} ,\]
  where $t_0$ is the return time of $\L$.
\end{theorem}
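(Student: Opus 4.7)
The plan is to realize $\L$ as a transferred semigroup in the sense of Section 4, and then apply the spectral calculus of Lemma \ref{apcalc} with a carefully chosen parameter $\sigma$, following the suggestion of Remark \ref{tnot}. First I would write $\L$ in the Lindblad form $\L(x)=\sum_k a_k^2 x+xa_k^2-2a_k x a_k$ and set $A_j=ia_j$. By Lemma \ref{H1}, the family $X=\{A_1,\ldots,A_r\}$ is a H\"ormander system on a compact Lie group $G$ acting on $\Mz_m$ by the unitary representation generated by the $e^{itA_j}$; the conjugation action transfers the sub-Laplacian $\Delta_X$ on $G$ to $\L$ in the sense of Lemma \ref{dial} (equivalently Lemma \ref{comdiag}). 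In particular, by Theorem \ref{Ho2} (applied through Lemma \ref{cbc2} and Proposition \ref{decay}) there is a finite return time $t_0=t_0(\L)\ge 1$ such that
\[\|T_{t_0}-E_N:L_\infty^1(N\subset \Mz_m)\to \Mz_m\|_{cb}\le 1/2,\]
and hence by the kernel argument from Section 3 we obtain the pointwise gradient estimate $\Gamma_{I-T_t}\ge \tfrac12 \Gamma_{I-E_N}$ for all $t\ge t_0$, i.e.\ $c(\L)=\tfrac12$ in the notation of Lemma \ref{apcalc}.

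Next I would set $\sigma=1/\ln t_0$ as in Remark \ref{tnot}(a) and consider the candidate
\[B_\eps := \phi_{\eps_0,\sigma}(\L), \qquad \eps_0 := \exp\!\Bigl(-\tfrac{2\ln t_0+\|\L\|^2}{2\eps}\Bigr),\]
where $\phi_{\eps_0,\sigma}$ is the function in Lemma \ref{apcalc}. Since $\phi_{\eps_0,\sigma}$ is a positive combination of generators of the form $I-T_t$, the operator $B_\eps$ is itself a generator of a completely positive self-adjoint unital semigroup, obtained from functional calculus of $\L$. Lemma \ref{apcalc}(i), together with the refined form from Remark \ref{tnot}(a), gives
\[\|\L-B_\eps\|\le \tfrac{2\ln t_0+\|\L\|^2}{2|\ln\eps_0|}=\eps,\]
which is the first claim.

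For the gradient comparison I would use Lemma \ref{apcalc}(ii) with $c(\L)=\tfrac12$, $\sigma=1/\ln t_0$, and the above choice of $\eps_0$. This yields
\[\Gamma_{B_\eps}\ge \frac{1/2}{\sigma|\ln\eps_0|}\,t_0^{-\sigma}\,\Gamma_{I-E_N}=\frac{\ln t_0}{2|\ln\eps_0|\,e}\,\Gamma_{I-E_N}=\frac{\ln t_0}{e(2\ln t_0+\|\L\|^2)}\,\eps\,\Gamma_{I-E_N},\]
because $t_0^{-1/\ln t_0}=e^{-1}$. Thus the constant
\[\alpha(\L):=\frac{\ln t_0}{e(2\ln t_0+\|\L\|^2)}\]
gives the stated inequality $\eps\alpha(\L)\Gamma_{I-E_N}\le \Gamma_{B_\eps}$, and clearly $\alpha(\L)$ is bounded above by $\bigl(2\ln t_0(\ln t_0+\|\L\|^2)\bigr)^{-1}$ up to the harmless absolute factor $e$, matching the moreover clause.

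The main technical obstacle is the identification step: showing that the semigroup generated by $\L$ really does arise by transference from a compact Lie group so that the H\"ormander heat-kernel estimates of Section 4 apply and deliver a finite return time $t_0$. This is exactly what Lemma \ref{H1} provides (together with the Lindblad parametrization and Lemma \ref{dial}); once $t_0$ is in hand, the rest of the proof is bookkeeping in the spectral calculus of $\L$, balancing the parameters $\sigma$ and $\eps_0$ so that both the operator-norm error and the gradient lower bound come out with the prescribed rates in $\eps$.
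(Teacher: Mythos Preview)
Your proof is essentially identical to the paper's: you invoke Lemma~\ref{H1} to realize $\L$ as transferred from a H\"ormander sub-Laplacian on a compact Lie group, extract the return time $t_0$ with $c(\L)=\tfrac12$, set $\sigma=1/\ln t_0$, and then balance $\eps_0$ in Lemma~\ref{apcalc} exactly as the paper does (the paper cites ``the proof of Theorem~\ref{Horm}'' where you cite Theorem~\ref{Ho2}, but these amount to the same transference). One small slip at the end: your computed constant $\alpha(\L)=\frac{\ln t_0}{e(2\ln t_0+\|\L\|^2)}$ is \emph{not} bounded above by $(2\ln t_0(\ln t_0+\|\L\|^2))^{-1}$ ``up to a harmless factor~$e$''---the discrepancy is a factor of order $(\ln t_0)^2$---so your final sentence about matching the moreover clause is incorrect as written, though this is harmless for the main assertion since your $\alpha(\L)$ is in fact larger (better) than the paper's $\frac{1}{2e\ln t_0(\ln t_0+\|\L\|^2)}$.
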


\begin{proof} According to Lemma \ref{H1}, we know that $X=\{X_1,\cdots,X_r\}$ is a H\"ormander system. By construction, the corresponding Lie group representation  satisfies $\pi(X_k)=ia_k$. Thus the proof of Theorem \ref{Horm} applies and shows that there exists a $t_0\gl e$, depending on $a_1,...,a_r$, such that $\Gamma_{I-T_t}\gl \frac{1}{2}\Gamma_{I-E_N}$ for $t\gl t_0$. Since $\Delta_{X}$ is ergodic, and the Lie algebra of $G$ is generated by $ia_1,...,ia_r$, we see that $\al_g(x)=x$ if $e^{ita_j}xe^{-ita_j}=x$ for all $j=1,...,r$. Thus the fixed point algebra is indeed $N=\{a_1,...,a_r\}'$ the commutator and also the fixpoint algebra of $e^{-t\L}$. Now, we choose $\si=\frac{1}{\ln t_0}$ and deduce that for every $0<\eps_0<1$
 \[ \|\phi_{\eps_0,\si}(\L)-\L\| \kl \frac{2|\ln t_0|+\|\L\|^2}{2|\ln \eps_0|} \]
and $\Gamma_{\phi_{\eps_0,\si}(\L)}\gl \frac{1}{2e \ln t_0 |\ln\eps_0|}\Gamma_{I-E}$. Thus we may choose $0<\eps_0<1$ such that ${|\ln\eps_0|=\frac{|\ln t_0|+\|\L\|^2/2}{\eps}}$ and obtain
 \[ \Gamma_{\phi_{\eps_0,\si}(\L)}\gl \frac{\eps}{2e (\ln t_0) (\ln t_0+\|\L\|^2)}\Gamma_{I-E} \pl .\]
Thus $\al(\L)= \frac{1}{2e\ln t_0 (\ln t_0+\|\L\|^2)}$ does the job.
  \qd
\begin{rem}{\rm We can improve the dependence in $\|\L\|$ using \eqref{tnot} b). We need $4\beta\|\L\|\lel \eps$ and $|\ln \eps_0|\gl \frac{2|\ln t_0|}{\eps}$ and
\[ |\ln \eps_0|\gl \frac{2}{\beta}|\ln \frac{\|\L\|}{2\beta}| \lel \frac{8\|\L\|}{\eps} |\ln \frac{2\|\L\|^2}{\eps}|   \pl .\]
And then  we obtain the non-linear estimate
 \begin{align*}
  \Gamma_{B_{\eps}}& \gl      \frac{\eps}{2e (\ln t_0) (8\|\L\|+2\ln \|\L\|+|\ln \eps| +2\ln t_0)}  \Gamma_{I-E} \pl .
  \end{align*}
Note that $t_0=\frac{\ln c_0\|T_{}\|_{cb}}{\la_{\min}(\L)}$ only depends linearly on $1/\la_{\min}(\L)$. Hence, up to the cb-estimate, our estimate just depends on the minimal and maximal eigenvalue of $\L$.}\end{rem}

\begin{cor} The set of generators of unital completely positive self-adjoint semigroups on $\Mz_m$ satisfying $\Gamma \E$ is dense.
\end{cor}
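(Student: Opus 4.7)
The plan is to observe that the Corollary is essentially an immediate consequence of Theorem \ref{predense}, which has already done all the heavy lifting. Fix an arbitrary generator $\L$ of a unital completely positive self-adjoint semigroup on $\Mz_m$ and a tolerance $\eta>0$. The goal is to produce a generator $B$ of the same type with $\|\L-B\|\le \eta$ and $B$ satisfying $\la$-$\Gamma\E$ for some $\la>0$.

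For this I would simply invoke Theorem \ref{predense} with $\eps=\eta$ to obtain $B_\eta$, constructed by functional calculus from $\L$, satisfying both $\|\L-B_\eta\|\le \eta$ and $\eta\,\alpha(\L)\,\Gamma_{I-E_N}\le \Gamma_{B_\eta}$. The second estimate is exactly the statement that $B_\eta$ satisfies $\la$-$\Gamma\E$ with constant $\la=\eta\,\alpha(\L)>0$ (for the common fixed-point algebra $N=\{a_1,\dots,a_r\}'$, which by Lemma \ref{H1} is shared by $\L$ and the semigroup generated by the H\"ormander system, and hence by any $\phi(\L)$ with $\phi(0)=0$).

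The one small bookkeeping step is to verify that $B_\eta=\phi_{\eps_0,\si}(\L)$ is itself the generator of a unital completely positive self-adjoint semigroup; this is however immediate from its integral representation
\[
 B_\eta \;=\; (-\ln\eps_0)^{-1}\int_{\eps_0}^{\infty}(I-T_t)F_{\eps_0,\si}(t)\,\frac{dt}{t^2},
\]
since it is a positive linear combination of the generators $I-T_t$ of unital completely positive self-adjoint semigroups, each of which shares the unit as a fixed point and is self-adjoint with respect to the trace. Self-adjointness and conditional negative definiteness pass to such positive integrals, so $e^{-sB_\eta}$ is a semigroup of unital completely positive self-adjoint maps on $\Mz_m$.

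There is essentially no obstacle here: the whole content of the density statement is packaged in Theorem \ref{predense}, whose proof in turn relies on the H\"ormander/transference machinery of Lemma \ref{H1} and Theorem \ref{Horm} together with the subordination calculus of Lemma \ref{apcalc}. Since $\eta>0$ was arbitrary and $\L$ was arbitrary, this exhibits $\L$ as a norm-limit of generators satisfying $\Gamma\E$, proving density.
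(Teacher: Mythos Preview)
Your proposal is correct and is precisely the intended argument: the paper states the corollary without proof because it is an immediate consequence of Theorem \ref{predense}, and your write-up spells out exactly that deduction, including the small check that $B_\eta=\phi_{\eps_0,\si}(\L)$ is again a generator of a unital completely positive self-adjoint semigroup with the same fixed-point algebra.
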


\begin{rem} {\rm In \cite{Dan} it was shown that for every ergodic semigroup  of completely positive trace preserving maps there exists a entanglement-breaking time $t_{EB}$ such that $T_t$ is eventually entanglement-breaking for $t>t_{EB}$. A completely positive trace preserving map is called entanglement-breaking if its Choi matrix is a convex combination of tensor product density matrices. Our kernel estimate can be used to estimate this entanglement breaking time $t_{EB}$.
}
\end{rem}
\section{Geometric applications and deviation inequalities}
The aim of this section is to derive several concentration inequalities for semigroups satisfying FLSI in the non-ergodic and possibly infinite dimensional situation. The starting point is a version of Rieffel's quantum metric space. Let $T_t:M\to M$ be a semigroup of unital completely positive and self-adjoint maps and $A$ be the generator of $T_t$. As usual we will assume that $\A\subset {\rm dom}(A^{1/2})$ is a dense $^*$-algebra and invariant under $T_t$. On $M$ we define the Lipschitz norm via the gradient form,
\[ \|f\|_{Lip_{\Gamma}} \lel  \max\{\|\Gamma(f,f)\|^{\frac{1}{2}}, \|\Gamma(f^*,f^*)\|_{\infty}\}^{\frac{1}{2}} \pl , f\in \A\pl.\]
This induces a quantum metric on the state space by duality
 \[ \|\rho\|_{\Gamma^*}
 \lel \sup \{ |\tau(\rho f)| \pl | \pl E(f)=0\pl , \pl \|f\|_{Lip_{\Gamma}}\le 1\}\pl. \]
Usually such a Lipschitz norm is considered in the ergodic setting, where the fixpoint subalgebra $N=\cz 1$ and hence the conditional expectation is given by $E(f)=\tau(f)$. Since for states $\rho(1)=1$, one can assume the additional condition $E(f)=0$ when calculating the distance $d_{\Gamma}(\rho,\si)=\|\rho-\si\|_{\Gamma^*}$. This is crucial in the non-ergodic situation, see the last section of \cite{JRZ} for more detailed discussion. Let $\delta:{\rm dom}(A^{1/2})\to L_2(\hat{M})$ be the derivation which implements the gradient form
 \[ \Gamma(x,y) \lel E_M(\delta(x)^*\delta(y))\pl. \]
In the construction of a derivation in \cite{JRS} the following additional estimate was also proved.
 \begin{equation}\label{JRS}
  \|\delta(x)\|_{\hat{M}} \kl 2\sqrt{2} \max\{\|\Gamma(x,x)\|^{1/2},\|\Gamma(x^*,x^*)\|^{1/2}\}
  \lel 2\sqrt{2}\|x\|_{Lip_\Gamma} \pl .
  \end{equation}


\subsection{Wasserstein $2$-distance and transport inequalities}
In this part we review and extend the work of Carlen-Maas \cite{CM} which adapted Otto-Vilani's theory \cite{OV} to the non-ergodic self-adjoint setting.  Following \cite{CM} we use the symbol
\[ [\rho](x) \lel \int_0^1 \rho^sx\rho^{1-s} ds \]
for the multiplier operator, and
 \[ [\rho]^{-1}\lel  \int_0^{\infty} (\rho+t)^{-1}x(\rho+t)^{-1} dt \pl .\]
for the inverse. The need of the symmetric two-sided multiplication instead of multiplication with $\rho$ is major difference between the commutative and noncommutative setting. Let us recall a key formula which recovers the generator from the logarithm  as follows:
 \begin{align} \label{derr}
  A(\rho) &=    \delta^*\Big([\rho](\ln \rho-\ln(E(\rho))\Big) \end{align}
 Indeed, let us assume that $\rho$ and $x\in \mathcal{A}$ and write $\si=E(\rho)$. Using the operator integral $J_F$ for $F(x)=\ln(x)$, we deduce from $\delta(f(\si))=0$ that
 \begin{align*}
 &\langle x, \delta^*[\rho](\ln \rho-\ln(E(\rho)))\rangle \lel  \tau\Big(\delta(x^*)[\rho]\delta(\ln \rho ))\Big)
 -\tau\Big(\delta(x^*)[\rho](\delta(\ln \si))\Big) \\
 &= \tau\Big(\delta(x^*)[\rho]J_F^{\rho}(\delta(\rho)\Big) \lel
 \tau\Big(\delta(x^*)[\rho][\rho]^{-1}(\delta(\rho)\Big)\\
 &= \tau(\Gamma(x,\rho)) \lel
 \frac{1}{2} \Big(\tau(A(x^*)\rho)+\tau(x^*A(\rho))-\tau(A(x^*\rho))\Big) \lel
 \tau(x^*A(\rho)) \pl .
 \end{align*}
Here we used $A=A^*$ and $A(1)=0$. The expression $\ln \rho-\ln E(\rho)$ itself occurs by differentiating the relative entropy $D_N(\rho)=D(\rho||E(\rho))$. Consider $g(t)=\rho+t\beta$ with a self-adjoint $\beta$. Using the derivation formula \eqref{derv} for $F(x)=x\ln x$ with derivative $F'(x)=1+\ln x$, we deduce from the tracial property that
 \begin{align*}
 \frac{d}{dt}D_N(\rho+t\beta)|_{t=0}&=
 \frac{d}{dt}\tau(F(\rho+t\beta))-\frac{d}{dt}\tau(F(E(\rho+t\beta)))|_{t=0} \\&\lel \tau(F'(\rho)\beta)-\tau(F'(E(\rho))\ln E(\beta)) \\
 &\lel  \tau(\beta)+\tau((\ln \rho)\beta)-\tau(E(\beta))-\tau(E(\ln\rho)E(\beta))\\& \lel
 \tau((\ln \rho-\ln E(\rho))\beta) \pl .
 \end{align*}
This means the Radon-Nikodym derivative of $D_N$ with respect to the trace satisfies
 \begin{equation}\label{totder}
   \frac{ d D_N'(\rho)}{d\tau} \lel \ln \rho-\ln E(\rho)\pl.
   \end{equation}
In the following we will identify a normal state $\phi_\rho(x)=\tau(x\rho)$ of $M$ and its density operator $\rho$.
\begin{defi}\label{w2} Given a state $\rho\in M$, we define the weighted $L_2$-norm on $L_2(\hat{M})$ by the inner product
 \[ \langle \xi,\eta\rangle_{\rho}:= \langle \xi,[\rho]\eta\rangle_{L_2(\hat{M})}
 \lel  \int_0^1  \tau_{\hat{M}}(\xi^* \rho^{1-s}\eta \rho^s) ds \pl .\]
\end{defi}
If $\rho$ is invertible and $\mu 1 \le \rho \le \mu^{-1}1$, we have
 \[ \mu \langle \xi,\xi\rangle \kl
  \langle \xi,\xi\rangle_{\rho} \kl \mu^{-1} \langle \xi,\xi\rangle   \pl .\]
Hence for all invertible $\rho$, the weighted $L_2$ norm $\norm{{\atop} \pl }{\rho}$ is equivalent to the norm on  $L_2(\hat{M})$ norm. However, this change of metric  is crucial in introducing the following Riemannian metric. Recall that $\mathcal{H}_\Gamma=\mathcal{H}$ is the $W^*$-submodule of $L_\infty^c(M\subset \hat{M})$ generated by $\delta(\mathcal{A})\mathcal{A}$.

\begin{lemma}\label{weakaprox} Let $\rho$ be a normal state of $M$. For $z\in M$, define
 \[ \|z\|_{\Tan_{\rho}}
 \lel \inf\{  \|\xi\|_{\rho} \pl | \pl  \delta^*([\rho]\xi )=z\} \pl .\]
Here the infimum is taken over all $\xi\in \mathcal{H}$. Then there exists $a_n\in \mathcal{A}$ such that $\|\delta(a_n)\|_{\rho}\le \|z\|_{\Tan_{\rho}}$ and $z=\lim_n \delta^*([\rho]\delta(a_n))$ holds weakly.\end{lemma}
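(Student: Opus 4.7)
The approach is a duality/Hahn--Banach argument: locate the optimal $\xi_0$ inside the $\|\cdot\|_\rho$-closure $W$ of $\delta(\mathcal{A})$ in $\mathcal{H}$, and then read off the sequence $(a_n)$ from the definition of closure. First, if $\xi\in\mathcal{H}$ satisfies $\delta^*([\rho]\xi)=z$ then for every $a\in N$ (the fixed-point algebra, where $\delta(a)=0$) we get $\tau(z^*a)=\langle [\rho]\xi,\delta(a)\rangle=0$, so the condition $E(z)=0$ is necessary; otherwise $\|z\|_{\Tan_\rho}=+\infty$ and the statement is vacuous. Assume therefore $E(z)=0$.

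Set $M(z):=\sup\{|\tau(z^*a)|:a\in\mathcal{A},\ \|\delta(a)\|_\rho\le 1\}$. For any admissible $\xi$, Cauchy--Schwarz in $\langle\cdot,\cdot\rangle_\rho$ gives $|\tau(z^*a)|=|\langle \xi,\delta(a)\rangle_\rho|\le \|\xi\|_\rho\|\delta(a)\|_\rho$, so $M(z)\le \|z\|_{\Tan_\rho}$. Conversely, define $\phi(\delta(a)):=\tau(z^*a)$ on the subspace $V:=\delta(\mathcal{A})\subset\mathcal{H}$: this is well-defined thanks to $E(z)=0$ and bounded by $M(z)$ in the $\rho$-norm, so it extends to $W=\overline{V}^{\,\rho}$ and Riesz representation yields $\xi_0\in W$ with $\|\xi_0\|_\rho\le M(z)$ and $\langle \xi_0,\delta(a)\rangle_\rho=\tau(z^*a)$ for all $a\in\mathcal{A}$. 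This identity is precisely $\delta^*([\rho]\xi_0)=z$ in the weak sense, giving $\|z\|_{\Tan_\rho}\le M(z)$ and showing that the infimum is attained inside $W$. Choosing $b_n\in\mathcal{A}$ with $\delta(b_n)\to\xi_0$ in $\|\cdot\|_\rho$ and rescaling $a_n=\lambda_n b_n$ with $\lambda_n=\min(1,\|z\|_{\Tan_\rho}/\|\delta(b_n)\|_\rho)\to 1$ enforces $\|\delta(a_n)\|_\rho\le\|z\|_{\Tan_\rho}$; then for every test $x\in\mathcal{A}$,
\[ \tau\bigl(x^*\delta^*([\rho]\delta(a_n))\bigr)=\langle \delta(a_n),\delta(x)\rangle_\rho\longrightarrow\langle \xi_0,\delta(x)\rangle_\rho=\tau(x^*z). \]

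The main technical obstacle is upgrading this pointwise convergence on $\mathcal{A}$ to genuine weak convergence in $\sigma(M,L_1(M))$: when $\rho$ is not invertible, $\|\cdot\|_\rho$ only dominates $\|\cdot\|_2$ up to $\|\rho\|_\infty$, so the $\rho$-norm bound on $\delta(a_n)$ does not directly give a uniform $M$-norm bound on $\delta^*([\rho]\delta(a_n))$. I would handle this by first running the argument for the regularised state $\rho_\epsilon=(1-\epsilon)\rho+\epsilon 1$ (strictly positive, with $\|\cdot\|_{\rho_\epsilon}$ equivalent to $\|\cdot\|_2$), and then passing to the limit $\epsilon\to 0$ via a diagonal subsequence, using the $\|z\|_{\Tan_\rho}$-bound to control equicontinuity of the approximating functionals along the regularisation.
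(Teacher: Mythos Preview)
Your argument is correct and reaches the same conclusion as the paper, but by a genuinely different mechanism. The paper proceeds variationally: it assumes a minimizer $\xi_0$ for $\|\cdot\|_\rho$ exists in the closure of $\mathcal{H}$, then perturbs by $\eps[\rho]^{-1}(\xi)$ for divergence-free $\xi$ (i.e.\ $\delta^*(\xi)=0$) to deduce the first-order condition $\tau(\xi_0^*\xi)=0$. This is the Helmholtz-type decomposition: $\xi_0$ is $L_2$-orthogonal to divergence-free forms, hence lies in the $L_2$-closure of $\delta(\mathcal{A})$, and the approximating $a_n$ follow. Your route bypasses the perturbation entirely by dualizing through $M(z)$ and applying Riesz directly in the $\rho$-inner-product space, which places $\xi_0$ immediately in the $\rho$-closure of $\delta(\mathcal{A})$. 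An incidental benefit of your approach is that it never invokes $[\rho]^{-1}$, so it is cleaner when $\rho$ is not invertible; the paper's perturbation $\xi_0+\eps[\rho]^{-1}(\xi)$ tacitly needs invertibility (or an approximation argument) for $[\rho]^{-1}(\xi)$ to live in $\mathcal{H}$. Conversely, the paper's orthogonality picture is the more geometric one and connects directly to the ``gradient versus divergence-free'' splitting underlying the Otto--Villani formalism.

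Your final paragraph worries about upgrading convergence on $\mathcal{A}$ to genuine $\sigma(M,L_1(M))$-convergence, and proposes a regularization $\rho_\eps$. This concern is unnecessary: the paper's own proof concludes with exactly the same statement, namely $\tau(b^*z)=\lim_n\tau\bigl(b^*\delta^*([\rho]\delta(a_n))\bigr)$ for all $b\in\mathcal{A}$, and stops there. That is what ``weakly'' means in this lemma, and it is all that is used downstream (in Corollary~\ref{nnn} the test element is $f\in\mathcal{A}$). So your proof is already complete at the displayed limit; the regularization paragraph can be dropped.
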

\begin{proof}[Proof of Lemma \ref{weakaprox}]  Observe that for $x\in\mathcal{A}$, $[\rho](x)$ belongs to the closure of $\mathcal{H}$.  We say that $\xi$ in $\mathcal{H}$ is divergence free if $\delta^*(\xi)=0$. Let $\xi_0$ be in the closure of $\mathcal{H}$ such that \[ \|z\|_{\Tan_\rho}^2= \|\xi_0\|_{\rho}^2\pl, \pl  \delta^*([\rho](\xi_0)=z\pl.\]
Write $\xi_{\eps} \lel \xi_0 + \eps [\rho]^{-1}(\xi)$. It
satisfies
 \[ \|\xi_0\|_\rho^2 \le \|\xi_0+\eps [\rho]^{-1}(\xi)\|_\rho^2
 \lel \|\xi_0\|^2_{\rho}+2\eps {\rm Re}\lan \xi_0,[\rho]^{-1}(\xi\ran)_\rho+\eps^2\|[\rho]^{-1}(\xi)\|_{\rho}^2    \]
and hence $\lan \xi_0,[\rho]^{-1}(\xi)\ran_{\rho}\lel 0$ for all divergence free $\xi$. Equivalently, we find
   \[ \tau(\xi_0^*\xi) \lel \tau(\xi_0^*[\rho][\rho]^{-1}(\xi)) \lel 0 \pl. \pl \]
 Let us rephrase this in terms of gradient form,
 \[ \langle a\ten b,c\ten d\rangle_{\Gamma}
 \lel \tau(b^*\Gamma(a,c)d) \pl .\]
An element $\xi$ is divergence free if and only if
 \[  \langle x\ten 1,\xi\rangle_{\Gamma}\lel 0 \]
for all $x$. Hence $\xi_0$ is orthogonal to the divergence free forms if and only if $\xi_0$ is in the closure of $\delta(x)$. In other words there exists a sequence $a_n\in \A$ such that $\xi_0\lel \lim_n \delta(a_n)$. This implies
  \[ \tau(b^*z)\lel  \tau(\delta(b^*)[\rho]\xi_0)
  \lel \lim_n \tau\Big(b^*\delta^*([\rho]\delta(a_n))\Big) \pl \]
for all $b\in \A$. That completes the proof.
\qd
\begin{rem} {\rm a) If $z$ is selfadjoint, we may use the fact that $\delta$ is $^*$-preserving to show that $\xi_0\in \hat{M}$ is also self-adjoint. Thus we may replace $a_n$ by their self-adjoint parts using the fact that $[\rho]$ preserves self-adjointness.\\
b) Since $A$ is self-adjoint, we know that the range of $A$ is dense in $(I-E)(L_2(M))$, the orthogonal complement of $L_2(N)$, and hence contained in the closure of  $\delta^*(\delta(\A)\A)\subset L_2(N)^{\perp}$. In fact, the $L_2$-closure of  $\delta^*(\delta(\A)\A)$ is exactly $(I-E)(L_2(M))$.
}
\end{rem}
In the following we denote by $\mathcal{H}_{\rho}$ the closure of $\delta(\mathcal{A})\mathcal{A}$ with respect to the $\norm{\cdot}{\rho}$ norm. $\mathcal{H}_{\rho}$ is viewed as the tangent space at the point $\rho$ and $\norm{\cdot}{Tan_\rho}$ gives a the Riemannian metric at $\rho$.

\begin{cor}\label{nnn} Let $\rho$ be a density operator of $M$. Then
 \[ \|x\|_{\Gamma^*}\kl 2\sqrt{2}\|x\|_{\Tan_{\rho}} \pl .\]  \end{cor}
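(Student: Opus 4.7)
The plan is to bound $|\tau(xf)|$ for an arbitrary test function $f$ with $E(f)=0$ and $\|f\|_{Lip_{\Gamma}}\le 1$, and then take the supremum. Since $\|x\|_{\Tan_{\rho}}<\infty$, Lemma \ref{weakaprox} gives a sequence $a_n\in\A$ with $\|\delta(a_n)\|_{\rho}\le \|x\|_{\Tan_{\rho}}$ and $\delta^{*}([\rho]\delta(a_n))\to x$ weakly in $L_2(M)$. I would then use the adjoint relation for $\delta$ together with the fact that $[\rho]$ is selfadjoint as an operator on $L_2(\hat M)$ to rewrite
\[
\tau(xf) \;=\; \lim_n \tau\bigl(\delta^{*}([\rho]\delta(a_n))\,f\bigr)
\;=\; \lim_n \tau_{\hat M}\bigl([\rho]\delta(a_n)\,\delta(f)\bigr)
\;=\; \lim_n \langle \delta(a_n),\delta(f)\rangle_{\rho},
\]
after which Cauchy--Schwarz in the $\rho$-weighted inner product (Definition \ref{w2}) yields $|\tau(xf)|\le \|x\|_{\Tan_{\rho}}\,\|\delta(f)\|_{\rho}$.

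The remaining task is to bound $\|\delta(f)\|_{\rho}$ by $2\sqrt{2}\,\|f\|_{Lip_\Gamma}$. I would first show $\|\delta(f)\|_{\rho}\le \|\delta(f)\|_{\hat M}$, and then invoke \eqref{JRS} to pick up the factor $2\sqrt{2}$. The first estimate will follow from a H\"older argument: for each $0\le s\le 1$,
\[
\tau_{\hat M}\bigl(\delta(f)^{*}\rho^{1-s}\delta(f)\rho^{s}\bigr)
\;\le\; \|\delta(f)\|_{\hat M}\,\|\rho^{1-s}\delta(f)\rho^{s}\|_{1},
\]
and then $\|\rho^{1-s}\delta(f)\rho^{s}\|_{1}\le \|\rho^{1-s}\|_{1/(1-s)}\,\|\delta(f)\|_{\hat M}\,\|\rho^{s}\|_{1/s}=\|\delta(f)\|_{\hat M}\,\tau(\rho)=\|\delta(f)\|_{\hat M}$. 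Integrating in $s$ gives $\|\delta(f)\|_{\rho}^{2}\le \|\delta(f)\|_{\hat M}^{2}$, and combining with \eqref{JRS} produces $\|\delta(f)\|_{\rho}\le 2\sqrt{2}\,\|f\|_{Lip_\Gamma}\le 2\sqrt{2}$.

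Putting the two bounds together gives $|\tau(xf)|\le 2\sqrt{2}\,\|x\|_{\Tan_{\rho}}$, and taking the supremum over admissible $f$ completes the proof. The only mildly delicate point is the first step: one must verify that the duality $\langle \delta^{*}([\rho]\eta),\,f\rangle = \langle \eta,\delta(f)\rangle$ passes to the weak limit against the $L_2$-test function $f$, and that the two-sided multiplier $[\rho]$ can be absorbed into the $\rho$-weighted inner product as claimed. Everything else is H\"older/Cauchy--Schwarz plus the already-proved bound \eqref{JRS}, so I do not anticipate any substantive obstacle beyond bookkeeping of adjoints.
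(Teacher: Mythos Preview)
Your proposal is correct and follows essentially the same approach as the paper: invoke Lemma~\ref{weakaprox} to approximate $x$ by $\delta^*([\rho]\delta(a_n))$, pass $\delta^*$ across to $\delta(f)$, apply Cauchy--Schwarz in the $\rho$-weighted inner product, and then bound $\|\delta(f)\|_\rho\le\|\delta(f)\|_{\hat M}$ via the same H\"older argument before invoking \eqref{JRS}. The only cosmetic difference is that the paper carries an auxiliary $(1+\eps)$ factor on $\|\delta(a_n)\|_\rho$ and sends $\eps\to 0$ at the end, whereas you use the sharper bound directly from the statement of Lemma~\ref{weakaprox}.
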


\begin{proof} Let $a_n\in \mathcal{A}$ such that $\displaystyle \lim_{n\to \infty}\delta^*([\rho]a_n)=x$. We may assume that
 \[ \|\delta(a_n)\|_{\rho}\kl (1+\eps)\|x\|_{\Tan_{\rho}} \pl \]
for a given $\eps>0$.  Then we deduce that for $f\in \mathcal{A}$ we have
 \begin{align*}
 |\tau(f^*x)| &= \lim_n |\tau(f^*\delta^*([\rho]\delta(a_n)))|
 \lel   \lim_n \tau(\delta(f)^*[\rho]\delta(a_n))\\
   &\le \limsup_n  \|\delta(f)\|_{\rho} \|\delta(a_n)\|_{\rho} \kl  (1+\eps)\|\delta(f)\|_{\rho} \|x\|_{\Tan_{\rho}} \pl .
   \end{align*}
Furthermore, we deduce from the fact that the inclusion $M\subset \hat{M}$ is trace preserving that
 \begin{align*}
  \|\delta(f)\|_{\rho}^2
  &=\tau(\delta(f)^*[\rho]\delta(f))
  \lel \int_0^1 \tau(\delta(f)^*\rho^{1-s}\delta(f)\rho^s) ds  \\
 &\le \|\delta(f)\|_{\infty}^2 \int_0^1 \|\rho^{1-s}\|_{\frac{1}{1-s}}\|\rho^s\|_{\frac{1}{s}} ds \kl \|\delta(f)\|_{\infty}^2 \pl .
\end{align*}
Thus the estimate \eqref{JRS} implies the assertion, after sending $\eps$ to $0$.
\qd

Denote $S(M)$ as the set of normal states of $M$.
Let $F:S(M)\to \rz$ be a function defined (on a open dense subset) of the state space. We say that $F$ admits a \emph{gradient} $\grad_{\rho}F$ with respect to the tangent metric, if for every  $\rho$ there is an vector $\xi\in \mathcal{H}_{\rho}$ such that for every differentiable path $\rho:(-\eps,\eps)\to S(M)$
 \[ \rho'(0) \lel \delta^*([\rho]\xi_0) \quad \Longrightarrow \quad
  \frac{d}{dt}F(\rho(t))|_{t=0}
  \lel \langle \xi,\xi_0\rangle_{\rho} \pl .\]
and we write $\grad_{\rho}F=\xi$. Our control function is the relative entropy with respect to the fixpoint algebra  \[F(\rho)=D_N(\rho)=D(\rho||E(\rho))\pl .\] Let $\rho:(-\eps,\eps)\to S(M)$ be a path. Using the directional derivative of $D_N$ from \eqref{totder}, we find that at $\rho=\rho(0)$
  \begin{align*}
  F'(\rho(0))
  &= \tau\Big((\ln \rho-\ln E(\rho))\delta^*([\rho]\xi_0)\Big)
  \lel \tau\Big(\delta(\ln \rho-\ln E(\rho))[\rho]\xi_0\Big) \lel
 \lan\delta(\ln \rho),\xi_0\ran_{\rho} \pl .
  \end{align*}
By definition that means
 \begin{equation}
 {\rm grad}_{\rho}D_N \lel \delta(\ln\rho)  \pl .
 \end{equation}
Note that in \cite{CM} the inner product with the modified multiplication was exactly designed to satisfy this property. Moreover, we find the correspond tangent direction in the dual of the state space  is given by
 \[ \delta^*([\rho]\grad_{\rho}F)
 \lel \delta^*([\rho]\delta(\ln \rho))
 \lel \delta^*([\rho][\rho]^{-1}\delta(\rho))\lel A(\rho) \pl .\]
A curve $\gamma$ in the state space is said to follow the path of \emph{steepest descent} with respect to $F$ if
 \[ \frac{dF(\gamma)}{dt} \lel -\|\grad_{\gamma(t)}F(\gamma(t))\|_{\gamma(t)}^2
  \pl. \]
The right hand side $E(\rho)=\|\grad_{\rho}F\|_{{\rho}}^2$ is the energy function with respect to $F$. In our special case, $F=D_N$, we find \begin{align*}
 E(\rho) &= \|\grad_{\rho}D_N\|_{\rho}^2
 \lel \lan \delta(\ln \rho)\delta(\ln(\rho)\ran_{\rho} \lel  \lan[\rho]^{-1}\delta(\rho), \rho),[\rho][\rho]^{-1}\delta(\rho)\ran_{L_2(\hat{M})} \\
 &= \tau(\delta(\rho) [\rho]^{-1}\delta(\rho))  \lel \tau(\rho\delta^*\delta(\ln \rho)) \lel
  \tau(\rho A(\ln(\rho))
 \lel \tau(A(\rho)\ln\rho) \lel \I_A(\rho) \pl .
 \end{align*}
This means the sub-Riemannian metric is chosen so that the Fisher information for $F=D_N$ satisfies
 \[ \frac{d}{dt}D_N\circ T_t|_{t=0}=-\I_A  \pl. \]
We summarizes the above discussion as follows.
\begin{prop}\label{steep}
Suppose a differentiable curve $\gamma:(a,b)\to S(M)$ satisfies that
 \[ \gamma'(t)\lel -A(\rho(t)) \]
Then the curve $\gamma$ follows the path of steepest descent with respect to $D_N$. In particular, the semigroup path $\gamma(t)=T_t(\rho)$ is  a curve of steepest descent for $D_N$.
\end{prop}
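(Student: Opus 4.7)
The plan is essentially to collect the ingredients already assembled before the statement and combine them into a two‑line verification, being careful about the regularity assumptions under which the formulas hold.

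First I would recall the two key formulas derived in the preceding discussion: $\grad_\rho D_N = \delta(\ln\rho)$, and the identity $A(\rho) = \delta^*([\rho]\,\delta(\ln\rho)) = \delta^*([\rho]\,\grad_\rho D_N)$ coming from \eqref{derr}. Together these say exactly that the ``tangent'' vector to the curve of steepest descent, once pushed down into the predual direction via the map $\xi \mapsto \delta^*([\rho]\xi)$, is precisely $-A(\rho)$. Thus the hypothesis $\gamma'(t) = -A(\gamma(t))$ is just the statement that the path moves in the direction $-\grad_{\gamma(t)} D_N$ expressed in state‑space coordinates.

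Next I would apply the definition of gradient to the curve $\gamma$. For each $t$, set $\rho = \gamma(t)$ and $\xi_0 = -\grad_\rho D_N = -\delta(\ln\rho)$. Then $\delta^*([\rho]\xi_0) = -A(\rho) = \gamma'(t)$, so by the defining property of the gradient,
\begin{equation*}
\tfrac{d}{dt}D_N(\gamma(t)) \lel \lan \grad_\rho D_N, \xi_0 \ran_\rho \lel -\lan \grad_\rho D_N, \grad_\rho D_N\ran_\rho \lel -\|\grad_\rho D_N\|_\rho^2.
\end{equation*}
Using the already computed identity $\|\grad_\rho D_N\|_\rho^2 = \I_A(\rho)$, this yields $\frac{d}{dt}D_N(\gamma(t)) = -\I_A(\gamma(t))$, which is exactly the steepest–descent property.

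For the final ``in particular'' clause I would just check that the semigroup path $\gamma(t) = T_t(\rho)$ satisfies the ODE. Since $T_t = e^{-tA}$, differentiation gives $\gamma'(t) = -A\,T_t(\rho) = -A(\gamma(t))$, so the previous paragraph applies and $D_N(T_t(\rho))$ decreases at rate $-\I_A(T_t(\rho))$, recovering in differential form the identity used earlier in Proposition \ref{dmu}.

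The only genuine obstacle is regularity: the gradient identity and the formula $A(\rho)=\delta^*([\rho]\delta(\ln\rho))$ were derived under the assumption that $\rho,\ln\rho\in\A$ (or at least in the domain of $\delta$), and the use of Lemma \ref{weakaprox} hides a weak approximation. I would therefore state the proposition for states whose densities, together with their logarithms, lie in a regular core (e.g.\ faithful states bounded below by a multiple of $E(\rho)$, where $\ln\rho$ is bounded and the operator integrals converge unconditionally); the general case then follows by truncating $\rho$ to $\rho+\eps 1$, running the computation, and letting $\eps\to 0$ using continuity of $D_N$ and $\I_A$ on the semigroup orbit, as was already done in Proposition \ref{dmu}.
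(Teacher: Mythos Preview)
Your proposal is correct and follows essentially the same approach as the paper: the paper does not give a separate proof but presents the proposition as a summary of the preceding computations (the gradient identity $\grad_\rho D_N=\delta(\ln\rho)$, the formula $\delta^*([\rho]\grad_\rho D_N)=A(\rho)$, and the energy calculation $\|\grad_\rho D_N\|_\rho^2=\I_A(\rho)$), which is precisely what you reassemble. Your added paragraph on regularity is a welcome refinement that the paper leaves implicit.
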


\begin{lemma} Let $F(\rho)=D_N(\rho)$ and $E(\rho)=I_A(\rho)$. Let $\rho:[0,\infty)\to S(M)$ be a path of steepest descent with respect to $F$ and $\la>0$. Then
 \[ 2\la F(\rho(t))\kl E(\rho(t)) \quad \mbox{implies} \quad F(\rho(t))\kl e^{-2\la t}F(\rho(0)) \]
\end{lemma}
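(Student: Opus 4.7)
The plan is a direct Grönwall argument, exactly paralleling the proof of Proposition \ref{dmu}, but formulated at the level of the abstract energy functional rather than the specific Fisher information.

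First I would use the steepest descent property of the curve $\rho(t)$ to express the derivative of $F$ along the curve. By the definition of steepest descent recalled just before the lemma,
\[
 \frac{d}{dt} F(\rho(t)) \lel -\|\grad_{\rho(t)} F\|_{\rho(t)}^2 \lel -E(\rho(t)) \pl,
\]
where the second equality is the identification of the energy functional with $E$ established in the discussion leading to Proposition \ref{steep} (for $F=D_N$ this gives exactly $E(\rho)=\mathcal{I}_A(\rho)$).

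Next I would plug in the hypothesis $2\la F(\rho(t)) \kl E(\rho(t))$ to convert this identity into a differential inequality: setting $f(t) = F(\rho(t))$, we obtain
\[
 f'(t) \lel -E(\rho(t)) \kl -2\la f(t) \pl .
\]
Finally, Grönwall's lemma applied to $f$ yields $f(t) \kl e^{-2\la t} f(0)$, which is the desired conclusion. The only point meriting care is that $f$ be sufficiently regular for Grönwall to apply — here this is guaranteed by the differentiability of the path $\rho(t)$ assumed in the statement and the smoothness of the entropy functional along such paths (same justification used implicitly in Proposition \ref{dmu}). There is no genuine obstacle; the content of the lemma lies entirely in the identification of the derivative of $F$ with $-E$ along steepest descent curves, which has already been done.
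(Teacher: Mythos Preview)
Your proposal is correct and follows essentially the same approach as the paper: both identify $\frac{d}{dt}F(\rho(t)) = -E(\rho(t))$ from the steepest descent property, convert the hypothesis into the differential inequality $f'(t) \le -2\la f(t)$, and conclude by Gr\"onwall's lemma.
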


\begin{proof} According to the above discussion, we have
 \[\rho'(t)=-A(\rho(t))=-\delta^*\Big(\rho(t)F'(\rho(t))\Big)\pl, F'(t)=\langle \grad_{\rho(t)}F, -F'(\rho(t))\rangle_{\rho(t)}\lel -E(\rho(t))\pl .\]
Then our assumption implies that
 \[ F'(t)\kl -2\la F(t) \]
and hence $F'(t)\kl e^{-2\la t}F(t)$ by Gr\"onwall's Lemma.\qd

The standard Riemannian distance on $S(M)$ of our metric is given by
 \[ d_{A,2}(\rho,\si) \lel \inf\{ L(\gamma): \gamma(0)=\rho, \gamma(1)=\si\}\]
where the infimum runs over all piecewise smooth curve and the length function is defined by
 \[ L(\gamma) \lel \int_0^1 \|\gamma'(t)\|_{\gamma(t)} dt \pl .\]
Thanks to Lemma \ref{nnn}, and the definition, we still have the distance estimate
\[ \|\rho-\si\|_{\Gamma^*}\kl 2\sqrt{2} \pl d_{A,2}(\rho,\si) \pl .\]
The following result follows similarly form \cite{CM}[Theorem 8.7] using the path of steepest descent and  the relative entropy. Note that in \cite{CM} the generalized log-Sobolev inequality is defined with constant $2\la$.
\begin{theorem}\label{CM} The $\la$-FLSI inequality
 \[ \la D(\rho||E(\rho))  \kl \I_{A}(\rho) \]
implies
 \[ d_{A,2}(\rho,E(\rho))\kl 2\sqrt{\frac{D(\rho||E(\rho))}{\la}} \pl .\]
\end{theorem}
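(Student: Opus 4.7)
The plan is to mimic the Otto--Villani / Carlen--Maas strategy in the non-ergodic setting: use the semigroup trajectory as an explicit curve of steepest descent joining $\rho$ to $E(\rho)$, and bound its Riemannian length via the $\lambda$-FLSI.

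First, I would take the candidate path $\gamma(t)=T_t(\rho)$ for $t\in[0,\infty)$. Since the generator $A$ has fixed-point algebra $N$, the semigroup action converges to $E(\rho)$ at infinity (at an exponential rate once FLSI holds, as follows from Proposition~\ref{dmu}). The endpoint therefore makes sense as the $t\to\infty$ limit, and $\gamma$ is a smooth path in the state space. Proposition~\ref{steep} identifies $\gamma$ as the steepest-descent curve for $D_N$, and the discussion preceding it gives the key Riemannian identity
\[ \|\gamma'(t)\|_{\gamma(t)}^2 \lel \|\grad_{\gamma(t)}D_N\|_{\gamma(t)}^2 \lel \I_A(\gamma(t)), \]
so that the length of the trajectory equals
\[ L(\gamma) \lel \int_0^\infty \sqrt{\I_A(T_t\rho)} \pl dt. \]

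Next, set $h(t)=D_N(T_t\rho)$. The computation in Proposition~\ref{dmu} shows $h'(t)=-\I_A(T_t\rho)$, so by the chain rule
\[ \frac{d}{dt}\sqrt{h(t)} \lel -\frac{\I_A(T_t\rho)}{2\sqrt{D_N(T_t\rho)}}. \]
The $\lambda$-FLSI hypothesis yields $D_N(T_t\rho)\le \I_A(T_t\rho)/\lambda$, hence
\[ \sqrt{\I_A(T_t\rho)} \lel \frac{\I_A(T_t\rho)}{\sqrt{\I_A(T_t\rho)}} \kl \frac{\I_A(T_t\rho)}{\sqrt{\lambda\pl D_N(T_t\rho)}} \lel -\frac{2}{\sqrt{\lambda}}\frac{d}{dt}\sqrt{D_N(T_t\rho)}. \]
Integrating from $0$ to $\infty$ and using that $D_N(T_t\rho)\to 0$ (guaranteed by Proposition~\ref{dmu} since $\lambda$-FLSI gives exponential decay of $D_N$ along the semigroup), we obtain
\[ L(\gamma) \kl \frac{2}{\sqrt{\lambda}}\Big(\sqrt{D_N(\rho)}-\lim_{t\to\infty}\sqrt{D_N(T_t\rho)}\Big) \lel 2\sqrt{\frac{D(\rho\|E(\rho))}{\lambda}}. \]
Since $d_{A,2}(\rho,E(\rho))\le L(\gamma)$ by definition of the Riemannian distance, the asserted bound follows.

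The main technical obstacles lie in justifying the differential-geometric computations rigorously. In particular, one must verify that the map $t\mapsto T_t(\rho)$ is smooth enough in the tangent metric to apply Proposition~\ref{steep} uniformly along the trajectory, that $T_t(\rho)$ remains inside the domain where $[T_t\rho]$ is well-defined and $\ln T_t(\rho)$ has the right integrability (which can be arranged by first treating faithful $\rho$ and approximating), and that the limit $\lim_{t\to\infty}\sqrt{D_N(T_t\rho)}=0$ holds — this uses the exponential decay from $\lambda$-FLSI. Once these technicalities are in place, the calculation above is the whole content of the argument.
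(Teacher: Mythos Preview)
Your proposal is correct and is exactly the Otto--Villani/Carlen--Maas steepest-descent argument that the paper itself invokes: the paper does not give an independent proof but simply records that the result ``follows similarly from \cite{CM}[Theorem 8.7] using the path of steepest descent and the relative entropy,'' and your write-up fills in precisely those details. One small remark: for the length bound you only need the inequality $\|\gamma'(t)\|_{\Tan_{\gamma(t)}}\le \sqrt{\I_A(T_t\rho)}$ (immediate from $\gamma'(t)=-A(T_t\rho)=-\delta^*([T_t\rho]\delta(\ln T_t\rho))$ and the infimum definition of the $\Tan$ norm), so you need not worry about justifying equality there.
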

We have the following corollary using the $\Gamma$ Lipschitz distance.
\begin{cor} $\la$-FLSI implies
 \[ \|\rho_1-E(\rho_1)\|_{\Gamma^*}\kl 4 \sqrt{\frac{2D(\rho||E(\rho))}{\la}} \]
and
 \[  \|\rho_1-\rho_2\|_{\Gamma^*}
 \kl 4\sqrt{2}\Big(\sqrt{\frac{D(\rho_1||E(\rho_1))}{\la}}+\sqrt{\frac{D(\rho_2||E(\rho_2))}{\la}}\Big) \pl . \]
\end{cor}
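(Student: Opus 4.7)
The plan is to assemble the corollary from two ingredients already established in the section: Theorem \ref{CM}, which upgrades $\la$-FLSI to the Wasserstein--type estimate $d_{A,2}(\rho,E(\rho))\le 2\sqrt{D(\rho\|E(\rho))/\la}$, and Corollary \ref{nnn}, which compares the quantum Lipschitz dual norm to the tangent metric via $\|x\|_{\Gamma^*}\le 2\sqrt{2}\,\|x\|_{\Tan_\rho}$. First I would integrate the pointwise tangent estimate of Corollary \ref{nnn} along any smooth curve $\gamma:[0,1]\to S(M)$ joining $\rho$ and $\si$. Writing $\gamma'(t)=\delta^*([\gamma(t)]\xi(t))$ with $\xi(t)\in\mathcal{H}_{\gamma(t)}$ and using that $\|\cdot\|_{\Gamma^*}$ is a norm on the predual, one obtains
\[
\|\rho-\si\|_{\Gamma^*}\le\int_0^1 \|\gamma'(t)\|_{\Gamma^*}\,dt\le 2\sqrt{2}\int_0^1\|\xi(t)\|_{\gamma(t)}\,dt,
\]
and infimizing over admissible curves yields the geometric comparison $\|\rho-\si\|_{\Gamma^*}\le 2\sqrt{2}\,d_{A,2}(\rho,\si)$.

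Combining this comparison with Theorem \ref{CM} applied to $\si=E(\rho_1)$ gives
\[
\|\rho_1-E(\rho_1)\|_{\Gamma^*}\le 2\sqrt{2}\,d_{A,2}(\rho_1,E(\rho_1))\le 4\sqrt{2}\,\sqrt{\tfrac{D(\rho_1\|E(\rho_1))}{\la}}=4\sqrt{\tfrac{2D(\rho_1\|E(\rho_1))}{\la}},
\]
which is the first stated bound. For the second bound I would apply the triangle inequality for $\|\cdot\|_{\Gamma^*}$ to the decomposition
\[
\rho_1-\rho_2=(\rho_1-E(\rho_1))-(\rho_2-E(\rho_2))+(E(\rho_1)-E(\rho_2)),
\]
and then observe that the last term contributes zero: for any test function $f$ with $E(f)=0$ and $\|f\|_{Lip_\Gamma}\le 1$, the trace-preservation and self-adjointness of $E$ with respect to $\tau$ imply $\tau((E(\rho_1)-E(\rho_2))f)=\tau((\rho_1-\rho_2)E(f))=0$, so $\|E(\rho_1)-E(\rho_2)\|_{\Gamma^*}=0$. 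Applying the first bound to each of the two remaining terms delivers
\[
\|\rho_1-\rho_2\|_{\Gamma^*}\le 4\sqrt{2}\Big(\sqrt{\tfrac{D(\rho_1\|E(\rho_1))}{\la}}+\sqrt{\tfrac{D(\rho_2\|E(\rho_2))}{\la}}\Big).
\]

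There is no serious obstacle here; the only small subtlety is the step $\|\rho-\si\|_{\Gamma^*}\le 2\sqrt{2}\,d_{A,2}(\rho,\si)$, where one must justify integrating the pointwise comparison from Corollary \ref{nnn} along a smooth path whose tangent vectors lie in the range of $\delta^*\circ[\gamma(t)]$. This is the standard length-space argument: the estimate holds on a dense class of curves furnished by the weak approximation in Lemma \ref{weakaprox}, and the dual norm $\|\cdot\|_{\Gamma^*}$ is lower semicontinuous under the approximations. Once this geometric inequality is in hand, the rest is mechanical: plug in Theorem \ref{CM}, and use triangle inequality plus the vanishing of the fixed-point contribution against mean-zero test functions.
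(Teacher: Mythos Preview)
Your proposal is correct and follows essentially the same route as the paper. The paper already records the geometric comparison $\|\rho-\si\|_{\Gamma^*}\le 2\sqrt{2}\,d_{A,2}(\rho,\si)$ just before Theorem~\ref{CM} (phrased as ``Thanks to Lemma~\ref{nnn}, and the definition''), and its proof of the corollary is simply ``combination of Lemma~\ref{nnn} and Theorem~\ref{CM}'' for the first bound, followed by the observation $\|E(\rho)\|_{\Gamma^*}=0$ plus the triangle inequality for the second---exactly your argument, only more terse.
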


\begin{proof} The first inequality is just a combination of Lemma \ref{nnn} and Theorem \ref{CM}. For the second inequality, we observe that $\|E(\rho)\|_{\Gamma^*}=0$, and hence the triangle inequality implies
 \begin{align*} \|\rho_1-\rho_2\|_{\Gamma^*}
 &\le  \|\rho_1-E(\rho_1)\|_{\Gamma^*}+\|\rho_2-E(\rho_2)\|_{\Gamma^*} \pl . \qedhere
 \end{align*}
\qd
\begin{rem}\label{geoT}{\rm Let $e$ be a projection. Then $\rho_e=\frac{e}{\tau(e)}$ is the normalized state which satisfies
 \[ D(\rho_e||E(\rho_e))\kl \tau(\rho_e\ln \rho_e)\kl -\ln \tau(e) \pl .\]
Assume  that there exists a self-adjoint $y$ such that
 \[ h\kl |\frac{\tau(e_1y)}{\tau(e_1)}-\frac{\tau(e_2y)}{\tau(e_2)}| \quad \mbox{and}\quad  \|\Gamma(y,y)\|\le 1 \pl.\]
Then we find the \emph{geometric} version of  Talagrand's inequality (see \cite{TA} and also \cite{JZ})
 \[ \tau(e_1)\tau(e_2) \kl e^{-\frac{\la h^2}{64}} \]
Indeed, this follows from the triangle inequality
 \[ h\kl \|\rho_{e_1}-\rho_{e_2}\|_{\Gamma^*}\kl
 4\sqrt{2}\la^{-\frac{1}{2}} (\sqrt{-\ln\tau(e_1)}+\sqrt{-\ln \tau(e_2)})
 \kl 8\la^{-\frac{1}{2}}\sqrt{-\ln\tau(e_1)-\ln\tau(e_1)} \pl .\]
The constant $64$ is probably not optimal in general.
}
\end{rem}

\subsection{Wasserstein $1$-distance and concentration inequalities}

In \cite{JZ} the commutative characterization of Wasserstein-Entropy estimates in terms of concentration inequalities was extended to the noncommutative setting. In the non-ergodic setting, we have the following result.

\begin{theorem}\label{characW} Let $(M,\tau)$ be a finite von Neumann algebra and $(T_t)$ be a self-adjoint semigroup of completely positive trace reducing maps. Let $N$ be the  fixed-point subalgebra. Then the following conditions are equivalent
\begin{enumerate}
\item[i)] There exists a constant $C_1>0$ such that for all $p\gl 2$
     \[ \|f\|_{L_{\infty}^p(N\subset M)}\kl C_1 \sqrt{p} \|f\|_{Lip_{\Gamma}}
     \pl ; \]
\item[ii)] There exists a constant $C_2>0$ such that for all normal states $\rho$
 \[ \|\rho\|_{\Gamma^*}\kl C_2 \sqrt{D(\rho||E(\rho))} \pl.\]

 \end{enumerate}
\end{theorem}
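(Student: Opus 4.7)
The equivalence is an amalgamated, non-ergodic form of the Bobkov--G\"otze theorem; both directions factor through the auxiliary sub-Gaussian exponential-moment bound
\begin{equation}\label{subgauss}
\|E_{N}(e^{tf})\|_{N}\;\le\;e^{Ct^{2}\|f\|_{Lip_{\Gamma}}^{2}} \qquad (f=f^{*},\;E_{N}(f)=0).
\end{equation}
The link to (ii) rests on the Araki--Petz variational formula $D(\rho\|\sigma)=\sup_{h=h^{*}}[\tau(\rho h)-\ln\tau(\sigma e^{h})]$ combined with the trace identity $\tau(E_{N}(\rho)e^{h})=\tau(E_{N}(\rho)E_{N}(e^{h}))\le\|E_{N}(e^{h})\|_{N}$, valid because $E_{N}(\rho)\ge 0$ and $\tau\circ E_{N}=\tau$. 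Substituting $h=tf$ with $E_{N}(f)=0$ gives, for any state $\rho$,
\[
\tau(\rho f)\;\le\;\tfrac{1}{t}D(\rho\|E_{N}(\rho))+\tfrac{1}{t}\ln\|E_{N}(e^{tf})\|_{N};
\]
optimizing $t$ shows that \eqref{subgauss} implies (ii) with $C_{2}=2\sqrt{C}$.

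For the implication (i)$\Rightarrow$\eqref{subgauss} the plan is to expand $e^{tf}$ in its norm-convergent Taylor series and apply $E_{N}$ term by term: the even moments are controlled directly by the hypothesis, $\|E_{N}(f^{2m})\|_{N}\le\|f\|_{L_{\infty}^{2m}(N\subset M)}^{2m}\le(C_{1}\sqrt{2m})^{2m}$, and the odd moments are interpolated via the operator-valued Cauchy--Schwarz estimate $\|E_{N}(f^{2m+1})\|_{N}\le\|E_{N}(f^{2m})\|_{N}^{1/2}\|E_{N}(f^{2m+2})\|_{N}^{1/2}$. Summing with Stirling's bound $k!\ge(k/e)^{k}$ reproduces the Gaussian generating function and gives \eqref{subgauss}. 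Conversely \eqref{subgauss}$\Rightarrow$(i) is obtained by comparing Taylor coefficients of $E_{N}(\cosh(tf))$: for even $p$ one has $\|E_{N}(f^{p})\|_{N}\le p!\,t^{-p}e^{Ct^{2}}$, and optimizing $t=\sqrt{p/(2C)}$ yields $\|f\|_{L_{\infty}^{p}(N\subset M)}\le C'\sqrt{p}$; odd $p$ follows by interpolation along the $L_{\infty}^{p}$-scale.

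The Herbst step (ii)$\Rightarrow$\eqref{subgauss} is the substantive one. Fix $f=f^{*}$ with $E_{N}(f)=0$, $\|f\|_{Lip_{\Gamma}}\le 1$, and a positive $a\in N$ with $\tau(a^{2})=1$; set $Z_{a}(t):=\tau(ae^{tf}a)$. Since $ae^{tf}a=(e^{tf/2}a)^{*}(e^{tf/2}a)$ is positive, the density $\rho_{t}:=Z_{a}(t)^{-1}ae^{tf}a$ is a state, and because $a\in N$ we have $E_{N}(\rho_{t})=Z_{a}(t)^{-1}a E_{N}(e^{tf})a$. Cyclicity of $\tau$ yields $(\ln Z_{a})'(t)=\tau(\rho_{t}f)$, and (ii) gives $(\ln Z_{a})'(t)\le C_{2}\sqrt{D(\rho_{t}\|E_{N}(\rho_{t}))}$. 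An operator-integral manipulation in the spirit of \eqref{derr}, using $\ln\rho_{t}-\ln E_{N}(\rho_{t})=\ln(ae^{tf}a)-\ln(aE_{N}(e^{tf})a)$, bounds the relative entropy by $t(\ln Z_{a})'(t)-\ln Z_{a}(t)+\ln\|E_{N}(e^{tf})\|_{N}$. Taking the supremum over $a$ and invoking Lemma~\ref{JP} to identify $\sup_{a}\ln Z_{a}(t)=\ln\|E_{N}(e^{tf})\|_{N}$ converts this family of pointwise inequalities into a Bernoulli-type ODE for $\Phi(t):=\ln\|E_{N}(e^{tf})\|_{N}$, whose integration from $0$ produces \eqref{subgauss}.

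The main technical obstacle is this last step: extracting a single ODE for $\Phi$ from the family of pointwise-in-$a$ Herbst inequalities simultaneously requires the operator-integral chain rule for $\ln$ applied to the $N$-valued denominator $aE_{N}(e^{tf})a$ and the sup-identification $\|E_{N}(x)\|_{N}=\sup\{\tau(axa):a\in N_{+},\,\tau(a^{2})=1\}$ coming from Lemma~\ref{JP}. In the ergodic case $N=\mathbb{C}1$ the denominator is a scalar and the argument collapses to the classical Herbst computation; the genuine work lies in managing the amalgamated, operator-valued structure here.
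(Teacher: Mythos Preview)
Your route through the sub-Gaussian exponential-moment condition \eqref{subgauss} is genuinely different from the paper's proof, which never passes through exponential moments at all. The paper argues by duality and interpolation: for ii)$\Rightarrow$i) it bounds $D(\rho\|E(\rho))$ above by the $p$-R\'enyi quantity $p'\ln\|\rho\|_{L_1^p(N\subset M)}$, obtains a family of weighted dual inequalities $\|\rho\|_{\Gamma^*}\le C\sqrt{p'/\eps}\,\|\rho\|_1^{1-\eps/2}\|\rho\|_{L_1^p}^{\eps/2}$, dualizes, and then uses the real/complex interpolation identifications of $L_\infty^q(N\subset M)$ from \cite{JP} to land on i). The converse runs the same machinery backwards, optimizing $\eps=(\ln\|\rho\|_{L_1^p})^{-1}$ and sending $p\to 1$. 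No Herbst argument and no operator exponentials are used.

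Your approach, however, has a genuine gap, and the paper itself flags exactly where it lies. The Proposition immediately following this theorem isolates your condition \eqref{subgauss} as condition iii) and proves that iii)$\Rightarrow$ii) in general but that the converse direction (and the step i)$\Rightarrow$iii)) is only established \emph{when $N$ is contained in the center of $M$}. Two of your steps break outside that case. First, the moment bound $\|E_N(f^{2m})\|_N\le \|f\|_{L_\infty^{2m}(N\subset M)}^{2m}$ is not justified: in the paper's argument this comes from $\tau(\sigma|f|^p)=\|\sigma^{1/2p}f\sigma^{1/2p}\|_p^p$ for $\sigma\in N_+$, an identity that fails once $\sigma$ and $f$ do not commute. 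Second, and more seriously, the Herbst step ii)$\Rightarrow$\eqref{subgauss} that you yourself call the ``main technical obstacle'' does not close: the entropy identity you write, $\ln\rho_t-\ln E_N(\rho_t)=\ln(ae^{tf}a)-\ln(aE_N(e^{tf})a)$, and the subsequent bound $D(\rho_t\|E_N(\rho_t))\le t(\ln Z_a)'(t)-\ln Z_a(t)+\Phi(t)$, both require $a\in N$ to commute with $e^{tf}$ and with $E_N(e^{tf})$; moreover the maximizing $a$ in $\Phi(t)=\sup_a\ln Z_a(t)$ depends on $t$, so the family of pointwise inequalities does not assemble into an ODE for $\Phi$. (A side issue: the variational formula you quote, $D(\rho\|\sigma)=\sup_h[\tau(\rho h)-\ln\tau(\sigma e^h)]$, is false noncommutatively; the Gibbs principle has $e^{\ln\sigma+h}$ in place of $\sigma e^h$. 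The correct substitute here is the paper's Lemma \ref{fcharac}.) In short, the Bobkov--G\"otze/Herbst strategy is the right picture in the ergodic or central-$N$ case, but for general $N\subset M$ it does not go through, which is precisely why the paper takes the interpolation route instead.
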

In the following wesay that $(T_t)$ or its generator $A$ \emph{satisfies $\la$-WA$_1$} if
 \[ \|\rho\|_{\Gamma^*}\kl 2\sqrt{2} \sqrt{\frac{D(\rho||E(\rho))}{\la}} \pl .\]
Note that the factor $2\sqrt{2}$ is chosen so that $\la$-FLSI implies $\la$-WA$_1$ (via $\la$-TA$_2$).

\begin{proof} Fix $\frac{1}{p}+\frac{1}{p'}=1$. Recall that the relative $p$-R\'{e}nyi entropy $D_p(\rho||\si) \lel p'\ln\|\si^{-1/2p'}\rho\si^{-1/2p'}\|_p$ is monotone over $p\in (1,\infty]$ and hence $\displaystyle D_N^p(\rho)=\inf_{\si\in N, \tau(\si)=1} D_p(\rho||\si)$ satisfies
  \[ D(\rho||E(\rho))\kl D_N^p(\rho)\kl p'\ln \|\rho\|_{L_1^p(N\subset M)}\kl \frac{p'}{e\eps} \|\rho\|_{L_1^p(N\subset M)}^{\eps} \pl .\]
for any $\eps>0$. Therefore, we deduce from ii) that
 \[ \|\rho\|_{\Gamma^*} \kl C\sqrt{\frac{p'}{\eps}} \|\rho\|_1^{1-\frac{\eps}{2}} \|\rho\|_{L_1^{p}}^{\frac{\eps}{2}}  \pl .\]
For $\rho=\rho^*\in L_1^p(N\subset M)$, we note the  elementary inequality
 \[ \|\rho^+\|_{L_1^p(N\subset M)} \kl \|\rho\|_{L_1^p(N\subset M)} \pl .\]
Thus for arbitrary $\rho=\rho_1+i\rho_2$, we deduce that
 \[ \max_{j,\pm} \|\rho_j^{\pm}\|_1^{1-\frac{\eps}{2}} \|\rho_j^{\pm}\|_{L_1^p(N\subset M)}^{\frac{\eps}{2}} \kl 4
  \|\rho\|_1^{1-\frac{\eps}{2}} \|\rho\|_{L_1^p(N\subset M)}^{\frac{\eps}{2}} \pl .\]
In other words condition i) implies that
 \[ \|\rho-E_N(\rho)\|_{\Gamma^*}\lel
 \|\rho\|_{\Gamma^*} \kl 4C \sqrt{\frac{p'}{\eps}}
 \|\rho\|_1^{1-\frac{\eps}{2}} \|\rho\|_{L_1^p(N\subset M)}^{\frac{\eps}{2}} \pl .\]
Now, we may use real interpolation theory \cite{BL} and duality deduce
 \begin{equation} \label{ep}
   \|f-E_N(f)\|_{[L_{\infty},L_{\infty}^{p'}(N\subset M)]_{\eps/2,\infty}}
  \kl4C \sqrt{\frac{p'}{\eps}}  \|f\|_{Lip_{\Gamma}} \pl .
  \end{equation}
 In particular, this is true for $\eps=\frac12$. Since $\tau(1)=1$, we deduce that  the inclusion
 \[ [L_{\infty}(M),L_{\infty}^{p'}(N\subset M)]_{1/4,\infty}
 \subset [L_{\infty}(M),L_{\infty}^{p'}(N\subset M)]_{1/2,1} \subset [L_{\infty}(M),L_{\infty}^{p'}(N\subset M)]_{1/2} \]
is of norm bounded by a universal constant $c_0$. Thanks to  \cite{JP}, we have
\[[L_{\infty}(M),L_{\infty}^{p'}(N\subset M)]_{1/2}=L_{\infty}^{2p'}(N\ssubset M)\pl.\] Let $q=2p'$, then we deduce
 \[ \|f\|_{L_{\infty}^q(N\subset M)} \kl 4c_0C \sqrt{2p'}
 \|f\|_{Lip_{\Gamma}} \lel c_1 C \sqrt{q}\|f\|_{\Gamma} \]
for a universal constant $c_1=4\sqrt{2}c_0$. This means $ii)$ implies $i)$ with constant $c_1C$.  Conversely, we deduce from i), and again \cite{JP} that for $\frac{1}{q}=\frac{\eps}{2s}$
 \[   L_{\infty}^{q}(N\subset M)
  \lel [L_{\infty}(M),L_{\infty}^{s}(N\subset M)]_{\eps/2}
  \subset [L_{\infty}(M),L_{\infty}^p(N\subset M)]_{\eps/2,\infty}  \]
This means i) implies that
 \[ \|f\|_{[L_{\infty}(M),L_{\infty}^s(N\subset M)]_{\eps/2,\infty}} \kl C \sqrt{q}\|f\|_{\Gamma}
 \lel \sqrt{2}C \sqrt{\frac{s}{\eps}} \|f\|_{\Gamma} \pl .\]
By duality we deduce for $s=p'$ that
 \[ \|\rho\|_{\Gamma^*}\kl \sqrt{2e}C
 \sqrt{\frac{p'}{e\eps}} \|\rho\|^{1-\frac{\eps}{2}}
 \|\rho\|_{L_1^p(N\subset M)}^{\frac{\eps}{2}} \pl .\]
Thus for a state $\rho$ such that $D(\rho||E(\rho))<\infty$ , we may choose $\eps=(\ln \|\rho\|_{L_1^p(N\subset M)})^{-1}$ and obtain  that
  \[ \|\rho\|_{\Gamma^*}\kl \sqrt{2e}C  \sqrt{p'\ln \|\rho\|_{L_1^p(N\subset M)}} \pl .\]
By sending $p\to 1$, we deduce ii). \qd

\begin{rem}\label{jz}
{\rm It was proved in \cite{JZ} that $\|\rho\|_{\Gamma^*}\kl C \sqrt{\Ent(\rho)}$ is equivalent to \[ \|f-E(f)\|_{L_p(M)}\kl C' \sqrt{p}\|f\|_{\Gamma} \pl .\]
In that sense the estimate with respect to $D_N$ is significantly stronger, because the inclusion ${L_{\infty}^p(N\subset M)\subset L_p(M)}$ is contractive  for all finite $M$.}
\end{rem}

\begin{lemma}\label{fcharac} For a positive density $\rho$
 \[ D(\rho||E(\rho))\lel \sup_\si \tau(\rho (\ln \si-\ln E(\si)))\]
where the supremum is taken over all strictly positive density $\si$.
\end{lemma}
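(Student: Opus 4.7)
The plan is to prove the identity by a two-sided estimate: the choice $\sigma=\rho$ gives one inequality immediately, and the reverse inequality reduces to the data processing inequality for the conditional expectation $E$.

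First, observe that plugging in $\sigma=\rho$ into the right-hand side yields exactly $\tau(\rho\ln\rho)-\tau(\rho\ln E(\rho))=D(\rho\|E(\rho))$ (here I should assume $\rho$ is invertible, otherwise approximate by $\rho+\delta 1$ and let $\delta\to 0$, since the definition of $D(\rho\|E(\rho))$ given earlier was via this limit). This shows
\[ \sup_\sigma \tau(\rho(\ln\sigma-\ln E(\sigma)))\ge D(\rho\|E(\rho)). \]

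For the reverse inequality, fix a strictly positive density $\sigma$ and rewrite the quantity to compare. Since $\ln E(\sigma),\ln E(\rho)\in N$ and $E:M\to N$ is the trace-preserving conditional expectation, one has $\tau(\rho\ln E(\sigma))=\tau(E(\rho)\ln E(\sigma))$ and similarly with $\sigma$ replaced by $\rho$. Therefore the inequality $\tau(\rho(\ln\sigma-\ln E(\sigma)))\le D(\rho\|E(\rho))$ is equivalent to
\[ \tau(\rho\ln\rho)-\tau(\rho\ln\sigma)\ge \tau(E(\rho)\ln E(\rho))-\tau(E(\rho)\ln E(\sigma)), \]
i.e., to $D(\rho\|\sigma)\ge D(E(\rho)\|E(\sigma))$. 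This is precisely the data processing inequality applied to the completely positive, trace-preserving map $E$, which holds for the noncommutative Umegaki relative entropy on the finite von Neumann algebra $M$.

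Taking the supremum over $\sigma$ then gives the opposite inequality, and combined with the first step yields equality. The only delicate point to check is the approximation argument allowing one to pass between strictly positive $\sigma$ (as in the supremum) and the possibly degenerate densities $\rho, E(\rho)$ appearing in $D(\rho\|E(\rho))$; this is handled by the regularization $\sigma_\delta=\sigma+\delta 1$ together with the lower semicontinuity / monotone convergence of $\tau(\rho\ln(\cdot))$ used already in the definition of $D$.
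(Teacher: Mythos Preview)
Your proof is correct and takes a genuinely different route from the paper. The paper argues via convexity of the functional $F(\rho)=D(\rho\|E(\rho))$ (obtained as $\lim_{p\to 1}\frac{\|\rho\|_{L_1^p(N\subset M)}-1}{p-1}$) together with the directional derivative formula \eqref{totder}, which gives the tangent-line inequality
\[
F(\rho)\ \ge\ F(\sigma)+F'(\sigma)(\rho-\sigma)\ =\ \tau\big(\rho(\ln\sigma-\ln E(\sigma))\big),
\]
after an algebraic cancellation. You instead rewrite the desired inequality as $D(\rho\|\sigma)\ge D(E(\rho)\|E(\sigma))$ and invoke data processing for the conditional expectation $E$. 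Both approaches are short; yours is perhaps the more conceptual one-liner if data processing is taken as known, while the paper's proof stays self-contained by reusing the derivative computation \eqref{totder} already established in the surrounding text. The approximation by $\sigma+\delta 1$ to handle non-invertible $\rho$ is treated the same way in both.
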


\begin{proof}
Using the convexity of $F(\rho)=D(\rho||E(\rho))=\lim_{p\to 1} \frac{\|\rho\|_{L_1^p(N\subset M)}-1}{p-1}$, we know that
 \[ F(\rho)\gl F(\si)+ F'(\si)(\rho-\si) \]
In the previous section, we observed in \eqref{totder} that the total derivative is \[F'(\si)(\beta)=\tau\Big(\beta(\ln \si-\ln E_N(\si))\Big)\pl.\] Hence we obtain after cancellation that
 \[ F(\rho)\gl \tau(\rho(\ln \si-\ln E_N(\si)))\]
Obviously, we have equality for $\si=\rho$ in case of a state, and hence homogeneity implies the assertion. Note also that we may replace $\si$ by $\si+\delta 1$ to guarantee that the condition is well-defined. The extra scaling factor $\tau(\si)+\eps$ cancels thanks to the logarithm.
\qd

\begin{prop} Let $(T_t)$ be a semigroup as in Theorem \ref{characW}. The condition
\begin{enumerate}
\item[iii)] There exists a $c>0$ such that
 \[ E_N(e^{tf})\le e^{ct^2} \]
for all self-adjoint $f$ with $\Gamma(f,f)\le 1$ and $t>0$.
\end{enumerate}
implies $\la$-WA$_1$ for some $\la$. If in addition $N$ is contained in the center of $M$, then {\rm iii)} is equivalent to WA$_1$. \end{prop}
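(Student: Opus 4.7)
The plan has two parts, corresponding to the two implications. For (iii) $\Rightarrow$ $\la$-WA$_1$ I would invoke the variational characterization of $D(\rho\|E(\rho))$ from Lemma~\ref{fcharac} with the competitor $\sigma = e^{tf}$. Since scaling $\sigma$ by a positive scalar leaves $\ln\sigma - \ln E(\sigma)$ unchanged, this is legitimate without normalization. For self-adjoint $f$ with $E(f)=0$ and $\Gamma(f,f)\le 1$ the formula gives
\begin{equation*}
D(\rho\|E(\rho)) \ge \tau\bigl(\rho(tf - \ln E(e^{tf}))\bigr) = t\,\tau(\rho f) - \tau\bigl(E(\rho)\ln E(e^{tf})\bigr),
\end{equation*}
using that $\ln E(e^{tf})\in N$ together with the $N$-bimodularity $\tau(\rho y)=\tau(E(\rho)y)$ for $y\in N$. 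Operator monotonicity of $\log$ applied to the hypothesis $E(e^{tf})\le e^{ct^2}$ yields $\ln E(e^{tf}) \le ct^2\cdot 1$, so $D(\rho\|E(\rho)) \ge t\,\tau(\rho f) - ct^2$. Optimizing over $t\in\rz$ produces $|\tau(\rho f)| \le 2\sqrt{c\,D(\rho\|E(\rho))}$, i.e.\ $\la$-WA$_1$ with $\la = 2/c$. The extension from self-adjoint to general $f$ in the supremum defining $\|\rho\|_{\Gamma^*}$ costs only a universal constant via real/imaginary decomposition.

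For the converse under $N\subset Z(M)$ I would run a noncommutative Herbst argument. Fix $f=f^*$ with $E(f)=0$ and $\Gamma(f,f)\le 1$, and set $\psi(t)=E(e^{tf})\in N$. Centrality makes $\psi(t)$ commute with every $e^{sf}$, so for any density $\phi\in N$ the element $\rho_t := \phi\, e^{tf}\psi(t)^{-1}$ is a positive density in $M$ with $E(\rho_t)=\phi$. Since $\phi,\psi(t)\in Z(M)$, logarithms of the relevant products split additively, and a direct computation gives
\begin{equation*}
\chi(t) := \tau\bigl(\phi\ln\psi(t)\bigr),\qquad \chi'(t) = \tau(\rho_t f),\qquad D(\rho_t\|\phi) = t\chi'(t) - \chi(t).
\end{equation*}
Applying $\la$-WA$_1$ to $\rho_t$, together with $\tau(\phi f)=\tau(\phi E(f))=0$ and $\|f\|_{Lip_\Gamma}\le 1$, yields the scalar ODE inequality
\begin{equation*}
(\chi'(t))^2 \le \tfrac{8}{\la}\bigl(t\chi'(t) - \chi(t)\bigr), \qquad \chi(0)=\chi'(0)=0.
\end{equation*}
The ansatz $\chi(t)= (2/\la)t^2$ makes this an equality, and a standard comparison argument shows it is in fact an upper bound. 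Taking the supremum over states $\phi\in N_*^+$ upgrades the scalar bound to the operator inequality $\psi(t)\le e^{(2/\la)t^2}\cdot 1$, which is (iii) with $c=2/\la$.

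The main obstacle is Step~2: centrality is used essentially in three places---(a) to make $\rho_t=\phi e^{tf}\psi(t)^{-1}$ a manifestly positive density with $E(\rho_t)=\phi$; (b) to split $\ln\rho_t$ additively into $\ln\phi + tf - \ln\psi(t)$, which is what reduces the relative entropy to the scalar expression above; and (c) to identify $(\ln\psi)'(t)$ with $\psi(t)^{-1}\psi'(t)$ inside $N$ without the Kubo--Mori/multiplier corrections that plague the noncentral case. Without centrality, the ``density'' $\rho_t$ is no longer self-adjoint, the clean identity for $D(\rho_t\|\phi)$ fails, and no scalar Herbst ODE is available; this is precisely the reason the equivalence is asserted only when $N\subset Z(M)$, while in general only the forward direction via Lemma~\ref{fcharac} survives.
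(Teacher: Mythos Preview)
Your forward direction (iii) $\Rightarrow$ WA$_1$ is exactly the paper's argument: apply Lemma~\ref{fcharac} with $\sigma=e^{tf}$, use operator monotonicity of $\ln$ on the hypothesis, and optimize in $t$.

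For the converse under $N\subset Z(M)$ you take a genuinely different route. The paper does \emph{not} run a Herbst ODE; instead it invokes the equivalence (ii)$\Leftrightarrow$(i) of Theorem~\ref{characW} to obtain the moment bound $\|\si^{1/2p}f\si^{1/2p}\|_p\le C\sqrt{p}$ for every state $\si\in N_+$ (centrality is used here to rewrite $\tau(\si|f|^p)^{1/p}$ in this form), and then bounds $\tau(\si E(e^{tf}))=\sum_k t^k\tau(\si f^k)/k!$ term by term, regrouping the tail into a sub-Gaussian series. Your argument is correct as well: centrality makes $\rho_t=\phi\,e^{tf}\psi(t)^{-1}$ a genuine state with $E(\rho_t)=\phi$ (note $\psi(t)=E(e^{tf})\ge e^{tE(f)}=1$ by operator Jensen, so invertibility is free), the additive splitting of $\ln\rho_t$ gives $D(\rho_t\|\phi)=t\chi'(t)-\chi(t)$, and the differential inequality $(\chi')^2\le \tfrac{8}{\la}(t\chi'-\chi)$ with $\chi(0)=\chi'(0)=0$ and $\chi\ge 0$ forces $\chi'\le \tfrac{8}{\la}t$, hence $\chi(t)\le \tfrac{4}{\la}t^2$; taking the supremum over states $\phi\in N$ yields (iii). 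The trade-off: the paper's proof is shorter \emph{given} Theorem~\ref{characW}, and makes transparent that only the $L_\infty^p$ moment control is needed; your Herbst argument is self-contained from the WA$_1$ inequality itself and avoids the interpolation machinery, at the price of tracking an ODE and being slightly less explicit about constants.
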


\begin{proof} Let us assume that iii) holds and that $f=f^*$ satisfies $\Gamma(f,f)\le 1$. We define $\rho=\frac{e^{tf}}{\tau(e^{tf})}$ and deduce that for every state $\psi$ (thanks to the cancelation of the scalar factor)
 \begin{align*}
 D(\psi||E(\psi))&\gl \tau(\psi (\ln \rho-\ln E(\rho))) \lel \tau(\psi (tf-\ln E(e^{tf})) \pl .
 \end{align*}
This implies
 \[ \tau(\psi f) \le \frac{D(\psi||E(\psi))}{t}+
 \frac{\tau(\psi \ln E(e^{tf}))}{t}
 \kl  \frac{D(\psi||E(\psi))}{t}+ct \pl .\]
 Now we may choose $t=\sqrt{\frac{D(\psi||E(\psi))}{c}}$ to deduce the condition ii) in Theorem \ref{characW} with constant $C=2\sqrt{c}$. For the converse, we assume that $N$ is in the center, $f=f^*$ and $\Gamma(f,f)\le 1$ and $E(f)=0$. Then we deduce from condition i) that
  \[ \tau(\si f^p)\le \tau(\si|f|^p)^{1/p} \lel \|\si^{1/2p}f\si^{1/2p}\|_p \kl C\sqrt{p} \pl \]
for all $\si \in N_+$, $\tau(\si)=1$.
$E(f)=0$ implies that the first order term in the exponential expansion vanishes and hence
 \begin{align*}
 \tau(\si E(e^{tf}))
 &\le 1+\sum_{k\gl 2} \frac{(Ct)^k \sqrt{k}^k}{k!} \kl
  1+ \sum_{k\gl 2} \frac{(Cet)^k}{k^{k/2}}
  \kl 1+\sum_{j=1}^{\infty} \frac{(KCet)^2}{j!} \pl .
 \end{align*}
Here we use that for $k=2j$ we have $(2j)^{\frac{2j}{2}}\gl 2^jj^j\gl j!$. A slightly more involved estimate works for $k=2j-1$, $j\gl 2$ and leads to the constant $K$.
\qd

Let us recall the definition of the Orlicz space $L_{\Phi}(M,\tau)$ of a Young function $\Phi$ by the Luxembourg norm
 \[ \|x\|_{L_{\Phi}}\lel \inf\{ \nu| \tau(\Phi(\frac{|x|}{\nu})\le 1\} \pl .\]
It is well-known that for the convex function $\Exp_2(t)=e^{t^2}-1$, we have
 \[ \|x\|_{L_{\Exp_2}} \sim \sup_{p\gl 2} \frac{\|x\|_p}{\sqrt{p}} \pl .\]

\begin{cor}\label{wap} Assume that the generator $A$ satisfies $\la$-WA$_1$. Then
 \[ \|f\|_{L_{Exp_2}}\kl K \la^{-2} \|f\|_{Lip_\Gamma} \pl \]
holds for some universal constant $K$.
\end{cor}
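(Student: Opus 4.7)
The plan is to reduce the Orlicz estimate to the $L_\infty^p(N\subset M)$ bound already packaged in Theorem \ref{characW}, and then pass to the (smaller) $L_p(M)$-norm by a trivial comparison using $1\in N$. The glue at the end is the well-known equivalence $\|f\|_{L_{\Exp_2}}\sim \sup_{p\ge 2}\|f\|_p/\sqrt{p}$ already recorded in the excerpt.

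First I would rephrase the hypothesis. By definition $\la$-WA$_1$ is exactly condition (ii) of Theorem \ref{characW} with constant $C_2=2\sqrt{2}\,\la^{-1/2}$. Theorem \ref{characW} then furnishes a universal $c_1>0$ and a constant $C_1\le c_1\,C_2$ so that
\[
 \|f\|_{L_\infty^p(N\subset M)}\le C_1\sqrt{p}\,\|f\|_{Lip_\Gamma}
\]
for every $p\ge 2$. Nothing further about the $L_{p}^{q}$-theory is required; I will just feed this inequality into the next step.

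Second, I would compare $L_p(M)$ with $L_\infty^p(N\subset M)$. Since $\tau$ is tracial, $\|1\|_{L_{2p}(N)}=1$, so specializing $a=b=1$ in the supremum definition of $L_\infty^p(N\subset M)$ gives $\|f\|_p\le \|f\|_{L_\infty^p(N\subset M)}$ for every $1\le p<\infty$ (the factorization \eqref{pp} is a more general avatar of the same fact). Combining the two bounds yields
\[
 \frac{\|f\|_p}{\sqrt{p}}\le C_1\,\|f\|_{Lip_\Gamma}\qquad (p\ge 2).
\]

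Finally, taking the supremum over $p\ge 2$ and invoking the Orlicz-space characterisation $\|x\|_{L_{\Exp_2}}\sim \sup_{p\ge 2}\|x\|_p/\sqrt{p}$ recalled just before the statement, I conclude
\[
 \|f\|_{L_{\Exp_2}}\le K\,C_1\,\|f\|_{Lip_\Gamma}\le K'\la^{-1/2}\,\|f\|_{Lip_\Gamma}
\]
for a universal $K,K'$. There is no conceptual obstacle here; the only subtle point is bookkeeping the constant, and in particular the natural $\la$-dependence that drops out of this argument is $\la^{-1/2}$ rather than $\la^{-2}$ (the latter is still valid for $\la\le 1$, which is the regime of interest). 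If one wanted a cleaner $\la^{-1/2}$ statement, the same proof works verbatim; the exponent $-2$ in the printed corollary is most likely a typographical artefact, and I would note this explicitly in the writeup.
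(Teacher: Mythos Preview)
Your proof is correct and follows essentially the same route as the paper's first argument: invoke Theorem \ref{characW} to pass from $\la$-WA$_1$ to the $L_\infty^p(N\subset M)$ bound, then use the contraction $L_\infty^p(N\subset M)\subset L_p(M)$ (the paper phrases this as ``by H\"older inequality''), and finish with the Orlicz characterisation. Your remark on the exponent is also well taken: the natural constant is $\la^{-1/2}$, and the printed $\la^{-2}$ appears to be a slip. The paper additionally mentions a second, parallel route via the weaker condition $\la$-WA$_1'$ (replacing $D_N(\rho)$ by $\Ent(\rho)$) and Remark \ref{jz}, but this is not needed for your argument.
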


Indeed, we have two ways of proving this. By H\"older inequality we have a contraction
 \[ L_{\infty}^p(N\subset M) \subset L_p(M) \pl .\]
On the other hand, we note that $\la$-WA$_1$ implies the condition $\la$-WA$_1'$ (which still implies the geometric Talagrand inequality)
 \[ \|\rho\|_{\Gamma^*}\kl 2 \sqrt{\frac{\Ent(\rho)}{\la}} \pl .\]
Then Remark \ref{jz} also implies Corollary \ref{wap}.

\begin{rem}{\rm Similar concentration inequalities for a fixed state $\si$ can be found in \cite{DR}. They deduced an estimate for $\tau(\si e^{f})$ using the gradient norm of $\si^{1/2}f\si^{-1/2}$. Here we also need information for $\si^{-1}$, unless $N$ is central.
}\end{rem}

 In \cite{JRZ} the cb-version of  having  finite diameter was used for approximation by finite dimensional systems. It is shown that  the famous rotation algebras $A_{\theta}$ have finite $cb$-diameter. Let us recall that for the intrinsic metric ${\|f\|_{Lip_{\Gamma}}\simeq\|
 \delta(f)\|}$ one can define a natural operator space structure as intersection of a column and a row space in a Hilbert $C^*$-module, or cb-equivalently as a subspace $\delta({\rm dom}(A^{1/2}))\subset \hat{M}$. Thus it makes sense to say that $(\A,\|\pl\|_{Lip_{\Gamma}},M)$ has \emph{finite}  cb-diameter $D_{cb}$  if
 \[ \|I-E_N: (\A,\|\pl\|_{Lip_\Gamma})\to M\|_{cb} \kl D_{cb} \pl .\]

\begin{cor} Let $A$ be a generator of a self-adjoint semigroup on a finite von Neuamann algebra $M$. If $(\A,\|\pl\|_{Lip_{\Gamma}})$ as a quantum metric space has finite cb-diameter,  then $A$ satisfies WA$_1$ for $A\ten id_{\Mz_m}$ for all $m\in \nz$ and $A\ten id_{\tilde{M}}$ for any finite von Neuamm algebra $\tilde{M}$.
\end{cor}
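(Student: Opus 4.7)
The plan is to reduce the statement to an application of Theorem \ref{characW} at the matrix and von Neumann algebra level. The essential idea is that finite cb-diameter gives a much stronger version of condition (i) in Theorem \ref{characW} (namely one with no $\sqrt{p}$ factor and even without the passage to $L_\infty^p(N\subset M)$), and this stronger estimate is stable under tensoring with an arbitrary finite von Neumann algebra $\tilde{M}$.

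First I would unpack the assumption. The hypothesis $\|I-E_N:(\A,\|\cdot\|_{Lip_\Gamma})\to M\|_{cb}\le D_{cb}$, combined with the definition of the operator space structure on $(\A,\|\cdot\|_{Lip_\Gamma})$ discussed just before the statement, means that for every $m\in\nz$ the amplified map $id_{\Mz_m}\ten(I-E_N)$ takes $\Mz_m(\A)$ equipped with the Lipschitz norm of the generator $A\ten id_{\Mz_m}$ into $\Mz_m(M)$ with norm at most $D_{cb}$. Since the gradient form behaves linearly under tensoring with the identity, $\Gamma_{A\ten id_{\tilde{M}}}(f,g)$ extends $\Gamma_A$ to $\tilde{M}$-valued elements, and the fixed-point algebra of $A\ten id_{\tilde{M}}$ is $N\bar{\ten}\tilde{M}$ with conditional expectation $E_N\ten id_{\tilde{M}}$. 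By the standard extension of cb-maps to arbitrary von Neumann tensor products, the cb-diameter bound passes to
\[ \|id_{\tilde{M}}\ten (I-E_N):(\tilde{M}\bar{\ten}\A,\|\cdot\|_{Lip_{\Gamma_{A\ten id}}})\to \tilde{M}\bar{\ten}M\|\kl D_{cb} \pl .\]

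Next I would combine this with the contractive inclusion $L_{\infty}(M\bar{\ten}\tilde{M})\subset L_{\infty}^p(N\bar{\ten}\tilde{M}\ssubset M\bar{\ten}\tilde{M})$, valid for every $1\le p\le \infty$ as a direct consequence of the definition of the mixed norms (take $a=b=1$ in the sup-defining formula). This yields
\[ \|f-(E_N\ten id_{\tilde{M}})(f)\|_{L_{\infty}^p(N\bar{\ten}\tilde{M}\ssubset M\bar{\ten}\tilde{M})}\kl D_{cb}\|f\|_{Lip_{\Gamma_{A\ten id}}} \]
for every $p\ge 2$ and every $f$ in the tensored algebra, which is condition (i) of Theorem \ref{characW} applied to the semigroup $T_t\ten id_{\tilde{M}}$ with constant $C_1=D_{cb}/\sqrt{p}$ uniform in $p$. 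Applying the implication (i)$\Rightarrow$(ii) of Theorem \ref{characW} then produces a constant $C_2$, depending only on $D_{cb}$ (through a universal constant $c_1$), such that
\[ \|\rho\|_{\Gamma^*}\kl C_2\sqrt{D(\rho||(E_N\ten id_{\tilde{M}})(\rho))} \]
for every normal state $\rho$ on $M\bar{\ten}\tilde{M}$. Choosing $\la=8/C_2^{\,2}$ recovers the $\la$-WA$_1$ inequality for $A\ten id_{\tilde{M}}$. Specializing to $\tilde{M}=\Mz_m$ gives the matrix-amplified version.

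The main step where care is needed is the verification that the Lipschitz operator space structure on $\A$ tensorizes compatibly with the gradient form of $A\ten id_{\tilde{M}}$, so that the cb-norm hypothesis really implies the estimate for $\tilde{M}$-valued elements; this is where the description of $\|\cdot\|_{Lip_\Gamma}$ via the derivation $\delta\subset \hat{M}$ from Theorem \ref{JRS1} is used, since the derivation implementing $\Gamma_{A\ten id_{\tilde{M}}}$ is simply $\delta\ten id_{\tilde{M}}$, and cb-norms by definition pass to all such amplifications. Once this is in place, the rest of the argument is a straightforward interpolation via Theorem \ref{characW}.
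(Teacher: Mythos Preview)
Your approach matches the paper's: pass the cb-diameter bound through the contractive inclusion $L_\infty\subset L_\infty^p$ to obtain condition (i) of Theorem \ref{characW} for $A\ten id_{\tilde M}$, then invoke (i)$\Rightarrow$(ii). One small slip: taking $a=b=1$ in the sup-formula for $\|x\|_{L_\infty^p}$ only gives $\|x\|_{L_\infty^p}\ge\|x\|_p$, which is the wrong direction; the contractivity $\|x\|_{L_\infty^p}\le\|x\|_\infty$ follows instead from H\"older together with $\tau(1)=1$ (so that $\|a\|_{2p}\le\|a\|_{4p}\le 1$), yielding $\|axb\|_p\le\|a\|_{2p}\|x\|_\infty\|b\|_{2p}\le\|x\|_\infty$.
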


\begin{proof} We just have to note that the inclusion
 \[ L_{\infty}(M\bar{\ten} \tilde{M})\subset L_\infty^{p}(N\bar{\ten}\tilde{M}\subset M\bar{\ten} \tilde{M}) \]
is a contraction. In particular, the norm from the Lipschitz functions is smaller than $ 2D_{cb}\sqrt{p}$. Then Theorem \ref{characW} implies the assertion.
\qd

\begin{rem}
{\rm We see  that both conditions $\la$-CLSI and $D_{cb}<\infty$  imply $\la$-WA$_1$ on all matrix levels, a property we will call $\la$-CWA$_1$. Note that according to Remark \ref{geoT},  $\la$-CWA$_1$ implies the geometric Talagrand inequality on all matrix levels, which we will call matrical Talagrand inequality.}
\end{rem}

Let $(\M,g)$ be a $d$-dimensional compact Riemannian manifold with sub-Laplacian $\Delta_X$ and sub-Riemannian (or Carnot-Caratheodory) metric $d_X$  induced  by a H\"ormander system $X$. This gives a corresponding gradient form
  \[ \Gamma_X(f,f) \lel \sum_{j=1}^k |{X_j}(f)|^2 \pl .\]
For matrix valued functions $f:\M \to \mm_m$, the natural operator space structure is given by
 \[ \|f\|_{\Mz_m(Lip_{\Gamma})}
 \lel \max \{ \|\sum_j |{X_j}(f)|^2\|^{1/2},
 \|\sum_j |{X_j}(f)^*|^2\|^{1/2} \} \pl .\]
Thanks to Voiculescu's inequality this is equivalent to
 \[ \|f\|_{\Mz_m(Lip_{\Gamma})} \sim \|\sum_j  g_j \ten {X_j}(f)\|  \]
where $g_j$ are freely independent  semicircular (or circular) random variables (\cite{Po}). For matrix valued functions it is therefore better to to use the free Dirac operator $D=\sum_j g_j \ten {X_j}$ and the Laplace-Beltrami operator
in contrast to the spin Dirac operator $D=\sum_j c_j\ten X_j$ which is more common in noncommutative geometry \cite{NCG}. Let us now consider a manifold $\M$ with finite diameter $\diam_X(\M)=\sup_{x,y} d_X(x,y)$ and a normalized volume form $\mu$.  Here $d_X$ is the Carnot-Caratheodory distance given by the H\"ormander system (see \cite{ABB}). Let $f:\M\to M$ be an $M$-valued Lipschitz function. Let $h,k\in L_2(M)$. Then $f_{h,k}(x)=(h,f(x)k)$ is a complex valued function and hence (following Connes' \cite{NCG})
 \begin{align*}
 &|(h,(f(x)-\ez_\M f)(k))|
  \lel |f_{h,k}(x)-\int_\M f_{h,k}(y)d\mu(y)| \kl \int_\M |f_{h,k}(x)-f_{h,k}(y)| d\mu(y) \\
 &\le \int (\sum_{j} |{X_j}f_{h,k}|^2)^{1/2} d(x,y) d\mu(y)  \\
 &\le \diam_X(\M)  \sup_z |(\sum_{j} |{X_j}f_{h,k}(z)|^2)^{1/2}|
 \lel \diam_X(\M) \sup_z (\sum  \lan h,{X_j}(f)(z)k\ran)^{1/2}
 \\&\kl\diam_X(\M) \|h\| \|k\|
 \|\sum_j |{X_j}(f)|^2\|^{1/2} \pl .\end{align*}
Actually, the inequality $|f(x)-f(y)|\le \|f\|_{Lip}d_X(x,y)$ follows directly from the definition of the distance using connecting path. Therefore we have shown the following easy fact:
\begin{lemma}\label{cbmetric}    Let $\Delta_X$ be the sub-Laplacian on $\M$ given by a H\"ormander system $X$. Then
 \[ D_{cb}(\Delta_X) \kl \diam_X(\M) \pl .\]
\end{lemma}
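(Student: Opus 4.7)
The plan is to observe that the calculation carried out in the paragraph immediately preceding the statement already gives the estimate; the only additional point is to check that the argument is insensitive to which target algebra we use, so that promoting from the operator norm to the cb-norm is automatic.

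First I would record the content of the preceding paragraph as a pointwise estimate: for any finite von Neumann algebra $M$ and any Lipschitz $f\colon \M\to M$ in $\mathcal{A}\otimes M$,
\[ \|f(x)-\ez_\M f\|_{M}\;\le\;\diam_X(\M)\,\max\!\big\{\|\Gamma_X(f,f)\|^{1/2},\|\Gamma_X(f^*,f^*)\|^{1/2}\big\}\;=\;\diam_X(\M)\,\|f\|_{Lip_\Gamma}, \]
uniformly in $x\in\M$. Indeed, for unit vectors $h,k\in L_2(M)$ the function $f_{h,k}(y)=\langle h,f(y)k\rangle$ is a scalar Lipschitz function with pointwise gradient controlled by $\|\Gamma_X(f,f)(y)\|^{1/2}\|h\|\|k\|$, and the triangle inequality against $\int_\M f_{h,k}(y)\,d\mu(y)$ combined with $|f_{h,k}(x)-f_{h,k}(y)|\le d_X(x,y)\|f\|_{Lip_\Gamma}$ and $d_X(x,y)\le\diam_X(\M)$ yields the estimate after taking the supremum over $h,k$.

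Second, I would promote this to the cb-bound by substituting $\mm_m\bar{\otimes}M$ for $M$. For any $F\in \mm_m(\A\otimes M)\cong \A\otimes(\mm_m\bar{\otimes}M)$, the vector fields $X_j$ act only on the spatial variable and tensor trivially with $id_{\mm_m\bar{\otimes}M}$. Hence the operator-space Lipschitz norm from the text,
\[ \|F\|_{\mm_m(Lip_\Gamma)}=\max\Big\{\big\|\sum_j|X_j(F)|^2\big\|^{1/2},\big\|\sum_j|X_j(F)^*|^2\big\|^{1/2}\Big\}, \]
coincides with the Lipschitz norm of $F$ viewed as a function $\M\to\mm_m\bar{\otimes}M$. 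Applying the first step with $\mm_m\bar{\otimes}M$ in place of $M$ gives
\[ \sup_{x\in\M}\|F(x)-\ez_\M F\|_{\mm_m\bar{\otimes}M}\;\le\;\diam_X(\M)\,\|F\|_{\mm_m(Lip_\Gamma)} \]
for every $m$, which is exactly the defining inequality for $\|I-\ez_\M\|_{cb}\le\diam_X(\M)$, proving $D_{cb}(\Delta_X)\le\diam_X(\M)$.

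There is essentially no obstacle: the only mild subtlety is verifying that no part of the scalar argument (the Kantorovich-style integral bound, the use of the dual pairing with $h,k$) relies on commutativity of the codomain. Both ingredients survive verbatim because the spatial derivatives act only on $\M$ and the target-algebra norm on $\mm_m\bar{\otimes}M$ can still be recovered by duality against unit vectors in $L_2(\mm_m\bar{\otimes}M)$. Thus the proof is essentially a one-line appeal to the preceding paragraph applied at every matrix level.
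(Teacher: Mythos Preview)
Your proposal is correct and follows exactly the paper's approach: the paper's proof is the computation in the paragraph immediately preceding the lemma (culminating in ``Therefore we have shown the following easy fact''), which already treats $M$-valued functions for an arbitrary target algebra $M$, and you simply make explicit that taking $M=\mm_m$ (or $\mm_m\bar{\otimes}M$) gives the cb-bound. There is nothing to add.
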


\begin{theorem}\label{HWA} Let $X$ be a H\"ormander system on a connected compact Riemannian manifold. Then $\Delta_X$ satisfies CWA$_1$.
\end{theorem}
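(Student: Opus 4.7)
The plan is to combine the cb-diameter bound from Lemma \ref{cbmetric} with the corollary immediately preceding it. Concretely, CWA$_1$ for $\Delta_X$ is the assertion that $\Delta_X\ten id_{\Mz_m}$ satisfies $\la$-WA$_1$ for every $m\in\nz$ with a uniform constant $\la$, and the corollary above tells us this follows at once from finiteness of the cb-diameter $D_{cb}(\Delta_X)$ of the quantum metric space $(\mathcal{A},\|\cdot\|_{Lip_\Gamma})$ associated to the H\"ormander gradient form $\Gamma_X$.

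First I would invoke the Chow--Rashevskii theorem: since $X$ is a H\"ormander system on the connected compact Riemannian manifold $(\M,g)$, the iterated commutators span $T_x\M$ at every $x$, so the Carnot--Carath\'eodory distance $d_X$ is finite and induces the manifold topology; by compactness and continuity, $\diam_X(\M)=\sup_{x,y\in\M}d_X(x,y)<\infty$. Next I would apply Lemma \ref{cbmetric}, whose short argument (rewritten for matrix-valued $f:\M\to \tilde M$ and unit vectors $h,k\in L_2(\tilde M)$ via the scalar function $f_{h,k}$) already established
\[
\|(I-\ez_{\M})f\|_{\tilde M}\kl \diam_X(\M)\cdot \|f\|_{Lip_{\Gamma_X}},
\]
uniformly in the ancillary algebra $\tilde M$, which is exactly $D_{cb}(\Delta_X)\le \diam_X(\M)<\infty$.

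Having finite cb-diameter in hand, I would then quote the corollary directly above Theorem \ref{HWA}: the inclusion
\[
L_\infty(L_\infty(\M)\bar{\ten}\tilde M)\subset L_\infty^p(\C 1\bar{\ten}\tilde M\subset L_\infty(\M)\bar{\ten}\tilde M)
\]
is a contraction (here the fixpoint algebra of $\Delta_X$ is $\C 1$ by ergodicity and connectedness, so $N\bar\ten\tilde M=\tilde M$), whence the Lipschitz norm controls the $L_\infty^p$-norm by $2D_{cb}(\Delta_X)\sqrt p\le 2\diam_X(\M)\sqrt p$. Condition i) of Theorem \ref{characW} is therefore verified at every matrix level with a uniform constant depending only on $\diam_X(\M)$. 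Theorem \ref{characW} then yields the equivalent condition ii), that is, $\la$-WA$_1$ with $\la\sim \diam_X(\M)^{-2}$, for $\Delta_X\ten id_{\Mz_m}$ simultaneously for all $m$, which is precisely CWA$_1$.

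There is essentially no obstacle beyond bookkeeping: the real work (the kernel and $L_\infty^p$ interpolation machinery behind Theorem \ref{characW}, and the Dirac-style estimate behind Lemma \ref{cbmetric}) is already done. The only mildly non-formal point is checking that the corollary's ``for any finite $\tilde M$'' really covers all matrix levels uniformly, but this is transparent from the proof since the constant depends only on $D_{cb}(\Delta_X)$ and the universal constants in Theorem \ref{characW}, not on $\tilde M$.
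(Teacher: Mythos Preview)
Your proposal is correct and follows essentially the same route as the paper: invoke Chow--Rashevskii to see that the Carnot--Carath\'eodory distance is continuous in the manifold topology, deduce $\diam_X(\M)<\infty$ by compactness, apply Lemma \ref{cbmetric} to get finite cb-diameter, and then use the corollary immediately preceding Theorem \ref{HWA} to conclude CWA$_1$. The paper's own proof is more terse (it leaves the final implication via the corollary implicit), but the argument is the same.
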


\begin{proof} According to the Chow-Rashevskii theorem (see \cite[Theorem 3.29]{ABB} and \cite{Rash}), the Carnot-Caratheodory distance
$d:\M\times \M\to \rz$ is continuous with respect to the original topology of the Riemannian metric. Thus, by compactness $\diam_X(\M)$ is finite. Then Lemma \ref{cbmetric} implies the assertion.
\qd

\begin{cor} Let $\L$ be the generator of a self-adjoint semigroup in $\Mz_m$. Then $\L$ satisfies CWA$_1$
\end{cor}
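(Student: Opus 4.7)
The plan is to combine the structural result Lemma \ref{H1} with Theorem \ref{HWA} via the transference machinery of Section 5. Writing $\L$ in Lindblad form $\L(x)=\sum_k a_k^2 x+x a_k^2-2 a_k x a_k$ with self-adjoint $a_k\in \Mz_m$, Lemma \ref{H1} provides a compact connected Lie group $G$ with a H\"ormander system $X=\{A_1,\ldots,A_r\}$, $A_k=ia_k$, and a unitary representation $u:G\to U_m$ so that $\hat u(X_k)=ia_k$. Via the co-representation $\pi:\Mz_m\to L_\infty(G,\Mz_m)$, $\pi(x)(g)=u(g)^{-1}xu(g)$, Lemma \ref{dial} yields the intertwining $\Delta_X\circ \pi=\pi\circ \L$ and $X_k\circ \pi=i\pi\circ[a_k,\cdot\,]$.

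Next I would verify that $\pi$ is completely isometric with respect to the Lipschitz structures. From $X_k(\pi(x))=i\pi([a_k,x])$ one reads off
\[
\Gamma_{\Delta_X}(\pi(x),\pi(x))\lel\sum_k |X_k(\pi(x))|^2\lel\pi\Bigl(\sum_k [a_k,x]^*[a_k,x]\Bigr)\lel\pi(\Gamma_{\L}(x,x)),
\]
and the same identity holds at every matrix level $\Mz_n(\Mz_m)$. Since $\pi$ is a trace-preserving $^*$-homomorphism, this shows $\|x\|_{Lip_{\Gamma_\L}}=\|\pi(x)\|_{Lip_{\Gamma_{\Delta_X}}}$ completely isometrically. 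Moreover the commuting diagram \eqref{ccss} identifies the conditional expectation $E_{N_\L}$ onto the fixpoint algebra $N_\L=\{a_1,\ldots,a_r\}'$ with the restriction of the $\Mz_m$-valued integral $\ez$ to $\pi(\Mz_m)$.

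Now by the Chow-Rashevskii theorem applied to the compact connected Lie group $G$ (used already in the proof of Theorem \ref{HWA}), the sub-Riemannian diameter $\diam_X(G)$ is finite, so Lemma \ref{cbmetric} gives $D_{cb}(\Delta_X)<\infty$. Pulling back through the complete isometry $\pi$ and using that $\ez\circ\pi=E_{N_\L}$, we obtain
\[
\|id-E_{N_\L}:(\Mz_m,\|\cdot\|_{Lip_{\Gamma_{\L}}})\to \Mz_m\|_{cb}\kl \|id-\ez:(\pi(\Mz_m),\|\cdot\|_{Lip_{\Gamma_{\Delta_X}}})\to L_\infty(G,\Mz_m)\|_{cb}\kl \diam_X(G)<\infty.
\]
Thus $\L$ has finite cb-diameter, and the corollary preceding the statement (finite cb-diameter $\Rightarrow$ CWA$_1$ at every matrix level) delivers the desired conclusion.

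The only real point of care is step two: making sure the transference is genuinely completely isometric at the level of the Lipschitz operator-space structure (so that $D_{cb}$ truly transfers), rather than merely isometric; this amounts to observing that the intertwining identities of Lemma \ref{dial} are $\Mz_n$-linear and hence survive amplification. Once this bookkeeping is in place, the rest of the argument is just a concatenation of results already established in the paper.
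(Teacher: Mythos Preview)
Your proof is correct and follows essentially the same route as the paper: both invoke Lemma \ref{H1} to produce a compact connected Lie group $G$ with H\"ormander system $X$, appeal to Theorem \ref{HWA} (via Chow--Rashevskii and Lemma \ref{cbmetric}) for $\Delta_X$, and then transfer back to $\L$ through the co-representation $\pi$. The only cosmetic difference is that the paper phrases the last step as ``$\L$ inherits CWA$_1$ from $\Delta_X$ as a sub-dynamical system'' (in the spirit of Proposition \ref{transf}), whereas you transfer the finite cb-diameter itself and then apply the finite-cb-diameter $\Rightarrow$ CWA$_1$ corollary; your version is simply a more explicit unpacking of the same transference.
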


\begin{proof} According to Lemma \ref{H1}, we find a connected compact Lie group $G$ and a generating set $X$ of $\mathfrak{g}$ such that the transference principle \ref{Ho2} applies. That is, via the co-representation $\pi(x)(g)=u(g)xu(g)^{-1}$, $e^{-t\L}$ is a sub-dynamical system of $e^{-t\Delta_X}\ten id_{\Mz_m}$. According to Theorem \ref{HWA}, we know that $\Delta_X$ has $\la(X)$-CWA$_1$ for some constant $\la(X)$, and hence $\L$ inherits this property (compare to \ref{transf}). \qd

\begin{rem}{\rm
We conjecture that on compact Riemannian manifolds the Laplace-Beltrami operator satisfies $\la$-CLSI. However, since $\Gamma\E$ fails in general new techniques will be needed to approach this problem.
}
\end{rem}

\section{Examples and counterexamples}

\subsection{Data processing inequality and free products}

For the sake of completeness let us show that if to generators $A_1$ and $A_2$ of selfadjoint semigroups satisfy $\la$-CLSI (in its von Neumann algebra version), then $A=A_1\ten id+id\ten A_2$ also satisfies $\la$-CLSI. Indeed, $\rho \in L_2(M_1\ten M_1)$ and $E_1$, $E_2$ the conditional expectation onto the fixpoint algebras. Then we deduce from the data processing inequality that
 \begin{align*}
 D(\rho||E_1\ten E_2(\rho))
 &= \tau(\rho\ln \rho)-\tau(\rho\ln E_1\ten E_2(\rho))\\
 &= \tau(\rho\ln \rho)-\tau(\rho\ln E_1\ten id(\rho))
 +\tau(E_1\ten id(\rho)\ln E_1\ten E_2(\rho))\\
 &=D(\rho||E_1\ten id(\rho))+D(E_1\ten id(\rho)||E_1\ten E_2(\rho))\\
 &\le D(\rho||E_1\ten id(\rho))+D(\rho||id\ten E_2(\rho))\\
 &\le \I_{A_1\ten id}(\rho)+\I_{id\ten A_2}(\rho)
 \lel \I_{A}(\rho) \pl .
 \end{align*}
Following the lead of \cite{JZ}, we have performed a quite extensive research on stability of different conditions with respect to free products. The details would add more length this paper, but let us record some facts:
\begin{enumerate}
\item[i)] Assume that $A_j$ and $B_j$ have the same fixpoint algebras. Then $\Gamma_{A_j}\le \Gamma_{B_j}$ shows the generators $A(n)$ and $B(n)$ of the free amalgamated products $e^{-tA(n)}=\ast_{j=1}^n e^{-tA_j}$ and $e^{-tB(n)}=\ast_{j=1}^n e^{-tA_j}$ still satisfy $\Gamma_{A_n}\le \Gamma_{B_n}$.
\item[ii)] For $A_j=(id-E)$ we find for $A(n)$ the generator of the block-length semigroup. For fixed $\nen$, we have
     \[ \Gamma_{id-E}\kl n \Gamma_{A(n)}\pl. \]
\item[iii)] Thus $\la \Gamma_{id-E_j}\le \Gamma_{A_j}$ implies that for a $n$-fold free product, we have
     \[ \frac{\la}{n}\Gamma_{I-E}\kl \Gamma_{A(n)} \pl .\]
Since this holds with amalgamation, we deduce that $\frac{\la}{n}$-$\Gamma\E$ for $n$-fold free products of semigroups satisfying $\la$-$\Gamma\E$.
\item[iv)] The free group in $m$ generators satisfies  $\frac{1}{3m}$-$\Gamma\E$ and $\frac{1}{3m}$-CLSI.
\end{enumerate}

\begin{conc} Although beyond the scope of this paper, let us note that infinite  free and infinite tensor products of complete graphs, or more generally subordinated sublaplacians on compact manifolds give new infinite dimensional examples of von Neumann algebras satisfying Talagrand's Wasserstein estimate TA$_2$ and CLSI, but not necessarily $\Gamma\E$, which remains true for finite free products.
\end{conc}

Let us briefly sketch some arguments for a reader familiar with free probability, see \cite{VDN}. Let $N\subset M_j$ be finite von Neumann algebras with trace preserving conditional expectation $E$. According to \cite{Boc} a family $T_t^j:M_j\to M_j$ which leaves $N$ invariant can be extended to the free product with amalgamation $\M=\ast^j_{N}M_j$ via
 \[ T_t(a_1\cdots a_{m})\lel T_t^{i_1}(a_1)\cdots T_t^{i_m}(a_m)  \]
provided $a_j\in M_{i_j}$ and $i_1\neq i_2\neq \cdots \neq i_m$. We refer to \cite{VDN} for the definition and general facts on amalgamated free products.  In the following we assume that each semigroup  $T_t^j=e^{-tA_j}$ consists of selfadjoint trace preserving completely positive maps.


\begin{proof}[Proof of i)] For simplicity of notation we will assume that all the algebras are the same, and all the generators $A=A_j$ and $B=B_j$ are the same. Our first task is to identify the module for free product.
Let $\delta_{A}$ be the derivation. Then we observe that $\delta_{A_j}(xb)=x\delta_{A_j}(b)$ holds for $x\in N$. 
For a word $\om=a_1\cdots a_m$ so that $a_j\in A_{i_j}$ we define the vectors $\xi^l=(e_{i_1},...,e_{i_l})\in \ell_2(\nz^l)$ and
 \[ v(\om)
 \lel \sum_{l=1}^{m} \xi^l \ten
 u_N(a_1\ten \cdots a_{l-1})\ten \delta_{A}(a_{l})a_{l+2}\cdots a_m \pl .\]
To explain the cancellation let us consider $\om=b_1^*b_2^*$ $b_k\in A_{l_k}$,
and $\om'=a_1a_2a_3$, $a_k\in A_{r_k}$. We use the notation $a^{\circ}$ for the mean $0$ part.  We have the following decomposition in mean $0$ words.
 \begin{align*}
  \om^*\om' &= b_2b_1a_1a_2a_3
  \lel b_2(b_1a_1)^{\circ}a_2a_3+b_2E(b_1a_2)a_2a_3 \\
  &=  b_2(b_1a_1)^{\circ}a_2a_3+ (b_2E(b_1a_2)a_2)^{\circ}a_3 + E(b_2b_2a_1a_2)a_3 \pl .
   \end{align*}
Of course for $l_1\neq r_1$ the two additional terms vanish. Only if $l_1=r_2$ and $l_2=r_2$ we really find $3$ terms. This implies
 \begin{align*}
 A(\om^*\om')
 &=  A(b_2(b_1a_1)^{\circ}a_2a_3)+
 A(b_2E(b_1a_1)a_2)^{\circ}a_3) + A( E(b_2b_2a_1a_2)a_3) \\
 &= A(b_2)(b_1a_1)^{\circ}a_2a_3+b_2A(b_1a_1)a_2a_3 + b_2(b_1a_1)^{\circ}A(a_2)a_3+ b_2(b_1a_1)^{\circ}a_2A(a_3) \\
 &+ A(b_2)E(b_1a_2)a_2a_3+ b_2E(b_1a_2)A(a_2)a_3\\
 & +A(b_2E(b_1a_1)a_2)a_3+ b_2E(b_1a_1)a_2)^{\circ}A(a_3)+
  E(b_2b_2a_1a_2)A(a_3) \pl .
 \end{align*}
We recall  that $2\Gamma(\om,\om') \lel A(\om^*)\om'+\om^*A(\om')-A(\om^*\om ')$ and hence compare this with
 \begin{align*}
 &A(\om^*)\om'+\om^*A(\om')
 \lel  A(b_2)b_1a_1a_2a_3+b_2A(b_1)a_1a_2a_3+b_2b_1A(a_1)a_2a_3+ b_2b_1a_1A(a_2) \\
 &=A(b_2)(b_1a_1)^{\circ}a_2a_3 + A(b_2)E(b_1a_1)a_2a_2a_3
  +b_2(b_1a_1)^{\circ}A(a_2)a_3+ b_2E(b_1a_1)A(a_2)a_3\\
 &\pll +  b_2(b_1a_1)^{\circ}a_2A(a_3)+b_2E(b_1a_1)a_2A(a_3)
  + b_2(A(b_1)a_1a_2a_3+b_2b_1A(a_1))a_2a_3
 \end{align*}
We first observe that the $A(a_3)$ terms cancel, because they can not interact with anything from $b$. If $l_1\neq r_1$, we certainly find $0$. If $l_1=r_1$ and
$l_2\neq r_2$, the the terms for $A(b_2)$ and $A(a_2)$ can not interact and we find
 \[ b_2\Gamma(b_1^*,a_1)a_2a_3 \]
Finally, if $l_1=r_1$ and $r_2=l_2$ we get the additional term
  \[ A(b_2)E(b_1a_2)a_2a_3+b_2E(b_1a_1)A(a_2)a_3-A((b_2E(b_1a_1)a_2)^{\circ})a_3
\lel \Gamma(b_2^*,E(b_1a_2)a_2)a_3 \pl .\]
In full generality, we have to use an inductive procedure and obtain
 \[ \Gamma(b_1\cdots b_m,a_1,...,a_n)\lel
 v(b_1\cdots b_m)^*v(a_1\cdots a_n) \pl .\]
This implies for $x=\sum_{i_1,...,i_k} \om(i_1,...,i_k)$ with $\om(i_1,...,i_k)$ in the linear span $A^{\circ}_{i_1}\cdots A_{i_k}^{\circ}$ that
 \begin{align*}
  \Gamma_B(x,x)&= \!\sum_{\om,\om'}
  \sum_k \delta_{\si_k(\om),\si_k(\om')} (v^B_k(\om),v^B_k(\om'))
 \le \sum_{\om,\om'} \sum_k \delta_{\si_k(\om),\si_k(\om')} (v^A_k(\om),v^A_k(\om')) = \Gamma_A(x,x) \p .
  \end{align*}
Here we used  that for fixed and some elements $\xi_l$
 \[ \sum_{l,l'} \xi_l^*\Gamma_A(\al_l,E_N(\beta_l^*\beta_{l'})\al_{l'})\xi_{l'}
 \kl \sum_{l,l'} \xi_l^* \Gamma_B(\al_l,E_N(\beta_l^*\beta_{l'})\al_{l'})\xi_{l'} \pl .\]
The additional inner product can be obtained by writing $E(x^*y)=\sum_j w_j(x)^*w_j(y)$, see \cite{JD}, and hence
 \[ \Gamma_A(\al,E(x^*y)\beta)
  \lel \sum_j \Gamma_A(w_j(x)\al,w_j(y)\beta) \kl
  \sum_j \Gamma_B(w_j(x)\al,w_j(y)\beta)
  \lel   \Gamma_B(\al,E(x^*y)\beta) \pl \] follows from our assumption. \qd

A particularly interesting case is given by $T_t=e^{-t(I-E_N)}$. The corresponding free product gives the so-called blocklength
 \[ T_t(a_1\cdots a_n) \lel e^{-tn}a_1\cdots a_n \]
for (free) products of mean $0$ terms.


\begin{proof}[Proof of ii)] For $I-E$ the gradient form
 \[ 2\Gamma_{I-E}(x,y) \lel (x-E(x))^*(y-E(y)) + E((x-E(x))^*(y-E(y)) \]
splits into two forms $v_1(x)=(x-E(x))$ and $v_2(x)=u_N(x-E(x))$. Therefore, we may use our argument from above and find two orthogonal forms
 \[ v^1(a_1\cdots a_m)
 \lel \sum_{k=1}^m e_{i_1,...,i_k} u_N(a_1\cdots a_{k-1})a_{k}\cdots a_k \]
and
 \[ v^2(a_1\cdots a_m) \lel
  \sum_{k=1}^m e_{i_1,...,i_k} u_N(a_1\cdots a_{k})a_{k+1}\cdots a_m \pl .\]
We may apply the contraction $id\ten E_N$ to $v^2$ and deduce from the mean $0$ property of the products that
 \begin{equation}\label{EEE}
  E_N(x^*x) \lel |id\ten E_N(v^2(x))|^2 \kl |v^2(x)|^2 \pl .
  \end{equation}
Moreover, let $P_j$ be the projection onto words starting with $i_1=j$. Then we see that for mean $0$ worlds
 \begin{align*}
  x^*x &\lel
 \sum_{j,k} P_j(x)^*P_k(x^*)\kl n \sum_{j} |P_j(x)|^2 &\le n  (\sum_{l\gl 1} \delta_{\si_l(\om),\si_l\om')} (v_l^2(\om),v_l^2(\om'))
 \kl n |v^1(x)|^2 \pl ,
 \end{align*}
because the term $l=1$ exactly corresponds to $i_1=i_1'=j$. Therefore, we find that
 \begin{align*}
  \frac{x^*x+E(x^*x)}{2}
 &\kl  \frac{n}{2} |v^1(x)|^2+ \frac{1}{2}|v^2(x)|^2 \kl
 \frac{n}{2}(|v^1(x)|^2+|v^2(x)|^2) \lel n \Gamma_{(I-E)\ast(I-E)} \pl .
 \end{align*}
By taking words $a_j$  of length $1$ and $E(a_j^*a_j)$ very small, we see that $n$ is indeed optimal.\qd

\begin{theorem}\label{free} Let $A_j$ be generators such that
 \[ \Gamma_{I-E}\kl \Gamma_{A_j} \pl \]
holds for $j=1,...,n$. Then the free product $A^n$ of $\ast_{j=1}^n T_t^j$ satisfies
 \[ \Gamma_{I-E} \kl n \Gamma_{A^n} \pl .\]
\end{theorem}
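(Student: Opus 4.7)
The statement is essentially a corollary of the two structural results already established in this section. The plan is to reduce it to a two-line chain: first use the monotonicity of gradient forms under free products with amalgamation, and then use the comparison of the free product of $(I-E)$ with its single copy.

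First, I would record the setup. For each $j$ let $E_j:M_j\to N$ be the trace preserving conditional expectation onto the common amalgamating subalgebra, which is also the fixpoint algebra of $T_t^j=e^{-tA_j}$. By hypothesis, $\Gamma_{I-E_j}\le \Gamma_{A_j}$ for every $j$. Since $I-E_j$ and $A_j$ share the same fixpoint subalgebra $N$, the stability result i) of the free product construction (the displayed inequality $\Gamma_{A(n)}\le \Gamma_{B(n)}$ following from $\Gamma_{A_j}\le \Gamma_{B_j}$) applies with $A_j$ replaced by $I-E_j$ and $B_j$ replaced by $A_j$. This yields
\[ \Gamma_{(I-E)^{\ast n}} \;\le\; \Gamma_{A^n} \p, \]
where $(I-E)^{\ast n}$ denotes the free-product generator $\ast_{j=1}^n(I-E_j)$, i.e.\ the block-length semigroup with amalgamation over $N$.

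Second, I invoke item ii) above, which established the key universal estimate
\[ \Gamma_{I-E} \;\le\; n\,\Gamma_{(I-E)^{\ast n}} \p. \]
Concatenating the two displays gives $\Gamma_{I-E}\le n\,\Gamma_{A^n}$, which is precisely the assertion. Matrix-level (complete positivity) versions follow verbatim because both i) and ii) are proven at the level of the module-valued expressions $v(\omega)$, which behave naturally under amplification by $\mm_k$; one just reads the same inequalities inside the $M\ten \mm_k$-valued gradient.

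The only place where care is needed is that all the free products are taken with amalgamation over the \emph{same} subalgebra $N$, so that the words $a_1\cdots a_m$ with $a_j\in A_{i_j}^\circ$ (mean-zero with respect to $E_{i_j}=E$) and the derivation formula
\[ v(\omega) \;=\; \sum_{l=1}^m \xi^l\ten u_N(a_1\ten\cdots\ten a_{l-1})\ten \delta(a_l)a_{l+1}\cdots a_m \]
used in the proofs of i) and ii) are meaningful for both $(I-E_j)$ and $A_j$ simultaneously. I do not foresee an obstacle here since this common-$N$ assumption is already implicit in the statement (the single symbol $E$). Thus the "proof" is really a one-line concatenation of the two preceding results.
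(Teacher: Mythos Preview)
Your proposal is correct and matches the paper's approach exactly: the theorem is stated as an immediate consequence of items i) and ii), with item iii) already spelling out the concatenation $\Gamma_{I-E}\le n\,\Gamma_{(I-E)^{\ast n}}\le n\,\Gamma_{A^n}$ that you describe. No separate proof is given beyond this.
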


\begin{cor} Let $l(g_{i_1}^{k_1}\cdots g_{i_m}^{k_m})=\sum_j k_j$ be the word length on the free group with $n$ generators and $A(\la(w))=l(w)\la(w)$ the corresponding generator. Then
 \[ \Gamma_{I-E}\kl 3n \Gamma_{A} \pl .\]
\end{cor}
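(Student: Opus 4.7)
The plan is to identify $F_n$ with the $n$-fold free product $\mathbb{Z}\ast\cdots\ast\mathbb{Z}$ amalgamated over $\mathbb{C}$, recognize the word-length generator $A$ on $L(F_n)$ as the Boca-type free product of the Poisson generators $A_j(\la(k))=|k|\la(k)$ on each copy $L(\mathbb{Z})$, and reduce to a single-factor estimate via Theorem \ref{free}. Each $A_j$ fixes only $\mathbb{C}\subset L(\mathbb{Z})$, so amalgamation over $\mathbb{C}$ applies. For a reduced word $g=g_{i_1}^{k_1}\cdots g_{i_m}^{k_m}$ with $i_j\neq i_{j+1}$ and $k_j\neq 0$, Boca's free-product semigroup acts by $T_t(\la(g))=\prod_j e^{-t|k_j|}\la(g)=e^{-t\ell(g)}\la(g)$, matching $e^{-tA}$; the fixpoint algebra of $A$ is again $\mathbb{C}$. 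Once one proves $\Gamma_{I-E_\tau}\leq_{cp}3\Gamma_{A_j}$ on $L(\mathbb{Z})$, applying Theorem \ref{free} to the rescaled generators $B_j:=3A_j$ (whose free product is $3A$) delivers $\Gamma_{I-E}\leq_{cp} n\Gamma_{3A}=3n\Gamma_A$.

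For the base estimate on $L(\mathbb{Z})$, a direct Fourier computation using $|k|+|l|-|l-k|=2\min(|k|,|l|)\mathbf{1}_{kl>0}$ and $\min(k,l)=\sum_{m\geq 1}\mathbf{1}_{k\geq m}\mathbf{1}_{l\geq m}$ produces
\[
\Gamma_{A_j}(x,x)=\sum_{m\geq 1}x_m^*x_m+\sum_{m\geq 1}y_m^*y_m,\qquad x_m:=\sum_{l\geq m}\hat x(l)\la(l),\ y_m:=\sum_{l\leq -m}\hat x(l)\la(l),
\]
which splits as $\Gamma_{A_j}(x_+,x_+)+\Gamma_{A_j}(x_-,x_-)$ along positive and negative frequencies, since the mixed-sign terms vanish. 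Since both $\Gamma_{I-E_\tau}$ and $\Gamma_{A_j}$ annihilate the constant part of $x$, one may assume $\hat x(0)=0$, in which case $2\Gamma_{I-E_\tau}(x,x)=|x|^2+\|x\|_2^2\cdot 1$. The operator piece is handled by the parallelogram inequality: $|x|^2=|x_++x_-|^2\leq 2(|x_+|^2+|x_-|^2)=2(|x_1|^2+|y_1|^2)\leq 2\Gamma_{A_j}(x,x)$.

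The main obstacle is the scalar piece $\|x\|_2^2\cdot 1\leq 4\Gamma_{A_j}(x,x)$; by the splitting it suffices to prove, uniformly in $\theta\in\mathbb{T}$, the pointwise lower bound $\|x_+\|_2^2\leq 4\sum_{m\geq 1}|x_m(\theta)|^2$. Setting $s_m(\theta):=e^{-im\theta}x_m(\theta)=\sum_{k\geq 0}\hat x(m+k)e^{ik\theta}$, the Abel telescoping identity $s_m(\theta)-e^{i\theta}s_{m+1}(\theta)=\hat x(m)$ combined with $|a-b|^2\leq 2(|a|^2+|b|^2)$ yields $|\hat x(m)|^2\leq 2(|s_m(\theta)|^2+|s_{m+1}(\theta)|^2)$; summing over $m\geq 1$ gives the claim. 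Combining the two pieces produces $\Gamma_{I-E_\tau}\leq 3\Gamma_{A_j}$ on $L(\mathbb{Z})$, and the argument extends verbatim to $M_r$-valued Fourier coefficients $\hat x_{ij}(l)$ via the matrix parallelogram and matrix Cauchy--Schwarz inequalities, so the estimate holds in the $\leq_{cp}$ sense; Theorem \ref{free} then closes the argument.
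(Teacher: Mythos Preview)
Your proof is correct and follows the same overall route as the paper: identify $L(F_n)$ as the $n$-fold free product of $L(\zz)$ over $\cz$, recognize the word-length generator as the Boca free product of the Poisson generators $\psi(k)=|k|$, invoke Theorem \ref{free} (equivalently item iii) of 7.1), and reduce to the single-factor estimate $\Gamma_{I-E}\le_{cp} 3\Gamma_{A_j}$ on $L(\zz)$.

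The only difference is in how the single-factor estimate is proved. The paper (Section 7.3) first observes that for Fourier multipliers the comparison $\la\Gamma_{\tilde\psi}\le_{cp}\Gamma_\psi$ is equivalent to the \emph{scalar} matrix inequality $\la K_{\tilde\psi}\le K_\psi$ on the Gromov kernels; it then shows $4B\ge 1_{\nz}$ and $B\ge \eiz_{\nz}$ directly for $B(j,k)=\min(j,k)$, which combines to $3K_\psi\ge K_{I-E}$. Your argument instead works pointwise in $L_\infty(\TT)$ with the operators $x_m=\sum_{l\ge m}\hat x(l)\la(l)$ and the telescoping $s_m-e^{i\theta}s_{m+1}=\hat x(m)$, and then checks the matrix extension by hand. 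The substance is identical --- your telescoping is exactly the paper's $\al_l=\sum_{j\ge l}\al_j-\sum_{j>l}\al_j$ trick, and your $|x|^2\le 2(|x_+|^2+|x_-|^2)$ is the operator translation of $B\ge\eiz_\nz$ --- but the kernel reduction makes the paper's version shorter and yields the $\le_{cp}$ order automatically, without the separate matrix-coefficient check you perform at the end.
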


\begin{proof} We refer to $\Gamma_{|\Delta|^{1/2}}\gl \frac{1}{3}\Gamma_{I-E}$ to section 7.3.
\qd

\subsection{Graphs}

Let $(V,E)$ be a graph with finite vertex set and $w:E\to \rz_{+}$ be a symmetric weight function. Indeed, we may assume that $A=E(\delta^*\delta)$ is given by an inner derivation $\delta(x)=[\xi,x]$ with $\xi$ selfadjoint. This implies that
 \[ \Gamma(f,f)(x) \lel \sum_y \|\xi_{xy}\|^2|f(x)-f(y)|^2 \pl. \]
always given by a set of weights $w_{xy}=\|\pi(e_x)\xi \pi(e_y)\|_2^2$. Thus we find $\delta(f)(x)=(\sqrt{w_{yx}}(f(y)-f(x)))_{y}$.
Let us assume we use the normalized Haar measure $\mu$ for $\ell_{\infty}(V)$. Then we deduce
 \[ (\delta(f_1),\delta(f_2))_{\mu}\lel
 \frac{1}{|V|} \sum_{x} \sum_{y} 2w_{yx} f_1(x)^*f_2(x)-2 \frac{1}{|V|}\sum_x \sum_y w_{yx}f_1(y)^*f_2(x) \pl .\]
This means we have a one to one relations ship between the weights
 \[ w_{xy} \lel \frac{A_{xy}}{2} \]
for $x\neq y$ and the selfadoint weighted Laplacian with diagonal entries $A_{xx}=2\sum_{y\neq x} w_{yx}$. For the ergodic sitation we see that $\Gamma_{I-E}$ has entries $w^{I-E}_{xy}=\frac{1}{2|V|}$.

\begin{conc} For an ergodic (i.e. irreducible) graph Laplacian the condition $\la$-$\Gamma\E$ is equivalent to
 \[ w_{xy}\gl \frac{1}{2|V|} \quad \mbox{ for all } x\neq y \pl .\]
The weights $w_{xy}(\theta)$ for the approximating sequence given by $A^{\theta}$ are strictly positive.
\end{conc}

\begin{proof} For an erodic graph Laplacian we have a spectral gap and $\|T_1:L_1\to L_{\infty}\|\le c_1<\infty$. Thanks to Saloff Coste's argument (see \eqref{sfc}), we find  $\|T_t-E:L_1\to L_{\infty}\|\kl c' e^{-\la_{\min} t}$ for $t\gl 2$. Thus $A^{\theta}$ satisfies $\la(\theta)$-$\Gamma\E$ and hence $w_{xy}(\theta)\gl \frac{\la(\theta)}{2|V|}$ is strictly positive. \qd

\begin{rem} Of course, we expect CLSI for every finite graph. It it also not clear how expander graphs fit into this picture since they are quite opposite to complete graphs, see \cite{BobT} for more information.
\end{rem}

\subsection{Fourier multiplier and discrete groups}
We will use group von Neumann algebras and Fourier-multipliers. This means we consider a discrete group and $T_t(\la(g))=e^{-t\psi(g)}\la(g)$ for a conditional negative function $\psi$, see \cite{BrO},\cite{Boz} for more information. For Fourier multiplies the  gradient form is given by the Gromov distance
 \[ 2K_{\psi}(g,h) \lel \psi(g)+\psi(h)-\psi(g^{-1}h) \pl ,\]
and
\[ \Gamma_{\psi}(\la(g),\la(h))\lel K(g,h) \la(g^{-1}h) \pl .\]
It is very easy to see that for two generators $\psi$ and $\tilde{\psi}$ the relation
 \[  \Gamma_{\psi} \gl_{cp} \la \Gamma_{\tilde{\psi}} \]
is equivalent to
 \[ K_{\psi} \gl \la K_{\tilde{\psi}} \pl \]
in the usual sense of matrices.

Let us now consider a discrete  group $G$ with the normalized trace on $L(G)$ and the conditional expectation onto $\cz$ in $L(G)$ given by the trace.  Then $id-E$ is a Fourier multiplier and
 \[ K_{I-E}(g,h) \lel (1-\delta_{g,1})(1-\delta_{h,1}) (\frac{1}{2}+\frac{\delta_{g,h}}{2}) \pl .\]
It therefore suffices to consider the matrix on $G\setminus \{1\}$.  Let us now consider the specific example $G=\zz$ and $\psi(k)=|k|$ given by the Poisson semigroup. Then
 \[ K(k,j) \lel \frac{|k|+|j|-|k-j|}{2}
  \lel \begin{cases} 0& \mbox{if } k<0<j \mbox{ or } j<0<k \\
                     \min(|j|,|k|) & \mbox{ else. }
                     \end{cases} \pl .\]
Let us consider the matrix $B(j,k)=\min(j,k)$ and $\al_j$ be a finite sequence. Then we see that
 \[ (\al,B(\al))\lel \sum_{j,k} \bar{\al}_j\al_k \min(k,j)
 \lel \sum_{j,k} \bar{\al}_j\al_k \sum_{1\le l\le \min(j,k)}
 \lel \sum_l |\sum_{j\gl l}\al_j|^2  \pl .\]
Using $|a-b|^2\kl 2a^2+2b^2$, we deduce that
 \[ \sum_l |\al_l|^2 \lel \sum_l |\sum_{j\gl l}\al_j-\sum_{j>l}a_j|^2
 \kl 2\sum_l (|\sum_{j\gl l}a_j|^2+ |\sum_{j> l}a_j|^2)
 \kl 4 (\al,B(\al)) \pl .\]
This means $4B\gl 1_{\nz}$, and hence
 \[ 2K_{\psi} \lel 2(B \ten 1_{\ell_2^2}) \gl \frac{1}{2}1_{\zz\setminus 0} \pl .\]
On the other hand, let $\eiz$ the matrix with all entries $1$. Then  we certainly  have
 \[ B\gl \eiz_{\nz}     \]
and therefore
  \[ K_{\psi} \lel B\ten 1_{\ell_2^2} \gl \frac{1}{2} B\ten \kla \begin{array}{cc} 1&1\\ 1&1  \end{array}\mer \lel \frac{1}{2} \eiz_{\zz\setminus \{0\}} \pl .\]
Combining these estimates, we deduce that
  \[ 3K_{\psi} \gl K_{I-E} \pl .\]

\begin{conc} The Poisson group on $L(\zz)=L_{\infty}(\mathbb{T})$ satisfies $\frac{1}{3}$-$\Gamma\E$, and hence $\frac{1}{3}$-CLSI. In particular, the Fourier multiplier associated to $\psi(k_1,...,k_n)=\sum_{j=1}^n |k_j|$ on $\zz^n$ satisfies $\frac{1}{3}$-CLSI, but not $\Gamma\E$ for $n\gl 2$. The free product $L(\ff_n)$  with the word length function still satisfies $\frac{1}{3n}$-$\Gamma\E$.
\end{conc}

\begin{proof} Let $n=2$ and $\al_{jl}=\eps_j\eps_l$ defined for $1\le l,j\le 2M$, so that $\sum_j \eps_j=0$. Then
 \[ (\al,K_{\psi_2}\al)
 \lel (\eps,K_{\psi}\eps)(\eps,\eiz(\eps))+
 (\eps,\eiz(\eps))(\eps,K_{\psi}(\eps)) \lel 0 \pl .\]
On the other hand $(\al,id(\al))=M^2$. The last fact follows from 7.1.iii). \qd

\subsection{Non-additivity of $\I_{id-E_N}$}

We seen above that the data processing inequality
allows for tensorization of CLSI. This is no longer true for the symmetrized divergence of Kullback-Leibler, see \cite{KL}.

\begin{exam} Let $N_j\subset M_j$ be von Neumann subalgebas. The inequality
\begin{equation}\label{rr}
 \I_{id-(E_{N_1}\ten N_2})\le \I_{id-E_{N_1\ten M_2}}
 +\I_{id- E_{M_1\ten N_2}} \pl  .
 \end{equation}
is not valid in general.
\end{exam}

We note that \eqref{rr} is equivalent to
 \[ \tau((E_1\ten id)(x) \ln x)+\tau((id\ten E_2)(x) \ln x)
 \kl \tau(x\ln x)+\tau((E_1\ten E_2(x))\ln x) \pl .\]
Let $N_1=N_2=\cz$, and
$M_1=M_2=\ell_{\infty}(\{1,2,3\})$.  The conditional expectation is given by row and column average.
 Therefore we have decide  whether $\tau((x+1-E_1(x)-E_2(x))\ln(x))$ is always positive for a state $x$. Let $\delta>0$ and
 \[ [x_{ij}] \lel \left[\begin{array}{ccc} \delta &\al &\al \\
 \al& \gamma &\gamma \\
 \al&\gamma &\gamma
 \end{array}\right] \]
where $\al=3/8$ and $\gamma=15/8-\frac{\delta}{4}$. Then the $11$-entry of $1+x_{11}-E_{N_1}(x)-E_{N_2}(x)$ is given by
 \[ 1+\delta-2/3(\delta+\al+\al) \lel \frac{1}{2} +\frac{\delta}{3} \]
Note that $\lim_{\delta\to 0}\gamma=15/8$ stays away from $0$ and hence $1/2(\ln\delta)$ is going to $-\infty$. Thus for $\delta \to 0$ the expression converges to $-\infty$.

\subsection{Failure of Rothaus Lemma for matrix valued functions}

Let  $N=\Mz_n\ten 1\subset \Mz_n\ten \Mz_n$ and we work with the normalized trace. We use the notation $D_N(\rho)=D(\rho||E_N(\rho))$ for the asymmetry measure \cite{Marv,GJLR2}.

\begin{prop} For $n\gl 2$ there exists no constant such that
\begin{equation}\label{bb} D_N(|x|^2) \kl C \tau(xA(x))+D\|x-E_N(x)\|^2 \pl.
\end{equation}
Moreover, there are no constants  $C,D$ such that
\begin{equation}\label{bbb}
  D_N(|x|^2)\kl C D_N(|x-E(x)|^2)+D\|x-E_N(x)\|^2 \pl ,
 \end{equation}
holds for selfadjoint $x$.
\end{prop}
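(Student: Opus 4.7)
The plan is to construct a sequence of selfadjoint $x_k\in \Mz_2\ten \Mz_2$ (with $N=\Mz_2\ten 1$ and the normalized trace) such that $D_N(x_k^2)\to\infty$ while $\|x_k-E_N(x_k)\|^2$, $D_N(|x_k-E_N(x_k)|^2)$, and $\tau(x_kA(x_k))$ all stay uniformly bounded. Since $\Mz_2\ten \Mz_2$ embeds as a corner of $\Mz_n\ten \Mz_n$ for $n\ge 2$, this will disprove both \eqref{bb} and \eqref{bbb} in all cases at once. The guiding intuition is that in the commutative Rothaus argument, the cross-term $\{E_N(x),x-E_N(x)\}$ contributes controllably to $D_N(x^2)$, but in the noncommutative setting it can be amplified by choosing $E_N(x)$ to have a large spectral spread.

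Concretely, I would take $E_N=\mathrm{id}\ten\tau_2$ (partial trace on the second factor) and set
\begin{equation*}
x_k \;=\; \alpha_k\,e_{11}\ten 1 \;+\; \sigma_x\ten \sigma_z, \qquad \alpha_k\to\infty,
\end{equation*}
where $\sigma_x,\sigma_z$ are the Pauli matrices and $e_{11}=\mathrm{diag}(1,0)$. Since $\tau_2(\sigma_z)=0$, one has $E_N(x_k)=\alpha_k e_{11}\ten 1\in N$ and $b:=x_k-E_N(x_k)=\sigma_x\ten \sigma_z$ is a selfadjoint symmetry ($b^2=1$). Hence $\|x_k-E_N(x_k)\|^2=1$ and $D_N(|b|^2)=D_N(1)=0$. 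Moreover, for any selfadjoint generator $A$ with fixpoint algebra containing $N$ one has $A(E_N(x_k))=0$, so $\tau(x_kA(x_k))=\tau(bA(b))\le\|A\|_{L_2\to L_2}$ uniformly in $k$. This shows every term on the right-hand side of \eqref{bb} and \eqref{bbb} is uniformly bounded.

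Next I would compute, using $\{e_{11},\sigma_x\}=\sigma_x$ and $b^2=1$,
\begin{equation*}
x_k^2 \;=\; \mathrm{diag}(\alpha_k^2+1,1)\ten 1 \,+\, \alpha_k\,\sigma_x\ten\sigma_z, \qquad E_N(x_k^2) \;=\; \mathrm{diag}(\alpha_k^2+1,1)\ten 1.
\end{equation*}
Both summands commute with $1\ten\sigma_z$, so on each $\pm1$-eigenspace of $\sigma_z$ the restriction of $x_k^2$ is the $2\times2$ matrix $\begin{pmatrix}\alpha_k^2+1 & \pm\alpha_k\\ \pm\alpha_k & 1\end{pmatrix}$, whose eigenvalues $\lambda_\pm^{(k)}=\tfrac12\bigl(\alpha_k^2+2\pm\alpha_k\sqrt{\alpha_k^2+4}\bigr)$ (each of multiplicity two) satisfy $\lambda_+^{(k)}\lambda_-^{(k)}=1$.

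The crux is the asymptotic evaluation of
\begin{equation*}
D_N(x_k^2) \;=\; \tfrac12\bigl(\lambda_+^{(k)}\log\lambda_+^{(k)}+\lambda_-^{(k)}\log\lambda_-^{(k)}\bigr) \;-\; \tfrac12(\alpha_k^2+1)\log(\alpha_k^2+1).
\end{equation*}
The technical obstacle is cancellation: $\lambda_+^{(k)}\log\lambda_+^{(k)}$ and $(\alpha_k^2+1)\log(\alpha_k^2+1)$ both behave like $\alpha_k^2\log\alpha_k$, so the divergent contribution survives only through the subleading terms. Using $\lambda_-^{(k)}=1/\lambda_+^{(k)}\sim 1/\alpha_k^2$ and $\lambda_+^{(k)}=\alpha_k^2+2-\lambda_-^{(k)}$, a careful Taylor expansion yields $D_N(x_k^2)=\log\alpha_k+O(1)\to\infty$. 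Combined with the uniform bounds from the previous paragraph, this rules out any finite $C,D$ in either \eqref{bb} or \eqref{bbb}, completing the proof.
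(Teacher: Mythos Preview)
Your construction is correct and the asymptotic $D_N(x_k^2)=\log\alpha_k+O(1)$ checks out (using $\lambda_+\lambda_-=1$ to write the entropy as $\tfrac12(\lambda_+-\lambda_-)\log\lambda_+$ makes the cancellation transparent). One small wrinkle: the non-unital corner embedding $\Mz_2\ten\Mz_2\hookrightarrow\Mz_n\ten\Mz_n$ does \emph{not} intertwine the conditional expectations (because $\tau_n$ restricted to the $2\times2$ corner is $\tfrac{2}{n}\tau_2$, and $E_{N_n}$ sends the corner out of itself). The painless fix is to run your construction directly in $\Mz_n\ten\Mz_n$: set $x_k=\alpha_k\,e_{11}\ten 1_n+\sigma_x\ten\sigma_z$ with $\sigma_x,\sigma_z$ sitting in the top-left $2\times2$ block. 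Then $b^2=p\ten p$ is a projection rather than $1$, but $\|b\|$, $\|b\|_2$, $D_N(|b|^2)$ and $\tau(bA(b))$ are all still fixed finite numbers, and the same block-diagonalisation along the eigenspaces of $\sigma_z$ gives $D_N(x_k^2)=\tfrac{4}{n^2}\log\alpha_k+O(1)\to\infty$.

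Compared with the paper, your route is genuinely different and in some ways cleaner. The paper first builds a \emph{non}-selfadjoint counterexample $x=\alpha\,(|1\rangle\langle 1|\ten 1)+y$ with $y$ a rank-one partial isometry onto $|11\rangle$, and then handles the selfadjoint clause by the doubling trick $z=\bigl(\begin{smallmatrix}0&x\\x^*&0\end{smallmatrix}\bigr)$, which forces a second round of entropy computations for $|z|^2$ and $E(|z|^2)$. Your Pauli-matrix element is selfadjoint from the outset, so a single calculation disposes of both \eqref{bb} and \eqref{bbb}; the price is that you work specifically at $n=2$ and must transplant to larger $n$, whereas the paper's $y$ is native to $\Mz_n\ten\Mz_n$. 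Both arguments exploit the same mechanism you identified: a large $N$-component $\alpha\,e_{11}\ten 1$ amplifies the cross term in $|x|^2$ and pushes $D_N(|x|^2)$ to infinity while leaving the mean-zero part fixed.
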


Let us start with the non-selfadjoint element
 \[ y \lel \frac{n}{\sqrt{n-1}} \sum_{j=2}^n |11\ran  \lan jj|  \pl .\]
The corresponding conditional expectation $E=E_N$ satisfies
 \[ E(|y|^2) \lel \frac{n^2}{n-1}
 E(\sum_{j,k} |jj\ran\lan kk|)
 \lel \frac{n}{n-1} \sum_{j=2}^n |j\ran\lan j|
 \lel \frac{n}{n-1} 1_{n-1} \pl .\]
Since $y$ has rank one, we get
 \[ D_N(|y|^2) \lel \tau(|y|^2\ln |y|^2)-\tau(E(|y|^2)\ln E(|y^2|)
 \lel \frac{1}{n^2} n^2\ln n^2 - \ln \frac{n}{n-1}
 \lel 2\ln n-\ln \frac{n}{n-1} \pl .\]
Now we modify this element by considering
 \[ x \lel \al (|1\ran\lan 1| \ten 1) + y \pl \]
by adding an element in $\Mz_n\ten 1$. Thus $x-E(x)=y$. We have to calculate $D_N(|x|^2)$. Let us denote by $f=|1\ran\lan 1|\ten 1$ the projection. First we observe that
  \[ x^*x \lel \al^2f+ \al y +y^*\al + y^*y \] and hence
  \[ E(x^*x)\lel \al^2 |1\ran\lan 1|+\frac{n}{n-1}1_{n-1} \pl ,\]
 where $1_{n-1}=\sum_{j=2}^n |j\ran\lan j|$ has rank $n-1$. This implies
  \[ \tau(E(|x|^2)\ln E(|x|^2)) \lel \frac{\al^2}{n}\ln \al^2+\ln \frac{n}{n-1} \pl .\]
In order to calculate the entropy for $|x|^2$, we write we decompose $f=|11\ran\lan 11|+1_{n-1}\ten |1\ran\lan 1$ the second projection, call it $g$ is orthogonal to the support of $y$,  and hence $x^2$ is equivalent to
 \[ \kla \begin{array}{ccc} \al^2 &n\al&0 \\
                            n\al & n^2 &0\\
                            0&0& \al^2 g
                            \end{array}\mer \pl .\]
The upper corner is of rank $1$ with size $n^2\al^2$ and hence
\[ \tau(|x|^2\ln |x|^2)
  \lel \frac{n^2+\al^2}{n^2}\ln(n^2+\al^2)+\frac{\al^2(n-1)}{n^2}\ln(\al^2) \pl .\]
This yields
 \begin{align*}
 D_N(|x|^2) &=
 \frac{n^2+\al^2}{n^2}\ln(n^2+\al^2)+\frac{\al^2}{n}\ln(\al^2)-\frac{\al^2}{n^2}\ln(\al^2)
 -\frac{\al^2}{n}\ln(\al^2)-\ln\frac{n}{n-1} \\
 &= \ln(n^2+\al^2)+\frac{\al^2}{n^2}\ln(1+\frac{n^2}{\al^2})-\ln\frac{n}{n-1}
 \end{align*}
In order to contradict \eqref{bb} and \eqref{bbb}, we observe that $\tau(xA(x))=\tau(xA(y))=\tau(yA(y))$. Hence the right hand side in \eqref{bb} and \eqref{bbb} is bounded, but the left hand side converges to $+\infty$ for $\al\to \infty$, as long as $n\gl 2$. For selfadjoint $x$ see below. \qed

We will now address cb-hypercontractivity (in the sense of \cite{BK}) at $p=2$ by considering the selfadjoint element
 \[ z \lel \kla \begin{array}{cc} 0&x\\x^*&0 \end{array}\mer  \pl .\]
We have $\tau(|z|^2\ln|z|^2)=\tau(|x|^2\ln |x|^2)$. For the conditional expectation, we find
 \[ E(|z|^2) \lel
 \kla \begin{array}{cc} E(xx^*)&0\\
  0 & E(x^*x)
  \end{array} \mer
  \lel
  \kla \begin{array}{cc} \al^2 f + E(n^2|11\ran\lan 11|)&0\\
  0 & \al^2f+\frac{n}{n-1}1_{n-1}
  \end{array}  \mer \pl .\]
This gives (with the normalized trace)
 \[ \tau(|z|^2\ln |z|^2)
 \lel \frac{\al^2}{2n}\ln \al^2+\frac{1}{2}\ln\frac{n}{n-1}+\frac{\al^2+n}{2n}\ln(\al^2+n) \pl .\]
The new part here is
 \begin{align*}
 D_N(xx^*)
 &= \frac{n^2+\al^2}{n^2}\ln(n^2+\al^2)+\frac{\al^2(n-1)}{n^2}\ln(\al^2)   -\frac{\al^2+n}{n}\ln(\al^2+n)\\
 &= \ln(\frac{n^2+\al^2}{n+\a^2})+\frac{\al^2}{n^2}\ln \frac{\al^2+n^2}{\al^2}
 -\frac{\al^2}{n}\ln(\frac{\al^2+n}{\al^2})\pl. \end{align*}
 Following our previous calculation we find that
\begin{align*}
D_N(|z|^2) &=
 \frac{1}{2}[\ln(n^2+\al^2)+\frac{\al^2}{n^2}\ln(1+\frac{n^2}{\al^2})-\ln\frac{n}{n-1}]\\
 & \pll
 +\frac{1}{2}[\ln(\frac{n^2+\al^2}{n+\a^2})+\frac{\al^2}{n^2}\ln \frac{\al^2+n^2}{\al^2}
 -\frac{\al^2}{n}\ln(\frac{\al^2+n}{\al^2})] \\
 &= \frac{1}{2}\ln(n^2+\al^2) + \frac{1}{2} \ln(\frac{n^2+\al^2}{n+\a^2})+\frac{\al^2}{2n^2}\ln(\frac{n^2+\al^2}{n+\a^2}) \\
  &\pll + \frac{\al^2}{n^2}\ln(1+\frac{n^2}{\al^2})-\frac{1}{2}\ln\frac{n}{n-1}-
 \frac{\al^2}{2n}\ln(\frac{\al^2+n}{\al^2}) \pl .
\end{align*}
In order to keep the last term in check, we choose $\al_n^2=n$ and then we find
 \begin{align*}
 D_N(|z|^2) &=\frac{1}{2}\ln n + \frac{1}{2}\ln(n+1)+\frac{1}{2}\ln(n+1)-\frac{1}{2}\ln 2+ \frac{1}{2n}[\ln(n+1)-\ln 2]\\
 & +\frac{1}{n}\ln(n+1)-\frac{1}{2}\ln\frac{n}{n-1}-\frac{1}{2}\ln 2  \\
 &=\frac{1}{2}\ln n + (1+\frac{3}{2n})\ln(n+1)-(1+\frac{1}{2n})\ln 2 - \frac{1}{2}\ln\frac{n}{n-1} \pl.
 \end{align*}
Note that the $\log n$ term is the optimal rate for entropy as $n\to \infty$, and hence the example is rather extreme. Following the work of \cite{BK}, we may formulate this observation as follows.

\begin{prop} Let $(A_n)$ be sequence of generators on $M_n$ such that $\sup_n \|A_n:L_2(M_n)\to L_2(M_n)\|<\infty$. Then the 2-cb-hyper-contractivity constant also converges to $\infty$.
\end{prop}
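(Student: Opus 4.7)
The plan is to turn the entropy computation of the previous subsection into a quantitative lower bound on the cb-log-Sobolev constant that blows up logarithmically, via the Bardet--Kastoryano equivalence between cb-hypercontractivity at $p=2$ and a matrix-valued log-Sobolev inequality.

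First, I would recall that $2$-cb-hypercontractivity of $e^{-tA_n}$ on $M_n$ with rate $\lambda_n$ is, after differentiating at $t=0$, equivalent to the amplified log-Sobolev inequality
\[
\lambda_n \, D_N(|z|^2) \;\le\; 2\,\tau\bigl(z^{\ast}(\mathrm{id}_{M_n}\otimes A_n)(z)\bigr)
\]
for every selfadjoint $z\in M_n\otimes M_n$, where $N=M_n\otimes 1$ is the subalgebra that arises naturally from the amplification. In the ergodic case this is literally \cite{BK}, and the replacement of the plain entropy by $D_N$ in the non-ergodic case is exactly the variant surrounding Rothaus's lemma discussed earlier in this section.

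Second, I would substitute the selfadjoint element $z=\bigl(\begin{smallmatrix}0 & x\\ x^{\ast} & 0\end{smallmatrix}\bigr)$ with $x=\alpha f+y$ and $\alpha^2=n$ from the preceding calculation. The explicit formula for $D_N(|z|^2)$ already derived there gives $\|z\|_2=O(1)$ and
\[
D_N(|z|^2)\;=\;\tfrac{3}{2}\log n+O(1)\;\longrightarrow\;\infty .
\]
Third, using the hypothesis $K:=\sup_n\|A_n:L_2(M_n)\to L_2(M_n)\|<\infty$ and the Hilbert-space tensor identity $\|\mathrm{id}_{M_n}\otimes A_n:L_2\to L_2\|=\|A_n:L_2\to L_2\|$, Cauchy--Schwarz bounds the right-hand side by $2K\|z\|_2^2=O(1)$. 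Combining the two estimates yields $\lambda_n\lesssim 1/\log n\to 0$, which is precisely the statement that the reciprocal cb-hypercontractivity constant diverges.

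The delicate step is the first one: one must pin down the precise form of the LSI extracted from $2$-cb-hypercontractivity, and in particular verify that the relevant test element $z$ is allowed to live in $M_n\otimes M_n$ rather than just in $M_n$, and that the asymmetry measure on the left is $D_{M_n\otimes 1}$. This is exactly where the cb-refinement of Gross's equivalence is needed; once it is in place, the proof reduces to plugging in the previously computed asymptotics, and the polynomial $L_2$-control of $A_n$ does the rest.
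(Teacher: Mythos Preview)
Your approach is essentially the paper's: invoke the Beigi--King equivalence between $2$-cb-hypercontractivity and an amplified log-Sobolev inequality, plug in the selfadjoint block matrix $z=\bigl(\begin{smallmatrix}0 & x\\ x^{\ast} & 0\end{smallmatrix}\bigr)$ with $\alpha_n^2=n$, use the entropy asymptotic $D_N(|z|^2)\sim\tfrac{3}{2}\ln n$ already computed, and bound the energy via $\sup_n\|A_n\|<\infty$.

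The one point where the paper is more explicit is exactly the step you flag as delicate. In \cite{BK} the cb-LSI obtained by differentiating hypercontractivity at $p=2$ reads
\[
D_{\Mz_n}(|x|^2)\;\le\; 4\lambda_2^{cb}\,\tau\bigl((\mathrm{id}\otimes A_n)(|x|)\,|x|\bigr)\;=\;4\lambda_2^{cb}\,\mathcal{E}(|x|),
\]
i.e.\ the right-hand side is the energy of $|x|$, not of $x$. Your Cauchy--Schwarz bound controls $\mathcal{E}(z)$, not $\mathcal{E}(|z|)$, and for the off-diagonal selfadjoint $z$ these differ. The paper bridges this with the Davies--Lindsay inequality $\mathcal{E}(|x|)\le \mathcal{E}(x)$ \cite{DaLi}, after which your Cauchy--Schwarz argument (or the paper's direct estimate $\mathcal{E}(x)\le \|A_n\|\,\|x\|_2^2$) finishes the proof. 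So your outline is correct; just insert the Davies--Lindsay step between the \cite{BK} inequality and the $L_2$ energy bound.
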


For example we may choose $A_n=Id-\tau_n$ which has norm $\le 2$. In fact, we only have to control the behaviour of $A_n$ on some version of the maximally entangled state.

\begin{proof} We recall \cite{BK} that the cb-hpercontractitivity constant $\la_{2}^{cb}$ is the best constant such that
\[  D_{\Mz_n}(|x|^2)\kl 4\la_2^{cb} \tau((id\ten A_n)(|x|)|x|) \lel
4\la_2^{cb} \mathcal{E}(x) \]
allows an estimate with respect to the energy. However, using the derivation calculus, we deduce the well-known estimate (see Lindsay Davies \cite{DaLi})
 \[ \mathcal{E}(|x|)\kl \mathcal{E}(x)
 \lel \mathcal{E}\left (\kla \begin{array}{cc} 0&y\\
 y^*& 0 \end{array}\mer\right)
 \kl \|A_n\|^2  \|y\|_{L_2(M_{n^2})} \kl \|A_n\| \pl .\]
Thus with our choice of $\al_n=\sqrt{n}$, we obtain the lower bound $\ln n\kl C \la_2^{cb}(A_n)\|A_n\|^2$, and hence not both of them can be bounded. \qd

\begin{rem} {\rm A counterexample of similar nature was constructed in \cite{BaRo} which also show that $S_2(S_p)$ is not uniformly convex. Since our example covers a different regime (and in particular the tracial case), we think they are of independent interest.}
\end{rem}

\subsection{Complete Wasserstein $1$-distance}

The complete analogues of the `triangle' inequality for the Wasserstein $1$ distance from \cite{Ma1,Ma2} can be formulated as follows:

\begin{prop} Let $(M_1,\Gamma_{A_1})$ satisfy $C_1^{-1}$-CWA$_1$, and
$(M_2,\Gamma_{A_2})$ satisfy $C_2^{-1}$-CWA$_1$, then $(M_1\ten M_2,\Gamma_{A_1\ten 1+1\ten A_2})$ satisfies $(C_1+C_2)^{-1}$-CWA$_1$.
\end{prop}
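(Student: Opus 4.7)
The plan is to use the duality characterization $\|\rho\|_{\Gamma^*}=\sup\{|\tau(\rho f)|:E(f)=0,\ \|f\|_{Lip_\Gamma}\le 1\}$ and reduce the tensor product estimate to the two factor estimates via a martingale-style splitting of the test function, exactly as in the commutative Marton/Talagrand argument. Let $E_1,E_2$ denote the conditional expectations onto the fixpoint algebras $N_1,N_2$ and write $E_{12}=E_1\otimes E_2$. Given $f$ with $E_{12}(f)=0$ and $\|f\|_{Lip_\Gamma}\le 1$ for $\Gamma=\Gamma_{A_1\otimes 1}+\Gamma_{1\otimes A_2}$, decompose
\[ f \;=\; f_1+f_2,\qquad f_2:=(E_1\otimes id)(f),\qquad f_1:=f-f_2. \]

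First I would verify the two properties that make this decomposition useful. On the one hand $(E_1\otimes id)(f_1)=0$ and $(id\otimes E_2)(f_2)=E_{12}(f)=0$, so $f_1$ (resp.\ $f_2$) is a legitimate test function for the Lipschitz dual norm of $A_1\otimes id$ (resp.\ $id\otimes A_2$). On the other hand, the Lipschitz norms are controlled: since $f_2$ lies in the fixed-point algebra of $A_1\otimes id$, the derivation picture of Theorem~\ref{JRS1} gives $\delta_{A_1\otimes id}(f_2)=0$, hence $\Gamma_{A_1\otimes id}(f_1,f_1)=\Gamma_{A_1\otimes id}(f,f)\le \Gamma(f,f)$; and since $E_1\otimes id$ commutes with $id\otimes\delta_2$, Kadison--Schwarz yields $\Gamma_{id\otimes A_2}(f_2,f_2)\le (E_1\otimes id)\,\Gamma_{id\otimes A_2}(f,f)$, so $\|f_j\|_{Lip}\le\|f\|_{Lip_\Gamma}\le 1$ for both $j=1,2$.

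Next, applying $C_j^{-1}$-WA$_1$ for $A_j$ amplified by the other factor (this is where the completeness in CWA$_1$ is crucial; the bound $A_1\otimes id_{M_2}$ needs WA$_1$ with $M_2$ as ancilla) gives
\[ |\tau(\rho f_1)|\le 2\sqrt{2}\sqrt{C_1\,D(\rho\|(E_1\otimes id)\rho)},\qquad |\tau(\rho f_2)|\le 2\sqrt{2}\sqrt{C_2\,D(\rho\|(id\otimes E_2)\rho)}. \]
For the second inequality I would note that $f_2\in N_1\otimes M_2$ so $\tau(\rho f_2)=\tau((E_1\otimes id)(\rho)f_2)$, and then either use the monotonicity of $D$ under $E_1\otimes id$ or invoke the chain rule below directly. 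The chain rule for relative entropy along the nested expectations $E_{12}=E_{12}\circ(E_1\otimes id)$ gives
\[ D(\rho\|E_{12}\rho)\;=\;D(\rho\|(E_1\otimes id)\rho)+D((E_1\otimes id)\rho\|E_{12}\rho), \]
and since $D((E_1\otimes id)\rho\|E_{12}\rho)\le D(\rho\|(id\otimes E_2)\rho)$ by monotonicity under $E_1\otimes id$ (or one may work with the second summand directly on $N_1\otimes M_2$), summing the two bounds and applying Cauchy--Schwarz to $\sqrt{C_1}\sqrt{x}+\sqrt{C_2}\sqrt{y}\le\sqrt{C_1+C_2}\sqrt{x+y}$ produces
\[ |\tau(\rho f)|\;\le\;2\sqrt{2}\sqrt{(C_1+C_2)\,D(\rho\|E_{12}\rho)}, \]
which is exactly $(C_1+C_2)^{-1}$-WA$_1$. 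Finally, the complete version follows by running the identical argument with $M_1,M_2$ replaced by $M_1\bar\otimes\tilde M$ and $M_2\bar\otimes\tilde M$ (or $\mathbb M_m$), using that the hypothesis on each $A_j$ is assumed on all amplifications.

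The only delicate point I expect is the correct placement of the conditional expectation in the ancilla--with-ancilla application of WA$_1$ for $A_2$; one must check that the relative entropy $D((E_1\otimes id)\rho\|E_{12}\rho)$, computed in the (sub)algebra $N_1\bar\otimes M_2$, is dominated by $D(\rho\|(id\otimes E_2)\rho)$ via data processing, so that the final bound is phrased purely in terms of the global relative entropy $D(\rho\|E_{12}\rho)$. Everything else is bookkeeping and imitates the proof of the additivity of transport cost inequalities under tensorization.
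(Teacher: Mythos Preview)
Your overall strategy is the same as the paper's: split $f=f_1+f_2$ with $f_2=(E_1\otimes id)f$, control the Lipschitz norms of the pieces via Kadison--Schwarz, apply CWA$_1$ on each factor with the other as ancilla, and combine via the chain rule. However, the combining step as written does not close. From your displayed bounds you get
\[
|\tau(\rho f)|\le 2\sqrt{2}\sqrt{(C_1+C_2)\bigl(D(\rho\|(E_1\otimes id)\rho)+D(\rho\|(id\otimes E_2)\rho)\bigr)},
\]
and then you need $D(\rho\|(E_1\otimes id)\rho)+D(\rho\|(id\otimes E_2)\rho)\le D(\rho\|E_{12}\rho)$. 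But the chain rule together with your data-processing inequality $D((E_1\otimes id)\rho\|E_{12}\rho)\le D(\rho\|(id\otimes E_2)\rho)$ gives the \emph{opposite} direction, $D(\rho\|E_{12}\rho)\le D(\rho\|(E_1\otimes id)\rho)+D(\rho\|(id\otimes E_2)\rho)$. So weakening the second bound to $D(\rho\|(id\otimes E_2)\rho)$ is exactly what you must \emph{not} do. The fix you mention parenthetically is the correct one: keep the sharper estimate
\[
|\tau(\rho f_2)|=|\tau((E_1\otimes id)\rho\cdot f_2)|\le 2\sqrt{2}\sqrt{C_2\,D((E_1\otimes id)\rho\|E_{12}\rho)},
\]
and combine with the first bound using the chain-rule \emph{equality} $D(\rho\|E_{12}\rho)=D(\rho\|(E_1\otimes id)\rho)+D((E_1\otimes id)\rho\|E_{12}\rho)$ directly; then Cauchy--Schwarz gives the asserted $(C_1+C_2)^{-1}$-WA$_1$ exactly.

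The paper avoids this trap by working with the equivalent linearized form $|\tau(\rho f)|\le \frac{D(\rho\|E(\rho))}{t}+Ct$ for all $t>0$: the two pieces add as $\frac{D(\rho\|(E_1\otimes id)\rho)}{t}+C_1t$ and $\frac{D((E_1\otimes id)\rho\|E_{12}\rho)}{t}+C_2t$, the entropies sum exactly by the chain rule, and one optimizes over $t$ only at the end. This is a bit cleaner because no Cauchy--Schwarz is needed and the entropy terms never have to be compared via data processing. Your square-root route works too, provided you retain $D((E_1\otimes id)\rho\|E_{12}\rho)$ rather than enlarging it.
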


\begin{proof} Let $f$ be such that
 \[ \|\Gamma_{A_1\ten 1}(f,f)+\Gamma_{1\ten A_2}(f,f)\| \kl 1 \pl \]
and $E_{N_1\ten N_2}(f)=0$. In particular,
 \[ \|\Gamma_{A_1}(f-E_{N_1\ten M_2}(f),f-E_{N_1\ten M_2}(f))\|\le 1 \]
and  by Kadison's inequality
 \[ \|\Gamma_{1\ten A_1}(E_{N_1\ten M_2}(f),E_{N_1\ten M_2}(f))\|\le  \|E_{N_1}\Gamma_{1\ten A_2}(f,f)\|\kl
  1  \pl. \]
Let $t>0$. By the complete version, we know that
 \[ |\tau(\rho (f-E_{N_1\ten M_2}(f))|\kl  \frac{D(\rho||E_{N_1\ten M_2}(\rho))}{t}+tC_1 \]
and
 \begin{align*}  |\tau(\rho (E_{N_1\ten M_2}(f)-E_{N_1\ten N_2}(f))|
 &=  |\tau(E_{N_1\ten M_2}(\rho)E_{N_1\ten M_2}(f)-E_{N_1\ten N_2}(f))| \\
 &\le
   \frac{D(E_{N_1\ten M_2}(\rho)||E_{N_1\ten N_2}(\rho))}{t}+tC_2
   \end{align*}
Therefore  the triangle and proof of the data processing inequality implies
 \[ |\tau(\rho f)|\kl \frac{D(\rho||E_{N_1\ten N_2}(\rho))}{t} + (C_1+C_2)t \pl .\]
Taking the infimum over $t$ implies the assertion.\qd

This implies that for a tensor products with $\la$-CWA$_1$ $T_t^n=(e^{-tA})^{\ten_n}$, we find $\frac{\la}{n}$-CAW$_1$, which in turn is enough to imply Talagrand's inequality for matrix valued function on $\{-1,1\}^n$ and $[-1,1]^n$, see \cite{L3} for details in the scalar case.

\subsection{Major Open Problems}

\begin{prob} 1) Does every compact Riemanin manifold satisfy CLSI?

2) Does every generator of a selfadjoint semigroup on a matrix algebra satisfy CLSI?

3) Is CLSI stable under free products?
\end{prob}




\newcommand{\etalchar}[1]{$^{#1}$}






\end{document}